\documentclass[a4paper,11pt]{article}
\usepackage{amssymb,amsmath,amsthm,amsfonts}
\usepackage{bookmark}
\usepackage{mathrsfs}
\usepackage{multirow}
\usepackage{graphicx}
\usepackage{authblk}
\usepackage{indentfirst}
\usepackage{multicol}
\usepackage{tabu}
\usepackage{url}
\usepackage{fancyhdr}
\usepackage[numbers]{natbib}
\usepackage[all]{xy}
\usepackage{mathtools}
\usepackage[english]{babel}
\usepackage{colortbl}
\usepackage{caption}
\usepackage{hyperref}\usepackage{subcaption}
\usepackage{tikz}\usepackage{float}\usepackage{enumitem}\usepackage{xcolor}\usepackage{tikz-cd}\usepackage{bm}\usepackage{booktabs}
\usepackage{longtable}

\usepackage{mathtools}

\newtheorem{theorem}{Theorem}[section]
\newtheorem{corollary}[theorem]{Corollary}

\newtheorem{lemma}[theorem]{Lemma}
\newtheorem{proposition}[theorem]{Proposition}

\theoremstyle{definition}
\newtheorem{definition}[theorem]{Definition}
\newtheorem{example}[theorem]{Example}
\newtheorem{remark}[theorem]{Remark}

\DeclareMathOperator{\im}{im}

\DeclareMathOperator{\rank}{rank}
\DeclareMathOperator{\sgn}{sgn}
\DeclareMathOperator{\spann}{span_{\mathbb{F}}}

\DeclareMathOperator{\mat}{\mathfrak{mat}}
\DeclareMathOperator{\kh}{\textit{Kh}}

\DeclareMathOperator{\cube}{\mathbb{B}}
\DeclareMathOperator{\cobo}{\mathcal{C}\mathit{ob}}
\DeclareMathOperator{\cob}{\mathcal{C}\mathit{ob}^{3}_{/\sim}}
\DeclareMathOperator{\dob}{\overrightarrow{\mathcal{C}\mathit{ob}}^{3}_{/\sim}}
\DeclareMathOperator{\fun}{\mathbf{Fun}}
\DeclareMathOperator{\hocolim}{\mathrm{hocolim}}
\DeclareMathOperator{\hocof}{\mathrm{hocofib}}

\title{Khovanov homology: pro-tangles, derived colimits and spectral sequences}

\author[1]{Jian Liu}%
\author[1]{Kefeng Liu\thanks{Corresponding author: kefengliu@cqut.edu.cn}}
\author[2]{Li Shen}

\affil[1]{Mathematical Science Research Center, Chongqing University of Technology, Chongqing 400054, China}
\affil[2]{NSF-Simons National Institute for Theory and Mathematics in Biology, Chicago, IL 60637, USA.}

\makeatletter
    \renewcommand*{\@fnsymbol}[1]{\ensuremath{\ifcase#1\or \dagger\or *\or *\or
   \mathsection\or \else\@ctrerr\fi}}
\makeatother
\date{}

\begin{document}
    \maketitle

    \paragraph{Abstract}
    
    This paper introduces \emph{pro-tangles}, a natural generalization of classical tangles, which are functors from the Boolean cube to Bar-Natan's cobordism category. By employing the simplicial Yoneda embedding, we construct the Khovanov simplicial presheaf of a pro-tangle as the total homotopy cofiber over the punctured Boolean cube, and prove that this simplicial presheaf is representable, with representing object the classical Khovanov simplicial object. We establish a fully faithful embedding showing that the weak equivalence class of this simplicial presheaf is determined by the chain homotopy type of the Khovanov complex. Furthermore, we utilize Boolean cube decompositions to construct an algebraic spectral sequence for pro-tangles. This spectral sequence converges to the total Khovanov homology, and its $E_1$ page is explicitly expressed in terms of the Khovanov homology of reduced tangles. This categorical setup yields a functorial interpretation of Reidemeister invariance in terms of morphisms of spectral sequences. By applying the tangle TQFT construction, we study this spectral sequence for Hopf clasps, the fundamental structural building blocks in tangle and link theory. We show that the spectral sequence collapses at the $E_3$ page, which further specializes to an $E_2$-collapse under the restriction to Hopf sums. Finally, we investigate connected sums of pro-tangles and pro-links. To address the module-action dependencies arising from tensor products in multi-connected sums, we introduce a state-dependent modified tensor operator and prove a structural decomposition theorem that generalizes the classical result at the chain complex level.

    \paragraph{Keywords}
     Pro-tangle,  Khovanov simplicial presheaf, Khovanov homology, spectral sequence, Hopf clasp.

\footnotetext[1]
{ {\bf 2020 Mathematics Subject Classification.}  	Primary  57K18; Secondary 55U40, 57K10.
}

\tableofcontents 

\section{Introduction}

Knot and link invariants are central objects in low-dimensional topology, offering systematic tools for distinguishing topological types and connecting geometric intuition with algebraic structures \cite{adams2004knot,kauffman2016introduction,rolfsen2003knots}. The development of polynomial invariants began with the Alexander polynomial \cite{alexander1928topological}, followed by the Jones polynomial in the 1980s, which detects knot chirality and initiated the field of quantum topology \cite{jones1985polynomial,jones1987hecke,witten1989quantum}. The HOMFLY-PT polynomial subsequently unified and generalized these invariants \cite{freyd1985new}.

A major breakthrough came with Khovanov's categorification of the Jones polynomial, which lifts a numerical invariant to a homological theory encoding richer topological information \cite{khovanov2000categorification}. Around the same time, Ozsv\'ath--Szab\'o and Rasmussen independently introduced knot Floer homology, a symplectic-geometric invariant that categorifies the Alexander polynomial \cite{ozsvath2004holomorphic,rasmussen2003floer}. These developments are linked by spectral sequences relating Khovanov and knot Floer homologies \cite{dowlin2024spectral,ozsvath2005heegaard}. Further refinements, such as Lee's deformation and Rasmussen's $s$-invariant, have led to powerful applications including bounds on the slice genus and a combinatorial proof of the Milnor conjecture for torus knots \cite{lee2005endomorphism,rasmussen2010khovanov}.

Attention has gradually shifted from links to tangles, which provide a more flexible and compositional framework. Bar-Natan extended Khovanov homology to tangles by constructing a functor from a cobordism category to chain complexes, thereby clarifying its functorial and TQFT-like structure \cite{bar2005khovanov,khovanov2002functor}. Subsequent work resolved foundational issues such as sign ambiguities in functoriality \cite{clark2009fixing}.

From a homotopy-theoretic perspective, Khovanov homology has been lifted to stable homotopy types, yielding stronger invariants. Constructions using the Burnside category and equivariant methods have further expanded its scope and applications \cite{borodzik2021khovanov,lawson2020khovanov,lipshitz2014khovanov}. On the computational side, efficient algorithms have been developed for links \cite{bar2007fast,schmidhuber2025quantum}, while recent work emphasizes tangle-based decompositions and TQFT structures for practical computation. Besides, the simplicial homotopy type of Khovanov homology for closed links was explored via categorical lifts in \cite{everitt2014homotopy} and via state-sum combinatorics in \cite{kauffman2018simplicial}.

Despite these advances, a systematic treatment that simultaneously captures the algebraic, functorial, and homotopy-theoretic aspects of tangles has not been fully established. In this paper, we introduce \emph{pro-tangles} as a natural extension of classical tangles designed to address this gap. Let $\cobo^3(B)$ be the category whose objects are 1-manifolds with boundary $B$, and whose morphisms are compact cornered surface cobordisms relative to $B$. Bar-Natan's cobordism category $\cob(B)$ is the quotient of $\cobo^3(B)$ by the local relations $S$, $T$ and $4Tu$. A pro-tangle is formally defined as a functor $F:  \cube^n \to \cob(B)$ from the Boolean cube to $\cob(B)$. Endowed with the Alexandroff topology on $\cube^n$, pro-tangles acquire a canonical cosheaf structure. The category of pro-tangles, $\fun_B$, is defined as a fibered total category over the class of finite Boolean posets, where objects are diagrammatic functors $F: \cube^n \to \cob(B)$ of variable ranks, and morphisms are structured pairs consisting of graphical functors and natural cobordism transformations.

Fix a ground ring $\mathbf{k}$, and let $\mathcal{A}_B = \mat(\cob(B))$ be the additive closure of the cobordism category $\cob(B)$. Composing the constant simplicial embedding with the simplicial Yoneda embedding yields the functorial assignment
\[
\mathcal{Y}_{\delta} :  \mathcal{A}_B \hookrightarrow s\mathcal{A}_B \xrightarrow{\;\mathcal{Y}\;} s\mathbf{Mod}_{\mathbf{k}}^{\mathcal{A}_B^\mathrm{op}} = \operatorname{Fun}_{\mathbf{k}}(\mathcal{A}_B^\mathrm{op},s\mathbf{Mod}_{\mathbf{k}}),
\]
where the target category $s\mathbf{Mod}_{\mathbf{k}}^{\mathcal{A}_B^\mathrm{op}}$ is equipped with the standard projective model structure.

For a pro-tangle $F: \cube^n \to \cob(B)$, the \textit{Khovanov simplicial presheaf} of $F$ is defined as the total homotopy cofiber of the diagram over the punctured Boolean cube:
\[
\mathbf{Kh}(F) = \operatorname{tcof}_{\cube^n}(\mathcal{Y}_{\delta}\circ\mat\circ F) \;\in\; s\mathbf{Mod}_{\mathbf{k}}^{\mathcal{A}_B^{\mathrm{op}}}.
\]

Let $\mathcal{S}(F)\in s\mathcal{A}_B$ be the Khovanov simplicial object, whose normalized Moore complex recovers the classical Khovanov chain complex. We prove the representability of this presheaf construction (see Theorem~\ref{theorem:representation}):
\begin{theorem}
The Khovanov simplicial presheaf $\mathbf{Kh}(F)$ is represented by the simplicial object $\mathcal{S}(F)$, i.e., for any object $M \in \mathcal{A}_B$, there is a natural weak equivalence of simplicial $\mathbf{k}$-modules
\begin{equation*}
    \mathbf{Kh}(F)(M) \simeq \operatorname{Hom}_{\mathcal{A}_B}\left(M, \mathcal{S}(F)\right).
\end{equation*}
\end{theorem}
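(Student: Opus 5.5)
Evaluate the total homotopy cofiber objectwise, identify it with an iterated mapping cone, and then pull the Hom functor outside by additivity.

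\emph{Step 1: pointwise evaluation.} The projective model structure on $s\mathbf{Mod}_{\mathbf{k}}^{\mathcal{A}_B^{\mathrm{op}}}$ has weak equivalences and fibrations defined objectwise. Homotopy colimits of diagrams of presheaves can be computed objectwise, for instance by the bar construction, which commutes with evaluation. Hence for every $M\in\mathcal{A}_B$ we have
\[
\mathbf{Kh}(F)(M)\simeq \operatorname{tcof}_{\cube^n}\bigl(v\mapsto \operatorname{Hom}_{\mathcal{A}_B}(M,\mat F(v))\bigr),
\]
a total homotopy cofiber of a cube of constant simplicial $\mathbf{k}$-modules. Here we use that $\mathcal{Y}_\delta(X)(M)=\operatorname{Hom}_{\mathcal{A}_B}(M,X)$ viewed as a constant simplicial module.

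\emph{Step 2: the total cofiber as an iterated cone.} For a cubical diagram $D:\cube^n\to s\mathbf{Mod}_{\mathbf{k}}$, I would identify $\operatorname{tcof}(D)$ up to weak equivalence with the iterated mapping cone. Induct on $n$, writing $\cube^n=\cube^{n-1}\times\cube^1$, so that
\[
\operatorname{tcof}_{\cube^n}D\simeq\hocof\bigl(\operatorname{tcof}D|_{x_n=0}\to\operatorname{tcof}D|_{x_n=1}\bigr).
\]
Under Dold--Kan, $\hocof$ corresponds to the mapping cone of normalized chain complexes. The iterated cone of a cube of modules, with the sign conventions forced by commutativity, is exactly the Khovanov-type cube complex $\bigoplus_{|v|=k}D(v)$ with differential $\sum\pm D(v\to v')$. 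Thus $N\operatorname{tcof}(D)\simeq\mathcal{C}(D)$, the cube complex, up to a degree shift that has to be matched with the convention for $\operatorname{tcof}$ (cofiber over the punctured cube).

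\emph{Step 3: commuting Hom past the construction.} The functor $\operatorname{Hom}_{\mathcal{A}_B}(M,-)$ is additive and so preserves finite direct sums in $\mathcal{A}_B$. Therefore applying it levelwise to the Khovanov complex $[\![F]\!]=\mathcal{C}(\mat F)$ in $\mathrm{Ch}(\mathcal{A}_B)$ gives precisely the cube complex $\mathcal{C}(\operatorname{Hom}(M,\mat F(-)))$, with the same signs. Since $\mathcal{S}(F)$ is the simplicial object with $N\mathcal{S}(F)=[\![F]\!]$ (Dold--Kan in the idempotent-complete additive category $\mathcal{A}_B$; pass to the Karoubi envelope if necessary), and Dold--Kan commutes with additive functors, we get
\[
N\operatorname{Hom}(M,\mathcal{S}(F))\cong\operatorname{Hom}(M,N\mathcal{S}(F)).
\]
Combining Steps 1–3 and applying the inverse Dold--Kan equivalence, which preserves quasi-isomorphisms as weak equivalences, yields the natural weak equivalence $\mathbf{Kh}(F)(M)\simeq\operatorname{Hom}_{\mathcal{A}_B}(M,\mathcal{S}(F))$. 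For naturality in $M$, observe that every identification above is functorial in the first variable of Hom.

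\emph{Main obstacle.} The delicate point is Step 2: making the zig-zag between the homotopy-invariant total cofiber and the strict iterated cone natural and sign-consistent with Khovanov's sign assignment. It also requires checking that the degree conventions match, since the cofiber over the punctured cube shifts by $n$ relative to the normalized Khovanov complex. My first attempt would be a cofinality/Reedy argument: the punctured cube with the extended diagram is Reedy cofibrant after a levelwise cone replacement, so the strict colimit computes the homotopy colimit. The sign issue would then be absorbed by an isomorphism of cube complexes, using that any two sign assignments are equivalent.
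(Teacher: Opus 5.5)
Your proposal is correct and follows the paper's proof essentially step for step. The paper likewise evaluates the total cofiber objectwise (Lemmas~\ref{lemma:ev_hocolim} and \ref{lemma:objectwise_tcof}), identifies $\operatorname{tcof}$ with the signed total complex via the iterated cofiber formula and a re-signing isomorphism (Lemma~\ref{lemma:total_complex}), commutes $\operatorname{Hom}_{\mathcal{A}_B}(M,-)$ past the direct sums and the normalization of $\mathcal{S}(F)$, and concludes because $\mathbf{N}$ detects weak equivalences. The degree shift by $n$ you flag does not actually occur: the paper grades $\overline{\mathcal{C}}(F)$ by coweight $|v|^*$, and that is exactly where the cofiber over the punctured cube places $\operatorname{Hom}_{\mathcal{A}_B}(M,F(v))$.
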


Moreover, we establish functoriality of this construction on natural subcategories of pro-tangles (see Theorems~\ref{theorem:homotopy_functoriality} and \ref{theorem:functoriality}):
\begin{theorem}
The Khovanov simplicial presheaf construction descends to a functor between homotopy categories on the subcategory of puncture-preserving pro-tangle morphisms, and restricts to a strict functor on the subcategory of admissible (coweight-preserving) morphisms:
\[
\mathbf{Kh}: \fun_B^{\mathrm{ad}} \longrightarrow  s\mathbf{Mod}_{\mathbf{k}}^{\mathcal{A}_B^{\mathrm{op}}}.
\]
\end{theorem}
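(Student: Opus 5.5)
The plan is to prove the two assertions of the statement separately. First, that morphisms preserving the puncture induce maps in the homotopy category $\mathrm{Ho}(s\mathbf{Mod}_{\mathbf{k}}^{\mathcal{A}_B^{\mathrm{op}}})$. Second, that admissible morphisms induce strict maps of simplicial presheaves, compatible with composition. Throughout, the total homotopy cofiber $\operatorname{tcof}_{\cube^n}$ will be modelled explicitly. Take the homotopy colimit of $\mathcal{Y}_\delta\circ\mat\circ F$ over the punctured cube $\cube^n\setminus\{\mathbf{1}\}$, computed by the standard Bousfield--Kan (or iterated mapping cone) formula. Then take the cofiber of the canonical map to the value at the terminal vertex, interpreted levelwise in $s\mathbf{Mod}_{\mathbf{k}}$. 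Since $\mathcal{Y}_\delta$ lands in representables, which are cofibrant in the projective model structure, this strict model computes the homotopy cofiber with no further cofibrant replacement.

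\emph{Step 1 (action of a morphism).} A morphism $(\varphi,\eta)\colon F\to G$ in $\fun_B$, with $F\colon\cube^n\to\cob(B)$ and $G\colon\cube^m\to\cob(B)$, consists of a graphical functor $\varphi\colon\cube^n\to\cube^m$ and a natural cobordism transformation $\eta\colon F\Rightarrow G\circ\varphi$. Applying $\mathcal{Y}_\delta\circ\mat$ to $\eta$ gives a natural transformation of cube diagrams of presheaves. Assume $\varphi$ is puncture-preserving, meaning it carries the punctured cube into the punctured cube and the terminal vertex to the terminal vertex. Then $\varphi$ induces a comparison map
\[
\hocolim_{\cube^n\setminus\{\mathbf{1}\}}(G\varphi)\to\hocolim_{\cube^m\setminus\{\mathbf{1}\}}G.
\]
Composing this with $\eta$ produces a map of cofiber sequences, and hence a map $\mathbf{Kh}(F)\to\mathbf{Kh}(G)$. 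Two facts then show this is a morphism of $\mathrm{Ho}$ and respects composition up to homotopy: homotopy colimits are functorial in the indexing functor up to coherent homotopy, and $\eta$ is only determined up to the cobordism relations $S,T,4Tu$, which hold strictly in $\cob(B)$. For composition, I would check that the comparison maps for $\psi\circ\varphi$ and for $\psi$ after $\varphi$ agree up to a canonical homotopy coming from the simplicial replacement. Alternatively, one can invoke the representability result (Theorem~\ref{theorem:representation}) and pass to normalized Moore complexes. There the map is the classical chain map on Khovanov complexes, and functoriality in the homotopy category follows from the fully faithful embedding onto chain homotopy types.

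\emph{Step 2 (strictness on admissible morphisms).} The point is that when $\varphi$ preserves coweight, i.e.\ $|\varphi(v)|^{c}=|v|^{c}$, the induced map respects the cube filtration by coweight. It therefore sends each simplicial summand indexed by a chain $v_0<\dots<v_k$ in $\cube^n\setminus\{\mathbf{1}\}$ to the summand indexed by $\varphi(v_0)<\dots<\varphi(v_k)$, with no degenerate collapses. This gives a strict levelwise formula for the induced map. Strict associativity and unitality then follow because both the $\varphi$'s and the $\eta$'s compose strictly in $\fun_B$, and the Bousfield--Kan construction is strictly functorial in maps of indexing categories that are injective on chains.

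\emph{Main obstacle.} I expect the hardest step to be handling signs and collapses when $\varphi$ is not injective or not coweight-preserving. In that case nondegenerate simplices map to degenerate ones, and the sign conventions of the iterated cone need not be preserved. This is exactly why only homotopy-functoriality should hold in general. My first attempt would be to express the cone signs via the cubical face orientation, check that coweight-preservation makes them intrinsic, and, in the puncture-preserving case, absorb any discrepancy into an explicit chain homotopy produced by the degeneracy maps.
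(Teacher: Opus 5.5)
Your homotopy-level half follows the paper's argument: re-index the Bousfield--Kan homotopy colimit along $\varphi^{\partial}$, compose with $\mathcal{E}(\eta)$, and take cofibers of the resulting strict map of spans (Lemma~\ref{lemma:reindexing}, Proposition~\ref{proposition:span_morphism}). In fact the two composites agree on the nose, not just up to homotopy. Two repairs are needed.

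First, in the paper ``puncture-preserving'' only means $\varphi(\partial\cube^n)\subseteq\partial\cube^m$. It does not require $\varphi(\mathbf{1}_n)=\mathbf{1}_m$, and it explicitly includes $\delta_{i,0}$ and $v\mapsto(v,0)$. For such maps $\eta_{\mathbf{1}_n}$ lands in $G(\varphi(\mathbf{1}_n))$ rather than $G(\mathbf{1}_m)$. You must therefore use $a_\Phi=\mathcal{E}\big(G(\varphi(\mathbf{1}_n)\to\mathbf{1}_m)\circ\eta_{\mathbf{1}_n}\big)$; then $a_{\Psi\circ\Phi}=a_\Psi\circ a_\Phi$ follows from naturality of $\zeta$ and functoriality of $H$.

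Second, ``cofiber'' has to mean the mapping cone, not the strict cokernel. Cofibrancy of representables does not make $b(F)$ a cofibration. In rank one, objectwise $b(F)$ is $\operatorname{Hom}_{\mathcal{A}_B}(M,F(e))$, which need not be injective, and the cokernel would lose its kernel, which should appear as $\pi_1$. Your fallback of passing to chain maps works only for admissible $\varphi$, since otherwise $\eta$ does not give a degree-zero map.

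For the strict half your route genuinely differs from the paper's. The paper leaves the cofiber model. It defines $\mathcal{C}(\Phi)|_{F(v)}=\lambda^p_v\,\eta_v$ on the Khovanov complex using the sign cocycle of Lemma~\ref{lemma:sign_system}, and sets $\mathbf{Kh}(\Phi)=\mathcal{N}^{-1}(\mathbf{h}_{\mathcal{C}(\Phi)})$. There, coweight preservation is what makes $\mathcal{C}(\Phi)$ degree zero, and the signs are the real work.

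In your model none of this arises. The Bousfield--Kan (or coend) model runs over all simplices of the nerve, degenerate ones included, so it is strictly functorial in \emph{every} functor of indexing posets. The cone is strictly functorial in commuting squares. Hence you already get a strict functor on all of $\fun_B^{\partial}$, and $\fun_B^{\mathrm{ad}}$ is just a restriction. Your ``no degenerate collapses'' observation is true but unnecessary. The sign obstacle you anticipate belongs to the $\operatorname{Tot}$ model, i.e.\ to the paper's route, not to yours.

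Your route buys three things:
\begin{enumerate}
\item formal strictness;
\item automatic agreement of the strict and homotopy-level functors (the paper checks this only for $p=\mathrm{id}$, in Proposition~\ref{proposition:geometric_compatibility});
\item independence from injectivity.
\end{enumerate}
The last point matters because injectivity is not automatic. The map $p\colon\cube^2\to\cube^2$ fixing $00,10,11$ and sending $01\mapsto 10$ is coweight-preserving. For it, the bijection $\operatorname{Out}(v)\to\operatorname{Out}(p(v))$ of Remark~\ref{remark:classification} fails at $v=00$. Consequently $v\mapsto\pm\eta_v$ is not a chain map unless $G(00\to 01)\circ\eta_{00}=0$.

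The paper's route buys an explicit formula for the induced map on the classical complex $\mathcal{C}(F)$ and on the representing object $\mathbf{h}_{\mathcal{S}(F)}$. To get that from your model, you would have to make the identification $\mathbf{N}(\mathbf{Kh}(F)(M))\simeq\operatorname{Hom}_{\mathcal{A}_B}(M,\mathcal{C}(F))$ natural in admissible $\Phi$. That is exactly where the signs $\lambda^p_v$ come back.
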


We emphasize an important distinction between our construction and the Khovanov homotopy types introduced by Lipshitz--Sarkar~\cite{lipshitz2014khovanov} and Lawson--Lipshitz--Sarkar~\cite{lawson2020khovanov}. 
The Lipshitz--Sarkar invariant constructs a strict stable homotopy type for closed links whose topological invariants refine the classical Khovanov homology by capturing higher-order Steenrod operations. In contrast, our Khovanov simplicial presheaf and its homotopy type are defined for pro-tangles, taking values in the usual unstable homotopy category of additive simplicial presheaves.
This construction supplies a homotopical foundation and topological justification for Bar-Natan's chain homotopies of tangles, lifting purely algebraic cobordism relations to consistent and well-behaved homotopical gluing operations.

Because $\mathcal{A}_B$ lacks arbitrary kernels and cokernels, the category of chain complexes $\mathbf{Ch}_{\ge 0}(\mathcal{A}_B)$ does not admit a standard projective model structure based on quasi-isomorphisms. Instead, it is equipped with the Hurewicz model structure, whose weak equivalences are strict chain homotopy equivalences, and whose localized homotopy category is denoted by $\mathbf{K}_{\ge 0}(\mathcal{A}_B)$. Let $\operatorname{Ho}(s\mathbf{Mod}_{\mathbf{k}}^{\mathcal{A}_B^\mathrm{op}})$ denote the homotopy category of $s\mathbf{Mod}_{\mathbf{k}}^{\mathcal{A}_B^\mathrm{op}}$.

The classical Khovanov complex $\mathcal{C}(F)$ is the normalized Moore complex of $\mathcal{S}(F)$.
The following result guarantees that the weak equivalence class of the Khovanov simplicial presheaf $\mathbf{Kh}(F)$ is completely determined and characterized by the classical chain homotopy type of the Khovanov complex $\mathcal{C}(F)$ (see Theorem \ref{theorem:derived_enriched_embedding} and Corollary \ref{corollary:algebraic_representation}).
\begin{theorem}
The post-composition of the Yoneda embedding with the inverse Dold--Kan correspondence $\mathcal{N}^{-1}$ induces a fully faithful functor of localized categories
\[
\mathcal{J}:  \mathbf{K}_{\ge 0}(\mathcal{A}_B) \;\hookrightarrow\; \operatorname{Ho}\left( s\mathbf{Mod}_{\mathbf{k}}^{\mathcal{A}_B^\mathrm{op}} \right).
\]
Furthermore, there is a natural isomorphism of presheaves of complexes
\[
    \mathcal{N}\left(\mathbf{Kh}(F)\right) \;\cong\; \mathbf{h}_{\mathcal{C}(F)},
\]
where $\mathcal{N}: s\mathbf{Mod}_{\mathbf{k}}^{\mathcal{A}_B^{\mathrm{op}}} \to \mathbf{Ch}_{\ge 0}(\mathbf{k})^{\mathcal{A}_B^{\mathrm{op}}}$ is the objectwise normalized Moore complex functor.
\end{theorem}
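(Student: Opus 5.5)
The plan is to build $\mathcal{J}$ as a composite of three functors and check full faithfulness on homotopy classes directly:
\[
\mathbf{Ch}_{\ge 0}(\mathcal{A}_B)\xrightarrow{\;\mathcal{N}^{-1}\;} s\mathcal{A}_B \xrightarrow{\;\mathcal{Y}\;} s\mathbf{Mod}_{\mathbf{k}}^{\mathcal{A}_B^{\mathrm{op}}} \longrightarrow \operatorname{Ho}\bigl(s\mathbf{Mod}_{\mathbf{k}}^{\mathcal{A}_B^{\mathrm{op}}}\bigr).
\]
Here $\mathcal{N}^{-1}$ is the Dold--Kan equivalence, which holds for any idempotent complete additive category. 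If $\mathcal{A}_B$ is not idempotent complete, I would work with its Karoubi envelope and restrict at the end. Under Dold--Kan, chain homotopies in $\mathbf{Ch}_{\ge 0}(\mathcal{A}_B)$ correspond to simplicial homotopies in $s\mathcal{A}_B$. So strict chain homotopy equivalences go to simplicial homotopy equivalences. The Yoneda embedding $\mathcal{Y}$ preserves simplicial homotopies, because $\mathcal{Y}(X\otimes\Delta^1)\cong \mathcal{Y}(X)\otimes\Delta^1$ levelwise, and it therefore sends these maps to weak equivalences. It follows that the composite factors through $\mathbf{K}_{\ge 0}(\mathcal{A}_B)$, which defines $\mathcal{J}$.

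\textbf{Full faithfulness.} For complexes $C,D$ put $X=\mathcal{N}^{-1}C$ and $Y=\mathcal{N}^{-1}D$. The presheaf $\mathcal{Y}(X)$ is levelwise a representable, hence projective, functor. A simplicial object that is levelwise projective is cofibrant in the projective model structure, since it is a retract of a cell complex built from the latent decomposition (degenerate summands split off by Dold--Kan). Every object is fibrant. Hence
\[
\operatorname{Hom}_{\operatorname{Ho}}(\mathcal{Y}X,\mathcal{Y}Y)=\pi_0\,\underline{\operatorname{Map}}(\mathcal{Y}X,\mathcal{Y}Y),
\]
where $\underline{\operatorname{Map}}$ denotes the simplicial mapping space. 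By the enriched Yoneda lemma, maps $\mathcal{Y}X\otimes\Delta^k\to\mathcal{Y}Y$ are the same as maps $X\otimes\Delta^k\to Y$ in $s\mathcal{A}_B$. Therefore $\pi_0$ equals the set of simplicial homotopy classes $[X,Y]$, which Dold--Kan identifies with chain homotopy classes $[C,D]$, that is, with $\operatorname{Hom}_{\mathbf{K}_{\ge 0}}(C,D)$.

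\textbf{Main obstacle.} The delicate step is cofibrancy of $\mathcal{Y}X$ together with the identification of left homotopy (via the cylinder $\mathcal{Y}X\otimes\Delta^1$) with the homotopy relation in the homotopy category. I would handle this in the dual way. Take an arbitrary presheaf $P$ and compute maps into $\mathcal{Y}Y$ via the normalized Moore complex: for cofibrant $P$, the set $\operatorname{Hom}_{\operatorname{Ho}}(P,\mathcal{Y}Y)$ is the group of chain homotopy classes $\mathcal{N}P\to \mathcal{N}\mathcal{Y}Y$ of presheaves of complexes. For $P=\mathcal{Y}X$ we have $\mathcal{N}\mathcal{Y}X = \mathbf{h}_C$, since $\mathcal{N}$ commutes with $\operatorname{Hom}_{\mathcal{A}_B}(M,-)$ by additivity of $\operatorname{Hom}$. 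The additive Yoneda lemma then gives chain maps $\mathbf{h}_C\to\mathbf{h}_D$, and homotopies between them, equal to chain maps $C\to D$ and homotopies between them. This argument runs through Dold--Kan for $\mathbf{k}$-module presheaves and avoids any explicit cell structure.

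\textbf{Second statement.} By the representability theorem (Theorem~\ref{theorem:representation}), $\mathbf{Kh}(F)(M)\simeq\operatorname{Hom}_{\mathcal{A}_B}(M,\mathcal{S}(F))$. To upgrade this to an isomorphism after normalization, I would check that the total cofiber model agrees with $\mathcal{Y}\mathcal{S}(F)$ at the level of normalized chains. Then
\[
\mathcal{N}\bigl(\operatorname{Hom}(M,\mathcal{S}(F))\bigr)\cong\operatorname{Hom}\bigl(M,\mathcal{N}\mathcal{S}(F)\bigr)=\operatorname{Hom}\bigl(M,\mathcal{C}(F)\bigr),
\]
naturally in $M$. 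The reason is that degenerate subobjects and normalized pieces are defined by face and degeneracy maps, and these commute with the additive functor $\operatorname{Hom}(M,-)$. This is the natural isomorphism $\mathcal{N}(\mathbf{Kh}(F))\cong\mathbf{h}_{\mathcal{C}(F)}$.
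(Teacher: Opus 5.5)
Your construction of $\mathcal{J}$ and your full-faithfulness argument are essentially the paper's. In both, $\mathcal{N}^{-1}(\mathbf{h}_C)$ is bifibrant: it is levelwise representable, hence cofibrant, and every object is fibrant. Morphisms in $\operatorname{Ho}$ are therefore simplicial homotopy classes of strict maps, and Yoneda plus Dold--Kan identify these with chain homotopy classes $C\to D$. The only difference is the order of operations. You apply Dold--Kan inside $s\mathcal{A}_B$ and then Yoneda, which forces your Karoubi-envelope detour. The paper applies Yoneda first and then Dold--Kan objectwise over $\mathbf{k}$. Since $\Gamma$ is built from finite direct sums and commutes with $\operatorname{Hom}_{\mathcal{A}_B}(M,-)$, the two composites agree. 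Your ``dual'' variant in the obstacle paragraph is exactly the paper's route, and the paper's cofibrancy lemma for $\mathbf{h}_{X_\bullet}$ supplies the cell structure you only allude to.

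The step that does not work is the proposed upgrade in the second part. You plan to check that the total cofiber model agrees with $\mathcal{Y}\mathcal{S}(F)$ on normalized chains, so as to obtain a strict isomorphism $\mathcal{N}(\mathbf{Kh}(F))\cong\mathbf{h}_{\mathcal{C}(F)}$. But $\mathbf{Kh}(F)$ is defined only up to weak equivalence, and for the Bousfield--Kan model the paper uses, this agreement is false once $n\ge 2$. Take $n=2$. The coslice $00\downarrow\partial\cube^{2}$ is the whole span, so its nerve contributes an extra vertex and two edges. Hence the normalized homotopy colimit over $\partial\cube^{2}$ of $v\mapsto\operatorname{Hom}_{\mathcal{A}_B}(M,F(v))$ has $\operatorname{Hom}(M,F(10))\oplus\operatorname{Hom}(M,F(00))\oplus\operatorname{Hom}(M,F(01))$ in degree $0$ and $\operatorname{Hom}(M,F(00))^{\oplus 2}$ in degree $1$. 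After coning, $\mathbf{N}(\mathbf{Kh}(F)(M))$ is strictly larger than $\operatorname{Hom}_{\mathcal{A}_B}(M,\mathcal{C}(F))$ in degrees $1$ and $2$. A strict identification could only be arranged by choosing a different model, such as iterated cones, and would then be a statement about that model rather than about $\mathbf{Kh}(F)$. What the statement asserts, and what the paper proves in Lemma~\ref{lemma:chain_representation}, is an isomorphism in the homotopy category of chain presheaves. For that no upgrade is needed. Apply $\mathcal{N}$, which preserves objectwise weak equivalences, to the zig-zag $\mathbf{Kh}(F)\simeq\mathbf{h}_{\mathcal{S}(F)}$ of Theorem~\ref{theorem:representation}. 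Then compose with the strict isomorphism $\mathcal{N}(\mathbf{h}_{\mathcal{S}(F)})\cong\mathbf{h}_{\mathcal{C}(F)}$, which you correctly derive from additivity of $\operatorname{Hom}_{\mathcal{A}_B}(M,-)$.
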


Let $\mathcal{A}$ be a $\mathbf{k}$-linear abelian category, and let $\Theta:  \mathcal{A}_B \to \mathcal{A}$ be a $\mathbf{k}$-linear additive functor. The representability established above provides the homotopical justification for a canonical construction of Khovanov homology directly via the linearized diagram $F^\Theta = \Theta \circ \mat\circ F:  \cube^n \to \mathcal{A}$.
We show that this Khovanov homology admits three equivalent characterizations. At the chain complex level, it is given by the cellular cube homology of the diagram $F^\Theta$, a signed totalization constructed via iterated mapping cones over the Boolean cube. Equivalently, it coincides with the homology of the normalized Moore complex of the Khovanov simplicial object with coefficients in $\Theta$. At the simplicial level, it is isomorphic to the simplicial homotopy groups of the total homotopy cofiber of the Eilenberg–MacLane resolution diagram obtained via iterated simplicial homotopy cofibers.

Using Boolean cube decompositions, we construct an algebraic spectral sequence associated with a pro-tangle (see Theorem \ref{theorem:spectral_sequence}). 
Let $F$ be a pro-tangle, and let $\cube^k$ denote the Boolean cube associated with a fixed choice of $k$ crossings in $F$. For each vertex $s \in \cube^k$, let $F_s$ be the corresponding induced pro-tangle smoothing. Then there is a first-quadrant spectral sequence with the $E_1$ page given by
\[
E_1^{p,q} \;\cong\; \bigoplus_{|s|=p} \kh^q(F_s).
\]
This spectral sequence collapses at the $E_{k+1}$ page, i.e., $E_{k+1} \cong E_{\infty}$, and converges to the total Khovanov homology
\[
E_1^{p,q} \;\Rightarrow\; \kh^{p+q}(F).
\]
This construction also provides a functorial interpretation of Reidemeister invariance, formulated in terms of morphisms of spectral sequences.

It is worth noting that we present an algebraic TQFT construction for tangles: we assign the circle to the Frobenius algebra $V=\mathbb{F}[x]/(x^2)$, each open arc to the ideal $W=xV$ of $V$, and further characterize the associated saddle morphisms. This construction enables the explicit computation of pro-tangle Khovanov homology. The Hopf clasp, illustrated in Figure \ref{figure:hopf_clasp}, is a fundamental building block in tangle and link theory.
\begin{figure}[H]
  \centering
  \includegraphics[width=0.5\textwidth]{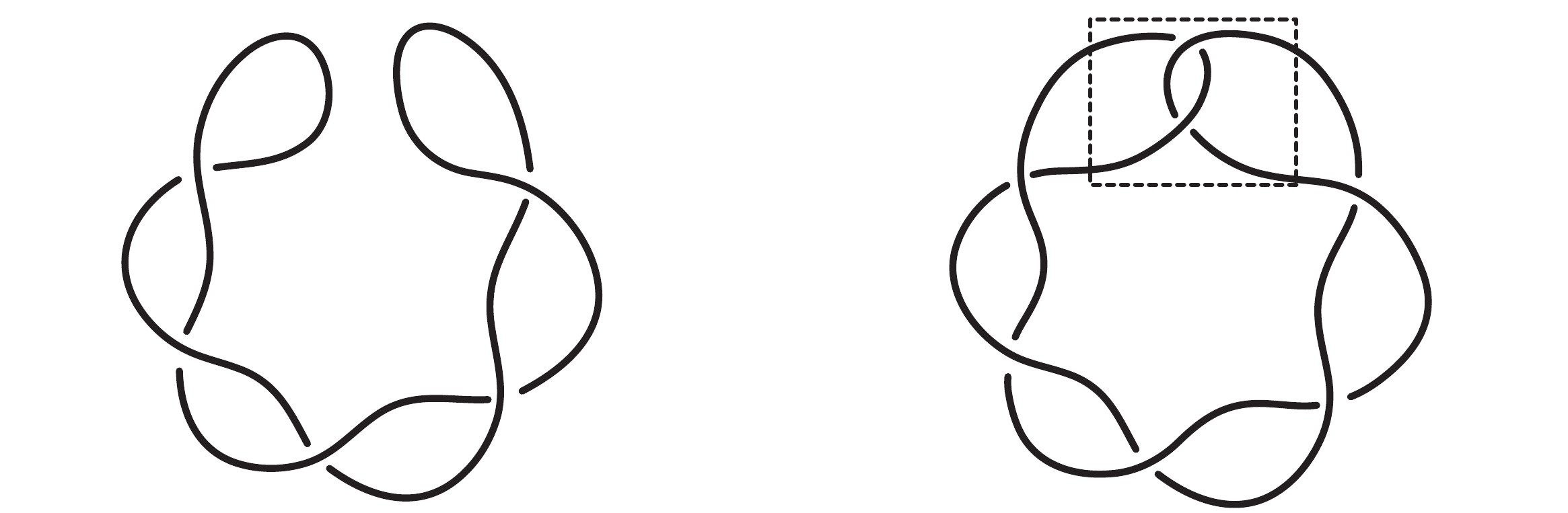}
  \caption{Illustration of the Hopf clasp for the unknot yielding a $7_2$ knot. The dashed enclosure highlights the Hopf clasp region.}\label{figure:hopf_clasp}
\end{figure}
By combining algebraic spectral sequences with the pro-tangle TQFT construction, we determine the Khovanov homology for the typical configuration of the Hopf clasp (see Theorem \ref{theorem:hopf_clasp}).
 
\begin{theorem}
Let $F\colon \cube^n \to \cob(B)$ be a pro-tangle admitting a Hopf clasp at designated crossings $i$ and $j$. There is a first-quadrant spectral sequence that collapses at the $E_3$ page ($E_3 \cong E_{\infty}$) and converges to the total Khovanov homology:
\[
E_1^{p,q} \;\Rightarrow\; \kh^{p+q}(F).
\]
Furthermore, the $E_1$ page is concentrated in columns $p \in \{0, 1, 2\}$, and there exists a pro-tangle $G\colon \cube^{n-2} \to \cob(B)$ such that the $E_1^{p,\bullet}$ columns are given by
\begin{align*}
    E_1^{0, \bullet} &\cong \kh(G) \otimes V, \\
    E_1^{1, \bullet} &\cong \kh(G) \oplus \kh(G), \\
    E_1^{2, \bullet} &\cong \kh(F_{1,1})
\end{align*}
for a Hopf clasp of type \textup{(I)}, and 
\begin{align*}
    E_1^{0, \bullet} &\cong \kh(F_{0,0}), \\
    E_1^{1, \bullet} &\cong \kh(G) \oplus \kh(G), \\
    E_1^{2, \bullet} &\cong \kh(G) \otimes V
\end{align*}
for a Hopf clasp of type \textup{(II)}. Here, $F_{0,0}$ and $F_{1,1}$ denote the $(0,0)$-smoothing and $(1,1)$-smoothing of $F$ at crossings $i$ and $j$, respectively.
\end{theorem}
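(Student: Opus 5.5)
We apply the Boolean cube decomposition spectral sequence (Theorem~\ref{theorem:spectral_sequence}) with $k=2$, taking as the distinguished crossings the two clasp crossings $i$ and $j$. That theorem immediately gives a first-quadrant spectral sequence
\[
E_1^{p,q} \cong \bigoplus_{|s|=p} \kh^q(F_s) \;\Rightarrow\; \kh^{p+q}(F),
\]
supported in columns $p\in\{0,1,2\}$, which collapses at $E_{k+1}=E_3$. So the convergence, the column range and the $E_3$-collapse are formal. The remaining work is to identify the four smoothings $F_{0,0},F_{0,1},F_{1,0},F_{1,1}$ at $i,j$ in terms of a single pro-tangle $G\colon\cube^{n-2}\to\cob(B)$.

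First I will fix local models. A Hopf clasp is two strands passing over and under each other twice, and in a fixed local disk it has exactly two configurations, types (I) and (II). They differ by which resolution of the pair of crossings produces a closed circle. For type (I), the $(0,0)$-smoothing yields the two parallel arcs together with a small closed circle. Both mixed smoothings $(0,1)$ and $(1,0)$ yield the same planar tangle with no closed component, namely the local replacement defining $G$. The $(1,1)$-smoothing yields a different arc configuration, which we leave as is. Type (II) is the mirror situation, with the roles of $(0,0)$ and $(1,1)$ exchanged.

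Define $G$ as the pro-tangle obtained from $F$ by restricting to the face $\{s_i=0,s_j=1\}\cong\cube^{n-2}$. I must check that the face $\{s_i=1,s_j=0\}$ gives an isomorphic functor, not just isomorphic objectwise. This holds because the two local pictures are isotopic rel boundary by an isotopy supported in the clasp disk, so the cobordism maps on the remaining $n-2$ coordinates agree under that identification. This yields $E_1^{1,\bullet}\cong\kh(G)\oplus\kh(G)$.

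For the column containing the extra circle, I will use the delooping isomorphism in $\cob(B)$, a circle $\cong\emptyset\{1\}\oplus\emptyset\{-1\}$, which follows from the $S$, $T$ and $4Tu$ relations. Applied objectwise and naturally in the other $n-2$ coordinates, it gives $F_{0,0}\cong G\otimes V$ in type (I). The arc configuration of $F_{0,0}$ minus the circle is the same as that of $G$, up to planar isotopy in the disk, and $V$ corresponds to the two gradings. Under the pro-tangle TQFT (circle $\mapsto V$) this gives $\kh(F_{0,0})\cong\kh(G)\otimes V$. Type (II) is the same argument with $(1,1)$ in place of $(0,0)$.

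The main obstacle is naturality. The delooping and the isotopy identifications must commute with the edge cobordisms in the remaining $n-2$ directions, because those edges never touch the clasp disk. Since these cobordisms are products with the identity on the clasp disk, and delooping is natural for such cobordisms, this should go through by locality. I expect the care needed to be in sign conventions, which affect only the differentials and not the $E_1$ groups.
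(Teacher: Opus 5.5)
Your proposal is correct and follows the paper's route: the paper also applies Theorem~\ref{theorem:spectral_sequence} with base cube $\cube^2$ spanned by crossings $i,j$. That gives the column range and the $E_3$-collapse for free, and the columns are then read off the vertexwise fiber homologies. The only difference is that the natural isomorphisms $F^{(i,j)}_{0,1}\cong F^{(i,j)}_{1,0}\cong G$ and $F^{(i,j)}_{0,0}\cong G\sqcup S^1$ (resp.\ $F^{(i,j)}_{1,1}\cong G\sqcup S^1$) are part of the paper's definition of a Hopf clasp. So your isotopy and delooping checks, which presuppose an actual clasp disk, are unnecessary, and monoidality of the TQFT gives $\kh(G\sqcup S^1)\cong\kh(G)\otimes V$ directly.
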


In particular, the spectral sequences for two specific configurations of the Hopf clasp, namely the Hopf sum and the Hopf twist illustrated in Figure \ref{figure:introduction_hopf}, are explicitly characterized to facilitate the computation of the Khovanov homology of tangles (see Theorems \ref{theorem:hopf_sum} and \ref{theorem:hopf_twist}). It is worth noting that the spectral sequence for Hopf sum collapses at the $E_2$ page.
\begin{figure}[H]
  \centering
  \includegraphics[width=0.8\textwidth]{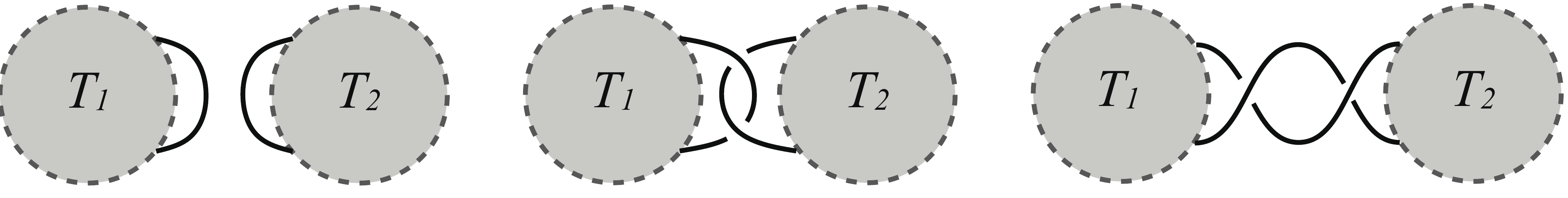}
  \caption{Illustration of Hopf sum and Hopf twist: a pair of disjoint tangles (left), their type \textup{(I)} Hopf sum (center), and their type \textup{(I)} Hopf twist (right).}\label{figure:introduction_hopf}
\end{figure}

Finally, we investigate planar connected sums of pro-tangles and pro-links. We show that the Khovanov complex of a connected sum decomposes as the tensor product of their respective Khovanov complexes over $V$, leading to a Künneth spectral sequence that computes the Khovanov homology of the resulting connected sum.

It is worth noting that a pro-tangle structured upon a collection of $k$ designated base strands inherits a $V^{\otimes k}$-module structure. Consequently, the multi-connected sums of pro-tangles and pro-links can become highly intricate, as tensor products over $V^{\otimes k}$ inevitably introduce complex topological interference. 

To formalize this for a double connected sum, let $F$ be a pro-tangle and let $L$ be a pro-link. Suppose $\mathbf{R}$ denotes a set of two distinct pairs of junction strands between $F$ and $L$ that govern the connected sum. We introduce a modified correction factor $\otimes^{\mathbf{R}} V$, where the specialized tensor operator $\otimes^{\mathbf{R}}$ acts on each state $s$ as $\otimes  V^{\otimes I_{\mathbf{R}}(s)}$. Here, the indicator $I_{\mathbf{R}}(s) \in \{0, 1\}$ is determined entirely by the local conformation of the connected sum at the given state. This allows us to establish the following modified decomposition for the Khovanov chain complex of the connected sum (see Theorem \ref{thm:dual_junction_isomorphism}).

\begin{theorem}
There exists an isomorphism of chain complexes
\begin{equation*}
    \mathcal{C}(F \#_{\mathbf{R}} L) \;\cong\; \left(\mathcal{C}(F) \otimes_{V \otimes V} \mathcal{C}(L)\right) \otimes^{\mathbf{R}} V.
\end{equation*}
\end{theorem}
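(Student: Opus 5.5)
The plan is to prove the isomorphism statewise over the joint Boolean cube, and then check that the differentials match. Write $F\colon \cube^n\to\cob(B)$ and $L\colon\cube^m\to\cob$. The connected sum $F\#_{\mathbf{R}}L$ is then a pro-tangle over $\cube^{n}\times\cube^{m}\cong\cube^{n+m}$. Each vertex $(s,t)$ has smoothing $(F\#_{\mathbf{R}}L)_{(s,t)}$, obtained by performing the two band surgeries prescribed by $\mathbf{R}$ on $F_s\sqcup L_t$. By the tangle TQFT, circles are sent to $V$ and open arcs to $W=xV$. So the chain group at $(s,t)$ is a tensor product of copies of $V$ and $W$ indexed by the components of the glued smoothing.

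\textbf{Vertexwise identification.} The two junction strands give $\mathcal{C}(F)_s$ and $\mathcal{C}(L)_t$ their $V\otimes V$-module structures: the $i$-th factor of $V$ acts by multiplication on the component containing the $i$-th base strand. The key local input is the classical single connected sum identity $A(C_1\# C_2)\cong A(C_1)\otimes_V A(C_2)$. It holds because merging two marked components multiplies their labels, so $V\otimes_V V\cong V$ and $W\otimes_V V\cong W$.

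For a double connected sum there are two cases, and this is where the indicator $I_{\mathbf{R}}(s)$ comes in.
\begin{enumerate}
\item If at state $(s,t)$ the two junction strands of $F_s$ (resp.\ $L_t$) lie on distinct components, the two surgeries merge two pairs of distinct circles. The component count is then exactly as in $\otimes_{V\otimes V}$, giving $I_{\mathbf{R}}=0$.
\item If both junctions on one side lie on the same component, the second surgery joins two points already on the same merged circle. This creates an extra circle (genus/Euler characteristic count), while $\otimes_{V\otimes V}$ collapses both $V$-actions through the same factor and loses one copy of $V$. Hence a correction $\otimes V$ is needed, giving $I_{\mathbf{R}}=1$.
\end{enumerate}
I will verify this by counting components: the number of circles in the glued smoothing is $c(F_s)+c(L_t)-2+2I_{\mathbf{R}}$ in the appropriate normalization. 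I will also make the isomorphism explicit by relabelling, sending $a\otimes b\otimes v$ to the labelling of the glued smoothing.

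\textbf{Differentials.} The differential of $\mathcal{C}(F\#_{\mathbf{R}}L)$ is a sum of saddle maps along edges in the $F$-directions and in the $L$-directions, with cube signs. Since no crossing lies in the junction region, each saddle is supported away from the bands. It therefore commutes with the $V\otimes V$-actions, which is why the tensor product over $V\otimes V$ is a well-defined complex with the Koszul sign $(-1)^{|s|}$ on the $L$-edges, matching the product cube sign assignment.

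The main obstacle is edges along which $I_{\mathbf{R}}$ changes, i.e.\ where a saddle in $F$ splits or merges the component carrying both junction points. Here the vertexwise isomorphisms must intertwine $d\otimes 1$ with the TQFT saddle on the glued diagram, and the extra factor $V$ must be created or absorbed. My first attempt is to define the correction on such edges via the comultiplication $\Delta$ and the multiplication $m$ of $V$ (respectively $V\to V\otimes V$ and $V\otimes V\to V$ on the affected circle). I will then check, by Frobenius identities ($m\circ\Delta=2x\cdot$, coassociativity, and the module-map property of $\Delta$), that the squares commute. This makes $\otimes^{\mathbf{R}}V$ a genuine state-dependent complex structure rather than a naive tensor product. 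Anticommutativity of faces then follows from that of $\mathcal{C}(F)$ and $\mathcal{C}(L)$ together with this local check. The vertexwise isomorphisms assemble into an isomorphism of chain complexes.
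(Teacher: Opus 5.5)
Your route is the paper's: a vertexwise relabelling organized by $I_{\mathbf{R}}$, the single-junction identification with differential $d\otimes\mathrm{id}_V$ on $I_{\mathbf{R}}$-preserving edges, and $\Delta$, $m$ on $I_{\mathbf{R}}$-changing edges. The gap is in the step you treat as routine, namely $I_{\mathbf{R}}$-preserving edges between states with $I_{\mathbf{R}}=1$.

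At such a state the two bands cut the $F$-junction component $A$ and the $L$-junction component $B$ at the junction points. The glued smoothing then has two components $D_1,D_2$ (two circles, or an arc and a circle if $A$ is an arc), \emph{each} containing a piece of $A$ and a piece of $B$. Neither is canonically the factor $\mathcal{Z}(A)\otimes_{V\otimes V}\mathcal{Z}(B)$. A saddle of $F$ or $L$ attached to $A$ or $B$ acts on $D_1$ or on $D_2$, depending on which side of the junction points it attaches. By contrast, $(d_F\otimes\mathrm{id}\pm\mathrm{id}\otimes d_L)\otimes\mathrm{id}_V$ always acts on the core and never on the external $V$.

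That saddles commute with the $V\otimes V$-action only shows that $\mathcal{C}(F)\otimes_{V\otimes V}\mathcal{C}(L)$ is a complex; it says nothing about which glued component a saddle touches. Frobenius identities do not rescue this. The bimodule identity $(c\otimes 1)\Delta(e)=(1\otimes c)\Delta(e)$ is why, on squares through a $0\to1$ edge, it does not matter whether a circle is merged into $D_1$ or $D_2$. But it holds only on $\operatorname{im}\Delta$, whereas an edge leaving an $I_{\mathbf{R}}=1$ state sees all of $V_{D_1}\otimes V_{D_2}$. If $A$ is an arc, the core is forced to be the arc component, and a single saddle attached to the segment of $A$ between $\alpha_1$ and $\alpha_2$ already breaks the relabelling.

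For an example with circles, take the paper's $F=L=S^1$ with two junctions and add to $F$ one Reidemeister I kink on each arc of $F\setminus\{\alpha_1,\alpha_2\}$. Then $F(00)=A\sqcup C_1\sqcup C_2$, the edge $00\to10$ merges $C_1$ into $A$ on one side, $00\to01$ merges $C_2$ into $A$ on the other, and $I_{\mathbf{R}}\equiv 1$. In the glued smoothing $C_1$ merges into $D_1$ and $C_2$ into $D_2$, while on the right both merge into the core.
\begin{itemize}
\item If $\kappa_{00}$ sends $D_1$ to the core and $D_2$ to the external factor, the square along $00\to01$ requires $\kappa_{01}(d_1\otimes d_2c_2\otimes c_1)=d_1c_2\otimes c_1\otimes d_2$.
\item Taking $c_2=1$ forces $\kappa_{01}$ to be the relabelling.
\item Then $c_2=x$, $d_1=d_2=1$ gives $1\otimes c_1\otimes x$ on one side versus $x\otimes c_1\otimes 1$ on the other.
\end{itemize}
The opposite choice fails symmetrically along $00\to10$, so your vertexwise maps do not assemble into a chain map. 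Here an isomorphism does exist abstractly, since the kink cones split into free $V$-summands, but it mixes tensor factors. A proof would need either a side-dependent differential on $I_{\mathbf{R}}=1$ edges, which is no longer $d\otimes\mathrm{id}_V$, or a genuinely non-local comparison. The paper's proof disposes of this step (its Case~1) by the same appeal to the single-junction theorem, so following it does not close the gap.

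A smaller point: $I_{\mathbf{R}}=1$ requires the junctions to coincide on \emph{both} sides. If only $\alpha_1,\alpha_2$ share a component, the second band merges $A\#B_1$ with $B_2$ and creates no circle, matching $V\otimes_{V\otimes V}(V\otimes V)\cong V$. Your count $c(F_s)+c(L_t)-2+2I_{\mathbf{R}}$ holds only under that reading.
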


The paper is organized as follows. Section \ref{section:protangles} introduces pro-tangles. Section \ref{section:protangles_invariants} develops their homotopy and homology theory. Section \ref{section:spectral_sequence} constructs the associated spectral sequence. Section \ref{section:pro-tangle_tqft} presents explicit computations. The final section studies planar connected sums of pro-tangles and pro-links.

\section{Pro-tangles}\label{section:protangles}

In this section, we introduce the concept of pro-tangles as a generalization of tangles, which possess functorial properties, and present their related notions. Additionally, we re-interpret the geometry of pro-tangles from the view of cosheaves.

\subsection{From tangles to pro-tangles}

\subsubsection{Tangles as functors}

In knot theory, a \emph{tangle} is a proper embedding of arcs and circles in a $3$-ball (or equivalently, in the cylinder $D^2 \times I$) such that all endpoints of the arcs lie on the boundary disk $D^2 \times \{0\} \cup D^2 \times \{1\}$. More precisely, an $(n,m)$-\emph{tangle} consists of $n$ arcs connecting the top boundary to the bottom boundary and $m$ closed circles in the interior.

A \emph{tangle diagram} $T$ is a generic projection of a tangle onto a disk (or $\mathbb{R}\times I$), equipped with over/under information at each double point, resulting in a finite number of crossings. 

From the categorical perspective, tangles can be interpreted using the category of $3$-dimensional cobordisms. Let $\cobo^3(B)$ denote the category whose objects are $1$-manifolds with boundary $B$ (typically disjoint unions of intervals and circles corresponding to smoothings of the diagram), and whose morphisms are compact surfaces with corners arising from the boundary points in $B$. 

The category $\cob(B)$ is the quotient of the category $\cobo^3(B)$ by the local relations $S$, $T$, and $4Tu$ introduced in \cite{bar2005khovanov}. These relations are as follows: $S$ states that closed spheres evaluate to zero; $T$ states that a disjoint tube multiplies the cobordism by two; and $4Tu$ is a specific relation on a sphere with four tubes. The resulting quotient category assigns to each tangle a chain complex whose homotopy class is a tangle invariant and categorifies the Jones polynomial via Khovanov homology. 

Bar-Natan's perspective is to view a tangle as a category. In this work, we attempt to approach tangles from a different angle by considering them as functors. 

Let $\cube = \{0 \to 1\}$ denote the interval category, and let $\cube^n$ be the $n$-fold Cartesian product category, representing the $n$-dimensional cubical lattice of resolutions. For a tangle diagram $T$ with $n$ ordered crossings, the associated tangle functor of $T$ is a functor $\mathcal{F}_T: \cube^n \to \cob(B)$, where $B$ denotes the set of boundary points. For each object $v \in \cube$, the assignment $\mathcal{F}_T(v)$ yields a 1-manifold. For each morphism (edge) $e: v \to v'$ where $|v'| = |v| + 1$, $\mathcal{F}_T(e)$ is a generating cobordism representing a single saddle transformation. In this setting, a tangle can be regarded as a functor.

\subsubsection{The category of pro-tangles}

Motivated by the functorial perspective of tangles, we introduce the notion of pro-tangles, where ordinary tangles can be regarded as special pro-tangles.

\begin{definition}[Pro-tangle]
A \textit{pro-tangle} of rank $n$ is a functor 
\[ F: \cube^{n} \to \cob(B) \]
which assigns to each resolution vertex $v \in \{0,1\}^n$ an object in $\cob(B)$ and to each edge a cobordism (morphism).
\end{definition}

A pro-tangle $L:\cube^{n} \to \cob(\emptyset)$ is referred to as a \textit{pro-link}. When $n=0$, the pro-link $L:\cube^{0} \to \cob(\emptyset)$ is \textit{unknotted}.

\begin{remark}
We use the term pro-tangle to indicate a pro-type combinatorial construction. A pro-tangle of rank $n$ is a functor defined on the $n$-fold Boolean cube $\cube^n$, and as $n$ varies, such objects form a projective system via the face inclusions. This is analogous to, but distinct from, the categorical notion of a pro-object, which is a formal cofiltered limit of objects in a category. Pro-tangles as defined here are \emph{not} pro-objects in the strict categorical sense; they are ordinary functors on a finite poset.
\end{remark}

A functor $p: \cube^{n} \to \cube^{m}$ is a \textit{graphical functor} if it is induced by a morphism of the underlying cube graphs. Specifically, for any elementary edge $e$ in $\cube^n$, its image $p(e)$ must be either an identity morphism or a single elementary edge in $\cube^m$, thereby prohibiting $p$ from mapping an edge to a composite of multiple edges (a diagonal).

\begin{definition}
The category of pro-tangles $\fun_B$ consists of the following data:
\begin{itemize}
    \item \textit{Objects:} functors $F: \cube^{n} \to \cob(B)$ for any $n \in \mathbb{N}_{\ge 0}$.
    \item \textit{Morphisms:} a morphism $\Phi: F \to G$ between $F: \cube^{n} \to \cob(B)$ and $G: \cube^{m} \to \cob(B)$ is a pair $(p, \eta)$, where:
    \begin{enumerate}[label=$(\roman*)$]
        \item $p: \cube^{n} \to \cube^{m}$ is a graphical functor between the resolution spaces.
        \item $\eta: F \Rightarrow G \circ p$ is a natural transformation.
    \end{enumerate}
\end{itemize}
\end{definition}

The data of a morphism $\Phi = (p, \eta)$ is characterized by the following diagrammatic filler:
\begin{equation*}
\vcenter{
\xymatrix@R=1pc@C=1pc{
\cube^{n} \ar[rrdd]^-{F} \ar[dd]_-{p} &&  \\
&&\\
\cube^{m} \ar[rr]_-{G}\ar@{<=}[ru]^-{\eta} && \cob(B).
}}
\end{equation*}

Given morphisms $\Phi = (p, \eta): F \to G$ and $\Psi = (q, \zeta): G \to H$, their composition $\Psi \circ \Phi: F \to H$ is defined by the pair
\begin{equation*}
  (q \circ p, (\zeta p) \circ \eta),
\end{equation*}
where $\zeta p: G \circ p \Rightarrow H \circ q \circ p$ is the whisker composition of the natural transformation $\zeta$ with the functor $p$.

\begin{remark}
In this setting, the functor $p$ represents a combinatorial re-indexing of crossings or smoothing choices, while $\eta$ provides the geometric cobordisms between the resulting 1-manifolds. This allows for a treatment of Reidemeister moves as natural transformations within $\fun_B$.
\end{remark}

\begin{lemma}
Let $\Phi = (p, \eta): F \to G$ and $\Psi = (q, \zeta): G \to H$ be morphisms in $\fun_B$. Their composition $\Psi \circ \Phi$ is a well-defined morphism in $\fun_B$.
\end{lemma}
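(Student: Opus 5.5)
To show $\Psi\circ\Phi=(q\circ p,(\zeta p)\circ\eta)$ is a morphism $F\to H$ in $\fun_B$, we verify the two defining conditions separately. Condition $(i)$ asks that $q\circ p:\cube^n\to\cube^\ell$ be a graphical functor. Condition $(ii)$ asks that $(\zeta p)\circ\eta$ be a natural transformation $F\Rightarrow H\circ(q\circ p)$. Both reduce to standard facts: graphical functors are closed under composition, and whiskering and vertical composition of natural transformations behave as usual.

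For $(i)$, first note that $q\circ p$ is a functor, being a composite of functors. Now let $e$ be an elementary edge of $\cube^n$. Since $p$ is graphical, $p(e)$ is either an identity or an elementary edge of $\cube^m$.
\begin{itemize}
\item If $p(e)=\mathrm{id}_v$, then functoriality of $q$ gives $q(p(e))=\mathrm{id}_{q(v)}$, which is an identity.
\item If $p(e)$ is an elementary edge, then $q(p(e))$ is either an identity or an elementary edge of $\cube^\ell$, because $q$ is graphical.
\end{itemize}
In neither case is a diagonal (a composite of several edges) produced, so $q\circ p$ is graphical. We should also note that a general morphism of $\cube^n$ is a composite of elementary edges. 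Hence the images of all morphisms are determined by the images of elementary edges, and no further check is needed.

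For $(ii)$, we first handle the whiskering. Since $\zeta:G\Rightarrow H\circ q$ is natural, the whiskered family $\zeta p$, with components $(\zeta p)_x=\zeta_{p(x)}:G(p(x))\to H(q(p(x)))$, is natural. Indeed, for any morphism $f:x\to y$ in $\cube^n$, the naturality square of $\zeta$ at $p(f)$ gives
\[
H(q(p(f)))\circ\zeta_{p(x)}=\zeta_{p(y)}\circ G(p(f)).
\]
Next we compose vertically. Since $\eta:F\Rightarrow G\circ p$, the componentwise composite $((\zeta p)\circ\eta)_x=\zeta_{p(x)}\circ\eta_x$ is a morphism $F(x)\to H(q(p(x)))$ in $\cob(B)$. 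To see that it is natural, we paste the two naturality squares:
\[
H(qp(f))\circ\zeta_{p(x)}\circ\eta_x=\zeta_{p(y)}\circ G(p(f))\circ\eta_x=\zeta_{p(y)}\circ\eta_y\circ F(f).
\]
Composition in $\cob(B)$ is well defined on equivalence classes modulo the relations $S$, $T$ and $4Tu$, because these relations are local and are preserved under gluing. So these equalities hold in the quotient category.

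There is no substantive obstacle. The only point needing care is that graphicality is a condition on elementary edges only, which is why the identity case of $p(e)$ must be treated explicitly. As an optional addition, one can remark that associativity and identities $(\mathrm{id},\mathrm{id}_F)$ follow from the corresponding 2-categorical identities, $(\xi q)p=\xi(qp)$ and interchange. This makes $\fun_B$ a category, although the lemma asks only for well-definedness.
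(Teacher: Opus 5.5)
Your proof is correct and follows the paper's argument: the paper also obtains naturality of $(\zeta p)\circ\eta$ by pasting the naturality squares of $\eta$ and $\zeta$ (whiskered along $p$). You also check explicitly that $q\circ p$ is graphical, whereas the paper's proof only notes that $q\circ p$ is a functor, so your version is slightly more complete on that point.
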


\begin{proof}
The composition $q \circ p: \cube^n \to \cube^k$ is a functor by the standard composition in $\mathbf{Cat}$. For any object $v \in \cube^n$, the map
\begin{equation*}
    F(v) \xrightarrow{\eta_v} G(p(v)) \xrightarrow{\zeta_{p(v)}} H(q(p(v)))
\end{equation*}
gives a valid composition of cobordisms in $\cob(B)$. 

To verify naturality, let $f: v \to w$ be a morphism in $\cube^n$. The naturality of $\eta$ and $\zeta$ yields the following commuting diagram
\begin{equation*}
\xymatrix@C=4pc{
F(v) \ar[r]^{\eta_v} \ar[d]_{F(f)} & G(p(v)) \ar[r]^{\zeta_{p(v)}} \ar[d]_{G(p(f))} & H(q(p(v))) \ar[d]^{H(q(p(f)))} \\
F(w) \ar[r]_{\eta_w} & G(p(w)) \ar[r]_{\zeta_{p(w)}} & H(q(p(w)))}
\end{equation*}
The horizontal concatenation $(\zeta p)_w \circ \eta_w \circ F(f) = H(q(p(f))) \circ (\zeta p)_v \circ \eta_v$ follows immediately from the commutativity of the left and right squares. Thus, $(\zeta p) \circ \eta: F \to H \circ (q \circ p)$ is a natural transformation.
\end{proof}

The structure $(\fun_B, \circ, \mathrm{id})$ satisfies the categorical axioms:
\begin{enumerate}[label=$(\roman*)$]
    \item \textit{Associativity:} for any three morphisms $\Phi, \Psi, \Xi$, we have $\Xi \circ (\Psi \circ \Phi) = (\Xi \circ \Psi) \circ \Phi$. This follows from the associativity of functor composition in $\mathbf{Cat}$ and the associativity of vertical and whisker compositions of natural transformations.
    \item \textit{Identity:} for each object $F: \cube^n \to \cob(B)$, the identity morphism is defined as $\mathbf{1}_F = (\mathrm{id}_{\cube^n}, \mathrm{id}_F)$, where $\mathrm{id}_{\cube^n}$ is the identity functor on $\cube^n$ and $\mathrm{id}_F: F \Rightarrow F$ is the identity natural transformation. It is trivial to verify that $\mathbf{1}_G \circ \Phi = \Phi = \Phi \circ \mathbf{1}_F$ for any $\Phi: F \to G$.
\end{enumerate}

We denote the full subcategory of $\fun_B$ consisting of pro-tangles of rank $n$ by $\fun_B^{(n)}$.

\begin{definition}
Let $F \in \fun^{(n)}_B$ and $G \in \fun^{(m)}_{B'}$ be pro-tangles with disjoint boundaries $B$ and $B'$, respectively. The \textit{disjoint union} of $F$ and $G$ is defined as the pro-tangle functor
\begin{equation*}
    F \sqcup G: \cube^{n+m} \longrightarrow \cob(B \sqcup B')
\end{equation*}
such that for any vertex $(v, w) \in \cube^{n} \times \cube^{m}$, the resolution is given by the spatial disjoint union $(F \sqcup G)(v, w) = F(v) \sqcup G(w)$.
\end{definition}

\subsubsection{Combinatorial and geometric perspectives on tangles}

In the combinatorial formulation, a tangle $T$ is encoded as a functor $\mathcal{F}_T: \cube^n \to \cob(B)$ from the Boolean cube to the Bar-Natan cobordism category. Comparatively, the geometric category $\cobo^4(B)$ (following Bar-Natan \cite{bar2005khovanov}) treats tangles as boundaries of $4$-dimensional manifolds, where morphisms are generic link cobordisms---$2$-dimensional surfaces embedded in $D \times I$ up to boundary-relative isotopy. While $\cobo^4(B)$ captures continuous topological deformations and global link isotopies, the functorial approach discretizes these transitions across a combinatorial lattice.

\begin{remark}
In general, there is no natural functor $\mathcal{F}: \cobo^4(B) \to \fun_B$. The objects in $\fun_B$ are rigid, highly structured functors that do not necessarily arise as the resolutions of a realizable geometric tangle diagram.
\end{remark}

The relationship between a natural transformation $\eta: F \Rightarrow G \circ p$ in $\fun_B$ and a geometric cobordism $C \in \text{Hom}_{\cobo^4}(T, T')$ is characterized by a functorial localization. Specifically, $\eta$ is uniquely determined by a family of $2$-dimensional cobordisms $\{ \eta_v \}_{v \in \text{Ob}(\cube^n)}$ in $\cob(B)$. For each morphism $e: v \to v'$ in $\cube^n$, the compatibility of these local data with the smoothing functors is expressed by the commutativity of the following diagram:
\begin{equation*}
\begin{tikzcd}[column sep=large, row sep=large]
F(v) \arrow[r, "F(e)"] \arrow[d, "\eta_v"'] & F(v') \arrow[d, "\eta_{v'}"] \\
G(p(v)) \arrow[r, "G(p(e))"'] & G(p(v')).
\end{tikzcd}
\end{equation*}
Geometrically, $\eta$ constitutes a stratified $2$-dimensional cobordism. In contrast to a global morphism in $\cobo^4(B)$, a morphism in $\fun_B$ decomposes the topological transition into cellular components indexed by the face posets of $\cube^n$. Consequently, $\eta$ provides a strict combinatorial lift of the continuous link cobordism, preserving the grading configurations required for the construction of the Khovanov chain maps.

\subsection{Smoothings and crossing map}

The smoothing of crossings in a tangle diagram is a fundamental operation in the construction of the Jones polynomial and Khovanov homology. In this section, we formalize these local topological transitions within the language of pro-tangles, characterizing the smoothing operations as categorical functors and natural transformations.

\subsubsection{Smoothings of pro-tangles}

\begin{definition}
Let $F: \cube^{n} \to \cob(B)$ be a pro-tangle of rank $n$. For any index $1 \le i \le n$ and resolution $\epsilon \in \{0, 1\}$, the \textit{$i$-th $\epsilon$-smoothing} $F^{(i)}_\epsilon$ is a pro-tangle of rank $n-1$ defined by the composition
\begin{equation*}
    F^{(i)}_\epsilon = F \circ \delta_{i,\epsilon},
\end{equation*}
where $\delta_{i,\epsilon}: \cube^{n-1} \to \cube^n$ is the $i$-th face inclusion functor
\begin{equation*}
    \delta_{i,\epsilon}(x_1, \dots, x_{n-1}) = (x_1, \dots, x_{i-1}, \epsilon, x_i, \dots, x_{n-1}).
\end{equation*}
\end{definition}

\begin{remark}
Although smoothing reduces the rank of a pro-tangle, the resulting object $F^{(i)}_\epsilon$ is formally related to its parent $F$ via a morphism in $\fun_B$. Specifically, the \textit{smoothing inclusion} $\mathcal{I}_{i,\epsilon}: F^{(i)}_\epsilon \to F$ is defined by the pair
\begin{equation*}
    \mathcal{I}_{i,\epsilon} = (\delta_{i,\epsilon}, \mathrm{id}_{F^{(i)}_\epsilon}),
\end{equation*}
where $\mathrm{id}_{F^{(i)}_\epsilon}: F^{(i)}_\epsilon \Rightarrow F \circ \delta_{i,\epsilon}$ is the identity natural transformation.
\end{remark}

\begin{definition}
For a pro-tangle $F$ of rank $n$, the categorical transition between its $0$-resolution and $1$-resolution at the $i$-th crossing is defined as the \textit{crossing map} $\Phi_i: F^{(i)}_0 \to F^{(i)}_1$. It is the morphism in $\fun_B$ represented by the pair $(\mathrm{id}_{\cube^{n-1}}, \eta^{(i)})$, where the natural transformation $\eta^{(i)}: F^{(i)}_0 \Rightarrow F^{(i)}_1$ is given by
\begin{equation*}
    \eta^{(i)}_v = F(e_{v,i}), \quad \forall v \in \cube^{n-1}.
\end{equation*}
Here, $e_{v,i}$ is the unique edge in $\cube^n$ from $\delta_{i,0}(v)$ to $\delta_{i,1}(v)$.
\end{definition}

\subsubsection{Compatibility of smoothings}

The consistency of smoothing operations under the morphisms of $\fun_{B}$ ensures that local topological resolutions are compatible with the global categorical structure.

\begin{theorem}
Let $F: \cube^n \to \cob(B)$ and $G: \cube^m \to \cob(B)$ be pro-tangles. Let $\Phi = (p, \eta): F \to G$ be a morphism in $\fun_{B}$. Suppose the functor $p: \cube^n \to \cube^m$ restricts to the faces of the resolution cubes, i.e., there exists a functor $p': \cube^{n-1} \to \cube^{m-1}$ such that $p \circ \delta_{i,\epsilon} = \delta_{j,\epsilon} \circ p'$ for fixed $i, j$ and $\epsilon \in \{0, 1\}$. Then there exists an induced morphism $\Phi^{(i,j)}_\epsilon: F^{(i)}_\epsilon \to G^{(j)}_\epsilon$ in $\fun_{B}$ defined by the pair $(p', \eta \cdot \delta_{i,\epsilon})$, and the following diagram commutes:
\begin{equation*}
\begin{tikzcd}
F^{(i)}_\epsilon \arrow[r, "\Phi^{(i,j)}_\epsilon"] \arrow[d, "\mathcal{I}^F_{i,\epsilon}"'] & G^{(j)}_\epsilon \arrow[d, "\mathcal{I}^G_{j,\epsilon}"] \\
F \arrow[r, "\Phi"'] & G.
\end{tikzcd}
\end{equation*}
\end{theorem}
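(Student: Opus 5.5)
The argument is a direct verification in three steps. First I check that $p'$ is a graphical functor. Second I check that the whiskered transformation $\eta\cdot\delta_{i,\epsilon}$ has the correct source and target, so that $\Phi^{(i,j)}_\epsilon$ is a morphism in $\fun_B$. Third I compute both composites around the square using the composition rule $(q\circ p,(\zeta p)\circ\eta)$ and compare them.

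\emph{Step 1: $p'$ is graphical.} An elementary edge $e$ of $\cube^{n-1}$ is sent by $\delta_{i,\epsilon}$ to an elementary edge of $\cube^n$. Since $p$ is graphical, $p(\delta_{i,\epsilon}(e))$ is an identity or an elementary edge. This morphism equals $\delta_{j,\epsilon}(p'(e))$.

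The face inclusion $\delta_{j,\epsilon}$ is injective on objects and changes no coordinate other than inserting the fixed value $\epsilon$. So $p'(e)$ is an identity exactly when $\delta_{j,\epsilon}(p'(e))$ is, and it is a single elementary edge exactly when $\delta_{j,\epsilon}(p'(e))$ is. The reason is that the number of coordinates changed is preserved by $\delta_{j,\epsilon}$. Hence $p'$ is graphical.

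\emph{Step 2: the natural transformation.} Whiskering gives $\eta\delta_{i,\epsilon}\colon F\circ\delta_{i,\epsilon}\Rightarrow G\circ p\circ\delta_{i,\epsilon}$. The source is $F^{(i)}_\epsilon$ by definition. Using the hypothesis $p\circ\delta_{i,\epsilon}=\delta_{j,\epsilon}\circ p'$, the target rewrites as
\[
G\circ\delta_{j,\epsilon}\circ p' = G^{(j)}_\epsilon\circ p'.
\]
Naturality is inherited, since the whiskering of a natural transformation by a functor is natural. Thus $\Phi^{(i,j)}_\epsilon=(p',\eta\delta_{i,\epsilon})$ is a morphism $F^{(i)}_\epsilon\to G^{(j)}_\epsilon$. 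Its composability is guaranteed by the composition lemma proved earlier in the paper.

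\emph{Step 3: commutativity.} Recall that $\mathcal I_{i,\epsilon}=(\delta_{i,\epsilon},\mathrm{id})$. The two composites are as follows.

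For $\Phi\circ\mathcal I^F_{i,\epsilon}$, the functor component is $p\circ\delta_{i,\epsilon}$. The transformation component is $(\eta\delta_{i,\epsilon})\circ\mathrm{id}=\eta\delta_{i,\epsilon}$.

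For $\mathcal I^G_{j,\epsilon}\circ\Phi^{(i,j)}_\epsilon$, the functor component is $\delta_{j,\epsilon}\circ p'$. The transformation component is $(\mathrm{id}_{G^{(j)}_\epsilon}\, p')\circ(\eta\delta_{i,\epsilon})=\eta\delta_{i,\epsilon}$.

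The functor components agree by hypothesis. The transformation components are then literally the same family $\{\eta_{\delta_{i,\epsilon}(v)}\}_{v\in\cube^{n-1}}$. Their targets $G(p(\delta_{i,\epsilon}(v)))$ and $G(\delta_{j,\epsilon}(p'(v)))$ coincide as objects, again by the hypothesis. So the square commutes strictly.

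\emph{Main subtlety.} The only delicate point is the bookkeeping in Step 3. The two composites must be equal as pairs, not merely isomorphic. This holds because the equality $p\circ\delta_{i,\epsilon}=\delta_{j,\epsilon}\circ p'$ is a strict equality of functors, so the codomains of the transformation components match on the nose.
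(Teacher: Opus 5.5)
Your proposal is correct and follows the paper's proof. The paper likewise uses $p\circ\delta_{i,\epsilon}=\delta_{j,\epsilon}\circ p'$ to identify the target of $\eta\cdot\delta_{i,\epsilon}$ with $G^{(j)}_\epsilon\circ p'$, then computes both composites with the rule $(q\circ p,(\zeta p)\circ\eta)$ and obtains equal pairs. Your Step 1, which checks that $p'$ is graphical so that $\Phi^{(i,j)}_\epsilon$ really is a morphism of $\fun_B$, covers a point the paper's proof leaves implicit, and your argument for it (face inclusions preserve the number of changed coordinates) is sound.
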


\begin{proof}
The induced morphism $\Phi^{(i,j)}_\epsilon = (p', \eta \cdot \delta_{i,\epsilon})$ is well-defined if the component $(\eta \cdot \delta_{i,\epsilon})_u$ maps the resolution $(F^{(i)}_\epsilon)(u)$ to the target $(G^{(j)}_\epsilon \circ p')(u)$ for each $u \in \cube^{n-1}$. By the definition of face inclusions and the naturality of $\eta$, we have
\begin{equation*}
    (\eta \cdot \delta_{i,\epsilon})_u = \eta_{\delta_{i,\epsilon}(u)} :  F(\delta_{i,\epsilon}(u)) \longrightarrow G(p(\delta_{i,\epsilon}(u))).
\end{equation*}
Applying the commutativity of the resolution spaces $p \circ \delta_{i,\epsilon} = \delta_{j,\epsilon} \circ p'$, the target side transforms as
\begin{equation*}
    G(p(\delta_{i,\epsilon}(u))) = G(\delta_{j,\epsilon}(p'(u))) = (G^{(j)}_\epsilon \circ p')(u).
\end{equation*}
This confirms that $\eta \cdot \delta_{i,\epsilon}$ is a valid natural transformation between the restricted functors.

To verify the commutativity of the diagram, we compare the compositions $\Phi \circ \mathcal{I}^F_{i,\epsilon}$ and $\mathcal{I}^G_{j,\epsilon} \circ \Phi^{(i,j)}_\epsilon$ in $\fun_B$. Recalling the composition rule $(p_2, \eta_2) \circ (p_1, \eta_1) = (p_2 \circ p_1, (\eta_2 p_1) \cdot \eta_1)$, the left-hand path yields
\begin{equation*}
    (p, \eta) \circ (\delta_{i,\epsilon}, \text{id}) = (p \circ \delta_{i,\epsilon}, \eta \cdot \delta_{i,\epsilon}).
\end{equation*}
Correspondingly, the right-hand path yields
\begin{equation*}
    (\delta_{j,\epsilon}, \text{id}) \circ (p', \eta \cdot \delta_{i,\epsilon}) = (\delta_{j,\epsilon} \circ p', \text{id} \cdot (\eta \cdot \delta_{i,\epsilon})) = (\delta_{j,\epsilon} \circ p', \eta \cdot \delta_{i,\epsilon}).
\end{equation*}
The assumption $p \circ \delta_{i,\epsilon} = \delta_{j,\epsilon} \circ p'$ ensures that both functors and natural transformations coincide, thereby completing the proof.
\end{proof}

\subsection{The geometry of pro-tangles}

\subsubsection{Alexandroff topology on Boolean cubes}

A geometric interpretation of the Boolean cube is facilitated by the \textit{simplicial nerve} $N(\cube)$. Within this view, the Boolean $n$-cube $\cube = \{0, 1\}^n$ is realized as a simplicial complex, where each $k$-simplex $\sigma = (v_0 < v_1 < \dots < v_k)$ corresponds to an ordered chain under the natural partial ordering of the Boolean cube. 

The resulting \textit{geometric realization} $|N(\cube)|$ provides a canonical topological space that mirrors the underlying categorical structure. By virtue of the nerve functor, the discrete combinatorial architecture of $\cube$ is faithfully embedded into a continuous topological space.

On the other hand, the Boolean $n$-cube $\cube = \{0, 1\}^n$ is initially a discrete set of $2^n$ smoothing states. We endow it with the downward \textit{Alexandroff Topology} induced by the partial order $v \le v'$. 
In this topology, a set $U \subseteq \cube$ is open if it is a lower set:
\begin{equation*}
  v \in U \text{ and } v' \le v \implies v' \in U.
\end{equation*}
The minimal open neighborhoods $U_v = \{ v' \in \cube \mid v' \le v \}$ form a basis for this topology.

An important consequence of McCord's theorem is that for a finite Alexandroff space $X$, the geometric realization of its nerve (equivalently, its order complex) $|N(X)|$ is weakly homotopy equivalent to $X$ \cite{mccord1966singular}. In particular, if $\cube$ denotes the Boolean cube viewed as a finite Alexandroff space, then $\cube$ is weakly homotopy equivalent to the geometric realization of its simplicial nerve $N(\cube)$.

\subsubsection{Regarding a pro-tangle as a cosheaf}

The embedding of the Boolean cube into its dual Alexandroff space induces a natural cosheaf structure associated to the pro-tangle $F: \cube^n \to \cob(B)$.

\begin{definition}
For each vertex $v \in \cube^n$, let $U_v = \{ w \in \cube^n \mid w \le v \}$ denote the minimal open neighborhood of $v$ in the downward Alexandroff topology. The costalk of the cosheaf $\mathcal{S}_F$ at $v$ is defined by $\Gamma(U_v, \mathcal{S}_F) = F(v)$. For a nested pair of open sets $U_v \subseteq U_{v'}$ (which holds if and only if $v \le v'$), the corestriction morphism 
\begin{equation*}
    \iota_{U_v, U_{v'}}: \Gamma(U_v, \mathcal{S}_F) \longrightarrow \Gamma(U_{v'}, \mathcal{S}_F)
\end{equation*}
is defined as the cobordism $F(v \to v') \in \text{Hom}_{\cob(B)}(F(v), F(v'))$.
\end{definition}

\begin{proposition}
The assignment $\mathcal{S}_F$ constitutes a cosheaf on the downward Alexandroff space $\cube^n$.
\end{proposition}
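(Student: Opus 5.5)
The plan is to first extend the assignment $\mathcal{S}_F$ from basic opens to all open sets, and then verify the cosheaf gluing axiom directly from the combinatorics of lower sets in the finite poset $\cube^n$. Since $\cob(B)$ does not have arbitrary colimits, I will work in the additive closure $\mat(\cob(B))$, or with formal colimits. For an arbitrary open (lower) set $U\subseteq\cube^n$, set
\[
\Gamma(U,\mathcal{S}_F)=\operatorname{colim}_{v\in U}F(v),
\]
the colimit over $U$ viewed as a full subposet. The first step is to check that this agrees with the prescribed costalks, namely $\Gamma(U_v,\mathcal{S}_F)=F(v)$. This holds because $v$ is a terminal object of $U_v$, so the colimit over $U_v$ exists in $\cob(B)$ itself and is canonically $F(v)$. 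Under this identification the corestriction $U_v\subseteq U_{v'}$ is exactly $F(v\to v')$, as in the definition.

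The second step establishes functoriality of the corestrictions. For open sets $U\subseteq U'$, the corestriction $\Gamma(U)\to\Gamma(U')$ is the canonical map of colimits induced by the inclusion of index posets. Compatibility with composition $U\subseteq U'\subseteq U''$ follows from the universal property. On basic opens it reduces to $F(v'\to v'')\circ F(v\to v')=F(v\to v'')$, which is just functoriality of $F$. This yields a precosheaf.

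The third step, the gluing axiom, is where the main content lies. For an open cover $U=\bigcup_\alpha U_\alpha$, I must show that
\[
\bigoplus_{\alpha,\beta}\Gamma(U_\alpha\cap U_\beta)\rightrightarrows\bigoplus_\alpha\Gamma(U_\alpha)\to\Gamma(U)
\]
is a coequalizer. The key observation is that in an Alexandroff space every $U_v$ with $v\in U$ lies in some $U_\alpha$, because the minimal neighborhood is contained in any open set containing $v$. So it suffices to prove that $\operatorname{colim}_{U}F$ is the colimit of the diagram $\alpha\mapsto\operatorname{colim}_{U_\alpha}F$ glued along $U_\alpha\cap U_\beta$. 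This is a standard cofinality / ``colimits commute with colimits'' argument, because the poset $U$ is the union of its subposets $U_\alpha$ and every relation $w\le v$ in $U$ already lives inside a single $U_\alpha$ containing $v$. A cocone on $\{F(v)\}_{v\in U}$ is therefore the same as a compatible family of cocones on each $U_\alpha$ that agree on the overlaps.

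I expect the main obstacle to be the existence of these colimits in the target category. To keep the argument honest, I will phrase the cosheaf as valued in a cocomplete envelope of $\cob(B)$, for instance presheaves on $\cob(B)$ via Yoneda, or $\mat(\cob(B))$ with formal colimits. Only the basic opens are then required to land in $\cob(B)$, and the statement is read as saying that $\mathcal{S}_F$ is determined by its costalks. An alternative reading is that a cosheaf on a finite Alexandroff space is equivalent to a functor on the poset of points. Under that reading the proposition follows from the equivalence
\[
\mathrm{CoShv}(\cube^n)\simeq\operatorname{Fun}(\cube^n,-),
\]
which I would cite as the dual of the well-known sheaf statement, and the argument above is exactly its proof.
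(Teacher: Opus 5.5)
Your proposal is correct and follows the paper's route. $\Gamma(U,\mathcal{S}_F)=\operatorname{colim}_{v\in U}F(v)$ is exactly the left Kan extension of $F$ along $v\mapsto U_v$ that the paper invokes, since $U_v\subseteq U$ iff $v\in U$ for a lower set $U$. Your gluing check (each relation $w\le v$ in $U$ already lies in a single down-closed $U_\alpha$) just spells out why that extension is a cosheaf.

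You also note that these colimits need not exist in $\cob(B)$, a point the paper's appeal to the Kan extension passes over silently. The ($\mathbf{k}$-linear) presheaf cocompletion is the right place to work: $\mat(\cob(B))$ alone lacks the cokernels needed for colimits over non-principal lower sets, while the values $\Gamma(U_v)=F(v)$ stay representable because a colimit over a poset with a terminal object is preserved by any functor.
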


\begin{proof}
The topology of the finite poset $\cube^n$ is generated by the basis of principal down-sets $U_v$. The functoriality of $F$ ensures that the assignments $\Gamma(U_v, \mathcal{S}_F)$ and $\iota_{U_v, U_{v'}}$ satisfy the identity and transitivity axioms of a pre-cosheaf. For an arbitrary open set $U \subseteq \cube^n$, the cosheaf property requires the canonical morphism $\varinjlim_{U_v \subseteq U} \Gamma(U_v, \mathcal{S}_F) \to \Gamma(U, \mathcal{S}_F)$ to be an isomorphism. This condition is satisfied identically via the left Kan extension of the functor $F$ from the basis of principal open sets to the lattice of open sets \cite[Chapter~X, Section~3]{mac1971categories}.
\end{proof}

The functorial assignment of $\mathcal{S}_F$ allows for a localized decomposition of the pro-tangle complex over the simplicial nerve $N(\cube^n)$. Each corestriction $\iota_{U_v, U_{v'}}$ corresponds to a canonical $2$-dimensional morphism in $\cob(B)$ acting on the local $1$-manifold components.

The cosheaf $\mathcal{S}_F$ is constructible with respect to the cubical stratification, meaning the costalk functors are locally constant on the open cells. For any $2$-simplex $\sigma = (v \to v' \to w)$ in $N(\cube^n)$, the functorial composition law enforces the commuting triangle:
\begin{equation*}
    \iota_{U_{v'}, U_w} \circ \iota_{U_v, U_{v'}} = \iota_{U_v, U_w}.
\end{equation*}
This commutativity guarantees that the local grading configurations are invariant under the permutation of smoothing choices. Furthermore, the cellular boundaries of these $2$-simplices encode the signs of the combinatorial differential; by equipping the corestrictions with the standard cubical sign assignment $(-1)^{\sum_{i < j} v_i}$, the cosheaf transitions recover the nilpotency property $\partial^2 = 0$ as a consequence of the local commutativity of the face maps.

Consequently, the global topological character of the pro-tangle is fully determined by the combinatorial consistency of $\mathcal{S}_F$ on $\cube^n$. The construction of the associated cosheaf homology is achieved by composing $\mathcal{S}_F$ with a functor from $\cob(B)$ into a target abelian category.

\subsection{Orientation of pro-tangles}

The oriented cobordism category $\dob(B)$ is the category whose objects are oriented $1$-manifolds $M$ with fixed boundary $\partial M = B$, and whose morphisms are oriented cobordisms $S: M_0 \to M_1$ relative to the cylinder $B \times [0,1]$, subject to the $(S, T, 4Tu)$ relations of the Bar-Natan category.

\begin{definition}
An \textit{oriented pro-tangle} of rank $n$ is a functor
\begin{equation*}
    F: \cube^n \longrightarrow \dob(B),
\end{equation*}
which assigns each vertex to an oriented resolution and each edge to a canonical oriented cobordism.
\end{definition}

\begin{definition}
For an oriented pro-tangle $F: \cube^n \longrightarrow \dob(B)$, an \textit{orientation} is a choice of a vertex $v \in \cube^n$.
\end{definition}

The chosen orientation vertex $v$ is called the \textit{Seifert vertex}. The corresponding object $F(v)$ is referred to as the \textit{Seifert resolution}, which is the unique state such that all local smoothings are mutually compatible with respect to the specified orientation flow of the pro-tangle.

\begin{definition}\label{definition:orientation}
Let $F: \cube^n \longrightarrow \dob(B)$ be an oriented pro-tangle with an orientation $v=(v_1,\dots,v_n) \in \cube^n$. For each $1\leq i\leq n$, the $i$-th crossing of the pro-tangle is called \textit{right-handed} if $v_i = 0$, and \textit{left-handed} otherwise.
\end{definition}

For an oriented pro-tangle $F$ with a fixed orientation, the number of right-handed crossings is denoted by $n_{+}(F)$, or simply $n_{+}$, while the number of left-handed crossings is denoted by $n_{-}(F)$, or $n_{-}$ for brevity. In the context of the classical writhe convention, right-handed crossings correspond to positive crossings with local writhe $+1$, whereas left-handed crossings correspond to negative crossings with local writhe $-1$.

\begin{remark}
We clarify the relationship between the Seifert vertex $v \in \mathbb{B}^n$ and the classical Seifert circle construction. Given an oriented tangle diagram $T$ with $n$ crossings, the \emph{Seifert resolution} of each crossing is determined by the orientation. In Bar-Natan's convention \cite{bar2005khovanov}, the Seifert resolution is the $0$-smoothing at a positive crossing and the $1$-smoothing at a negative crossing.
The Seifert vertex $v_{\mathrm{Seifert}}$ is therefore the vertex with $v_i=0$ if crossing $i$ is right-handed and $v_i=1$ if crossing $i$ is left-handed. This is consistent with Definition~\ref{definition:orientation}, where the $i$-th crossing is right-handed if and only if $v_i=0$.
\end{remark}

\section{Homotopy invariants for pro-tangles}\label{section:protangles_invariants}

This section lays the geometric and homological foundations of pro-tangle invariants. Given a pro-tangle functor, we construct its associated Khovanov simplicial presheaf and prove its representability, identifying the classical Khovanov simplicial object as the underlying representing object. Via the Dold–Kan correspondence, normalization of this simplicial presheaf yields a presheaf of chain complexes, represented by the Khovanov complex—the Moore complex of the Khovanov simplicial object. The homotopy category of Khovanov complexes then provides the algebraic setup for classifying weak equivalence classes of Khovanov simplicial presheaves. To extract the corresponding homological invariants, we pass to an abelian target category to define generalized Khovanov homology groups. This homological construction admits three equivalent characterizations, establishing generalized Khovanov homology as the homological realization of the Khovanov simplicial presheaf.

\subsection{The homotopy type of tangles}\label{subsection:Khovanov_simplicial_presheaf}

\subsubsection{Khovanov simplicial presheaf}

Let $\cob(B)$ be the pre-additive category of $1$-manifolds and cobordisms with fixed boundary $B$. Suppose the ground ring $\mathbf{k}$ is a commutative ring.
The \textit{additive closure} $\mathcal{A}_B = \mat(\cob(B))$ is defined as the formal matrix closure of this category. The objects of $\mathcal{A}_B$ are finite formal direct sums $\bigoplus_{i} M_i$ of objects in $\cob(B)$. For any pair of objects $M, M' \in \mathcal{A}_B$, the morphism spaces $\operatorname{Hom}_{\mathcal{A}_B}(M, M')$ inherit a canonical $\mathbf{k}$-module structure induced by matrix-valued $\mathbf{k}$-linear combinations of the cobordism classes.

Let $s\mathcal{A}_B = \operatorname{Fun}(\Delta^\mathrm{op}, \mathcal{A}_B)$ denote the category of simplicial objects over $\mathcal{A}_B$. The discrete embedding is given by the constant simplicial functor $\delta: \mathcal{A}_B \hookrightarrow s\mathcal{A}_B$. Let $\mathbf{Mod}_{\mathbf{k}}$ denote the category of modules over the ground ring $\mathbf{k}$, and let $s\mathbf{Mod}_{\mathbf{k}} = \operatorname{Fun}(\Delta^\mathrm{op}, \mathbf{Mod}_{\mathbf{k}})$ be the category of simplicial $\mathbf{k}$-modules. We define the simplicial Yoneda embedding into the category of simplicial $\mathbf{k}$-modules by
\begin{equation*}
    \mathcal{Y}: s\mathcal{A}_B \longrightarrow \operatorname{Fun}_{\mathbf{k}}(\mathcal{A}_B^{\mathrm{op}}, s\mathbf{Mod}_{\mathbf{k}}), \quad X_\bullet \longmapsto \operatorname{Hom}_{\mathcal{A}_B}(-, X_\bullet).
\end{equation*}
Let $s\mathbf{Mod}_{\mathbf{k}}^{\mathcal{A}_B^{\mathrm{op}}} = \operatorname{Fun}_{\mathbf{k}}(\mathcal{A}_B^{\mathrm{op}}, s\mathbf{Mod}_{\mathbf{k}})$ denote the category of linear simplicial presheaves over the additive closure.

\begin{proposition}[\cite{goerss2009simplicial,jardine2015local}]\label{proposition:global_model}
Since $s\mathbf{Mod}_{\mathbf{k}}$ is closed under all limits and colimits and possesses sufficiently many projectives, $s\mathbf{Mod}_{\mathbf{k}}^{\mathcal{A}_B^{\mathrm{op}}}$ inherits a canonical cofibrantly generated projective model structure. A morphism $f: \mathcal{X}_\bullet \to \mathcal{Y}_\bullet$ is a weak equivalence (resp. fibration) if and only if for each object $M \in \mathcal{A}_B$, the map $f(M): \mathcal{X}_\bullet(M) \to \mathcal{Y}_\bullet(M)$ is a weak equivalence (resp. Kan fibration) in $s\mathbf{Mod}_{\mathbf{k}}$.
\end{proposition}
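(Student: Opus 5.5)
This is the standard transfer of a model structure along the family of evaluation functors, so I would invoke Kan's recognition theorem for cofibrantly generated model categories.

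Two preliminary facts are needed. First, $s\mathbf{Mod}_{\mathbf{k}}$ carries its classical cofibrantly generated model structure. It is transferred from $s\mathbf{Set}$ along the free--forgetful adjunction. Its generating cofibrations are $I=\{\mathbf{k}[\partial\Delta^n]\to\mathbf{k}[\Delta^n]\}$ and its generating trivial cofibrations are $J=\{\mathbf{k}[\Lambda^n_k]\to\mathbf{k}[\Delta^n]\}$. Weak equivalences and fibrations are detected on underlying simplicial sets. Second, $\operatorname{Fun}_{\mathbf{k}}(\mathcal{A}_B^{\mathrm{op}},s\mathbf{Mod}_{\mathbf{k}})$ is complete and cocomplete, with limits and colimits computed objectwise. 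The objectwise colimit of $\mathbf{k}$-linear functors is again $\mathbf{k}$-linear.

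For each $M\in\mathcal{A}_B$, the evaluation functor $\mathrm{ev}_M$ has a left adjoint. It sends $K\in s\mathbf{Mod}_{\mathbf{k}}$ to $\operatorname{Hom}_{\mathcal{A}_B}(-,M)\otimes_{\mathbf{k}}K$, by the enriched Yoneda lemma. I would then set
\[
I_{\mathcal{A}}=\{\operatorname{Hom}(-,M)\otimes i : M\in\mathcal{A}_B,\ i\in I\},\qquad J_{\mathcal{A}}=\{\operatorname{Hom}(-,M)\otimes j : M\in\mathcal{A}_B,\ j\in J\}.
\]
By adjunction, a map has the right lifting property against $J_{\mathcal{A}}$ (resp. $I_{\mathcal{A}}$) exactly when it is objectwise a fibration (resp. trivial fibration). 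This already gives the stated description of fibrations and weak equivalences. The set-theoretic subtlety that $\mathcal{A}_B$ is only essentially small is handled by restricting to a skeleton.

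The hypotheses of Kan's theorem then need checking. Smallness of the domains holds because the domains are finitely presented: $\operatorname{Hom}(-,M)\otimes\mathbf{k}[\partial\Delta^n]$ corepresents $\partial\Delta^n$-shaped data at $M$, which commutes with filtered colimits. The two-out-of-three property and closure under retracts for weak equivalences are inherited objectwise. The main obstacle is showing that relative $J_{\mathcal{A}}$-cell complexes are weak equivalences. I would argue this objectwise. Evaluating a pushout of $\operatorname{Hom}(-,M)\otimes j$ at $N$ gives a pushout of $\operatorname{Hom}(N,M)\otimes j$. This is a direct sum of copies of $j$ when $\operatorname{Hom}(N,M)$ is free. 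In general I would instead use the objectwise statement directly. In $s\mathbf{Mod}_{\mathbf{k}}$, any map of the form $P\otimes j$, with $P$ a $\mathbf{k}$-module and $j\in J$, is an injective weak equivalence. This holds because $\mathbf{k}[\Lambda^n_k]\to\mathbf{k}[\Delta^n]$ is a simplicial homotopy equivalence with a retraction, and tensoring with a constant module preserves simplicial homotopy equivalences. Pushouts of injective weak equivalences in simplicial modules, which are monomorphisms of underlying simplicial groups, remain weak equivalences, as do transfinite compositions of them. So every relative $J_{\mathcal{A}}$-cell complex is an objectwise weak equivalence. Kan's theorem then yields the cofibrantly generated projective model structure with the claimed weak equivalences and fibrations.
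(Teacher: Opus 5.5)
Your argument is correct. It also does more than the paper, which gives no proof here and simply cites \cite{goerss2009simplicial,jardine2015local}.

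\textbf{The route implicit in the paper.} The justification written into the statement (limits, colimits, ``sufficiently many projectives'') is really the hypothesis of Quillen's model structure on simplicial objects. Apply it to the abelian category $\operatorname{Fun}_{\mathbf{k}}(\mathcal{A}_B^{\mathrm{op}},\mathbf{Mod}_{\mathbf{k}})$, whose representables $\operatorname{Hom}_{\mathcal{A}_B}(-,M)$ form a set of small projective generators. Since simplicial objects there are exactly $s\mathbf{Mod}_{\mathbf{k}}^{\mathcal{A}_B^{\mathrm{op}}}$, and $\operatorname{Hom}(\operatorname{Hom}_{\mathcal{A}_B}(-,M),f)=f(M)$, detection by projectives is objectwise detection. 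So the model structure comes out in one step.

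\textbf{What your route buys.} You instead run Kan's recognition theorem with the enriched generators $\operatorname{Hom}_{\mathcal{A}_B}(-,M)\otimes i$ and $\operatorname{Hom}_{\mathcal{A}_B}(-,M)\otimes j$. This supplies two things the citation does not literally give.
\begin{enumerate}
\item The cited sources are formulated for diagrams and presheaves on ordinary small categories. Their free generators $\mathbf{k}[\operatorname{Hom}(-,M)]\otimes K$ are not $\mathbf{k}$-linear functors, whereas yours live in the $\mathbf{k}$-linear category. They are the simplicial counterparts of the generators $\mathbf{h}_U\otimes_{\mathbf{k}}(S^{n-1}\hookrightarrow D^n)$ that the paper later invokes in Lemma~\ref{lemma:representable_cofibrant}.
\item You correctly isolate the one non-formal input: $P\otimes j$ is an injective weak equivalence for an \emph{arbitrary} $\mathbf{k}$-module $P$. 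This matters because the Hom-modules of $\mathcal{A}_B$ are quotients by the $S$, $T$, $4Tu$ relations and need not be free or flat.
\end{enumerate}
The ``enough projectives'' clause plays no role in your argument.

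\textbf{Small points to tidy up.}
\begin{enumerate}
\item You verify only part of Kan's hypotheses. The remaining ones, $J_{\mathcal{A}}$-cell $\subseteq I_{\mathcal{A}}$-cof and $I_{\mathcal{A}}$-inj $=W\cap J_{\mathcal{A}}$-inj (which also settles the final condition), follow immediately from your lifting characterization. It is worth stating this.
\item For $n\ge 2$ the retraction of $\mathbf{k}[\Lambda^n_k]\to\mathbf{k}[\Delta^n]$ does not come from a retraction of simplicial sets; none exists. It exists because $j$ is a trivial cofibration and $\mathbf{k}[\Lambda^n_k]$ is fibrant. Injectivity of $P\otimes j$ already follows from $j$ being degreewise a basis inclusion, hence degreewise split.
\end{enumerate}
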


The composition of the discrete embedding with the simplicial Yoneda embedding induces a functorial assignment $\mathcal{Y} \circ \delta: \mathcal{A}_B \hookrightarrow s\mathbf{Mod}_{\mathbf{k}}^{\mathcal{A}_B^{\mathrm{op}}}$, under which constant simplicial objects are mapped to representable linear simplicial presheaves.

We denote by $\mathbf{Ch}_{\ge 0}(\mathbf{Mod}_{\mathbf{k}})$ the category of non-negatively graded chain complexes over $\mathbf{Mod}_{\mathbf{k}}$. The classical Dold--Kan correspondence asserts the existence of an equivalence of categories:
\begin{equation*}
    \mathbf{N}: s\mathbf{Mod}_{\mathbf{k}} \xrightarrow{ \simeq } \mathbf{Ch}_{\ge 0}(\mathbf{Mod}_{\mathbf{k}}),
\end{equation*}
where $\mathbf{N}$ is the \textit{normalized chain complex functor}. For any simplicial module $A_\bullet \in s\mathbf{Mod}_{\mathbf{k}}$, its image $\mathbf{N}(A_\bullet)$ is the chain complex whose degree $k$ term is defined by the intersection of the kernels of the face maps:
\begin{equation*}
    \mathbf{N}(A_\bullet)_k = \bigcap_{i=1}^k \ker(d_i: A_k \to A_{k-1}),
\end{equation*}
with the differential $\partial_k: \mathbf{N}(A_\bullet)_k \to \mathbf{N}(A_\bullet)_{k-1}$ given by the restriction of the remaining face map $d_0$. Under this equivalence, the simplicial homotopy groups $\pi_k(A_\bullet)$ are identified with the homology groups of the normalized complex.

\begin{lemma}\label{lemma:Dold_Kan}
Let $\mathbf{N}: s\mathbf{Mod}_{\mathbf{k}} \to \mathbf{Ch}_{\ge 0}(\mathbf{Mod}_{\mathbf{k}})$ be the normalized chain complex functor realizing the Dold--Kan correspondence. For any $X_\bullet \in s\mathcal{A}_B$ and $M \in \mathcal{A}_B$, there exists a canonical isomorphism of $\mathbf{k}$-modules
\begin{equation*}
    \pi_k\left( \operatorname{Hom}_{\mathcal{A}_B}(M, X_\bullet) \right) \cong H_k\left( \mathbf{N}\left( \operatorname{Hom}_{\mathcal{A}_B}(M, X_\bullet) \right) \right).
\end{equation*}
\end{lemma}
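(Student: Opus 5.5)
The plan is to reduce the statement to the classical identification of homotopy groups of a simplicial abelian group with the homology of its normalized Moore complex, applied to one specific simplicial $\mathbf{k}$-module. First, fix $M \in \mathcal{A}_B$ and $X_\bullet \in s\mathcal{A}_B$, and set $A_\bullet = \operatorname{Hom}_{\mathcal{A}_B}(M, X_\bullet)$. Since $\mathcal{A}_B$ is $\mathbf{k}$-linear, each $A_k = \operatorname{Hom}_{\mathcal{A}_B}(M, X_k)$ is a $\mathbf{k}$-module. The face maps $d_i^A = (d_i^X)_*$ and degeneracies $s_j^A = (s_j^X)_*$ are given by post-composition. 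Post-composition is $\mathbf{k}$-linear, and functoriality of $\operatorname{Hom}_{\mathcal{A}_B}(M,-)$ transports the simplicial identities. Hence $A_\bullet \in s\mathbf{Mod}_{\mathbf{k}}$, and this is exactly the value of $\mathcal{Y}(X_\bullet)$ at $M$.

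Second, apply the classical theorem (Moore; see \cite{goerss2009simplicial}, Chapter III). For any simplicial abelian group $A_\bullet$, which is automatically a Kan complex, there is a natural isomorphism
\[
\pi_k(A_\bullet, 0) \cong H_k(\mathbf{N}(A_\bullet)).
\]
I would recall the mechanism briefly. A class in $\pi_k$ is represented by a simplex $x \in A_k$ with $d_i x = 0$ for all $i$. Such an $x$ lies in $\bigcap_{i\ge1}\ker d_i$ and is a cycle for $\partial = d_0$. Two such representatives are homotopic exactly when they differ by $d_0 y$ for some $y \in \mathbf{N}(A_\bullet)_{k+1}$. The resulting map $\pi_k \to H_k$ is well defined, additive because the group structure on $\pi_k$ of a simplicial group agrees with pointwise addition, and bijective. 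Basepoint independence holds because translation by a vertex is an automorphism of $A_\bullet$.

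The last point is that the isomorphism is $\mathbf{k}$-linear, which holds because it is induced by the identity on representatives in $A_k$. Naturality in $M$ and in $X_\bullet$ follows because both sides are functorial under simplicial maps of simplicial $\mathbf{k}$-modules, and the comparison is natural for such maps. Precomposition with $f\colon M'\to M$, and postcomposition with a map $X_\bullet\to X'_\bullet$, both induce simplicial $\mathbf{k}$-module maps.

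The only step needing care is the well-definedness and injectivity of $\pi_k \to H_k$, namely that simplicial homotopy of normalized representatives corresponds to differing by a $d_0$-boundary of a normalized $(k+1)$-simplex. Rather than redoing this argument, I will cite the standard Moore/Dold--Kan argument, since nothing in it uses more than the simplicial abelian group structure established in the first step.
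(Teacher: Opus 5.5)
Your proposal is correct and takes the same route as the paper. Both apply $\operatorname{Hom}_{\mathcal{A}_B}(M,-)$ levelwise to obtain a simplicial $\mathbf{k}$-module and then invoke the classical Moore/Dold--Kan identification $\pi_k \cong H_k\circ\mathbf{N}$ (Goerss--Jardine, Chapter~III). Your additional remarks on $\mathbf{k}$-linearity of the face maps, normalized representatives, additivity, and naturality spell out details that the paper's one-line proof leaves implicit.
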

\begin{proof}
Since the evaluation functor $\operatorname{Hom}_{\mathcal{A}_B}(M, -)$ maps the simplicial object $X_\bullet$ to a simplicial $\mathbf{k}$-module, the result follows object-wise from the Dold--Kan equivalence (see, e.g., \cite[Chapter~III]{goerss2009simplicial}, \cite[Chapter 8]{weibel1994introduction}), which identifies the simplicial homotopy groups of a simplicial module with the homology groups of its normalized chain complex.
\end{proof}

Let $F: \cube^{n} \to \cob(B)$ be a pro-tangle of rank $n$, and denote by
\begin{equation*}
    \mathcal{E}_F := \mathcal{Y} \circ \delta \circ \mat \circ F : \cube^{n} \longrightarrow s\mathbf{Mod}_{\mathbf{k}}^{\mathcal{A}_B^{\mathrm{op}}}
\end{equation*}
the induced diagram, whose values $\mathcal{E}_F(v)$ are representable linear simplicial presheaves. We denote the homotopy category of $s\mathbf{Mod}_{\mathbf{k}}^{\mathcal{A}_B^{\mathrm{op}}}$ by $\operatorname{Ho}\left( s\mathbf{Mod}_{\mathbf{k}}^{\mathcal{A}_B^{\mathrm{op}}} \right)$.

\begin{remark}\label{remark:hocolim_obstruction}
Since the Boolean cube $\cube^{n}$ possesses the terminal object $\mathbf{1}=(1,\dots,1)$, the inclusion $\{\mathbf{1}\}\hookrightarrow\cube^{n}$ is homotopy final, and the canonical morphism
\begin{equation*}
    \hocolim_{\cube^{n}}\mathcal{E}_F \longrightarrow \operatorname*{colim}_{\cube^{n}}\mathcal{E}_F \cong \mathcal{E}_F(\mathbf{1})
\end{equation*}
is a weak equivalence \cite{hirschhorn2003model}. A plain homotopy colimit therefore collapses onto the all-$1$ resolution and retains no information about the edge cobordisms $F(e)$. The homotopically meaningful invariant of a cubical diagram is instead its total homotopy cofiber, the construction measuring the failure of the cube to be homotopy cocartesian \cite{munson2015cubical}.
\end{remark}

Let $\partial\cube^{n}:=\cube^{n}\setminus\{\mathbf{1}\}$ be the full sub-poset obtained by deleting the terminal vertex, and let $\mathcal{E}_F|_{\partial\cube^{n}}$ be the restriction of $\mathcal{E}_F$. Since $\cube^{n}$ is a poset, there is, for each $v\in\partial\cube^{n}$, a unique morphism $v\to\mathbf{1}$, whence a unique cobordism $F(v\to\mathbf{1}):F(v)\to F(\mathbf{1})$. These assignments assemble into a natural transformation $\mathcal{E}_F|_{\partial\cube^{n}}\Rightarrow\operatorname{const}_{\mathcal{E}_F(\mathbf{1})}$ to the constant diagram. As the nerve of $\partial\cube^{n}$ is a cone with apex the minimal vertex $\mathbf{0}=(0,\dots,0)$ and hence contractible. Note that $\hocolim_{\partial\cube^{n}}\operatorname{const}_{\mathcal{E}_F(\mathbf{1})}\xrightarrow{\simeq}\mathcal{E}_F(\mathbf{1})$.
The induced map on homotopy colimits yields the structure map of the pro-tangle
\begin{equation*}
    b(F):\hocolim_{\partial\cube^{n}}\mathcal{E}_F \longrightarrow \mathcal{E}_F(\mathbf{1}).
\end{equation*}

\begin{definition}\label{definition:Khovanov_presheaf}
The \textit{Khovanov simplicial presheaf} of $F$, denoted
\begin{equation*}
    \mathbf{Kh}(F) := \operatorname{tcof}_{\cube^{n}}(\mathcal{E}_F) := \hocof\big( b(F) \big) \in s\mathbf{Mod}_{\mathbf{k}}^{\mathcal{A}_B^{\mathrm{op}}},
\end{equation*}
is the total homotopy cofiber of the diagram $\mathcal{E}_F$. That is, it is the homotopy pushout of $\mathcal{E}_F(\mathbf{1})\leftarrow\hocolim_{\partial\cube^{n}}\mathcal{E}_F\rightarrow 0$, where $0$ denotes the zero presheaf.
\end{definition}
The construction is functorial with respect to morphisms of cubical diagrams, and its weak equivalence class is independent of the particular models chosen for the homotopy colimit and homotopy cofiber.

The weak equivalence class of $\mathbf{Kh}(F)$ in the homotopy category $\operatorname{Ho}\left( s\mathbf{Mod}_{\mathbf{k}}^{\mathcal{A}_B^{\mathrm{op}}} \right)$ is referred to as the \textit{Khovanov homotopy type} of the pro-tangle $F$.

\begin{remark}\label{remark:objectwise_cone}
Homotopy colimits in the projective model structure are computed objectwise by the Bousfield--Kan bar construction, since colimits, tensors, and coends in the presheaf category are formed objectwise \cite{hirschhorn2003model}. Consequently, fro any $M\in\mathcal{A}_B$, the simplicial module $\mathbf{Kh}(F)(M)$ is the homotopy cofiber of the map of simplicial modules $b(F)(M)$, computed by the mapping cone, and there is a natural quasi-isomorphism of chain complexes
\begin{equation*}
    \mathbf{N}\big(\mathbf{Kh}(F)(M)\big)\simeq\operatorname{Cone}\Big(\mathbf{N}\big(b(F)(M)\big)\Big).
\end{equation*}
\end{remark}

\begin{remark}\label{remark:rank_zero}
For $n=0$ the punctured cube $\partial\cube^{n}$ is empty, so $\hocolim_{\emptyset}\mathcal{E}_F=0$ and $\mathbf{Kh}(F)\simeq\mathcal{E}(F(\ast))$. The Khovanov presheaf of a crossingless pro-tangle is the representable presheaf of its single resolution, in agreement with the classical convention that the complex of a crossingless diagram is the resolution itself.
\end{remark}

\begin{proposition}\label{proposition:iterated_cofiber}
Let $F:\cube^{n}\to\cob(B)$ be a pro-tangle of rank $n\ge 1$. For any $1\le i\le n$, let $F^{(i)}_0,F^{(i)}_1:\cube^{n-1}\to\cob(B)$ be its $i$-th smoothings, and let $\eta^{(i)}:F^{(i)}_0\Rightarrow F^{(i)}_1$ be the natural transformation underlying the $i$-th crossing map. Then $\eta^{(i)}$ induces a morphism $\mathbf{Kh}(F^{(i)}_0)\to\mathbf{Kh}(F^{(i)}_1)$, and there is a natural weak equivalence
\begin{equation*}
    \mathbf{Kh}(F)\simeq\hocof\Big(\mathbf{Kh}\big(F^{(i)}_0\big)\longrightarrow\mathbf{Kh}\big(F^{(i)}_1\big)\Big).
\end{equation*}
\end{proposition}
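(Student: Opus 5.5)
The plan is to use the standard fact from cubical homotopy theory (Goodwillie; Munson--Voli\'c \cite{munson2015cubical}) that the total homotopy cofiber of an $n$-cube equals the homotopy cofiber of the map of total cofibers of two opposite $(n-1)$-dimensional faces. The first step is to identify those faces. Write $\cube^n \cong \cube^{n-1}\times\cube$ with the $i$-th coordinate split off. Then $\mathcal{E}_F$ becomes a map of $(n-1)$-cubes $\mathcal{E}_{F^{(i)}_0}\Rightarrow\mathcal{E}_{F^{(i)}_1}$, since $\mathcal{E}_{F^{(i)}_\epsilon}=\mathcal{E}_F\circ\delta_{i,\epsilon}$. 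The transformation itself is $\mathcal{Y}\circ\delta\circ\mat$ applied to $\eta^{(i)}$.

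Functoriality of $\operatorname{tcof}$ comes next. A natural transformation of cubical diagrams induces a map of homotopy colimits over $\partial\cube^{n-1}$ commuting with the structure maps $b(-)$. This is because $b$ is induced by the unique edges to $\mathbf{1}$, and $\eta^{(i)}$ is natural. One therefore gets a map of homotopy pushouts $\mathbf{Kh}(F^{(i)}_0)\to\mathbf{Kh}(F^{(i)}_1)$, which is the induced morphism in the statement.

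For the main equivalence, I would compute $\hocolim_{\partial\cube^n}\mathcal{E}_F$ by decomposing the punctured cube $\partial\cube^n=(\cube^{n-1}\times\{0\})\cup(\partial\cube^{n-1}\times\{1\})$. These two pieces meet in $\partial\cube^{n-1}\times\{0\}$. The inclusion of $\partial\cube^{n-1}\times\{1\}\cup\partial\cube^{n-1}\times\{0\}$ is handled by a standard Grothendieck-type decomposition of homotopy colimits over a union of subposets (Hirschhorn \cite{hirschhorn2003model}). This gives
\[
\hocolim_{\partial\cube^n}\mathcal{E}_F\simeq \mathcal{E}_F(\mathbf{1}_{n-1},0)\cup^{h}_{\hocolim_{\partial\cube^{n-1}}\mathcal{E}_{F_0}}\hocolim_{\partial\cube^{n-1}}\mathcal{E}_{F_1},
\]
using that $\cube^{n-1}\times\{0\}$ has the terminal object $(\mathbf{1},0)$, as in Remark~\ref{remark:hocolim_obstruction}.

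Taking the homotopy cofiber of the map to $\mathcal{E}_F(\mathbf{1})$ and commuting homotopy cofibers with homotopy pushouts (the "cofiber of cofibers" or $3\times3$ lemma) gives $\hocof(\hocof b(F_0)\to\hocof b(F_1))$, which is the claimed equivalence.

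The main obstacle is keeping this identification natural and verifying that the induced map is exactly the one from the second step. In a concrete alternative I would make everything explicit. By Remark~\ref{remark:objectwise_cone} everything is computed objectwise in $s\mathbf{Mod}_{\mathbf{k}}$, so after applying $\mathbf{N}$ it suffices to exhibit natural quasi-isomorphisms of chain complexes. There the total cofiber of a cube is the iterated mapping cone (the totalization of the cube), and $\operatorname{Cone}$ of a map of cubes is the totalization of the larger cube, which is the standard splitting of the totalization along coordinate $i$. Since $\mathbf{N}$ is an equivalence reflecting weak equivalences, this transports back to an objectwise, hence projective, weak equivalence by Proposition~\ref{proposition:global_model}. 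The rank-one base case ($n-1=0$) is Remark~\ref{remark:rank_zero}.
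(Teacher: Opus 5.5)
Your proposal is correct and is essentially the paper's proof. You split off the $i$-th coordinate and use the pasting lemma to identify $\hocolim_{\partial\cube^{n}}\mathcal{E}_F$ with the homotopy pushout of the punctured square built from $b_0$, $b_1$ and $\eta^{(i)}$; the cofiber-of-cofibers ($3\times 3$) lemma then finishes. There is one small slip in the decomposition: $\cube^{n-1}\times\{0\}$ and $\partial\cube^{n-1}\times\{1\}$ are disjoint, so to get the overlap $\partial\cube^{n-1}\times\{0\}$ the second piece must be $\partial\cube^{n-1}\times\cube^{1}$, as in the paper. Its homotopy colimit still reduces to $\hocolim_{\partial\cube^{n-1}}\mathcal{E}_{F^{(i)}_1}$ because $\cube^{1}$ has a terminal object.
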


\begin{proof}
By symmetry, it suffices to treat \(i = n\). Let \(\mathcal{E}_F \colon \cube^n \to s\mathbf{Mod}_{\mathbf{k}}^{\mathcal{A}_B^{\mathrm{op}}}\) be the diagram induced by \(F\). Recall
\[
\mathbf{Kh}(F) = \hocof\left( \operatorname{hocolim}_{\partial\cube^n} \mathcal{E}_F \xrightarrow{b(F)} \mathcal{E}_F(\mathbf{1}_n) \right),
\]
where \(\partial\cube^n = \cube^n \setminus \{\mathbf{1}_n\}\) and \(\mathbf{1}_n = (1,\dots,1)\) is the terminal vertex.

Identify \(\cube^n \cong \cube^{n-1} \times \cube^1\) with \(\mathbf{1}_n = (\mathbf{1}_{n-1}, 1)\). Then \(\partial\cube^n = A \cup B\) where \(A = \cube^{n-1} \times \{0\}\), \(B = \partial\cube^{n-1} \times \cube^1\), and \(A \cap B = \partial\cube^{n-1} \times \{0\}\). By the pasting lemma for homotopy colimits, the induced square of homotopy colimits is a homotopy pushout.

The crossing transformation \(\eta^{(n)}\) induces a map \(\eta_*\) on boundary homotopy colimits and a map \(\eta_{\mathbf{1}}\) at the terminal vertex, which together with the structure maps \(b_0, b_1\) of \(F^{(n)}_0, F^{(n)}_1\) form the commutative square
\[
\xymatrix{
\operatorname{hocolim}_{\partial\cube^{n-1}} \mathcal{E}_{F^{(n)}_0} \ar[r]^-{\eta_*} \ar[d]_{b_0} & \operatorname{hocolim}_{\partial\cube^{n-1}} \mathcal{E}_{F^{(n)}_1} \ar[d]^{b_1} \\
\mathcal{E}_{F^{(n)}_0}(\mathbf{1}_{n-1}) \ar[r]_-{\eta_{\mathbf{1}}} & \mathcal{E}_{F^{(n)}_1}(\mathbf{1}_{n-1})
}
\]

By the square cofiber lemma, the total homotopy cofiber of this square equals the homotopy cofiber of the induced map on vertical cofibers. The vertical cofibers are precisely \(\mathbf{Kh}(F^{(n)}_0) = \hocof(b_0)\) and \(\mathbf{Kh}(F^{(n)}_1) = \hocof(b_1)\); the homotopy pushout of the square coincides with \(\operatorname{hocolim}_{\partial\cube^n} \mathcal{E}_F\), and the map to the bottom-right corner is exactly \(b(F)\). Therefore
\[
\mathbf{Kh}(F) = \hocof(b(F)) \simeq \hocof\left( \mathbf{Kh}(F^{(n)}_0) \to \mathbf{Kh}(F^{(n)}_1) \right),
\]
as desired.
\end{proof}

\begin{example}\label{example:rank_one}
Let $F:\cube^{1}=\{0\to1\}\to\cob(B)$ be a pro-tangle of rank one, with unique edge $e:0\to1$. The punctured cube $\partial\cube^{1}=\{0\}$ is the terminal category, so $\hocolim_{\partial\cube^{1}}\mathcal{E}_F=\mathcal{E}(F(0))$ and $b(F)=\mathcal{E}(F(e))$. Hence
\begin{equation*}
    \mathbf{Kh}(F)=\hocof\big(\mathcal{E}(F(0))\xrightarrow{\mathcal{E}(F(e))}\mathcal{E}(F(1))\big)
\end{equation*}
is the mapping cone of the single saddle cobordism. By Remark~\ref{remark:objectwise_cone}, for each $M\in\mathcal{A}_B$ the normalized complex $\mathbf{N}(\mathbf{Kh}(F)(M))$ is presented by the two-term complex
\begin{equation*}
    0\longrightarrow\operatorname{Hom}_{\mathcal{A}_B}\big(M,F(0)\big)\xrightarrow{\operatorname{Hom}_{\mathcal{A}_B}(M,F(e))}\operatorname{Hom}_{\mathcal{A}_B}\big(M,F(1)\big)\longrightarrow 0,
\end{equation*}
concentrated in two homological degrees.
\end{example}

\begin{example}\label{example:rank_two}
Let $F:\cube^{2}\to\cob(B)$ be a pro-tangle of rank two, with vertices $00,10,01,11$ and elementary edges $e^{00}_1:00\to10$, $e^{00}_2:00\to01$, $e^{10}_2:10\to11$, $e^{01}_1:01\to11$. The square commutes in $\cob(B)$ because $\cube^{2}$ is a poset and $F$ is a functor. The punctured cube $\partial\cube^{2}$ is the span $10\leftarrow 00\rightarrow 01$. Hence the homotopy colimit is a homotopy pushout, and we have
\begin{equation*}
    \mathbf{Kh}(F)=\hocof\Big(\operatorname{hocolim}\big(\mathcal{E}(F(10))\leftarrow\mathcal{E}(F(00))\rightarrow\mathcal{E}(F(01))\big)\longrightarrow\mathcal{E}(F(11))\Big).
\end{equation*}
Equivalently, by Proposition~\ref{proposition:iterated_cofiber} applied along the second coordinate,
\begin{equation*}
    \mathbf{Kh}(F)\simeq\hocof\Big(\operatorname{Cone}\big(\mathcal{E}(F(00))\to\mathcal{E}(F(10))\big)\longrightarrow\operatorname{Cone}\big(\mathcal{E}(F(01))\to\mathcal{E}(F(11))\big)\Big),
\end{equation*}
where the morphism of cones is induced by the second crossing transformation $\eta^{(2)}$. Consequently, for each $M\in\mathcal{A}_B$, the normalized complex $\mathbf{N}(\mathbf{Kh}(F)(M))$ is naturally quasi‑isomorphic to the total complex of the square with coweight grading
\begin{align*}
    \overline{\mathcal{C}}_2&=\operatorname{Hom}_{\mathcal{A}_B}\big(M,F(00)\big),\\
    \overline{\mathcal{C}}_1&=\operatorname{Hom}_{\mathcal{A}_B}\big(M,F(10)\big)\oplus\operatorname{Hom}_{\mathcal{A}_B}\big(M,F(01)\big),\\
    \overline{\mathcal{C}}_0&=\operatorname{Hom}_{\mathcal{A}_B}\big(M,F(11)\big),
\end{align*}
whose differentials are
\begin{equation*}
    \partial\big|_{F(00)}=F(e^{00}_1)+F(e^{00}_2),\qquad
    \partial\big|_{F(10)}=-F(e^{10}_2),\qquad
    \partial\big|_{F(01)}=+F(e^{01}_1),
\end{equation*}
in accordance with the sign rule $\sgn(e)=(-1)^{\xi_i(v)}$, where $\xi_i(v)$ counts the $1$'s preceding the flipped position. The identity
\begin{equation*}
    \partial^{2}\big|_{F(00)}=-F(e^{10}_2)\circ F(e^{00}_1)+F(e^{01}_1)\circ F(e^{00}_2)=0
\end{equation*}
holds precisely because the square commutes in $\cob(B)$.
\end{example}

\subsubsection{Representability of Khovanov simplicial presheaf}
The total homotopy cofiber of a cubical diagram is defined pointwise in any pointed model category; we apply it both to diagrams in $s\mathbf{Mod}_{\mathbf{k}}$ and in $\mathbf{Ch}_{\ge 0}(\mathbf{Mod}_{\mathbf{k}})$. Our goal is to prove that $\mathbf{Kh}(F)$ is represented, in the homotopy category, by an explicit simplicial object, whose normalization is the classical Khovanov chain complex of the resolution cube.

For a diagram $X:\mathcal{P}\to s\mathbf{Mod}_{\mathbf{k}}$ over a finite poset $\mathcal{P}$, the homotopy colimit admits the Bousfield--Kan coend model
\[
\hocolim_{\mathcal{P}}X\cong\int^{p\in\mathcal{P}}X(p)\otimes_{\mathbf{k}}\mathbf{k}\big[N(p\downarrow\mathcal{P})\big],
\]
where $N(p\downarrow\mathcal{P})$ is the nerve of the coslice category. Applied to $\mathcal{P}=\partial\cube^{n}$, this models the source of the structure map $b(F)$. The contrast with the full cube is instructive: for $\cube^{n}$ each coslice $v\downarrow\cube^{n}\cong\cube^{|v|^{\ast}}$ has contractible nerve, so $\hocolim_{\cube^{n}}\mathcal{E}_F\xrightarrow{\simeq}\mathcal{E}_F(\mathbf{1})$ collapses to the terminal vertex, recovering the trivialization of Remark~\ref{remark:hocolim_obstruction}.

Let $\mathcal{V}$ be a pointed cofibrantly generated model category, $\mathcal{C}$ a small category, and endow $\operatorname{Fun}(\mathcal{C},\mathcal{V})$ with the projective model structure. For each $c\in\mathcal{C}$, the \textit{evaluation functor}
\[
\operatorname{ev}_c: \operatorname{Fun}(\mathcal{C},\mathcal{V})\longrightarrow\mathcal{V}, \quad\operatorname{ev}_c(X)=X(c)
\]
is restriction along the inclusion $\mathbf{1}\to\mathcal{C}$ selecting $c$; it is a two-sided adjoint (left/right Kan extension) and thus preserves all small limits and colimits, which are formed objectwise.

\begin{lemma}\label{lemma:ev_hocolim}
For each $c\in\mathcal{C}$, the evaluation functor $\operatorname{ev}_c$ commutes with homotopy colimits and homotopy cofibers up to natural weak equivalence: for every small diagram $X:\mathcal{D}\to\operatorname{Fun}(\mathcal{C},\mathcal{V})$ and every morphism $f$ in $\operatorname{Fun}(\mathcal{C},\mathcal{V})$, we have
\[
\operatorname{ev}_c\big(\operatorname*{hocolim}_{\mathcal{D}}X\big)\simeq
\operatorname*{hocolim}_{\mathcal{D}}\big(\operatorname{ev}_cX\big),
\qquad
\operatorname{ev}_c\big(\hocof(f)\big)\simeq\hocof\big(\operatorname{ev}_c f\big).
\]
\end{lemma}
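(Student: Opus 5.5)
The plan is to treat $\operatorname{ev}_c$ as a left Quillen functor that also preserves all weak equivalences, so that it commutes with the derived constructions on the nose up to the usual cofibrant-replacement comparisons. We fix the standard models. For $\hocolim_{\mathcal{D}}$ we take the left derived functor of $\operatorname{colim}_{\mathcal{D}}$, computed as $\operatorname{colim}_{\mathcal{D}}(QX)$, where $QX$ is a projective-cofibrant replacement of $X$ in $\operatorname{Fun}(\mathcal{D},\operatorname{Fun}(\mathcal{C},\mathcal{V}))$. Equivalently, we may use the Bousfield--Kan coend $\int^{d}X(d)\otimes N(d\downarrow\mathcal{D})$ applied to an objectwise cofibrant replacement. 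For $\hocof(f)$ with $f\colon A\to B$ we factor $QA\to QB$ as a cofibration and take the pushout along $QA\to 0$.

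\textbf{Step 1 (ev is left Quillen and homotopical).} Weak equivalences and fibrations in the projective structure are objectwise, so $\operatorname{ev}_c$ preserves both, and in particular trivial fibrations. Its left adjoint $\mathcal{V}\to\operatorname{Fun}(\mathcal{C},\mathcal{V})$ is $V\mapsto\coprod_{\operatorname{Hom}(c,-)}V$; it sends generating (trivial) cofibrations to generating (trivial) cofibrations, hence is left Quillen. So $\operatorname{ev}_c$ is right Quillen. To see that it is also left Quillen, note that it preserves generating cofibrations: these have the form $\coprod_{\operatorname{Hom}(c',-)}i$, and evaluating at $c$ gives $\coprod_{\operatorname{Hom}(c',c)}i$, a coproduct of cofibrations. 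Since $\operatorname{ev}_c$ preserves colimits, it preserves retracts of transfinite composites of pushouts of these maps, that is, all projective cofibrations. The same argument works for trivial cofibrations. Hence $\operatorname{ev}_c$ preserves cofibrant objects, cofibrations, and all weak equivalences.

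\textbf{Step 2 (hocolim).} Applied levelwise to diagrams, Step 1 shows that $\operatorname{ev}_c\circ QX$ is projectively cofibrant in $\operatorname{Fun}(\mathcal{D},\mathcal{V})$ and weakly equivalent to $\operatorname{ev}_cX$. Here one checks that post-composition with $\operatorname{ev}_c$ sends generating cofibrations $\coprod_{\operatorname{Hom}(d,-)}j$ of the diagram category to maps of the same form. Since $\operatorname{ev}_c$ commutes with colimits,
\[
\operatorname{ev}_c\big(\operatorname{colim}_{\mathcal{D}}QX\big)\cong\operatorname{colim}_{\mathcal{D}}(\operatorname{ev}_cQX),
\]
and the right-hand side is a model for $\hocolim_{\mathcal{D}}(\operatorname{ev}_cX)$. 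Independence of the choice of model, which holds because $\operatorname{colim}$ is left Quillen, gives the natural weak equivalence. If one prefers the Bousfield--Kan model, the argument is even more direct: tensors and coends are objectwise, so $\operatorname{ev}_c$ commutes with the formula strictly.

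\textbf{Step 3 (hocof).} Apply $\operatorname{ev}_c$ to the defining pushout square $QA\rightarrowtail QB'$, $QA\to 0$. Evaluation preserves the pushout, preserves the zero object since $\mathcal{V}$ is pointed, and preserves the cofibration and the cofibrancy of $QA$ by Step 1. It also preserves the weak equivalences to $A$ and $B$. The image square is therefore a legitimate model of $\hocof(\operatorname{ev}_cf)$.

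The main obstacle is Step 1, namely that evaluation preserves projective cofibrations. This relies on cofibrant generation and on $\operatorname{Hom}(c',c)$ being a set. For the $\mathbf{k}$-linear functor category used in the paper, the representables are $\operatorname{Hom}_{\mathcal{A}_B}(-,M)\otimes V$ rather than coproducts. Evaluation then gives $\operatorname{Hom}_{\mathcal{A}_B}(c,M)\otimes_{\mathbf{k}}i$, and we need this to be a cofibration in $s\mathbf{Mod}_{\mathbf{k}}$. I would first try to circumvent the issue entirely: in $s\mathbf{Mod}_{\mathbf{k}}$, pushouts along monomorphisms are homotopy pushouts and the Bousfield--Kan coend is homotopy invariant for all objects. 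So one can use these objectwise formulas as the definitions and conclude by strict commutation of $\operatorname{ev}_c$ with colimits, tensors and coends.
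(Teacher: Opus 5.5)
Your proposal is correct and follows the paper's own argument: $\operatorname{ev}_c$ preserves all weak equivalences, it is left Quillen because it sends the generating cofibrations to coproducts of cofibrations, and it commutes strictly with colimits and pushouts, hence with their left-derived versions. Your Steps 1--3 make explicit the cofibrant-replacement bookkeeping that the paper leaves implicit, and your closing caveat adds something the paper does not address: in the $\mathbf{k}$-linear presheaf category where the lemma is actually applied, $\operatorname{Hom}_{\mathcal{A}_B}(c,M)\otimes_{\mathbf{k}} i$ is a cofibration only if $\operatorname{Hom}_{\mathcal{A}_B}(c,M)$ is $\mathbf{k}$-projective, and your fallback via the objectwise Bousfield--Kan model, which is homotopy invariant for all objects of $s\mathbf{Mod}_{\mathbf{k}}$, is a sound way around this.
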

\begin{proof}
In the projective model structure, weak equivalences are detected objectwise, so $\operatorname{ev}_c$ preserves weak equivalences. It is left Quillen as a left adjoint that preserves generating cofibrations. Hence its left derived functor satisfies $\mathbb{L}\operatorname{ev}_c \simeq \operatorname{ev}_c$ and preserves homotopy colimits and homotopy pushouts.

Strict colimits in functor categories are formed objectwise, so $\operatorname{ev}_c \circ \operatorname{colim}_\mathcal{D} = \operatorname{colim}_\mathcal{D} \circ \operatorname{ev}_c$ strictly. Passing to left derived functors yields the first equivalence. Homotopy cofibers are homotopy pushouts of spans $0 \leftarrow X \xrightarrow{f} Y$, hence also preserved.
\end{proof}

\begin{lemma}\label{lemma:DK_transport}
Let $\mathbf{N}:s\mathbf{Mod}_{\mathbf{k}}\rightleftarrows\mathbf{Ch}_{\ge0}(\mathbf{Mod}_{\mathbf{k}}):\Gamma$ be the Dold--Kan Quillen equivalence, with normalization $\mathbf{N}$ and denormalization $\Gamma$.
\begin{enumerate}[label=$(\roman*)$]
\item $\mathbf{N}$ induces an equivalence of homotopy categories
  $\overline{\mathbf{N}}:\operatorname{Ho}(s\mathbf{Mod}_{\mathbf{k}})\to\operatorname{Ho}(\mathbf{Ch}_{\ge0}(\mathbf{Mod}_{\mathbf{k}}))$.
  In particular, $\mathbb{L}\mathbf{N}\simeq\mathbf{N}$ and $\overline{\mathbf{N}}$ preserves all small homotopy colimits and homotopy cofibers.
\item For every small diagram $X:\mathcal{D}\to s\mathbf{Mod}_{\mathbf{k}}$, there is a natural isomorphism
\[
\overline{\mathbf{N}}\big(\operatorname*{hocolim}_{\mathcal{D}}X\big)\cong\operatorname*{hocolim}_{\mathcal{D}}\big(\mathbf{N}X\big)
\]
in $\operatorname{Ho}(\mathbf{Ch}_{\ge0}(\mathbf{Mod}_{\mathbf{k}}))$.
\item $\mathbf{N}\circ\operatorname{const}=\iota$, where $\iota:\mathbf{Mod}_{\mathbf{k}}\hookrightarrow\mathbf{Ch}_{\ge0}(\mathbf{Mod}_{\mathbf{k}})$ is the degree-zero inclusion.
\end{enumerate}
\end{lemma}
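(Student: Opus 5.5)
The plan is to derive all three items from the standard fact that the Dold--Kan correspondence is an equivalence of categories which is simultaneously a Quillen equivalence. We use the projective model structure on $\mathbf{Ch}_{\ge0}(\mathbf{Mod}_{\mathbf{k}})$, with quasi-isomorphisms as weak equivalences and degreewise surjections in positive degrees as fibrations, and the standard model structure on $s\mathbf{Mod}_{\mathbf{k}}$. Since $\mathbf{N}$ and $\Gamma$ are mutually inverse equivalences of categories, each is both left and right adjoint to the other. The first step is to recall that $\pi_\ast(A_\bullet)\cong H_\ast(\mathbf{N}A_\bullet)$, as recorded before Lemma~\ref{lemma:Dold_Kan}. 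Consequently $\mathbf{N}$ both preserves and reflects weak equivalences. The same holds for fibrations, because Kan fibrations of simplicial modules correspond to maps whose normalization is surjective in positive degrees (see \cite[Chapter~III]{goerss2009simplicial}). Hence $(\mathbf{N},\Gamma)$ is a Quillen equivalence.

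For $(i)$, the key point is that $\mathbf{N}$ preserves all weak equivalences, not merely those between cofibrant objects. So no cofibrant replacement is needed, and $\mathbb{L}\mathbf{N}\simeq\mathbf{N}$ descends directly to a functor $\overline{\mathbf{N}}$ on homotopy categories. The derived functors of a Quillen equivalence are inverse equivalences $\operatorname{Ho}(s\mathbf{Mod}_{\mathbf{k}})\simeq\operatorname{Ho}(\mathbf{Ch}_{\ge0}(\mathbf{Mod}_{\mathbf{k}}))$. Left derived functors of left Quillen functors preserve homotopy colimits, and homotopy cofibers are special homotopy pushouts. This gives the preservation claim.

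For $(ii)$, I will argue at the point-set level rather than only abstractly. Take the Bousfield--Kan model $\hocolim_{\mathcal{D}}X=\int^{d}X(d)\otimes_{\mathbf{k}}\mathbf{k}[N(d\downarrow\mathcal{D})]$. Since $\mathbf{N}$ is an equivalence, it commutes with coends and colimits. The one nontrivial input is the Eilenberg--Zilber theorem: there is a natural quasi-isomorphism $\mathbf{N}(A\otimes B)\simeq\mathbf{N}A\otimes\mathbf{N}B$ for degreewise tensor products, realized by the shuffle map. It should be applied with $B=\mathbf{k}[N(d\downarrow\mathcal{D})]$, which is degreewise free. Together these identify $\mathbf{N}(\hocolim X)$ with the corresponding chain-level Bousfield--Kan coend $\int^{d}\mathbf{N}X(d)\otimes \mathbf{N}\mathbf{k}[N(d\downarrow\mathcal{D})]$, and that coend computes $\hocolim_{\mathcal{D}}\mathbf{N}X$ in the projective structure. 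Alternatively, $(ii)$ follows formally from $(i)$: both sides are the derived left adjoint of the diagonal, composed with an equivalence. I will present the formal argument and mention the explicit shuffle-map identification as the concrete model.

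For $(iii)$ the computation is direct. If $A_\bullet$ is constant, all face maps are identities, so $\bigcap_{i=1}^k\ker d_i=0$ for every $k\ge1$, and $\mathbf{N}(\operatorname{const}A)_0=A$. The main obstacle is $(ii)$, specifically checking the naturality of the Eilenberg--Zilber comparison over the coend. That difficulty goes away if one accepts the formal derived-functor argument, so I would rely on that argument first.
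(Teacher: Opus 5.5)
Your proposal is correct and follows essentially the same route as the paper: Dold--Kan as a Quillen equivalence with $\mathbf{N}$ preserving and reflecting weak equivalences, so $\mathbb{L}\mathbf{N}\simeq\mathbf{N}$ descends to an equivalence of homotopy categories; $(ii)$ deduced formally from $(i)$; and $(iii)$ by the direct kernel computation for constant simplicial modules. You justify preservation of homotopy colimits through derived left Quillen functors, whereas the paper appeals only to the equivalence of homotopy categories; yours is the more precise form of the same argument, and your Eilenberg--Zilber point-set model is an optional extra that you rightly do not rely on.
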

\begin{proof}
$(i)$ By the Dold--Kan theorem, $\mathbf{N}$ and $\Gamma$ form a Quillen equivalence, and a morphism $f$ of simplicial modules is a weak equivalence if and only if $\mathbf{N}(f)$ is a quasi-isomorphism. Thus both functors are homotopical and descend to an equivalence $\overline{\mathbf{N}}$ of homotopy categories. As an equivalence, $\overline{\mathbf{N}}$ preserves all homotopy colimits and homotopy pushouts.

$(ii)$ It follows immediately from $(i)$, since homotopy colimits are preserved by equivalences of homotopy categories.

$(iii)$ A constant simplicial module has identity face maps, so $\mathbf{N}(\operatorname{const}V)_k=\bigcap_{i\ge1}\ker d_i=0$ for $k\ge1$ and $\mathbf{N}(\operatorname{const}V)_0=V$.
\end{proof}

\begin{lemma}\label{lemma:objectwise_tcof}
Let $F:\cube^{n}\to\cob(B)$ be a pro-tangle. For any $M\in\mathcal{A}_B$, define the degree-zero cubical diagram of chain complexes
\[
X^{M}:\cube^{n}\longrightarrow\mathbf{Ch}_{\ge 0}(\mathbf{Mod}_{\mathbf{k}}),\qquad X^{M}_v:=\operatorname{Hom}_{\mathcal{A}_B}\big(M,F(v)\big),
\]
with edge maps $X^{M}_e=\operatorname{Hom}_{\mathcal{A}_B}(M,F(e))$. Then there is a canonical isomorphism, natural in $M$,
\[
\mathbf{N}\big(\mathbf{Kh}(F)(M)\big)\cong\operatorname{tcof}_{\cube^{n}}\big(X^{M}\big)
=\hocof\Big(\hocolim_{\partial\cube^{n}}X^{M}\longrightarrow X^{M}_{\mathbf{1}}\Big)
\]
in $\operatorname{Ho}\big(\mathbf{Ch}_{\ge 0}(\mathbf{Mod}_{\mathbf{k}})\big)$.
\end{lemma}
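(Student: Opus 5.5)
The plan is to push the definition of $\mathbf{Kh}(F)$ through two homotopical functors in turn: first evaluation at $M$, then normalization. Both are already known to commute with homotopy colimits and homotopy cofibers. What is left is to identify the diagram obtained after both steps with $X^M$.

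\emph{Step 1 (evaluation).} By Definition~\ref{definition:Khovanov_presheaf}, $\mathbf{Kh}(F)=\hocof(b(F))$ with $b(F):\hocolim_{\partial\cube^n}\mathcal{E}_F\to\mathcal{E}_F(\mathbf{1})$. Applying Lemma~\ref{lemma:ev_hocolim} with $\mathcal{C}=\mathcal{A}_B^{\mathrm{op}}$ and $c=M$ gives natural weak equivalences
\[
\mathbf{Kh}(F)(M)\simeq\hocof\Big(\hocolim_{\partial\cube^n}\operatorname{ev}_M\mathcal{E}_F\longrightarrow \operatorname{ev}_M\mathcal{E}_F(\mathbf{1})\Big).
\]
One must check that the evaluated structure map is the map induced on homotopy colimits by the natural transformation $\operatorname{ev}_M\mathcal{E}_F|_{\partial\cube^n}\Rightarrow\operatorname{const}$. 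This follows because $\operatorname{ev}_M$ is strictly compatible with the Bousfield--Kan coend model (Remark~\ref{remark:objectwise_cone}). By definition of $\mathcal{Y}\circ\delta$, the evaluated diagram is $v\mapsto\operatorname{const}\operatorname{Hom}_{\mathcal{A}_B}(M,F(v))$, a cube of constant simplicial modules.

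\emph{Step 2 (normalization).} Apply $\overline{\mathbf{N}}$ from Lemma~\ref{lemma:DK_transport}. Part $(i)$ says it preserves homotopy cofibers. Part $(ii)$ commutes it past $\hocolim_{\partial\cube^n}$. Part $(iii)$ identifies $\mathbf{N}\circ\operatorname{const}\operatorname{Hom}(M,F(v))$ with $\operatorname{Hom}(M,F(v))$ in degree $0$, with edge maps $\operatorname{Hom}(M,F(e))$. Together these give, in $\operatorname{Ho}(\mathbf{Ch}_{\ge0}(\mathbf{Mod}_{\mathbf{k}}))$,
\[
\mathbf{N}\big(\mathbf{Kh}(F)(M)\big)\cong\hocof\Big(\hocolim_{\partial\cube^n}X^M\to X^M_{\mathbf{1}}\Big)=\operatorname{tcof}_{\cube^n}(X^M).
\]
The structure map again corresponds because $\mathbf{N}$ is applied levelwise to the whole natural transformation to the constant diagram. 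The identification $\hocolim_{\partial\cube^n}\operatorname{const}_{X^M_{\mathbf{1}}}\simeq X^M_{\mathbf{1}}$ is preserved since $N(\partial\cube^n)$ is contractible.

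\emph{Step 3 (naturality in $M$).} A morphism $M'\to M$ in $\mathcal{A}_B$ induces precomposition maps on every $\operatorname{Hom}(M,F(v))$, compatible with all edge maps. It therefore gives a morphism of cubical diagrams $X^M\to X^{M'}$. Each equivalence above is natural in maps of diagrams: $\operatorname{ev}$ is a restriction of a natural presheaf structure, and $\mathbf{N}$ and the derived (co)limits are functorial. So the composite isomorphism is natural in $M$.

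\emph{Main obstacle.} I expect the delicate point to be Step 2's compatibility of the structure maps. The isomorphism of Lemma~\ref{lemma:DK_transport}$(ii)$ lives only in the homotopy category, so one must make sure the resulting triangle with $b(F)$ commutes there and not merely that the objects agree. I would first try to avoid this issue by using the explicit Bousfield--Kan coend model and the fact that $\mathbf{N}$ is additive. $\mathbf{N}$ commutes strictly with coends of the form $X(p)\otimes\mathbf{k}[N(p\downarrow\mathcal{P})]$ up to the Eilenberg--Zilber quasi-isomorphism, and that map is natural. This should give a chain-level comparison under which both structure maps visibly correspond.
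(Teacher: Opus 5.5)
Your proposal is correct and follows the same route as the paper. You evaluate at $M$ via Lemma~\ref{lemma:ev_hocolim}, identify $\operatorname{ev}_M\mathcal{E}_F$ with the cube of constant simplicial modules $\operatorname{const}X^M$, and pass through normalization via Lemma~\ref{lemma:DK_transport}$(i)$--$(iii)$, with naturality in $M$ inherited from each step; your extra care about the structure map, which the paper leaves implicit, is well placed and simpler than you fear, since $\mathbf{N}\big(\operatorname{const}A\otimes\mathbf{k}[K]\big)\cong A\otimes\mathbf{N}\big(\mathbf{k}[K]\big)$ holds canonically at the chain level, so no Eilenberg--Zilber comparison is needed for the Bousfield--Kan models and the maps to the terminal vertex to correspond.
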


\begin{proof}
By Lemma~\ref{lemma:ev_hocolim}, evaluation at $M$ commutes with homotopy colimits over $\partial\cube^{n}$ and with homotopy cofibers. By definition of $\mathcal{E}_F=\mathcal{Y}\circ\delta\circ\mat\circ F$,
\[
\operatorname{ev}_M\mathcal{E}_F(v)=\operatorname{Hom}_{\mathcal{A}_B}\big(M,F(v)\big)=\operatorname{const}X^{M}_v,
\qquad
\operatorname{ev}_M\mathcal{E}_F(e)=\operatorname{const}X^{M}_e,
\]
so $\operatorname{ev}_M\mathbf{Kh}(F) \cong \hocof\big(\hocolim_{\partial\cube^{n}}\operatorname{const}X^{M}\to \operatorname{const}X^{M}_{\mathbf{1}}\big)$ in $\operatorname{Ho}(s\mathbf{Mod}_{\mathbf{k}})$. By Lemma~\ref{lemma:DK_transport}, we obtain
\[
\begin{aligned}
\mathbf{N}\big(\operatorname{ev}_M\mathbf{Kh}(F)\big)
&\cong \hocof\left( \operatorname{hocolim}_{\partial\cube^{n}} \mathbf{N}(\operatorname{const}X^{M}) \longrightarrow \mathbf{N}\big(\operatorname{const}X^{M}_{\mathbf{1}}\big) \right) \\
&\cong \hocof\left( \operatorname{hocolim}_{\partial\cube^{n}} X^{M} \longrightarrow X^{M}_{\mathbf{1}} \right) \\
&= \operatorname{tcof}_{\cube^{n}}\big(X^{M}\big).
\end{aligned}
\]
All identifications are natural in $M$.
\end{proof}

\begin{lemma}\label{lemma:total_complex}
Let $X:\cube^{n}\to\mathbf{Ch}_{\ge 0}(\mathbf{Mod}_{\mathbf{k}})$ be a cubical diagram concentrated in degree zero. For each vertex $v$, let $e_i:v\to v_i$ denote the edge flipping the $i$-th zero, and let $\xi_i(v)$ count the number of ones preceding position $i$. Define the \textit{total complex}
\[
\operatorname{Tot}(X)_k=\bigoplus_{|v|^{\ast}=k}X_v,\qquad
\partial\big|_{X_v}=\sum_{i=1}^{|v|^{\ast}}(-1)^{\xi_i(v)}\,X(e_i).
\]
Then $\operatorname{tcof}_{\cube^{n}}(X)\simeq\operatorname{Tot}(X)$ in $\operatorname{Ho}(\mathbf{Ch}_{\ge 0}(\mathbf{Mod}_{\mathbf{k}}))$, naturally in $X$.
\end{lemma}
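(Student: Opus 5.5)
The plan is induction on $n$, using the iterated cofiber decomposition. Proposition~\ref{proposition:iterated_cofiber} was stated for simplicial presheaves, but its proof only uses the pasting lemma and the square cofiber lemma. Both hold in any pointed model category, and in particular in $\mathbf{Ch}_{\ge 0}(\mathbf{Mod}_{\mathbf{k}})$. So for a degree-zero cube $X:\cube^{n}\to\mathbf{Ch}_{\ge0}(\mathbf{Mod}_{\mathbf{k}})$ I will first record the splitting along the last coordinate,
\[
\operatorname{tcof}_{\cube^{n}}(X)\simeq\hocof\Big(\operatorname{tcof}_{\cube^{n-1}}(X_0)\longrightarrow\operatorname{tcof}_{\cube^{n-1}}(X_1)\Big),
\]
where $X_\epsilon=X\circ\delta_{n,\epsilon}$ and the map is induced by the natural transformation $X_0\Rightarrow X_1$ of edges in direction $n$. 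The base cases are $n=0$, where $\operatorname{tcof}=X_\ast=\operatorname{Tot}(X)$ as in Remark~\ref{remark:rank_zero}, and $n=1$, where $\operatorname{tcof}=\hocof(X_0\to X_1)$. In $\mathbf{Ch}_{\ge0}$ every object is cofibrant for the projective structure over a ring when we use the mapping cone model, so this is the two-term complex $X_0\to X_1$. With the coweight grading $|v|^\ast$ (number of zeros), it sits in degrees $1$ and $0$, which matches $\operatorname{Tot}(X)$ and Example~\ref{example:rank_one}.

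For the inductive step I will use the mapping cone as a strict model for $\hocof$ of a chain map. This is legitimate because the cone of a chain map $f:A\to B$ is always weakly equivalent to the homotopy pushout of $0\leftarrow A\to B$: one factors $A\to \operatorname{Cone}(\mathrm{id}_A)$, which is a cofibration to a contractible object. The inductive hypothesis gives natural weak equivalences $\operatorname{tcof}(X_\epsilon)\simeq\operatorname{Tot}(X_\epsilon)$. I also need the induced map to correspond to the strict chain map $\operatorname{Tot}(X_0)\to\operatorname{Tot}(X_1)$, $\bigoplus_v X(e_{v,n})$, up to the appropriate sign.

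Then I compute $\operatorname{Cone}(\operatorname{Tot}(X_0)\to\operatorname{Tot}(X_1))$ directly. In degree $k$ it is $\operatorname{Tot}(X_0)_{k-1}\oplus\operatorname{Tot}(X_1)_k$. A vertex $(v,0)$ has coweight $|v|^\ast+1$ and a vertex $(v,1)$ has coweight $|v|^\ast$, so this equals $\bigoplus_{|w|^\ast=k}X_w$. The cone differential is $\begin{pmatrix}-\partial_0&0\\ f&\partial_1\end{pmatrix}$. Comparing with the sign rule: in direction $i<n$, the factor $(-1)^{\xi_i(w)}$ for $w=(v,\epsilon)$ equals $(-1)^{\xi_i(v)}$ because position $n$ comes after $i$. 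The extra minus on the $X_0$ part therefore does not match directly. I will fix this with the standard sign-twisting automorphism: multiply the summand $X_{(v,0)}$ by $(-1)^{|v|}$, or more simply $(-1)^{\text{degree}}$. This gives an isomorphism of complexes, so the identification holds up to isomorphism. The edge in direction $n$ carries sign $(-1)^{\xi_n(w)}=(-1)^{|v|}$, which the same twist produces.

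Naturality in $X$ is clear because every identification is built from natural constructions: the cones, the twist, and the square lemmas.

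The main obstacle is the homotopy-coherence issue in the inductive step. The inductive weak equivalences $\operatorname{tcof}(X_\epsilon)\simeq\operatorname{Tot}(X_\epsilon)$ only live in the homotopy category. To take cones I need them to form a commutative square with the strict map induced by $\eta$, not merely one commuting up to homotopy, since a cone of a map is only determined up to non-canonical isomorphism by its homotopy class. I would handle this by working throughout with the explicit Bousfield--Kan model of $\hocolim_{\partial\cube^{n}}$ and proving that there is a natural zigzag of strict quasi-isomorphisms to $\operatorname{Tot}$. This could be done by collapsing the coend $\int^{p}X(p)\otimes\mathbf{k}[N(p\downarrow\partial\cube^{n})]$ via the contractibility of the relevant coslices, so that the naturality holds at the point-set level and the cone functor can be applied to strictly commuting squares.
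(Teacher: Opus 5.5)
Your proposal is correct and follows the paper's own route: the iterated cofiber formula along one coordinate, the mapping cone as the model for $\hocof$, and a diagonal re-signing. Your stepwise twist $(-1)^{|v|}$ on the summands $X_{(v,0)}$ composes over the induction to exactly the paper's $\rho_v=(-1)^{\operatorname{inv}(v)}$, and the strict-naturality issue you flag is genuine and is simply passed over in the paper.

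Two asides need fixing. First, the cone models $\hocof(f)$ because $A\to\operatorname{Cone}(\mathrm{id}_A)$ is a degreewise split monomorphism, along which strict pushouts of chain complexes are homotopy pushouts; it is not a projective cofibration when $A$ is a non-projective $\mathbf{k}$-module, and not every complex is cofibrant. Second, $(-1)^{\text{degree}}$ is not a valid substitute for $(-1)^{|v|}$: on the $X_0$ block it differs by an $n$-dependent constant sign, which flips the sign of the direction-$n$ edge for one parity of $n$.
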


\begin{proof}
The decomposition $\partial\cube^{n}=(\cube^{n-1}\times\{0\})\cup(\partial\cube^{n-1}\times\{1\})$ and the pasting lemma for homotopy pushouts give the iterated cofiber formula
\[
\operatorname{tcof}_{\cube^{n}}(X)\simeq\hocof\Big(\operatorname{tcof}_{\cube^{n-1}}\big(X^{(n)}_0\big)\longrightarrow\operatorname{tcof}_{\cube^{n-1}}\big(X^{(n)}_1\big)\Big),
\]
where $X^{(n)}_\epsilon$ is the restriction to the face $\cube^{n-1}\times\{\epsilon\}$.

In $\mathbf{Ch}_{\ge 0}(\mathbf{Mod}_{\mathbf{k}})$, the homotopy cofiber of a chain map is modeled by the algebraic mapping cone. By induction on $n$ (base case $n=0$: $\operatorname{tcof}(X)=X_{\mathbf{1}}$), $\operatorname{tcof}_{\cube^{n}}(X)$ is isomorphic to an iterated mapping cone, with $X_v$ placed in homological degree $|v|^{\ast}$. The differential on the iterated cone carries a sign depending on the order of coning; the standard re-signing automorphism $\rho_v=(-1)^{\operatorname{inv}(v)}$ (with $\operatorname{inv}(v)$ the number of inversions in $v$) intertwines the iterated-cone differential with the $\operatorname{Tot}(X)$ differential given by the $(-1)^{\xi_i(v)}$ rule. The result follows.
\end{proof}

Write $\overline{\mathcal{C}}(F)=\operatorname{Tot}(\mat\circ F)\in\mathbf{Ch}_{\ge 0}(\mathcal{A}_B)$ for the total complex of the resolution cube of $F$. Explicitly, we have
\[
\overline{\mathcal{C}}_k(F)=\bigoplus_{|v|^{\ast}=k}F(v),\qquad
\partial\big|_{F(v)}=\sum_{i=1}^{|v|^{\ast}}(-1)^{\xi_i(v)}\,F(e_i),
\]
which is the classical Khovanov chain complex \cite{bar2005khovanov}.

We now construct a strict simplicial object whose normalization is $\overline{\mathcal{C}}(F)$.

\begin{definition}\label{definition:Khovanov_simplicial_object}
The \textit{Khovanov simplicial object} $\mathcal{S}(F) \in s\mathcal{A}_B$ is defined in simplicial degree $k \ge 0$ by
\[
\mathcal{S}_k(F) = \bigoplus_{m \le k}  \bigoplus_{|v|^\ast=m}  \bigoplus_{\theta} \theta(F(v)),
\]
where $\theta = s_{i_1} \cdots s_{i_{k-m}}$ ranges over all sequences of degeneracy operators with $k-1 \ge i_1 > \cdots > i_{k-m} \ge 0$. Face and degeneracy maps are determined by
\begin{enumerate}[label=$(\roman*)$]
\item On non-degenerate summands $F(v)$ ($m=k$), the face map acts by
\[
d_i \big|_{F(v)} = \begin{cases}
0, & i = 0, \\
(-1)^{\xi_i(v)+i}F(e_i), & 1 \le i \le k,
\end{cases}
\]
where $e_i: v \to v'$ flips the $i$-th zero of $v$.
\item All standard simplicial identities hold on $\mathcal{S}(F)$.
\end{enumerate}
\end{definition}

Let $\widetilde{\mathbf{N}}(\mathcal{S}(F))$ denote the normalized chain complex obtained by quotienting out degenerate simplices.

\begin{lemma}\label{lemma:dold_kan_compatibility}
Let $F:\cube^{n}\to\cob(B)$ be a pro-tangle.
\begin{enumerate}[label=$(\roman*)$]
\item $\widetilde{\mathbf{N}}(\mathcal{S}(F))\cong\overline{\mathcal{C}}(F)$ in $\mathbf{Ch}_{\ge 0}(\mathcal{A}_B)$.
\item For every $M\in\mathcal{A}_B$, there is a natural isomorphism
\[
\mathbf{N}\big(\operatorname{Hom}_{\mathcal{A}_B}(M,\mathcal{S}(F))\big)\cong\operatorname{Hom}_{\mathcal{A}_B}\big(M,\overline{\mathcal{C}}(F)\big).
\]
\end{enumerate}
\end{lemma}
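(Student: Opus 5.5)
For part $(i)$, I would compute $\widetilde{\mathbf{N}}(\mathcal{S}(F))$ degree by degree from Definition~\ref{definition:Khovanov_simplicial_object}. By construction, $\mathcal{S}_k(F)$ is the direct sum of its non-degenerate summands $\bigoplus_{|v|^\ast=k}F(v)$ and the degenerate summands $\theta(F(v))$ with $m<k$. This is exactly the splitting $X_k = N_k \oplus D_k$ that the inverse Dold--Kan functor $\Gamma$ produces. Quotienting by the degenerate part therefore gives
\[
\widetilde{\mathbf{N}}_k(\mathcal{S}(F))\cong\bigoplus_{|v|^\ast=k}F(v)=\overline{\mathcal{C}}_k(F).
\]

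The differential on the normalized quotient is the alternating sum $\sum_{i=0}^{k}(-1)^i d_i$ restricted to non-degenerate summands, taken modulo degenerate ones. Using the prescribed faces, $d_0=0$ and $d_i=(-1)^{\xi_i(v)+i}F(e_i)$, we get
\[
\sum_i(-1)^i d_i\big|_{F(v)}=\sum_{i=1}^{k}(-1)^{\xi_i(v)}F(e_i).
\]
This is exactly the Khovanov differential $\partial|_{F(v)}$. The factor $(-1)^i$ in the face maps was built in precisely to cancel the simplicial sign. I would also check that the target $F(v')$ of each such face is non-degenerate in degree $k-1$, since $|v'|^\ast=k-1$, so no part of $d_i$ is killed in the quotient.

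To make this rigorous, and to be sure condition $(ii)$ of Definition~\ref{definition:Khovanov_simplicial_object} is consistent, I would identify $\mathcal{S}(F)$ with $\Gamma(\overline{\mathcal{C}}(F))$. Here $\Gamma$ is the Dold--Kan denormalization, which exists in any additive, or idempotent-complete, category. Its $k$-simplices are $\bigoplus_{[k]\twoheadrightarrow[m]}\overline{\mathcal{C}}_m(F)$, indexed by surjections, and these correspond bijectively to the sequences $\theta=s_{i_1}\cdots s_{i_{k-m}}$ with decreasing indices. The simplicial identities then hold automatically. The statement $\widetilde{\mathbf{N}}\Gamma\cong\mathrm{id}$ is the Dold--Kan theorem, and $\widetilde{\mathbf{N}}$ agrees with the Moore normalization $\mathbf{N}$ up to the standard natural isomorphism $N_k\cong X_k/D_k$.

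\textbf{Main obstacle.} The delicate point is matching the given face formulas with those of $\Gamma$. In $\Gamma$, the only non-degenerate face that survives is $d_k$ (or $d_0$, depending on convention), acting by the differential; the others vanish on the normalized part. The paper's prescription instead spreads the differential over $d_1,\dots,d_k$. I would handle this by working purely at the level of the quotient complex $X_k/D_k$. The alternating-sum differential there is independent of how the differential is distributed among the faces, as long as the simplicial identities hold. So I only need the normalized differential computed above, together with the quotient identification.

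For part $(ii)$, the functor $\operatorname{Hom}_{\mathcal{A}_B}(M,-)$ is additive. It therefore commutes with finite direct sums and with the decomposition $X_k=N_kX\oplus D_kX$, and it sends degeneracies to degeneracies. Applied to $\mathcal{S}(F)$, it yields the simplicial $\mathbf{k}$-module $\Gamma(\operatorname{Hom}(M,\overline{\mathcal{C}}(F)))$. Applying $\mathbf{N}$, in the form of the degenerate quotient which is canonically isomorphic to the kernel model, and then using part $(i)$ gives
\[
\mathbf{N}(\operatorname{Hom}(M,\mathcal{S}(F)))\cong\operatorname{Hom}(M,\widetilde{\mathbf{N}}\mathcal{S}(F))\cong\operatorname{Hom}(M,\overline{\mathcal{C}}(F)).
\]
Naturality in $M$ is clear, because every map involved is given by postcomposition.
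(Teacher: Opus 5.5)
Your proof is correct and follows the paper's argument: you quotient $\mathcal{S}_k(F)$ by its degenerate summands to get $\bigoplus_{|v|^\ast=k}F(v)$, compute the induced differential as $\sum_{i=1}^{k}(-1)^i(-1)^{\xi_i(v)+i}F(e_i)=\sum_{i}(-1)^{\xi_i(v)}F(e_i)$, and for $(ii)$ use additivity of $\operatorname{Hom}_{\mathcal{A}_B}(M,-)$ together with the canonical isomorphism between the quotient and kernel normalizations of a simplicial module. The detour through $\Gamma(\overline{\mathcal{C}}(F))$ is not needed, and as you observe it is not a literal identification given the prescribed face maps, so drop the sentence in $(ii)$ saying that $\operatorname{Hom}(M,\mathcal{S}(F))$ ``yields'' $\Gamma(\operatorname{Hom}(M,\overline{\mathcal{C}}(F)))$; your final chain of isomorphisms stands without it.
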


\begin{proof}
$(i)$ The quotient of $\mathcal{S}_k(F)$ by degenerate summands is $\bigoplus_{|v|^{\ast}=k}F(v)$. The induced differential restricts to $\sum_{i=1}^{k}(-1)^i(-1)^{\xi_i(v)+i}F(e_i)=\sum_{i}(-1)^{\xi_i(v)}F(e_i)$, which is exactly the differential of $\overline{\mathcal{C}}(F)$.

$(ii)$ Since $\operatorname{Hom}_{\mathcal{A}_B}(M,-)$ is additive, it preserves the direct sum decomposition into non-degenerate and degenerate summands. For simplicial modules, quotient by degenerate elements agrees with the intersection-of-kernels normalization, so
\[
\mathbf{N}\big(\operatorname{Hom}_{\mathcal{A}_B}(M,\mathcal{S}(F))\big)_k
=\operatorname{Hom}_{\mathcal{A}_B}(M,\mathcal{S}_k(F))/\operatorname{Hom}_{\mathcal{A}_B}(M,\mathcal{D}_k)
\cong\operatorname{Hom}_{\mathcal{A}_B}\big(M,\overline{\mathcal{C}}_k(F)\big).
\]
Differentials correspond by the same additivity.

$(ii)$ Since $\operatorname{Hom}_{\mathcal{A}_B}(M,-)$ is additive, it preserves the canonical direct sum decomposition $\mathcal{S}_k(F) = \overline{\mathcal{C}}_k(F) \oplus \mathcal{D}_k(F)$ into non-degenerate and degenerate summands. The degenerate submodule of $\operatorname{Hom}_{\mathcal{A}_B}(M,\mathcal{S}(F))$ coincides with $\operatorname{Hom}_{\mathcal{A}_B}(M,\mathcal{D}_k(F))$. By quotient normalization of simplicial modules, one has
\[
\widetilde{\mathbf{N}}\big(\operatorname{Hom}(M,\mathcal{S}(F))\big)_k \cong \operatorname{Hom}_{\mathcal{A}_B}\big(M,\overline{\mathcal{C}}_k(F)\big).
\]
For any simplicial module the intersection-of-kernels normalization $\mathbf{N}$ is canonically isomorphic to the quotient normalization $\widetilde{\mathbf{N}}$, so the same isomorphism holds for $\mathbf{N}$. The differentials correspond under this identification, as both are induced by the $0$-th face map.
\end{proof}

\begin{theorem}\label{theorem:representation}
The Khovanov simplicial presheaf $\mathbf{Kh}(F)$ is represented by the Khovanov simplicial object $\mathcal{S}(F)\in s\mathcal{A}_B$ in the homotopy category. Explicitly, for any object $M \in \mathcal{A}_B$, there is a natural isomorphism of simplicial $\mathbf{k}$-modules
\[
\mathbf{Kh}(F)\cong\mathbf{h}_{\mathcal{S}(F)}\quad\text{in }\operatorname{Ho}\big(s\mathbf{Mod}_{\mathbf{k}}^{\mathcal{A}_B^{\mathrm{op}}}\big).
\]
Equivalently, for every $M\in\mathcal{A}_B$ there is a natural weak equivalence
\[
\mathbf{Kh}(F)(M)\simeq\operatorname{Hom}_{\mathcal{A}_B}\big(M,\mathcal{S}(F)\big).
\]
In particular, the Khovanov homotopy type of $F$ is the weak equivalence class of the representable presheaf $\mathbf{h}_{\mathcal{S}(F)}$.
\end{theorem}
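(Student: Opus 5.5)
The plan is to reduce the statement to a comparison of chain complexes, objectwise in $M$, and then to lift that comparison back to simplicial modules using the Dold--Kan correspondence. Both $\mathbf{Kh}(F)$ and $\mathbf{h}_{\mathcal{S}(F)}=\operatorname{Hom}_{\mathcal{A}_B}(-,\mathcal{S}(F))$ are objects of $s\mathbf{Mod}_{\mathbf{k}}^{\mathcal{A}_B^{\mathrm{op}}}$. By Proposition~\ref{proposition:global_model}, weak equivalences there are detected objectwise. It therefore suffices to produce, for each $M\in\mathcal{A}_B$, a weak equivalence
\[
\mathbf{Kh}(F)(M)\simeq\operatorname{Hom}_{\mathcal{A}_B}(M,\mathcal{S}(F))
\]
that is natural in $M$, and then to assemble these into a zigzag of presheaf maps.

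\textbf{Step 1: normalize both sides.} Fix $M$.
\begin{itemize}
\item For the left side, Lemma~\ref{lemma:objectwise_tcof} gives $\mathbf{N}(\mathbf{Kh}(F)(M))\cong\operatorname{tcof}_{\cube^n}(X^M)$ in $\operatorname{Ho}(\mathbf{Ch}_{\ge0}(\mathbf{Mod}_{\mathbf{k}}))$. Lemma~\ref{lemma:total_complex} then identifies this with $\operatorname{Tot}(X^M)$.
\item Since $\operatorname{Hom}_{\mathcal{A}_B}(M,-)$ is additive, $\operatorname{Tot}(X^M)=\operatorname{Hom}_{\mathcal{A}_B}(M,\overline{\mathcal{C}}(F))$ on the nose. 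The summands $\operatorname{Hom}(M,F(v))$ in coweight degree $k$ and the signed differentials $(-1)^{\xi_i(v)}F(e_i)$ agree termwise.
\item For the right side, Lemma~\ref{lemma:dold_kan_compatibility}(ii) gives $\mathbf{N}(\operatorname{Hom}_{\mathcal{A}_B}(M,\mathcal{S}(F)))\cong\operatorname{Hom}_{\mathcal{A}_B}(M,\overline{\mathcal{C}}(F))$.
\end{itemize}
Combining these, the normalized complexes of the two sides are isomorphic in the homotopy category of chain complexes.

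\textbf{Step 2: lift back to simplicial modules.} By Lemma~\ref{lemma:DK_transport}(i), $\overline{\mathbf{N}}$ is an equivalence of homotopy categories. An isomorphism $\overline{\mathbf{N}}A\cong\overline{\mathbf{N}}B$ therefore yields $A\cong B$ in $\operatorname{Ho}(s\mathbf{Mod}_{\mathbf{k}})$, i.e.\ a zigzag of weak equivalences. Concretely, I would use the counit $\Gamma\mathbf{N}\cong\mathrm{id}$ and the fact that every simplicial module is fibrant and every chain complex with $\Gamma$ applied is cofibrant enough. For simplicial $\mathbf{k}$-modules this is harmless: the needed zigzag exists with cofibrant replacements.

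\textbf{Step 3: naturality in $M$, which is the main obstacle.} An objectwise isomorphism in $\operatorname{Ho}(s\mathbf{Mod}_{\mathbf{k}})$ for each $M$ does not automatically give an isomorphism in $\operatorname{Ho}(s\mathbf{Mod}_{\mathbf{k}}^{\mathcal{A}_B^{\mathrm{op}}})$. One needs a zigzag of actual presheaf maps. To get it, I would rerun the argument globally instead of pointwise.
\begin{itemize}
\item Work in $\mathbf{Ch}_{\ge0}(\mathbf{k})^{\mathcal{A}_B^{\mathrm{op}}}$ with the projective structure. Apply the objectwise normalization $\mathcal{N}$ to the defining homotopy pushout of $\mathbf{Kh}(F)$.
\item Model the homotopy colimit by the Bousfield--Kan coend (or directly by the iterated cone of Proposition~\ref{proposition:iterated_cofiber}). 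This gives an explicit presheaf of complexes mapping by a natural quasi-isomorphism to the presheaf $M\mapsto\operatorname{Tot}(X^M)$.
\item The comparison maps of Lemma~\ref{lemma:total_complex} (iterated mapping cones, re-signed by the fixed automorphism $\rho_v=(-1)^{\operatorname{inv}(v)}$) depend only on the cube combinatorics and not on $M$. They are therefore strictly natural in $M$.
\end{itemize}
This produces a natural chain of quasi-isomorphisms of presheaves of complexes
\[
\mathcal{N}(\mathbf{Kh}(F))\simeq\mathbf{h}_{\overline{\mathcal{C}}(F)}\cong\mathcal{N}(\mathbf{h}_{\mathcal{S}(F)}).
\]
Applying the objectwise $\Gamma$, and using that Dold--Kan is an equivalence of categories applied objectwise, transports this to a zigzag of objectwise weak equivalences in $s\mathbf{Mod}_{\mathbf{k}}^{\mathcal{A}_B^{\mathrm{op}}}$. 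That zigzag is the claimed isomorphism $\mathbf{Kh}(F)\cong\mathbf{h}_{\mathcal{S}(F)}$.

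The delicate bookkeeping is checking that the sign conventions in the iterated cones and in the face maps of $\mathcal{S}(F)$ (the factor $(-1)^{\xi_i(v)+i}$) are chosen uniformly in $M$. Since all maps are postcompositions with fixed cobordisms in $\mathcal{A}_B$, I expect this to go through.
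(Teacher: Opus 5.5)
Your argument is essentially the paper's proof: you use the same chain of identifications (Lemma~\ref{lemma:objectwise_tcof}, Lemma~\ref{lemma:total_complex}, additivity of $\operatorname{Hom}_{\mathcal{A}_B}(M,-)$, Lemma~\ref{lemma:dold_kan_compatibility}(ii)), lift it back through Dold--Kan, and assemble it via objectwise detection of weak equivalences (Proposition~\ref{proposition:global_model}). The one difference is your Step~3, where you build the zigzag globally in presheaves of complexes from $M$-independent comparison maps; this makes explicit the naturality in $M$ that the paper only asserts when it says the objectwise zigzag ``assembles'', so it tightens the paper's argument rather than taking a different route.
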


\begin{proof}
Fix $M\in\mathcal{A}_B$ and set $X:=\operatorname{Hom}_{\mathcal{A}_B}(M,F(-))$. Combining Lemma~\ref{lemma:objectwise_tcof}, Lemma~\ref{lemma:total_complex}, additivity of $\operatorname{Hom}_{\mathcal{A}_B}(M,-)$, and Lemma~\ref{lemma:dold_kan_compatibility}$(ii)$, we obtain a chain of natural quasi-isomorphisms
\begin{align*}
\mathbf{N}\big(\mathbf{Kh}(F)(M)\big)
&\cong\operatorname{tcof}_{\cube^{n}}(X)\simeq\operatorname{Tot}(X)\\
&\cong\operatorname{Hom}_{\mathcal{A}_B}\big(M,\operatorname{Tot}(\mat\circ F)\big)=\operatorname{Hom}_{\mathcal{A}_B}\big(M,\overline{\mathcal{C}}(F)\big)\\
&\cong\mathbf{N}\big(\operatorname{Hom}_{\mathcal{A}_B}(M,\mathcal{S}(F))\big).
\end{align*}
Lemma~\ref{lemma:DK_transport} says that $\mathbf{N}$ detects weak equivalences. Thus we have $\mathbf{Kh}(F)(M)\simeq\operatorname{Hom}_{\mathcal{A}_B}(M,\mathcal{S}(F))$ for each $M$. By Proposition~\ref{proposition:global_model}, weak equivalences in the projective model structure are detected objectwise, so the zig-zag of objectwise weak equivalences assembles to an isomorphism $\mathbf{Kh}(F)\cong\mathbf{h}_{\mathcal{S}(F)}$ in the homotopy category.
\end{proof}

\subsubsection{Homotopical embedding and algebraic representation theorem}

The additive category $\mathcal{A}_B$ is not abelian, as it lacks arbitrary kernels and cokernels. Consequently, $\mathbf{Ch}_{\ge 0}(\mathcal{A}_B)$ does not admit a standard projective model structure with quasi-isomorphisms as weak equivalences. In this subsection we introduce a compatible Hurewicz model structure on chain complexes, construct a fully faithful embedding of its homotopy category into the homotopy category of simplicial presheaves, and give a purely algebraic characterization of the Khovanov homotopy type.

\begin{proposition}[\cite{buhler2010exact,may2012more}] \label{proposition:hurewicz_model_structure}
There exists a canonical Hurewicz model structure $(\mathcal{W}, \mathcal{C}, \mathcal{F})$ on $\mathbf{Ch}_{\ge 0}(\mathcal{A}_B)$ with
\begin{enumerate}[label=$(\roman*)$]
    \item Weak equivalences $\mathcal{W}$: strict chain homotopy equivalences.
    \item Fibrations $\mathcal{F}$: degreewise split epimorphisms with the right lifting property with respect to all representable contractible inclusions.
    \item Cofibrations $\mathcal{C}$: morphisms with the left lifting property with respect to all acyclic fibrations $\mathcal{F} \cap \mathcal{W}$.
\end{enumerate}
\end{proposition}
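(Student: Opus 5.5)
The plan is to exhibit $\mathbf{Ch}_{\ge 0}(\mathcal{A}_B)$ as a category of chain complexes over the additive category $\mathcal{A}_B$, equipped with its split exact structure, and then invoke the general theory of exact (Frobenius) model structures cited from \cite{buhler2010exact,may2012more}. First, I endow $\mathcal{A}_B$ with the split exact structure: conflations are exactly the split short exact sequences $M'\to M'\oplus M''\to M''$. This is a legitimate exact structure on any additive category, and it requires no kernels or cokernels beyond those of split maps. The resulting exact category is Frobenius, because every object is both projective and injective. It is also weakly idempotent complete at the level needed. If this step turns out to be delicate, I would replace $\mathcal{A}_B$ by its Karoubi envelope, which does not change chain homotopy types.

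Next, I pass to chain complexes with the degreewise split exact structure. The category $\mathbf{Ch}_{\ge 0}(\mathcal{A}_B)$, with degreewise split conflations, is again Frobenius. Its projective–injective objects are the contractible complexes, namely finite sums of the disks $D^k(M)\colon M\xrightarrow{\mathrm{id}}M$. Moreover, its stable category is precisely the homotopy category $\mathbf{K}_{\ge 0}(\mathcal{A}_B)$. Here one handles the bounded-below truncation by using the disks $D^k(M)$ for $k\ge1$ together with $S^0(M)$ in degree $0$, so that the cone and path objects stay in nonnegative degrees.

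With this in place, I apply the Frobenius model structure theorem (Gillespie/Bühler style). It gives a model structure in which:
\begin{itemize}
\item cofibrations are the inflations, i.e. degreewise split monomorphisms;
\item fibrations are the deflations that have the right lifting property with respect to $0\to D^k(M)$;
\item weak equivalences are the stable equivalences.
\end{itemize}
I then identify each class with the corresponding item in the statement. For fibrations, a degreewise split epimorphism $p\colon X\to Y$ has the right lifting property against the representable contractible inclusions $0\to D^k(M)$ and $S^{k-1}(M)\to D^k(M)$ exactly when it satisfies the lifting conditions in $(ii)$; this unwinds to the surjectivity of $\operatorname{Hom}(M,X_k)\to\operatorname{Hom}(M,Y_k)$ compatible with boundaries. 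Item $(iii)$ is then the standard definition of cofibrations by lifting, once the acyclic fibrations are identified as split epimorphisms with contractible split kernel.

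Finally, I check that the weak equivalences are exactly the chain homotopy equivalences. A map factors as an inflation into its mapping cylinder followed by a deflation that is a homotopy equivalence. A map is a stable equivalence iff it becomes an isomorphism in the stable category, and since that stable category is $\mathbf{K}_{\ge 0}(\mathcal{A}_B)$, this happens iff the map is a chain homotopy equivalence. The 2-out-of-3 property and the retract axiom are then immediate.

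I expect the main obstacle to be the factorization axioms. Constructing the cylinder and cocylinder factorizations $X\to \mathrm{Cyl}(f)\to Y$ and $X\to \mathrm{Cocyl}(f)\to Y$ requires only finite direct sums and cones, so they exist in $\mathcal{A}_B$. However, I must verify that the first map of each factorization is an inflation that is acyclic where required, and do this without assuming that kernels exist. My first move is to write these maps down explicitly as degreewise split inclusions and projections with explicit contracting homotopies, so that every lifting problem reduces to block-matrix computations in $\mathcal{A}_B$.
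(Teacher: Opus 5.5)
The paper gives no argument for this statement; it only cites Bühler and May--Ponto, whose settings are exact categories and unbounded complexes of modules. The connective case is therefore exactly what has to be checked, and your proposal breaks there. You treated the bounded-below truncation as a routine aside, but with homological grading (as in the paper) $\mathbf{Ch}_{\ge 0}(\mathcal{A}_B)$ with degreewise split conflations is \emph{not} Frobenius.

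Consider $S^0(M)$. It is projective, because a chain map $S^0(M)\to Y$ is just an arbitrary morphism $M\to Y_0$. It is not injective: the degreewise split inclusion $S^0(M)\to D^1(M)$ has no chain retraction, since a retraction would need both $r_0=\mathrm{id}_M$ and $r_0\circ\mathrm{id}_M=d^{S^0(M)}_1r_1=0$.

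Adding $S^0(M)$ to the trivial objects, as you propose, does not rescue this. Then $\mathrm{id}_{S^0(M)}$ factors through a trivial object, so the stable category becomes a proper quotient of $\mathbf{K}_{\ge 0}(\mathcal{A}_B)$, because $S^0(M)$ is not contractible for $M\neq 0$. Stable equivalences are then no longer chain homotopy equivalences. If you leave $S^0(M)$ out instead, projectives and injectives differ and the Frobenius model-structure theorem does not apply. Either way, your identification of $\mathcal{W}$ has no foundation.

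Nor can the fibrations of $(ii)$ be kept under any repair. If fibrations must be split epimorphisms in every degree including $0$, the factorization axiom fails for $0\to S^0(M)$ with $M\neq 0$. In any factorization $0\to Z\xrightarrow{p}S^0(M)$ with $0\to Z$ a homotopy equivalence, $Z$ is contractible. Hence $\mathrm{id}_{Z_0}=d_1h_0$, and
$p_0=p_0d_1h_0=d^{S^0(M)}_1p_1h_0=0$, which is not split epi.

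There are two workable versions:
\begin{itemize}
\item Run your Frobenius argument on unbounded complexes, where (given weak idempotent completeness) the contractible complexes really are the projective--injectives.
\item Stay connective, but take as fibrations the maps with the right lifting property against $0\to D^k(M)$ for $k\ge 1$. These are the split epimorphisms in positive degrees only, and they are the covering-homotopy fibrations in that setting.
\end{itemize}

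Two further corrections. First, the inclusions $S^{k-1}(M)\to D^k(M)$ must not enter the test for fibrations. Their cokernel $S^k(M)$ is not contractible, and lifting against all of them forces $\ker p$ to be contractible, so that condition characterizes $\mathcal{F}\cap\mathcal{W}$, not $\mathcal{F}$. Second, $\mathcal{A}_B$ lacks kernels and cokernels, so $\mathbf{Ch}_{\ge0}(\mathcal{A}_B)$ lacks general pullbacks and pushouts. Any conclusion can therefore only be a model structure in Gillespie's exact-category sense, without the (co)limit axiom, and you should say so. Your Karoubi-envelope fallback would also put the structure on a different category from the one in the statement.
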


The resulting homotopy category is denoted $\mathbf{K}_{\ge 0}(\mathcal{A}_B)$, the homotopy category of chain complexes up to strict chain homotopy.

\begin{remark}
The matrix closure of $\mathcal{A}_B$ ensures it is weakly idempotent complete, so $\mathbf{K}_{\ge 0}(\mathcal{A}_B)$ is pre-triangulated, with distinguished triangles given by standard algebraic mapping cones.
\end{remark}

Recall the classical Dold--Kan Quillen equivalence $\mathbf{N}: s\mathbf{Mod}_{\mathbf{k}} \rightleftarrows \mathbf{Ch}_{\ge 0}(\mathbf{k}): \Gamma$, with normalization $\mathbf{N}$ and denormalization $\Gamma$. Applied objectwise, it induces an equivalence of functor categories
\[
\mathcal{N}: s\mathbf{Mod}_{\mathbf{k}}^{\mathcal{A}_B^{\mathrm{op}}} \xrightarrow{\cong} \mathbf{Ch}_{\ge 0}(\mathbf{k})^{\mathcal{A}_B^{\mathrm{op}}},
\]
where $\mathbf{Ch}_{\ge 0}(\mathbf{k})^{\mathcal{A}_B^{\mathrm{op}}}$ carries the projective model structure with objectwise quasi-isomorphisms as weak equivalences \cite{hirschhorn2003model}. The degreewise Yoneda embedding
\[
\mathbf{h}_{(-)}: \mathbf{Ch}_{\ge 0}(\mathcal{A}_B) \to \mathbf{Ch}_{\ge 0}(\mathbf{k})^{\mathcal{A}_B^{\mathrm{op}}},
\qquad
\mathbf{h}_X = \operatorname{Hom}_{\mathcal{A}_B}(-, X_\bullet)
\]
sends each chain complex to its representable chain presheaf. Composing with the objectwise inverse $\mathcal{N}^{-1}$ yields a functor
\[
\mathcal{N}^{-1}\circ\mathbf{h}_{(-)}: \mathbf{Ch}_{\ge 0}(\mathcal{A}_B) \to s\mathbf{Mod}_{\mathbf{k}}^{\mathcal{A}_B^{\mathrm{op}}}.
\]

\begin{lemma}\label{lemma:representable_cofibrant}
In the projective model structure on $\operatorname{Fun}_{\mathbf{k}}(\mathcal{A}_B^{\mathrm{op}},\mathbf{Ch}_{\ge0}(\mathbf{k}))$, for every $X_\bullet\in\mathbf{Ch}_{\ge0}(\mathcal{A}_B)$ the representable presheaf $\mathbf{h}_{X_\bullet}$ is cofibrant.
\end{lemma}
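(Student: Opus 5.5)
The plan is to show that $\mathbf{h}_{X_\bullet}$ is built from $0$ by a countable sequence of pushouts along generating cofibrations, followed by a sequential colimit. This makes it a cell complex, hence cofibrant.

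First recall how the projective model structure on $\operatorname{Fun}_{\mathbf{k}}(\mathcal{A}_B^{\mathrm{op}},\mathbf{Ch}_{\ge0}(\mathbf{k}))$ is generated. Its generating cofibrations are the maps
\[
\mathbf{h}_M\otimes_{\mathbf{k}}\big(S^{k-1}\to D^k\big),\qquad M\in\mathcal{A}_B,\ k\ge 0,
\]
with the convention $S^{-1}=0$ and $D^0=S^0$. They are obtained by applying the left adjoint $\mathbf{h}_M\otimes(-)$ of $\operatorname{ev}_M$ to the generating cofibrations of $\mathbf{Ch}_{\ge0}(\mathbf{k})$. This is the $\mathbf{k}$-linear enriched Yoneda lemma:
\[
\operatorname{Hom}\big(\mathbf{h}_M\otimes C,\mathcal{X}\big)\cong\operatorname{Hom}_{\mathbf{Ch}}\big(C,\mathcal{X}(M)\big).
\]
Each degree $X_k$ of the complex is a single object of $\mathcal{A}_B$, since it is a finite formal direct sum. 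Hence $\mathbf{h}_{X_k}$ is itself representable, and no infinite sums are needed.

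Next, filter $\mathbf{h}_{X_\bullet}$ by its brutal truncations $\mathbf{h}_{X^{\le k}}$, where $X^{\le k}$ agrees with $X_\bullet$ in degrees $\le k$ and is $0$ above. Then $\mathbf{h}_{X^{\le 0}}=\mathbf{h}_{X_0}\otimes S^0$, which is a pushout of $0\to\mathbf{h}_{X_0}\otimes S^0$ and hence cofibrant.

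For $k\ge1$ the key step is to exhibit a pushout square
\[
\xymatrix{
\mathbf{h}_{X_k}\otimes S^{k-1} \ar[r]^-{\phi_k} \ar[d] & \mathbf{h}_{X^{\le k-1}} \ar[d] \\
\mathbf{h}_{X_k}\otimes D^{k} \ar[r] & \mathbf{h}_{X^{\le k}}
}
\]
Here the attaching map $\phi_k$ corresponds under Yoneda to the differential $\partial_k\colon X_k\to X_{k-1}$. It is a chain map precisely because $\partial_{k-1}\partial_k=0$. The pushout is computed objectwise, so it suffices to check the square after evaluating at each $M$. There it becomes the standard pushout of chain complexes: attaching the cone on $\operatorname{Hom}(M,X_k)[k-1]$ along $\operatorname{Hom}(M,\partial_k)$ to $\operatorname{Hom}(M,X^{\le k-1})$.

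One verifies this degree by degree. In degree $k$ the pushout adds $\operatorname{Hom}(M,X_k)$. In degree $k-1$ the copy of $\operatorname{Hom}(M,X_k)$ coming from $S^{k-1}$ is identified with its image under $\partial_k$. All differentials match those of $\mathbf{h}_{X^{\le k}}(M)$. This bookkeeping, in particular the degree-$(k-1)$ identification, is the step I expect to need the most care; the rest is formal.

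Finally, $\mathbf{h}_{X_\bullet}=\operatorname{colim}_k\mathbf{h}_{X^{\le k}}$. This holds because colimits are objectwise and, in each fixed degree, the sequence stabilizes. Thus $0\to\mathbf{h}_{X_\bullet}$ is a transfinite composite of pushouts of generating cofibrations, i.e.\ a relative cell complex, so $\mathbf{h}_{X_\bullet}$ is cofibrant.

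As a cross-check, one can argue instead via the identification of $\mathbf{k}$-linear presheaves with modules over the ringoid $\mathcal{A}_B$. Each $\mathbf{h}_{X_k}$ is projective, because $\operatorname{Hom}(\mathbf{h}_{X_k},-)=\operatorname{ev}_{X_k}$ is exact. In $\mathbf{Ch}_{\ge0}$ of modules, a complex that is degreewise projective is projectively cofibrant, which gives the same conclusion.
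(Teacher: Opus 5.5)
Your proposal is correct and follows essentially the same argument as the paper: you filter $\mathbf{h}_{X_\bullet}$ by its brutal truncations, obtain each stage as the pushout of $\mathbf{h}_{X_k}\otimes(S^{k-1}\to D^k)$ along the attaching map induced by $\partial_k$, and conclude by passing to the sequential colimit. Your degree-by-degree check of the pushout (in degree $k-1$, pushing out along the identity of $\mathbf{h}_{X_k}$ just returns $\mathbf{h}_{X_{k-1}}$) and your degreewise-projective cross-check are both sound, and they supply details the paper leaves implicit.
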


\begin{proof}
The projective model structure is cofibrantly generated by maps $\ell_U(S^{n-1}\hookrightarrow D^n)$ for all $U\in\mathcal{A}_B$ and $n\ge0$, where $\ell_U(C)=\mathbf{h}_U\otimes_{\mathbf{k}}C$ is left adjoint to evaluation at $U$ \cite[Thm.~11.6.1]{hirschhorn2003model}.

Let $P^{(k)}$ denote the $k$-th truncation of $\mathbf{h}_{X_\bullet}$. By induction on $k$, $P^{(k)}$ is cofibrant: it is the pushout
\[
P^{(k)} = P^{(k-1)} \cup_{\ell_{X_k}(S^{k-1})} \ell_{X_k}(D^k)
\]
attached along the differential $\mathbf{h}_{\partial_k}$, and pushouts of generating cofibrations are cofibrations. Thus $\mathbf{h}_{X_\bullet} = \operatorname{colim}_{k\ge 0} P^{(k)}$ is cofibrant, as a transfinite composition of cofibrations.
\end{proof}

\begin{theorem}\label{theorem:derived_enriched_embedding}
The assignment $X_\bullet \mapsto \mathcal{N}^{-1}(\mathbf{h}_{X_\bullet})$ sends strict chain homotopy equivalences to objectwise weak equivalences, and therefore descends to a fully faithful functor
\[
\mathcal{J}: \mathbf{K}_{\ge 0}(\mathcal{A}_B) \hookrightarrow \operatorname{Ho}\left( s\mathbf{Mod}_{\mathbf{k}}^{\mathcal{A}_B^{\mathrm{op}}} \right).
\]
\end{theorem}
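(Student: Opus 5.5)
Since $\mathcal{N}$ is an equivalence of categories that identifies objectwise weak equivalences of simplicial modules with objectwise quasi-isomorphisms (Lemma~\ref{lemma:DK_transport} applied objectwise), it induces an equivalence $\operatorname{Ho}(s\mathbf{Mod}_{\mathbf{k}}^{\mathcal{A}_B^{\mathrm{op}}})\simeq\operatorname{Ho}(\mathbf{Ch}_{\ge0}(\mathbf{k})^{\mathcal{A}_B^{\mathrm{op}}})$. It therefore suffices to prove the statement for $X_\bullet\mapsto\mathbf{h}_{X_\bullet}$ with target the projective model structure on chain presheaves. The plan has two steps: homotopy invariance, and then full faithfulness by computing derived mapping sets.

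For homotopy invariance, $\mathbf{h}_{(-)}$ is degreewise the additive functor $\operatorname{Hom}_{\mathcal{A}_B}(-,X_k)$. It therefore sends a chain homotopy $s$ with $\partial s+s\partial=\mathrm{id}-gf$ to the chain homotopy $\mathbf{h}_s$ with the same identity. Hence a strict chain homotopy equivalence $f$ goes to an objectwise chain homotopy equivalence, and in particular an objectwise quasi-isomorphism. By the universal property of localization, this yields a functor $\mathcal{J}$ on $\mathbf{K}_{\ge0}(\mathcal{A}_B)$. (Here $\mathbf{K}_{\ge0}(\mathcal{A}_B)$ has morphisms equal to chain maps modulo homotopy, because in the Hurewicz structure of Proposition~\ref{proposition:hurewicz_model_structure} all objects are fibrant and cofibrant.)

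For full faithfulness, I would use Lemma~\ref{lemma:representable_cofibrant}: $\mathbf{h}_{X}$ is cofibrant. Every object of the projective structure on $\mathbf{Ch}_{\ge0}(\mathbf{k})^{\mathcal{A}_B^{\mathrm{op}}}$ is fibrant, since fibrations are objectwise surjections in positive degrees and the map to $0$ qualifies. Consequently
\[
\operatorname{Hom}_{\operatorname{Ho}}(\mathbf{h}_X,\mathbf{h}_Y)\cong\operatorname{Hom}(\mathbf{h}_X,\mathbf{h}_Y)/\!\sim,
\]
where $\sim$ is homotopy through a cylinder object. For chain presheaves the standard cylinder $\mathbf{h}_X\otimes I$ is built from the interval complex $I=N(\mathbf{k}[\Delta^1])$, and it is a good cylinder for cofibrant $\mathbf{h}_X$. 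With it, left homotopy coincides with chain homotopy of presheaf maps. The additive Yoneda lemma, applied degreewise, gives $\operatorname{Hom}(\mathbf{h}_{X_k},\mathbf{h}_{Y_l})\cong\operatorname{Hom}_{\mathcal{A}_B}(X_k,Y_l)$. Chain maps $\mathbf{h}_X\to\mathbf{h}_Y$ are thus exactly $\mathbf{h}_f$ for chain maps $f:X\to Y$, and chain homotopies between them are exactly $\mathbf{h}_s$ for chain homotopies $s$ in $\mathcal{A}_B$. Passing to quotients gives the bijection $\operatorname{Hom}_{\mathbf{K}_{\ge0}(\mathcal{A}_B)}(X,Y)\cong\operatorname{Hom}_{\operatorname{Ho}}(\mathcal{J}X,\mathcal{J}Y)$, and it is compatible with composition.

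The main obstacle is the middle identification. One has to justify that the homotopy relation in the projective model structure, on maps out of a cofibrant object into a fibrant one, is computed by the algebraic cylinder $\mathbf{h}_X\otimes I$ and not by some other path or cylinder object. I would handle it as follows. First check that $\mathbf{h}_X\oplus\mathbf{h}_X\to\mathbf{h}_X\otimes I$ is a cofibration, which holds because it is $\mathbf{h}_X\otimes(\text{cofibration }\mathbf{k}\oplus\mathbf{k}\to I)$ and the projective structure is monoidal over $\mathbf{Ch}_{\ge0}(\mathbf{k})$. Second, check that $\mathbf{h}_X\otimes I\to\mathbf{h}_X$ is a weak equivalence. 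Then invoke the standard fact that for cofibrant source and fibrant target, any single good cylinder detects the homotopy relation. The remaining verification is that maps out of $\mathbf{h}_X\otimes I$ correspond, again by Yoneda, to pairs of maps together with a degree-one homotopy.
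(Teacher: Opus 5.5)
Your proof is correct and has the same skeleton as the paper's. Homotopy invariance comes from additivity of $\operatorname{Hom}_{\mathcal{A}_B}(M,-)$, and full faithfulness comes from bifibrancy of the representables (Lemma~\ref{lemma:representable_cofibrant} together with fibrancy of all objects) combined with the Yoneda identification of chain maps and chain homotopies. The difference is that you transport the problem to chain presheaves through the equivalence $\mathcal{N}$ and detect the homotopy relation with the explicit good cylinder $\mathbf{h}_X\otimes I$. The paper instead stays on the simplicial side: it transfers cofibrancy through the left Quillen functor $\mathcal{N}^{-1}$, then asserts without justification that chain homotopies correspond to simplicial homotopies and that these compute morphisms in $\operatorname{Ho}$. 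Your route makes explicit exactly the step the paper leaves unargued.
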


\begin{proof}
First, if $f: X_\bullet \to Y_\bullet$ is a strict chain homotopy equivalence, then $\operatorname{Hom}_{\mathcal{A}_B}(M,f)$ is a chain homotopy equivalence of $\mathbf{k}$-modules for every $M$. By the Dold–Kan correspondence this corresponds to a simplicial homotopy equivalence, so $\mathcal{N}^{-1}(\mathbf{h}_f)$ is an objectwise weak equivalence, hence a weak equivalence in the projective model structure by Proposition~\ref{proposition:global_model}. Thus $\mathcal{J}$ is well defined on homotopy categories.

We now prove full faithfulness. Fix $X_\bullet, Y_\bullet \in \mathbf{Ch}_{\ge 0}(\mathcal{A}_B)$. By the degreewise Yoneda lemma, chain maps $f: X_\bullet \to Y_\bullet$ correspond bijectively to natural transformations $\mathbf{h}_f: \mathbf{h}_{X_\bullet} \Rightarrow \mathbf{h}_{Y_\bullet}$, and chain homotopies correspond bijectively to natural chain homotopies between the presheaves. Applying the Dold–Kan correspondence objectwise, these natural chain homotopies correspond bijectively to natural simplicial homotopies between $\mathcal{N}^{-1}(\mathbf{h}_{X_\bullet})$ and $\mathcal{N}^{-1}(\mathbf{h}_{Y_\bullet})$.

Every object of $s\mathbf{Mod}_{\mathbf{k}}^{\mathcal{A}_B^{\mathrm{op}}}$ is fibrant, as simplicial modules are Kan complexes and fibrations are detected objectwise. By Lemma~\ref{lemma:representable_cofibrant}, $\mathbf{h}_{X_\bullet}$ is cofibrant in the chain presheaf category. Since $\mathcal{N}^{-1}$ is a left Quillen functor for the projective model structures (as the objectwise left adjoint of the Dold–Kan Quillen equivalence), it preserves cofibrant objects, so $\mathcal{N}^{-1}(\mathbf{h}_{X_\bullet})$ is cofibrant.

For the bifibrant pair, homotopy classes of morphisms in the homotopy category agree with simplicial homotopy classes of strict morphisms:
\[
\operatorname{Hom}_{\operatorname{Ho}}\big(\mathcal{N}^{-1}(\mathbf{h}_{X_\bullet}), \mathcal{N}^{-1}(\mathbf{h}_{Y_\bullet})\big)
\cong \operatorname{Mor}\big(\mathcal{N}^{-1}(\mathbf{h}_{X_\bullet}),\, \mathcal{N}^{-1}(\mathbf{h}_{Y_\bullet})\big)/\sim,
\]
where $\sim$ denotes simplicial homotopy. This identifies the homotopy-category Hom set with chain homotopy classes of maps $X_\bullet \to Y_\bullet$, i.e. $\operatorname{Hom}_{\mathbf{K}_{\ge 0}(\mathcal{A}_B)}(X_\bullet, Y_\bullet)$.
\end{proof}

We now translate the representability result of the previous subsection to the chain complex setting. Let $F: \cube^{n} \to \cob(B)$ be a pro-tangle, with associated Khovanov simplicial object $\mathcal{S}(F) \in s\mathcal{A}_B$.

\begin{definition}
The \textit{Khovanov chain complex} $\mathcal{C}(F) \in \mathbf{Ch}_{\ge 0}(\mathcal{A}_B)$ is the normalized chain complex of $\mathcal{S}(F)$ given by
\[
\mathcal{C}(F) = \widetilde{\mathbf{N}}\left( \mathcal{S}(F) \right),
\]
with differential induced by the alternating sum of face maps $\partial_k = \sum_{i=0}^k (-1)^i d_i$.
\end{definition}

\begin{lemma}\label{lemma:C_equals_classical}
The Khovanov chain complex $\mathcal{C}(F)$ is canonically isomorphic to the classical total complex $\overline{\mathcal{C}}(F) = \operatorname{Tot}(\mat \circ F)$ of the resolution cube.
\end{lemma}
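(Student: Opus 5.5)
The plan is to read the statement off Lemma~\ref{lemma:dold_kan_compatibility}$(i)$, which already gives $\widetilde{\mathbf{N}}(\mathcal{S}(F))\cong\overline{\mathcal{C}}(F)$ in $\mathbf{Ch}_{\ge 0}(\mathcal{A}_B)$. Since $\mathcal{C}(F)$ is defined as $\widetilde{\mathbf{N}}(\mathcal{S}(F))$ with differential induced by $\partial_k=\sum_{i=0}^k(-1)^i d_i$, what remains is to check that the differential in that lemma is the same one, and that the isomorphism is canonical: the identity on underlying objects rather than merely some isomorphism.

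The steps, in order, are as follows.

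First, I use the canonical splitting $\mathcal{S}_k(F)=\overline{\mathcal{C}}_k(F)\oplus\mathcal{D}_k(F)$ given by Definition~\ref{definition:Khovanov_simplicial_object}. Here the summands with $m=k$ (empty $\theta$) are the non-degenerate part $\bigoplus_{|v|^\ast=k}F(v)$, and $\mathcal{D}_k(F)$ is the sum of the $\theta(F(v))$ with $\theta$ nonempty. By the simplicial identities the degenerate summands form a simplicial subobject, so $\partial_k$ descends to the quotient. The projection then identifies $\mathcal{C}_k(F)$ with $\bigoplus_{|v|^\ast=k}F(v)=\overline{\mathcal{C}}_k(F)$; this is the identity on the non-degenerate summand, hence canonical.

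Second, I compute the induced differential on a summand $F(v)$ with $|v|^\ast=k$. The $d_0$ term vanishes by Definition~\ref{definition:Khovanov_simplicial_object}$(i)$. Each $d_i$ with $i\ge1$ contributes $(-1)^i(-1)^{\xi_i(v)+i}F(e_i)=(-1)^{\xi_i(v)}F(e_i)$. These summands land in the non-degenerate part in degree $k-1$, since $e_i$ raises the weight by one and so lowers the coweight $|v|^\ast$ from $k$ to $k-1$. The total is exactly the differential of $\operatorname{Tot}(\mat\circ F)$ from Lemma~\ref{lemma:total_complex}. Naturality in $F$, for coweight-preserving morphisms, follows because the identification is the identity on the summands $F(v)$.

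The expected obstacle is the well-definedness underlying the first step. One must confirm that the face maps of a degenerate summand $\theta(F(v))$ never produce a non-degenerate component other than those forced by the identities $d_is_j$, $d_js_j=\mathrm{id}$, and so on. Otherwise the quotient differential could acquire extra terms. I would handle this by noting that Definition~\ref{definition:Khovanov_simplicial_object}$(ii)$ is precisely the Dold--Kan denormalization $\Gamma$ of the complex $\overline{\mathcal{C}}(F)$, which exists in any additive category. Its faces on degenerate pieces are then given by the standard formulas, and $\widetilde{\mathbf{N}}\Gamma\cong\mathrm{id}$ canonically.
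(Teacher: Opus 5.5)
Your core computation is the paper's proof. The paper just restates Lemma~\ref{lemma:dold_kan_compatibility}$(i)$: quotient by the summands with nonempty $\theta$, and on each $F(v)$ with $|v|^\ast=k$ the alternating sum gives $\sum_{i=1}^k(-1)^i(-1)^{\xi_i(v)+i}F(e_i)=\sum_i(-1)^{\xi_i(v)}F(e_i)$. However, two of the justifications you add around this are false, although neither is actually needed.

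First, $\mathcal{D}_\bullet(F)$ is not a simplicial subobject. Since $d_js_j=d_{j+1}s_j=\mathrm{id}$, a face of a degenerate simplex can be non-degenerate. What makes $\partial$ descend to the quotient is that $\mathcal{D}_\bullet(F)$ is a subcomplex of the alternating-face-map complex. In $\sum_i(-1)^id_is_j$ the terms $(-1)^jd_js_j$ and $(-1)^{j+1}d_{j+1}s_j$ cancel, and every remaining term has the form $s_{j-1}d_i$ or $s_jd_{i-1}$, hence is degenerate.

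Second, $\mathcal{S}(F)$ is not $\Gamma(\overline{\mathcal{C}}(F))$. The underlying objects $\bigoplus_{[k]\twoheadrightarrow[m]}$ agree, but the face maps do not. On a non-degenerate summand of $\Gamma$ only one face can be non-zero, and it equals the whole differential. Definition~\ref{definition:Khovanov_simplicial_object}$(i)$ instead sets $d_0=0$ and spreads the differential over $d_1,\dots,d_k$ as individual signed edge maps. So your resolution of the ``obstacle'' is about a different simplicial object.

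The accurate description is that $\mathcal{S}(F)$ is the free (degeneracy-completed) simplicial object on the semi-simplicial object $Y_k=\bigoplus_{|v|^\ast=k}F(v)$ with the faces of $(i)$. Its identities $d_id_j=d_{j-1}d_i$ for $i<j$ hold because $\xi_i(v_j)=\xi_i(v)$ and $\xi_{j-1}(v_i)=\xi_j(v)+1$ make the signs agree, and functoriality of $F$ matches the cobordisms.

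The obstacle is moot in any case. By $(i)$, faces of non-degenerate summands land in non-degenerate summands. Faces of degenerate summands never enter the quotient differential once $\mathcal{D}_\bullet(F)$ is known to be a subcomplex.
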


\begin{proof}
This restates Lemma~\ref{lemma:dold_kan_compatibility}(i): the induced differential matches the classical formula $(-1)^{\xi_i(v)}$ after cancellation of the extra factor from simplicial normalization.
\end{proof}

We now connect the chain-level representabilit to the simplicial Khovanov presheaf.

\begin{lemma}\label{lemma:chain_representation}
There is a canonical isomorphism
\[
\mathcal{N}\left(\mathbf{Kh}(F)\right) \cong \mathbf{h}_{\mathcal{C}(F)}
\]
in the homotopy category of chain presheaves, and a strict isomorphism of simplicial presheaves
\[
\mathcal{N}^{-1}\left(\mathbf{h}_{\mathcal{C}(F)}\right) \cong \mathbf{h}_{\mathcal{S}(F)}.
\]
\end{lemma}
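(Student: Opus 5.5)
The plan has two parts, one for each claim, and the second is where the real content lies.

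For the first isomorphism, fix $M\in\mathcal{A}_B$ and reuse the chain of identifications from the proof of Theorem~\ref{theorem:representation}. Lemma~\ref{lemma:objectwise_tcof} gives $\mathbf{N}(\mathbf{Kh}(F)(M))\cong\operatorname{tcof}_{\cube^n}(X^M)$, and Lemma~\ref{lemma:total_complex} identifies this with $\operatorname{Tot}(X^M)$. Since $\operatorname{Hom}_{\mathcal{A}_B}(M,-)$ is additive and commutes with the finite direct sums and signed cube differentials, $\operatorname{Tot}(X^M)=\operatorname{Hom}_{\mathcal{A}_B}(M,\overline{\mathcal{C}}(F))$. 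Finally, Lemma~\ref{lemma:C_equals_classical} identifies this with $\operatorname{Hom}_{\mathcal{A}_B}(M,\mathcal{C}(F))=\mathbf{h}_{\mathcal{C}(F)}(M)$.

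The point needing care is naturality in $M$, so that the objectwise zig-zag becomes a zig-zag of presheaves of complexes. Every step above is induced either by a functor applied to $\operatorname{ev}_M$ or by postcomposition in the second variable of $\operatorname{Hom}$, so each is natural for morphisms $M'\to M$ acting by precomposition. In particular, the re-signing automorphism in Lemma~\ref{lemma:total_complex} acts only on the cube vertices and therefore commutes with precomposition. The resulting zig-zag of natural quasi-isomorphisms consists of weak equivalences in the projective structure on $\mathbf{Ch}_{\ge0}(\mathbf{k})^{\mathcal{A}_B^{\mathrm{op}}}$, which gives the isomorphism in the homotopy category.

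For the strict isomorphism $\mathcal{N}^{-1}(\mathbf{h}_{\mathcal{C}(F)})\cong\mathbf{h}_{\mathcal{S}(F)}$, apply $\mathcal{N}$ to both sides, since $\mathcal{N}$ is an equivalence of categories. This reduces the claim to a natural isomorphism of chain presheaves $\mathcal{N}(\mathbf{h}_{\mathcal{S}(F)})\cong\mathbf{h}_{\mathcal{C}(F)}$, which is exactly Lemma~\ref{lemma:dold_kan_compatibility}(ii) combined with $\mathcal{C}(F)=\widetilde{\mathbf{N}}(\mathcal{S}(F))$. That lemma gives, for each $M$,
\[
\mathbf{N}\big(\operatorname{Hom}_{\mathcal{A}_B}(M,\mathcal{S}(F))\big)\cong\operatorname{Hom}_{\mathcal{A}_B}(M,\mathcal{C}(F)).
\]
It remains to check that this identification is natural in $M$ and compatible with differentials.
\begin{itemize}
\item Naturality holds because the splitting $\mathcal{S}_k(F)=\overline{\mathcal{C}}_k(F)\oplus\mathcal{D}_k(F)$ into nondegenerate and degenerate parts is a splitting inside $\mathcal{A}_B$. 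It is therefore respected by precomposition with any $M'\to M$.
\item The canonical isomorphism between the quotient and intersection-of-kernels normalizations is natural in the simplicial module, hence in $M$.
\item The differentials correspond because both are induced by the same face maps of $\mathcal{S}(F)$.
\end{itemize}
Applying $\mathcal{N}^{-1}$ then yields the strict isomorphism, using that $\mathcal{N}^{-1}\mathcal{N}\cong\mathrm{id}$ canonically.

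The main obstacle is the strictness of this second claim. It depends on $\mathcal{S}(F)$ really being a simplicial object whose degenerate part is a direct summand. That is, the Dold--Kan decomposition $\mathcal{S}_k(F)\cong\bigoplus_{\theta}\theta(\widetilde{\mathbf{N}}_m)$ must hold inside the additive, non-abelian category $\mathcal{A}_B$. Definition~\ref{definition:Khovanov_simplicial_object} builds $\mathcal{S}(F)$ exactly in this $\Gamma$-form, namely $\mathcal{S}(F)=\Gamma(\overline{\mathcal{C}}(F))$ computed in the additive category. So I would first observe that the additive Dold--Kan construction $\Gamma$ makes sense in any additive category and that $\mathcal{S}(F)\cong\Gamma(\mathcal{C}(F))$. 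The desired isomorphism then follows from the fact that $\operatorname{Hom}_{\mathcal{A}_B}(M,-)$ commutes with $\Gamma$, giving
\[
\mathbf{h}_{\mathcal{S}(F)}\cong\Gamma\circ\mathbf{h}_{\mathcal{C}(F)}=\mathcal{N}^{-1}(\mathbf{h}_{\mathcal{C}(F)})
\]
objectwise and naturally.
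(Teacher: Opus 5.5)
Your argument is correct and is essentially the paper's. The strict isomorphism comes from Lemma~\ref{lemma:dold_kan_compatibility}(ii), Lemma~\ref{lemma:C_equals_classical} and the invertibility of the objectwise Dold--Kan functor $\mathcal{N}$. The first isomorphism is the chain from the proof of Theorem~\ref{theorem:representation}; the paper just applies $\mathcal{N}$ to that theorem's conclusion, where you re-run the chain with explicit naturality in $M$.

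One correction to your closing paragraph, which you do not need: Definition~\ref{definition:Khovanov_simplicial_object} does not give $\Gamma(\overline{\mathcal{C}}(F))$ on the nose. On a nondegenerate summand $F(v)$ it sets $d_0=0$ and $d_i=(-1)^{\xi_i(v)+i}F(e_i)$ for $1\le i\le k$, spreading the differential over all faces, whereas $\Gamma$ puts it in a single face. So $\mathcal{S}(F)\cong\Gamma(\mathcal{C}(F))$ is not an observation but, via Yoneda, a restatement of the second claim. Rely on the reduction to Lemma~\ref{lemma:dold_kan_compatibility}(ii) instead.
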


\begin{proof}
The second identity holds objectwise by Lemma~\ref{lemma:dold_kan_compatibility}$(ii)$ and Lemma~\ref{lemma:C_equals_classical}, using $\Gamma \circ \mathbf{N} = \operatorname{id}$. For the first, apply $\mathcal{N}$ to the isomorphism $\mathbf{Kh}(F) \cong \mathbf{h}_{\mathcal{S}(F)}$ of Theorem~\ref{theorem:representation}; since $\mathcal{N}$ preserves weak equivalences and $\mathcal{N}(\mathbf{h}_{\mathcal{S}(F)}) \cong \mathbf{h}_{\mathcal{C}(F)}$, the result follows.
\end{proof}

\begin{corollary}\label{corollary:homotopy_groups}
For each $M \in \mathcal{A}_B$ and $k \ge 0$, there are canonical isomorphisms
\[
\pi_k\big( \mathbf{Kh}(F)(M) \big) \cong H_k\Big( \operatorname{tcof}_{\cube^{n}} \big( \mathbf{N} \circ \mathcal{E}_F \big)(M) \Big) \cong H_k\big( \operatorname{Hom}_{\mathcal{A}_B}(M, \mathcal{C}(F)) \big).
\]
\end{corollary}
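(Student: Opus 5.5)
The plan is to obtain both isomorphisms by composing results already proved, with no new computation. Fix $M\in\mathcal{A}_B$ and $k\ge 0$.

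For the first isomorphism, start from Lemma~\ref{lemma:Dold_Kan}, or more generally the Dold--Kan identification of simplicial homotopy groups of a simplicial $\mathbf{k}$-module with the homology of its normalization. This gives $\pi_k(\mathbf{Kh}(F)(M))\cong H_k(\mathbf{N}(\mathbf{Kh}(F)(M)))$. Lemma~\ref{lemma:objectwise_tcof} then identifies $\mathbf{N}(\mathbf{Kh}(F)(M))$ with $\operatorname{tcof}_{\cube^n}(X^M)$ in $\operatorname{Ho}(\mathbf{Ch}_{\ge0}(\mathbf{Mod}_{\mathbf{k}}))$, where $X^M_v=\operatorname{Hom}_{\mathcal{A}_B}(M,F(v))$.

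We still need $X^M=(\mathbf{N}\circ\mathcal{E}_F)(M)$, i.e.\ $\mathbf{N}\circ\mathcal{E}_F$ evaluated objectwise at $M$. This is Lemma~\ref{lemma:DK_transport}$(iii)$: $\mathbf{N}\circ\operatorname{const}=\iota$, and $\operatorname{ev}_M\mathcal{E}_F=\operatorname{const}X^M$. Evaluation at $M$ commutes with the total homotopy cofiber by Lemma~\ref{lemma:ev_hocolim}, applied in the chain presheaf category, where evaluation is again left Quillen for the projective structure. Hence $\operatorname{tcof}_{\cube^n}(\mathbf{N}\circ\mathcal{E}_F)(M)\simeq\operatorname{tcof}_{\cube^n}(X^M)$. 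Since isomorphisms in the homotopy category are quasi-isomorphisms, $H_k$ is well defined on these objects, and the first isomorphism follows.

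For the second isomorphism there are two routes, and I would use the shorter one. Theorem~\ref{theorem:representation} gives a natural weak equivalence $\mathbf{Kh}(F)(M)\simeq\operatorname{Hom}_{\mathcal{A}_B}(M,\mathcal{S}(F))$, so the two sides have isomorphic $\pi_k$. Lemma~\ref{lemma:Dold_Kan} rewrites the latter as $H_k(\mathbf{N}\operatorname{Hom}_{\mathcal{A}_B}(M,\mathcal{S}(F)))$. Lemma~\ref{lemma:dold_kan_compatibility}$(ii)$ together with Lemma~\ref{lemma:C_equals_classical} identifies this normalization with $\operatorname{Hom}_{\mathcal{A}_B}(M,\mathcal{C}(F))$. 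Alternatively, one can pass through Lemma~\ref{lemma:total_complex}: $\operatorname{tcof}(X^M)\simeq\operatorname{Tot}(X^M)=\operatorname{Hom}_{\mathcal{A}_B}(M,\overline{\mathcal{C}}(F))$, using additivity of $\operatorname{Hom}(M,-)$ and the fact that $\operatorname{Tot}$ is built from finite direct sums and signed edge maps. Lemma~\ref{lemma:chain_representation} packages the same statement at the presheaf level.

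The main subtlety is canonicity and naturality. The intermediate identifications are isomorphisms only in homotopy categories, possibly realized by zig-zags. To argue that the induced maps on $\pi_k$ and $H_k$ are canonical, I would use that each weak equivalence in the chain is natural in $M$ and in $F$; this is already recorded in the cited lemmas. I would also note that the re-signing automorphism in Lemma~\ref{lemma:total_complex} is a fixed, natural choice, so it does not introduce ambiguity in the resulting isomorphisms.
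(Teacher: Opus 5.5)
Your proposal is correct and matches the paper's proof. The first isomorphism is Lemma~\ref{lemma:objectwise_tcof} combined with $\pi_k\cong H_k\circ\mathbf{N}$. Your ``alternative'' route for the second isomorphism (Lemma~\ref{lemma:total_complex}, additivity of $\operatorname{Hom}_{\mathcal{A}_B}(M,-)$, Lemma~\ref{lemma:C_equals_classical}) is exactly what the paper uses. Your preferred route through Theorem~\ref{theorem:representation} and Lemma~\ref{lemma:dold_kan_compatibility}$(ii)$ only repackages the same chain, since that theorem is proved by precisely those steps.
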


\begin{proof}
The first isomorphism follows from Lemma~\ref{lemma:objectwise_tcof} and the Dold--Kan identification $\pi_k \cong H_k \circ \mathbf{N}$. The second follows from Lemma~\ref{lemma:total_complex}, Lemma~\ref{lemma:C_equals_classical}, and additivity of $\operatorname{Hom}_{\mathcal{A}_B}(M,-)$.
\end{proof}

\begin{corollary}\label{corollary:algebraic_representation}
There is a canonical isomorphism
\[
\mathbf{Kh}(F) \cong \mathcal{J}\big( \mathcal{C}(F) \big)
\]
in $\operatorname{Ho}\left( s\mathbf{Mod}_{\mathbf{k}}^{\mathcal{A}_B^{\mathrm{op}}} \right)$.
Consequently, for two pro-tangles $F, G: \cube^{\ast} \to \cob(B)$, $\mathbf{Kh}(F) \cong \mathbf{Kh}(G)$ in $\operatorname{Ho}(s\mathbf{Mod}_{\mathbf{k}}^{\mathcal{A}_B^{\mathrm{op}}})$ if and only if $\mathcal{C}(F) \cong \mathcal{C}(G)$ in $\mathbf{K}_{\ge 0}(\mathcal{A}_B)$, i.e., the Khovanov complexes are strictly chain homotopy equivalent.
\end{corollary}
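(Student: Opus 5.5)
The plan is to assemble the statement from Theorem~\ref{theorem:representation}, Lemma~\ref{lemma:chain_representation} and the full faithfulness in Theorem~\ref{theorem:derived_enriched_embedding}. First I will identify $\mathcal{J}(\mathcal{C}(F))$. By definition $\mathcal{J}$ sends a complex $X_\bullet$ to the class of $\mathcal{N}^{-1}(\mathbf{h}_{X_\bullet})$. The second half of Lemma~\ref{lemma:chain_representation} gives a strict isomorphism $\mathcal{N}^{-1}(\mathbf{h}_{\mathcal{C}(F)})\cong\mathbf{h}_{\mathcal{S}(F)}$ of simplicial presheaves, so $\mathcal{J}(\mathcal{C}(F))$ is represented by $\mathbf{h}_{\mathcal{S}(F)}$. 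Theorem~\ref{theorem:representation} provides $\mathbf{Kh}(F)\cong\mathbf{h}_{\mathcal{S}(F)}$ in $\operatorname{Ho}(s\mathbf{Mod}_{\mathbf{k}}^{\mathcal{A}_B^{\mathrm{op}}})$, realized by a zig-zag of objectwise weak equivalences. Composing these yields the canonical isomorphism $\mathbf{Kh}(F)\cong\mathcal{J}(\mathcal{C}(F))$. Canonicity follows because every identification used is natural in $M$, so all of them lie in the homotopy category of presheaves.

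Next I will prove the ``consequently'' clause. If $\mathcal{C}(F)\simeq\mathcal{C}(G)$ in $\mathbf{K}_{\ge 0}(\mathcal{A}_B)$, then applying the functor $\mathcal{J}$ gives $\mathcal{J}(\mathcal{C}(F))\cong\mathcal{J}(\mathcal{C}(G))$. Here I use that $\mathcal{J}$ is well defined on the homotopy category, which is the first half of Theorem~\ref{theorem:derived_enriched_embedding}. The first part then gives $\mathbf{Kh}(F)\cong\mathbf{Kh}(G)$.

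Conversely, an isomorphism $\mathbf{Kh}(F)\cong\mathbf{Kh}(G)$ transports to an isomorphism $\phi:\mathcal{J}(\mathcal{C}(F))\to\mathcal{J}(\mathcal{C}(G))$ in $\operatorname{Ho}$, with inverse $\psi$. A fully faithful functor reflects isomorphisms, so I proceed as follows. Full faithfulness supplies unique classes $[f]$ and $[g]$ in $\mathbf{K}_{\ge 0}(\mathcal{A}_B)$ with $\mathcal{J}[f]=\phi$ and $\mathcal{J}[g]=\psi$. Then $\mathcal{J}([g][f])=\mathrm{id}=\mathcal{J}(\mathrm{id})$, and injectivity on Hom-sets gives $[g][f]=\mathrm{id}$; the same argument gives $[f][g]=\mathrm{id}$. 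Hence $f$ is a strict chain homotopy equivalence.

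I expect the one real subtlety to be that $\mathbf{K}_{\ge0}(\mathcal{A}_B)$ is the localization of the Hurewicz model structure of Proposition~\ref{proposition:hurewicz_model_structure}, and its isomorphisms must coincide with strict chain homotopy equivalences rather than mere zig-zags of them. I would address this by noting that in the Hurewicz structure every object is bifibrant, so the homotopy category is the naive quotient by chain homotopy. Its Hom-sets are exactly the chain homotopy classes used in the proof of Theorem~\ref{theorem:derived_enriched_embedding}, and the argument above then produces an honest chain homotopy equivalence $\mathcal{C}(F)\to\mathcal{C}(G)$.
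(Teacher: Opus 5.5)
Your proposal is correct and follows essentially the same route as the paper: identify $\mathcal{J}(\mathcal{C}(F))$ with $\mathbf{h}_{\mathcal{S}(F)}$ via Lemma~\ref{lemma:chain_representation}, compose with Theorem~\ref{theorem:representation}, and deduce the equivalence from the well-definedness and full faithfulness of $\mathcal{J}$ (Theorem~\ref{theorem:derived_enriched_embedding}). Your argument that a fully faithful functor reflects isomorphisms, and your remark that isomorphisms in $\mathbf{K}_{\ge 0}(\mathcal{A}_B)$ are genuine strict chain homotopy equivalences, spell out steps the paper leaves implicit.
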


\begin{proof}
By definition $\mathcal{J}(\mathcal{C}(F)) = \mathcal{N}^{-1}(\mathbf{h}_{\mathcal{C}(F)})$, which is strictly isomorphic to $\mathbf{h}_{\mathcal{S}(F)}$ by Lemma~\ref{lemma:chain_representation}. By Theorem~\ref{theorem:representation}, $\mathbf{h}_{\mathcal{S}(F)} \cong \mathbf{Kh}(F)$. The equivalence follows from this identification and full faithfulness of $\mathcal{J}$.
\end{proof}

\subsubsection{Functoriality of Khovanov simplicial presheaf}

We develop functoriality of the Khovanov construction in two stages. First, under the weak condition that the graphical functor preserves punctured cubes, we obtain a canonical functor at the level of the homotopy category. Then, under the stronger coweight-preserving condition, we lift this to a strict functor of simplicial presheaves, compatible with the homotopical embedding of Theorem~\ref{theorem:derived_enriched_embedding}.

\begin{definition}
A graphical functor $p:\cube^{n}\to\cube^{m}$ \emph{preserves punctured cubes} if $p$ restricts to a functor $p^{\partial}:\partial\cube^{n}\to\partial\cube^{m}$.
A morphism $\Phi=(p,\eta)$ of $\fun_B$ is \emph{puncture-preserving} if $p$ is. Puncture-preserving morphisms form a subcategory $\fun_B^{\partial}\subseteq\fun_B$.
\end{definition}

This class of puncture-preserving maps includes all identity-indexed morphisms $(\mathrm{id}_{\cube^n},\eta)$, all face inclusions $\delta_{i,\epsilon}$ (both $\epsilon=0$ and $\epsilon=1$), all coordinate permutations, and stabilizations $v\mapsto(v,c)$. It excludes functors collapsing non-terminal vertices to $\mathbf{1}_m$, such as coordinate projections.

\begin{lemma}\label{lemma:reindexing}
Let $p:\cube^{n}\to\cube^{m}$ be puncture-preserving and $G:\cube^{m}\to\cob(B)$ a pro-tangle. There is a canonical morphism
\[
\kappa_p:\hocolim_{\partial\cube^{n}}\big(\mathcal{E}_G\circ p^\partial\big)\longrightarrow\hocolim_{\partial\cube^{m}}\mathcal{E}_G .
\]
For composable puncture-preserving functors $p,q$, one has $\kappa_{q\circ p}=\kappa_q\circ\kappa_p$ and $\kappa_{\mathrm{id}}=\mathrm{id}$.
\end{lemma}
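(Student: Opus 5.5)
The plan is to build $\kappa_p$ directly from the Bousfield--Kan model of the homotopy colimit. That model is functorial in the indexing category, which makes the composition laws automatic. Since homotopy colimits in $s\mathbf{Mod}_{\mathbf{k}}^{\mathcal{A}_B^{\mathrm{op}}}$ are computed objectwise (Remark~\ref{remark:objectwise_cone}), I will work with the coend model
\[
\hocolim_{\mathcal{P}}X=\int^{c\in\mathcal{P}}X(c)\otimes_{\mathbf{k}}\mathbf{k}\big[N(c\downarrow\mathcal{P})\big],
\]
or equivalently with the simplicial replacement $\coprod_{c_0\to\cdots\to c_k}X(c_0)$ in degree $k$. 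By hypothesis $p$ restricts to a functor $p^{\partial}:\partial\cube^{n}\to\partial\cube^{m}$. Because $\cube^n$ is a poset, this is simply an order-preserving map sending non-terminal vertices to non-terminal vertices; the graphical condition plays no role here.

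\textbf{Construction of $\kappa_p$.} For any functor $u:\mathcal{P}\to\mathcal{Q}$ and any diagram $Y:\mathcal{Q}\to s\mathbf{Mod}_{\mathbf{k}}^{\mathcal{A}_B^{\mathrm{op}}}$, there is a canonical map
\[
u_*:\hocolim_{\mathcal{P}}(Y\circ u)\to\hocolim_{\mathcal{Q}}Y
\]
(Hirschhorn, Prop.~18.1.6). In the simplicial replacement model, $u_*$ sends the summand $Y(u(c_0))$ indexed by a chain $c_0\to\cdots\to c_k$ in $\mathcal{P}$ identically onto the summand indexed by the image chain $u(c_0)\to\cdots\to u(c_k)$ in $\mathcal{Q}$. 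This assignment visibly commutes with the face maps (composing or deleting arrows) and with the degeneracy maps (inserting identities), because $u$ is a functor. I then set $\kappa_p:=(p^{\partial})_*$ with $\mathcal{P}=\partial\cube^n$, $\mathcal{Q}=\partial\cube^m$ and $Y=\mathcal{E}_G|_{\partial\cube^m}$. Equivalently, in the coend model $\kappa_p$ is induced by the maps of nerves $N(c\downarrow\partial\cube^n)\to N(p(c)\downarrow\partial\cube^m)$ that $p^\partial$ induces on coslices.

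\textbf{Functoriality.} Let $p:\cube^n\to\cube^m$ and $q:\cube^m\to\cube^k$ be puncture-preserving. Then $(q\circ p)^{\partial}=q^{\partial}\circ p^{\partial}$. On the summand indexed by a chain $\sigma$ in $\partial\cube^n$, the map $\kappa_{q\circ p}$ goes to the summand indexed by $(qp)(\sigma)=q(p(\sigma))$. The composite $\kappa_q\circ\kappa_p$ first sends it to the summand of $p(\sigma)$ via the identity of $\mathcal{E}_H(qp(c_0))$, and then to the summand of $q(p(\sigma))$, again via the identity. So the two maps agree strictly, not merely up to homotopy. Here $\kappa_q$ is taken with diagram $\mathcal{E}_H$ and $\kappa_p$ with diagram $\mathcal{E}_H\circ q$. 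For $\kappa_{\mathrm{id}}$, every chain is sent to itself, so $\kappa_{\mathrm{id}}=\mathrm{id}$.

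\textbf{Main obstacle.} The delicate point is that the statement asks for strict equalities. This forces me to fix the strict Bousfield--Kan model throughout rather than an arbitrary model of the homotopy colimit; only in that model is $u\mapsto u_*$ strictly functorial (a pseudofunctor becomes strict because the coend/replacement is built from chains). I would also check compatibility with the structure maps. Writing $\pi$ for the canonical map $\hocolim_{\partial\cube^{m}}\mathcal{E}_G\to\hocolim_{\cube^{m}}\mathcal{E}_G\to\mathcal{E}_G(\mathbf{1}_m)$, I would verify that $b(G)\circ\kappa_p$ agrees with $\pi\circ(\text{the map induced by }p)$. This follows because the terminal vertex $\mathbf 1_m$ receives every vertex of $\partial\cube^m$, and the legs to $\mathbf{1}_m$ are unique in a poset. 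This compatibility will be needed for the subsequent homotopy functoriality theorem, though it is not part of the lemma itself.
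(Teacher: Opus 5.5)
Your proposal is correct and takes essentially the same route as the paper. In the Bousfield--Kan coend model (equivalently, the simplicial replacement), you realize $\kappa_p$ as the map induced by $p^{\partial}$ on coslice nerves $N(c\downarrow\partial\cube^n)\to N(p(c)\downarrow\partial\cube^m)$, and the strict identities $\kappa_{q\circ p}=\kappa_q\circ\kappa_p$ and $\kappa_{\mathrm{id}}=\mathrm{id}$ then follow from strict functoriality on coslices and chains. Your write-up is more explicit than the paper's in three places: the chain-level formula, the identification $\mathcal{E}_H\circ q=\mathcal{E}_{H\circ q}$ that makes the composite well-typed, and the remark that fixing the strict model is what yields equalities on the nose.
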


\begin{proof}
In the Bousfield–Kan coend model, $p$ induces functors $(v\downarrow\partial\cube^n)\to(p(v)\downarrow\partial\cube^m)$ on overcategories, hence maps of the corresponding nerve modules. These assemble into a morphism of coends by functoriality of the tensor product. The composition law follows from functoriality of overcategories.
\end{proof}

\begin{proposition}\label{proposition:span_morphism}
Let $\Phi=(p,\eta):F\to G$ be puncture-preserving. Then the square
\[
\begin{tikzcd}
\hocolim_{\partial\cube^{n}}\mathcal{E}_F \arrow[r, "b(F)"] \arrow[d, "\lambda_\Phi"'] & \mathcal{E}_F(\mathbf{1}_n) \arrow[d, "a_\Phi"] \\
\hocolim_{\partial\cube^{m}}\mathcal{E}_G \arrow[r, "b(G)"'] & \mathcal{E}_G(\mathbf{1}_m)
\end{tikzcd}
\]
commutes strictly, where $\lambda_\Phi=\kappa_p\circ\hocolim(\mathcal{E}(\eta))$ and $a_\Phi=\mathcal{E}\big(G(p(\mathbf{1}_n)\to\mathbf{1}_m)\circ\eta_{\mathbf{1}_n}\big)$. The assignment $\Phi\mapsto (\lambda_\Phi,a_\Phi)$ is functorial.
\end{proposition}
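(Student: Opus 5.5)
The plan is to work throughout in the Bousfield--Kan coend model
\[
\hocolim_{\partial\cube^{n}}\mathcal{E}_F=\int^{v\in\partial\cube^{n}}\mathcal{E}_F(v)\otimes_{\mathbf{k}}\mathbf{k}\big[N(v\downarrow\partial\cube^{n})\big],
\]
formed objectwise in $M\in\mathcal{A}_B$. In this model the structure map $b(F)$ is explicit. On the $v$-summand it is the composite
\[
\mathcal{E}_F(v)\otimes\mathbf{k}[N(v\downarrow\partial\cube^{n})]\xrightarrow{\ \mathrm{id}\otimes\varepsilon\ }\mathcal{E}_F(v)\xrightarrow{\ \mathcal{E}(F(v\to\mathbf{1}_n))\ }\mathcal{E}_F(\mathbf{1}_n).
\]
Here $\varepsilon$ is the augmentation collapsing the nerve to a point. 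This is the coend description of the map induced by the cone $\mathcal{E}_F|_{\partial\cube^{n}}\Rightarrow\operatorname{const}_{\mathcal{E}_F(\mathbf{1}_n)}$, followed by the identification $\hocolim_{\partial\cube^{n}}\operatorname{const}\to\mathcal{E}_F(\mathbf{1}_n)$ given by the augmentation.

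The map $\lambda_\Phi$ is built in two stages. First, $\hocolim(\mathcal{E}(\eta))$ acts on the $v$-summand by $\mathcal{E}(\eta_v)\otimes\mathrm{id}$, landing in the $v$-summand of $\hocolim_{\partial\cube^{n}}(\mathcal{E}_G\circ p^{\partial})$. Second, $\kappa_p$ from Lemma~\ref{lemma:reindexing} sends that summand to the $p(v)$-summand of $\hocolim_{\partial\cube^{m}}\mathcal{E}_G$, via $\mathrm{id}\otimes N(v\downarrow\partial\cube^{n}\to p(v)\downarrow\partial\cube^{m})$. Both stages are compatible with augmentations, since the augmentation is natural with respect to maps of nerves. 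So on the $v$-summand
\[
b(G)\circ\lambda_\Phi=\mathcal{E}\big(G(p(v)\to\mathbf{1}_m)\circ\eta_v\big)\otimes\varepsilon .
\]
The other path is $a_\Phi\circ b(F)$, which on the $v$-summand equals
\[
\mathcal{E}\big(G(p(\mathbf{1}_n)\to\mathbf{1}_m)\circ\eta_{\mathbf{1}_n}\circ F(v\to\mathbf{1}_n)\big)\otimes\varepsilon .
\]

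Strict commutativity therefore reduces to an identity of cobordisms in $\cob(B)$. Naturality of $\eta$ along $v\to\mathbf{1}_n$ gives
\[
\eta_{\mathbf{1}_n}\circ F(v\to\mathbf{1}_n)=G(p(v)\to p(\mathbf{1}_n))\circ\eta_v .
\]
Functoriality of $G$ on the poset $\cube^{m}$ gives
\[
G(p(\mathbf{1}_n)\to\mathbf{1}_m)\circ G(p(v)\to p(\mathbf{1}_n))=G(p(v)\to\mathbf{1}_m).
\]
Hence both paths agree summandwise, so they agree on the coend. This requires $\mathcal{E}=\mathcal{Y}\circ\delta\circ\mat$ to be a strict functor, which it is.

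For functoriality, let $\Psi\circ\Phi=(q\circ p,(\zeta p)\circ\eta)$. The equality $a_{\Psi\circ\Phi}=a_\Psi\circ a_\Phi$ is again a naturality computation. One uses $\zeta$ along $p(\mathbf{1}_n)\to\mathbf{1}_m$ together with functoriality of $H$, and both sides reduce to $H(qp(\mathbf{1}_n)\to\mathbf{1}_k)\circ\zeta_{p(\mathbf{1}_n)}\circ\eta_{\mathbf{1}_n}$.

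For $\lambda$, I will first check an interchange law summandwise: $\hocolim(\mathcal{E}(\zeta))$ commutes with $\kappa_p$ in the sense
\[
\kappa_p\circ\hocolim(\mathcal{E}(\zeta p))=\hocolim(\mathcal{E}(\zeta))\circ\kappa_p .
\]
This holds because $\kappa_p$ acts only on the nerve factor and $\mathcal{E}(\zeta)$ only on the first factor. Combining the interchange law with $\kappa_{q\circ p}=\kappa_q\circ\kappa_p$ and with functoriality of $\hocolim$ in the diagram yields $\lambda_{\Psi\circ\Phi}=\lambda_\Psi\circ\lambda_\Phi$. Identities go to identities since $\kappa_{\mathrm{id}}=\mathrm{id}$.

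I expect the main obstacle to be the bookkeeping in the coend model. The point to verify is that the map of coslice nerves induced by $p$ is compatible with the coend relations, so that $\kappa_p$ really is well defined on the quotient. The coslice nerves transform contravariantly in $v$ and are twisted by $p$; I will check this on generators of the coend relation.
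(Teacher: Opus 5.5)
Your proposal is correct and takes the same route as the paper. In both, commutativity comes from naturality of $\eta$ along $v\to\mathbf{1}_n$ together with functoriality of $G$ on the poset $\cube^m$. Functoriality of $\Phi\mapsto(\lambda_\Phi,a_\Phi)$ comes from Lemma~\ref{lemma:reindexing} and naturality of $\eta$, $\zeta$ and $\mathcal{E}$. You simply make explicit, in the Bousfield--Kan coend model, the summandwise formulas and the interchange law $\kappa_p\circ\hocolim(\mathcal{E}(\zeta p))=\hocolim(\mathcal{E}(\zeta))\circ\kappa_p$, which the paper's two-line proof leaves implicit.
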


\begin{proof}
Commutativity follows from naturality of $\eta$ applied to the unique morphism $v\to\mathbf{1}_n$ for each vertex $v$, together with $p(\mathbf{1}_n)\to\mathbf{1}_m$ being the unique morphism in $\cube^m$. Functoriality follows from Lemma~\ref{lemma:reindexing} and naturality of $\mathcal{E}$.
\end{proof}

\begin{theorem}\label{theorem:homotopy_functoriality}
The Khovanov construction descends to a functor
\[
\overline{\mathbf{Kh}}:\fun_B^{\partial}\longrightarrow\operatorname{Ho}\big(s\mathbf{Mod}_{\mathbf{k}}^{\mathcal{A}_B^{\mathrm{op}}}\big).
\]
\end{theorem}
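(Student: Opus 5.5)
The plan is to define $\overline{\mathbf{Kh}}$ on objects by $F\mapsto \mathbf{Kh}(F)$ and on a puncture-preserving morphism $\Phi=(p,\eta)\colon F\to G$ by the map induced on homotopy cofibers by the commutative square of Proposition~\ref{proposition:span_morphism}. To make this a genuine morphism, first fix a functorial model for homotopy cofibers. We use the functorial Bousfield--Kan coend model for $\hocolim_{\partial\cube^n}$. For a map $f\colon X\to Y$ in $s\mathbf{Mod}_{\mathbf{k}}^{\mathcal{A}_B^{\mathrm{op}}}$ we take the objectwise mapping cone
\[
\hocof(f)=Y\cup_X (X\otimes\Delta^1)/(X\otimes\{1\}).
\]
This is functorial in commutative squares. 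It is also homotopy invariant, because every object is fibrant and the coend model is objectwise cofibrant-enough; alternatively, we apply a functorial cofibrant replacement $Q$ first. A commutative square $(\lambda_\Phi,a_\Phi)$ from $b(F)$ to $b(G)$ then yields a strict map
\[
\mathbf{Kh}(\Phi)\colon \hocof(b(F))\to\hocof(b(G)),
\]
and we define $\overline{\mathbf{Kh}}(\Phi)$ to be its image under the localization $\gamma\colon s\mathbf{Mod}_{\mathbf{k}}^{\mathcal{A}_B^{\mathrm{op}}}\to\operatorname{Ho}$.

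Next we verify the functor axioms. For the identity $\mathbf{1}_F=(\mathrm{id},\mathrm{id}_F)$, Lemma~\ref{lemma:reindexing} gives $\kappa_{\mathrm{id}}=\mathrm{id}$, and $\mathcal{E}(\mathrm{id}_F)=\mathrm{id}$. Hence $\lambda_{\mathbf{1}_F}=\mathrm{id}$ and $a_{\mathbf{1}_F}=\mathrm{id}$, since $G(\mathbf{1}_n\to\mathbf{1}_n)=\mathrm{id}$, and so $\mathbf{Kh}(\mathbf{1}_F)=\mathrm{id}$.

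For composition, let $\Psi\circ\Phi=(q\circ p,(\zeta p)\circ\eta)$. Proposition~\ref{proposition:span_morphism} asserts functoriality of $\Phi\mapsto(\lambda_\Phi,a_\Phi)$, which reduces to two checks. The first is that $\lambda_{\Psi\Phi}=\lambda_\Psi\lambda_\Phi$. This uses $\kappa_{q p}=\kappa_q\kappa_p$ together with the interchange $\kappa_p\circ\hocolim(\mathcal{E}(\zeta p))=\hocolim(\mathcal{E}(\zeta))\circ\kappa_p$, which is naturality of $\kappa_p$ in the diagram. The second is that $a_{\Psi\Phi}=a_\Psi a_\Phi$. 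Here we use naturality of $\zeta$ along the edge $p(\mathbf{1}_n)\to\mathbf{1}_m$:
\[
H(q(\mathbf{1}_m)\to\mathbf{1}_k)\circ\zeta_{\mathbf{1}_m}\circ G(p(\mathbf{1}_n)\to\mathbf{1}_m)
= H(q(\mathbf{1}_m)\to\mathbf{1}_k)\circ H(q(p(\mathbf{1}_n)\to\mathbf{1}_m))\circ\zeta_{p(\mathbf{1}_n)},
\]
and the two $H$-terms compose to $H(qp(\mathbf{1}_n)\to\mathbf{1}_k)$ by functoriality of $H$ on the poset. Since the cofiber model is a functor on the arrow category, these identities show that $\mathbf{Kh}$ is already a strict functor $\fun_B^\partial\to s\mathbf{Mod}_{\mathbf{k}}^{\mathcal{A}_B^{\mathrm{op}}}$, and composing with $\gamma$ gives $\overline{\mathbf{Kh}}$.

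Finally we address independence of choices. The value in $\operatorname{Ho}$ does not depend on the chosen models for $\hocolim$ or $\hocof$: any two functorial models are connected by natural objectwise weak equivalences, by Lemma~\ref{lemma:ev_hocolim} and the objectwise detection of Proposition~\ref{proposition:global_model}. These natural weak equivalences commute with the induced maps, so the resulting functors to $\operatorname{Ho}$ are naturally isomorphic.

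The main obstacle is the naturality of $\kappa_p$ with respect to maps of diagrams, and the fact that $\kappa_p$ is well defined in the coend model. In the coend model this comes down to checking that the functor $p$ induces a map on the coslice nerves $N(v\downarrow\partial\cube^n)\to N(p(v)\downarrow\partial\cube^m)$ compatibly with the coend relations. This is exactly where puncture preservation is used: the image must avoid $\mathbf{1}_m$, or the target coslice is not in $\partial\cube^m$. Naturality in the diagram then holds because $\kappa_p$ acts only on the nerve factor while $\mathcal{E}(\zeta)$ acts only on the value factor.
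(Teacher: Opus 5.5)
Your proposal is correct and follows the same route as the paper: the strict commutative square of Proposition~\ref{proposition:span_morphism} is fed into a homotopy-cofiber construction that is functorial in strict morphisms of spans, and functoriality of $\Phi\mapsto(\lambda_\Phi,a_\Phi)$ then yields the functor. You make explicit what the paper compresses into one line. That is, you fix a functorial cone model, check identities, and check composition using naturality of $\kappa_p$ in the diagram together with naturality of $\zeta$ along $p(\mathbf{1}_n)\to\mathbf{1}_m$.
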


\begin{proof}
The total homotopy cofiber $\mathbf{Kh}(F)$ is the homotopy cofiber of the structure map $b(F)$, equivalently the homotopy colimit of the span $0\leftarrow\hocolim_{\partial\cube^n}\mathcal{E}_F\to\mathcal{E}_F(\mathbf{1}_n)$. A strict morphism of spans induces a canonical morphism of homotopy cofibers, functorially in strict morphisms of spans. By Proposition~\ref{proposition:span_morphism}, every puncture-preserving $\Phi$ induces such a morphism, and the assignment is functorial.
\end{proof}

We now impose the stronger coweight-preserving condition, which suffices to construct a strict functor before passage to the homotopy category, and which aligns with the chain-complex model of Khovanov homology.

\begin{definition}\label{definition:admissible}
A graphical functor $p:\cube^{n}\to\cube^{m}$ is \textit{admissible} if it preserves coweight:
\[
|p(v)|^* = |v|^* \quad \text{for all } v\in\cube^{n}.
\]
A morphism $\Phi=(p,\eta)$ of $\fun_B$ is \textit{admissible} if $p$ is admissible. Admissible morphisms form a subcategory $\fun_B^{\mathrm{ad}}\subseteq\fun_B^{\partial}$.
\end{definition}

Coweight preservation implies puncture preservation: $|p(\mathbf{1}_n)|^*=|\mathbf{1}_n|^*=0$, so $p(\mathbf{1}_n)=\mathbf{1}_m$, and $v\neq\mathbf{1}_n$ implies $|p(v)|^*>0$, hence $p(v)\neq\mathbf{1}_m$. Thus admissible morphisms are automatically puncture-preserving, and Theorem~\ref{theorem:homotopy_functoriality} applies.

\begin{remark}\label{remark:classification}
Let $p:\cube^{n}\to\cube^{m}$ be admissible. Then $p$ sends elementary edges to elementary edges, and restricts to a bijection $\operatorname{Out}(v)\to\operatorname{Out}(p(v))$ for each $v$. Here, $\operatorname{Out}(v)$ denotes the set of elementary edges from $v$.
\end{remark}

\begin{lemma}\label{lemma:sign_system}
Let $p:\cube^{n}\to\cube^{m}$ be admissible. For a vertex $v\in\cube^{n}$ with zero positions $r_1<\dots<r_Z$ ($Z=|v|^*$), let $j_i=j_i(v)$ be the index such that $p(e_i)$ flips the $j_i$-th zero of $p(v)$, and define $\operatorname{inv}(j_\bullet):=\#\{\,i<i'\mid j_i>j_{i'}\,\}$, the inversion number of the zero-index sequence. Define
\[
\lambda^{p}_v := (-1)^{\operatorname{inv}(p(v))-\operatorname{inv}(v)+\operatorname{inv}(j_\bullet)}.
\]
Then:
\begin{enumerate}[label=$(\roman*)$]
    \item For an elementary edge $e_i:v\to v_i$ flipping the $i$-th zero of $v$,
    \[
    \lambda^{p}_{v_i}/\lambda^{p}_{v} = (-1)^{\xi_{j_i}(p(v))-\xi_{i}(v)}.
    \]
    \item $\lambda^{\mathrm{id}}\equiv 1$, and $\lambda^{q\circ p}_v=\lambda^{q}_{p(v)}\cdot\lambda^{p}_v$ for composable admissible functors.
    \item If $p$ appends terminal $1$s, then $\lambda^{p}\equiv 1$.
\end{enumerate}
\end{lemma}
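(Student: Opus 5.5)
The plan is a direct parity computation, organized around one bookkeeping identity. I read $\operatorname{inv}(v)$ as the number of pairs of positions $a<b$ with $v_a=1$ and $v_b=0$, so that
\[
\operatorname{inv}(v)=\sum_{i=1}^{Z}\xi_i(v),
\]
since $\xi_i(v)$ counts the ones preceding the $i$-th zero $r_i$. Flipping that zero to a one removes its $\xi_i(v)$ inversions. It also creates one new inversion with each of the $Z-i$ later zeros. This gives
\[
\operatorname{inv}(v_i)-\operatorname{inv}(v)=Z-i-\xi_i(v).
\]
Admissibility gives $|p(v)|^*=Z$, and $p(e_i)$ flips the $j_i$-th zero of $p(v)$, so the same reasoning yields
\[
\operatorname{inv}(p(v_i))-\operatorname{inv}(p(v))=Z-j_i-\xi_{j_i}(p(v)).
\]

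The remaining ingredient is how the zero-index sequence $j_\bullet(v_i)$ relates to $j_\bullet(v)$. This is the step I expect to be the main obstacle, since it needs a genuine structural input about admissible functors beyond Remark~\ref{remark:classification}. The claim I would prove is the following: for $i'\neq i$, the edge out of $v_i$ flipping the original coordinate $r_{i'}$ is sent by $p$ to the edge out of $p(v_i)$ flipping the same coordinate of $\cube^m$ that $p(e_{i'})$ flips at $p(v)$. The argument would go in three moves.
\begin{enumerate}[label=$(\alpha)$]
\item The square in $\cube^n$ spanned by $r_i,r_{i'}$ at $v$ is the poset interval $[v,w]$ between $v$ and its top vertex $w$.
\item Since $p$ is a functor, it maps this interval into $[p(v),p(w)]$. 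By coweight preservation together with the bijection $\operatorname{Out}(v)\to\operatorname{Out}(p(v))$, the four images are distinct, with $|p(w)|^*=|p(v)|^*-2$. Hence the image is the square in $\cube^m$ with the two flipped coordinates.
\item Opposite edges of a square in $\cube^m$ flip the same coordinate, which gives the claim.
\end{enumerate}
Granting this, $j_\bullet(v_i)$ is obtained from $j_\bullet(v)$ by deleting the entry $j_i$ and decrementing the entries larger than $j_i$. The inversions lost are exactly those involving position $i$. Let $a=\#\{i'<i: j_{i'}<j_i\}$. The lost inversions then number $(i-1-a)+(j_i-1-a)\equiv i+j_i \pmod 2$.

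For part (i), I would add the three changes. Modulo $2$, the exponent changes by
\[
\bigl(Z-j_i-\xi_{j_i}(p(v))\bigr)-\bigl(Z-i-\xi_i(v)\bigr)-(i+j_i)\equiv \xi_{j_i}(p(v))+\xi_i(v),
\]
which is the asserted ratio.

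Part (ii) splits into a telescoping step and a permutation step. For $q\circ p$ the term $\operatorname{inv}(q(p(v)))-\operatorname{inv}(v)$ telescopes through $\operatorname{inv}(p(v))$. The zero-index sequence of $q\circ p$ at $v$ is the composite of the bijections $\{1,\dots,Z\}\to\{1,\dots,Z\}$ given by $j^p_\bullet(v)$ and $j^q_\bullet(p(v))$. Parity of inversions is the sign of a permutation, and the sign is multiplicative, so the $\operatorname{inv}(j_\bullet)$ terms multiply. For $p=\mathrm{id}$ every term vanishes.

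Part (iii) is immediate. Appending ones after all coordinates adds no pairs of the form (one before zero), so $\operatorname{inv}(p(v))=\operatorname{inv}(v)$. The zeros keep their order, so $j_\bullet$ is the identity and $\operatorname{inv}(j_\bullet)=0$.
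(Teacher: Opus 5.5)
Your argument for (i) and (iii) is the paper's: you add the same three increments mod $2$, namely those of $\operatorname{inv}(v)$, $\operatorname{inv}(p(v))$ and $\operatorname{inv}(j_\bullet)$. In (i) you add one genuine step, the square argument. The paper only asserts that $j_\bullet(v_i)$ is obtained from $j_\bullet(v)$ by deleting $j_i$ and decrementing the larger entries. Your observation that $p$ carries the interval $[v,w]$ onto the square at $p(v)$ spanned by the two flipped coordinates, so that opposite edges flip the same coordinate, is exactly what justifies this.

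For (ii) you take a different route. The paper shows that $\mu(v)=\lambda^{q\circ p}_v(\lambda^q_{p(v)}\lambda^p_v)^{-1}$ has trivial increment along every edge by applying (i) three times. It then concludes from connectivity of $\cube^n$ and $\mu(\mathbf{1}_n)=1$. Your direct argument uses $j^{q\circ p}_\bullet(v)=j^q_\bullet(p(v))\circ j^p_\bullet(v)$ as permutations, together with multiplicativity of the sign. It is shorter and does not depend on (i). It also makes explicit the composition identity that the paper's telescoping uses tacitly when it matches the $\xi$-terms of $q\circ p$ with those of $q$ at the edge $p(e_i)$.

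There is one caveat, which applies equally to the paper's proof (it writes ``by injectivity of $p$''). Both arguments need the bijection $\operatorname{Out}(v)\to\operatorname{Out}(p(v))$ of Remark~\ref{remark:classification}, and this bijection does not follow from coweight preservation. Take the fold $p:\cube^2\to\cube^2$ with $p(0,0)=(0,0)$, $p(1,0)=p(0,1)=(1,0)$, $p(1,1)=(1,1)$. It is graphical and admissible, but it sends both edges out of $(0,0)$ to the same edge, so $j_\bullet(0,0)=(1,1)$ is not a permutation. There $\lambda^p_{(0,0)}=1$ and $\lambda^p_{(0,1)}=(-1)^{1-0+0}=-1$. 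But (i) predicts the ratio $(-1)^{\xi_1(0,0)-\xi_2(0,0)}=1$ along $e_2$, so (i) fails.

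So the lemma holds only under the extra hypothesis that $p$ is injective on each $\operatorname{Out}(v)$, for instance when $p$ is injective on vertices. You should state this hypothesis explicitly rather than rely on the Remark. With it, your proof, including the square argument and the permutation-sign argument for (ii), is complete.
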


\begin{proof}
By injectivity of $p$ and coweight preservation, the coordinates flipped by $p(e_i)$ are pairwise distinct and exhaust the zeros of $p(v)$; thus $(j_1,\dots,j_Z)$ is a permutation of $\{1,\dots,Z\}$. Also $p(\mathbf{1}_n)=\mathbf{1}_m$, and $\lambda^{p}_{\mathbf{1}_n}=1$ since there are no zeros and no inversions.

$(i)$ Let $e_i:v\to v_i$ flip the $i$-th zero of $v$ at position $r_i$, and let $c_i$ be the coordinate flipped by $p(e_i)$ (the $j_i$-th zero of $p(v)$). Flipping the zero at rank $t$ in a string with $Z$ zeros changes the inversion count by $(Z-t)-\xi$, where $\xi$ counts ones before the flipped position. Using the identities $\xi_{j_i}(p(v))=c_i-j_i$ and $\xi_i(v)=r_i-i$, we have
\[
\operatorname{inv}(p(v_i))-\operatorname{inv}(p(v)) = Z - c_i,
\qquad
\operatorname{inv}(v_i)-\operatorname{inv}(v) = Z - r_i.
\]
We now compute the change in $\operatorname{inv}(j_\bullet)$. Passing from $v$ to $v_i$ removes the entry $j_i$ at position $i$ and decrements all entries larger than $j_i$. Decrementing preserves inversions; deleting the entry at position $i$ removes exactly the inversions involving it, so
\[
\operatorname{inv}(j_\bullet(v_i))-\operatorname{inv}(j_\bullet(v))
= -\big(\#\{k<i\mid j_k>j_i\}+\#\{k>i\mid j_k<j_i\}\big)
\equiv j_i - i \pmod{2}.
\]
Combining the three contributions, the exponent difference is
\begin{align*}
&\big[\operatorname{inv}(p(v_i))-\operatorname{inv}(v_i)+\operatorname{inv}(j_\bullet(v_i))\big]
- \big[\operatorname{inv}(p(v))-\operatorname{inv}(v)+\operatorname{inv}(j_\bullet(v))\big] \\
&\quad= \big[(Z-c_i)-(Z-r_i)\big] + (j_i - i) \\
&\quad= (r_i - c_i) + (j_i - i) \\
&\quad\equiv (c_i - j_i) - (r_i - i)
= \xi_{j_i}(p(v)) - \xi_i(v) \pmod{2}.
\end{align*}

$(ii)$ Define $\mu(v):=\lambda^{q\circ p}_v\cdot\big(\lambda^{q}_{p(v)}\lambda^{p}_v\big)^{-1}$. By $(i)$ applied to $p$, to $q$ at edge $p(e_i)$, and to $q\circ p$, the exponents telescope and $\mu$ has trivial increments along every elementary edge. Since $\cube^n$ is connected and $\mu(\mathbf{1}_n)=1$ (all three sign terms vanish at the terminal vertex), we have $\mu\equiv1$. For $p=\mathrm{id}$, $c_i=r_i$ and $j_i=i$, so all terms vanish and $\lambda^{\mathrm{id}}\equiv1$.

$(iii)$ If $p$ appends terminal $1$s, then $c_i=r_i$ and $j_i=i$ for all $i$, and trailing ones create no inversions; hence all three terms vanish and $\lambda^p\equiv1$.
\end{proof}

\begin{theorem}\label{theorem:functoriality}
The Khovanov construction extends to a strict functor
\[
\mathbf{Kh}:\fun_B^{\mathrm{ad}}\longrightarrow s\mathbf{Mod}_{\mathbf{k}}^{\mathcal{A}_B^{\mathrm{op}}}.
\]
For an admissible morphism $\Phi=(p,\eta):F\to G$, the induced chain map $\mathcal{C}(\Phi):\mathcal{C}(F)\to\mathcal{C}(G)$ is given on summands by
\[
\mathcal{C}(\Phi)\big|_{F(v)} = \lambda^{p}_v\cdot\eta_v : F(v)\longrightarrow G(p(v)),
\]
and $\mathbf{Kh}(\Phi) := \mathcal{N}^{-1}\big(\mathbf{h}_{\mathcal{C}(\Phi)}\big)$.
\end{theorem}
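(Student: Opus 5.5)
The plan is to first build $\mathcal{C}(\Phi)$ as a morphism in $\mathbf{Ch}_{\ge 0}(\mathcal{A}_B)$, and then transport it through the strict functor $X_\bullet\mapsto\mathcal{N}^{-1}(\mathbf{h}_{X_\bullet})$. Define $\mathcal{C}(\Phi)$ summandwise by the displayed formula. Coweight preservation (Definition~\ref{definition:admissible}) ensures that the summand $F(v)$ in degree $|v|^*$ lands in $G(p(v))$ in the same degree $|p(v)|^*=|v|^*$, so $\mathcal{C}(\Phi)$ is a degree-zero morphism of graded objects of $\mathcal{A}_B$. Functoriality of $\mathcal{N}^{-1}$ (an equivalence of functor categories) and of the degreewise Yoneda embedding $\mathbf{h}_{(-)}$ then reduces the theorem to two checks: $\mathcal{C}(\Phi)$ is a chain map, and $\Phi\mapsto\mathcal{C}(\Phi)$ respects identities and composition.

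\textbf{Chain map.} Fix $v$ with zeros $r_1<\dots<r_Z$. Then
\[
\partial^G\circ\mathcal{C}(\Phi)\big|_{F(v)}=\lambda^p_v\sum_{j}(-1)^{\xi_j(p(v))}G(e_j)\circ\eta_v .
\]
By Remark~\ref{remark:classification}, $i\mapsto j_i$ is a bijection from $\operatorname{Out}(v)$ to $\operatorname{Out}(p(v))$ with $p(e_i)=e_{j_i}$. Reindexing the sum by $i$ and using naturality of $\eta$ along $e_i$, namely $G(p(e_i))\circ\eta_v=\eta_{v_i}\circ F(e_i)$, the sum becomes
\[
\sum_i\lambda^p_v(-1)^{\xi_{j_i}(p(v))}\,\eta_{v_i}F(e_i).
\]
On the other side,
\[
\mathcal{C}(\Phi)\circ\partial^F\big|_{F(v)}=\sum_i\lambda^p_{v_i}(-1)^{\xi_i(v)}\,\eta_{v_i}F(e_i).
\]
The two expressions agree termwise exactly by Lemma~\ref{lemma:sign_system}$(i)$. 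This sign bookkeeping is the crux, and it has already been isolated in that lemma. One residual point to check is that $\mathcal{C}(F)$ is identified with $\overline{\mathcal{C}}(F)$ via Lemma~\ref{lemma:C_equals_classical}, so the map may be defined on $\overline{\mathcal{C}}$ and transported.

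\textbf{Functoriality.} For $\Psi=(q,\zeta)$, the composite is $(q\circ p,(\zeta p)\circ\eta)$. Its component at $v$ is
\[
\lambda^{q\circ p}_v\,\zeta_{p(v)}\eta_v=\lambda^q_{p(v)}\zeta_{p(v)}\circ\lambda^p_v\eta_v,
\]
by Lemma~\ref{lemma:sign_system}$(ii)$, so $\mathcal{C}(\Psi\circ\Phi)=\mathcal{C}(\Psi)\circ\mathcal{C}(\Phi)$. Identities go to identities since $\lambda^{\mathrm{id}}\equiv1$. Applying $\mathbf{h}_{(-)}$ and then $\mathcal{N}^{-1}$ gives the strict functor $\mathbf{Kh}$.

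\textbf{Compatibility with the construction.} To justify calling this \emph{the Khovanov construction}, I would check that on objects $\mathcal{N}^{-1}(\mathbf{h}_{\mathcal{C}(F)})\cong\mathbf{h}_{\mathcal{S}(F)}\cong\mathbf{Kh}(F)$ in the homotopy category (Lemma~\ref{lemma:chain_representation}, Corollary~\ref{corollary:algebraic_representation}). On morphisms, the image of $\mathbf{Kh}(\Phi)$ in $\operatorname{Ho}$ should agree with $\overline{\mathbf{Kh}}(\Phi)$ from Theorem~\ref{theorem:homotopy_functoriality}. For this I would trace the morphism of spans of Proposition~\ref{proposition:span_morphism} through the iterated-cone model of Lemma~\ref{lemma:total_complex}, where the re-signing automorphisms $\rho_v=(-1)^{\operatorname{inv}(v)}$ account for the factor $(-1)^{\operatorname{inv}(p(v))-\operatorname{inv}(v)}$ in $\lambda^p$. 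I expect this comparison to be the main obstacle, since the homotopy-colimit model is only identified up to zig-zag. I would handle it by an induction on $n$ via Proposition~\ref{proposition:iterated_cofiber}, though strictly the theorem only requires the strict functor, which the first two steps already establish.
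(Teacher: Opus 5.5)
Your argument follows the paper's proof step for step: coweight preservation makes $\mathcal{C}(\Phi)$ degree-preserving, naturality of $\eta$ plus Lemma~\ref{lemma:sign_system}$(i)$ gives the chain-map identity termwise, Lemma~\ref{lemma:sign_system}$(ii)$ and $\lambda^{\mathrm{id}}\equiv1$ give functoriality, and $\mathcal{N}^{-1}\circ\mathbf{h}_{(-)}$ is applied at the end. The difficulty is that you inherit a gap that the paper's proof also has. Your reindexing step relies on Remark~\ref{remark:classification}, i.e.\ on $p$ inducing a bijection $\operatorname{Out}(v)\to\operatorname{Out}(p(v))$. Equivalently, it relies on $j_\bullet(v)$ being a permutation in Lemma~\ref{lemma:sign_system}, whose proof invokes ``injectivity of $p$''. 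Coweight preservation does not imply this.

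Here is a counterexample. Take $p\colon\cube^2\to\cube^2$ with $00\mapsto00$, $10\mapsto10$, $01\mapsto10$, $11\mapsto11$.
\begin{itemize}
\item It is order-preserving, sends every elementary edge to an elementary edge, and preserves coweight, so it is admissible.
\item Yet both edges out of $00$ go to $00\to10$, and $01\notin\operatorname{im}p$.
\end{itemize}
For any $G\colon\cube^2\to\cob(B)$, put $F=G\circ p$ and $\eta=\mathrm{id}$. The component of $\partial^G\circ\mathcal{C}(\Phi)|_{F(00)}$ into $G(01)$ is $\lambda^p_{00}\,G(00\to01)$. By contrast, $\mathcal{C}(\Phi)\circ\partial^F|_{F(00)}$ lands entirely in $G(10)$: it equals $(\lambda^p_{10}+\lambda^p_{01})\,G(00\to10)$, to be compared with $\lambda^p_{00}\,G(00\to10)$. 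So whenever $G(00\to01)\neq0$, the displayed formula is not a chain map, and the statement fails for this admissible morphism.

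The repair is to add injectivity of $p$ to the definition of admissible; injectivity on each $\operatorname{Out}(v)$ already suffices. Comparing the two paths around each square of $\cube^n$ then shows that the coordinate flipped by the image of an edge in direction $r$ does not depend on the base vertex. Hence $p(v)_{c(r)}=v_r$ for some injection $c\colon\{1,\dots,n\}\to\{1,\dots,m\}$, with all remaining coordinates equal to $1$. This is exactly the class of permutations and stabilizations by $1$s that the paper has in mind. For such $p$:
\begin{itemize}
\item $j_\bullet(v)$ is a genuine permutation, so the inversion count in the proof of Lemma~\ref{lemma:sign_system}$(i)$ is valid;
\item the class is closed under composition;
\item your argument goes through verbatim.
\end{itemize}
Your final compatibility paragraph is not needed for the statement. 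The paper only compares $\mathbf{Kh}(\Phi)$ with $\operatorname{tcof}(\Phi)$ when $p=\mathrm{id}$, in Proposition~\ref{proposition:geometric_compatibility}.
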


\begin{proof}
By coweight preservation, $\mathcal{C}(\Phi)$ is a degree-zero map of graded objects. Fix $v$ with $|v|^*=k$ and outgoing edges $e_i:v\to v_i$. By injectivity of $p$, $p$ restricts to a bijection from $\operatorname{Out}(v)$ to $\operatorname{Out}(p(v))$. Using naturality of $\eta$ and Lemma~\ref{lemma:sign_system}$(i)$, we have
\begin{align*}
\partial\circ\mathcal{C}(\Phi)\big|_{F(v)}
&= \lambda^{p}_v \sum_{i=1}^k (-1)^{\xi_{j_i}(p(v))} G(p(e_i))\circ\eta_v \\
&= \lambda^{p}_v \sum_{i=1}^k (-1)^{\xi_{j_i}(p(v))} \eta_{v_i}\circ F(e_i) \\
&= \sum_{i=1}^k (-1)^{\xi_i(v)} \lambda^{p}_{v_i} \eta_{v_i}\circ F(e_i)
= \mathcal{C}(\Phi)\circ\partial\big|_{F(v)}.
\end{align*}
Hence $\mathcal{C}(\Phi)$ is a chain map.

For $\Psi=(q,\zeta):G\to H$, by Lemma~\ref{lemma:sign_system}$(ii)$, one has
\[
\mathcal{C}(\Psi)\circ\mathcal{C}(\Phi)\big|_{F(v)}
= \lambda^{q}_{p(v)}\lambda^{p}_v \zeta_{p(v)}\circ\eta_v
= \lambda^{q\circ p}_v (\zeta p \circ \eta)_v
= \mathcal{C}(\Psi\circ\Phi)\big|_{F(v)}.
\]
Since $\lambda^{\mathrm{id}}\equiv1$, $\mathcal{C}(\mathbf{1}_F)=\mathrm{id}$. Thus $\mathcal{C}$ is a strict functor $\fun_B^{\mathrm{ad}}\to\mathbf{Ch}_{\ge 0}(\mathcal{A}_B)$, and $\mathbf{Kh}=\mathcal{N}^{-1}\circ\mathbf{h}_{(-)}\circ\mathcal{C}$ is also a strict functor.
\end{proof}

\begin{proposition}\label{proposition:geometric_compatibility}
Let $\Phi=(p,\eta)$ be admissible.
\begin{enumerate}[label=$(\roman*)$]
    \item The morphism $\operatorname{tcof}(\Phi):\operatorname{tcof}_{\cube^n}(\mathcal{E}_F)\to\operatorname{tcof}_{\cube^m}(\mathcal{E}_G)$ induced by the span morphism of Proposition~\ref{proposition:span_morphism} is well-defined.
    \item If $p=\mathrm{id}_{\cube^{n}}$, then $\operatorname{tcof}(\Phi)=\mathbf{Kh}(\Phi)$ in $\operatorname{Ho}\big(s\mathbf{Mod}_{\mathbf{k}}^{\mathcal{A}_B^{\mathrm{op}}}\big)$.
\end{enumerate}
\end{proposition}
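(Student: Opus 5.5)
For part $(i)$, I would simply note that an admissible morphism is puncture-preserving (coweight preservation forces $p(\mathbf{1}_n)=\mathbf{1}_m$ and $p(\partial\cube^n)\subseteq\partial\cube^m$, as observed after Definition~\ref{definition:admissible}). Proposition~\ref{proposition:span_morphism} then provides a strictly commuting square $(\lambda_\Phi,a_\Phi)$ from the structure map $b(F)$ to $b(G)$. Here $a_\Phi=\mathcal{E}(\eta_{\mathbf{1}_n})$, because $p(\mathbf{1}_n)=\mathbf{1}_m$ makes the correction cobordism an identity. A strict map of spans $(0\leftarrow\hocolim\mathcal{E}_F\to\mathcal{E}_F(\mathbf{1}_n))\to(0\leftarrow\hocolim\mathcal{E}_G\to\mathcal{E}_G(\mathbf{1}_m))$ induces a map of homotopy pushouts. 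This map is independent, up to canonical isomorphism in $\operatorname{Ho}$, of the chosen models, exactly as in the proof of Theorem~\ref{theorem:homotopy_functoriality}. That settles well-definedness.

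For part $(ii)$, with $p=\mathrm{id}$, both sides are maps between objects that Theorem~\ref{theorem:representation} identifies with $\mathbf{h}_{\mathcal{S}(F)}$ and $\mathbf{h}_{\mathcal{S}(G)}$. By Corollary~\ref{corollary:algebraic_representation}, $\mathbf{Kh}(F)\cong\mathcal{J}(\mathcal{C}(F))$. The map $\mathbf{Kh}(\Phi)$ is $\mathcal{J}(\mathcal{C}(\Phi))$ with $\mathcal{C}(\Phi)|_{F(v)}=\eta_v$, since $\lambda^{\mathrm{id}}\equiv1$ by Lemma~\ref{lemma:sign_system}$(ii)$. The plan is to show that the chain of identifications in the proof of Theorem~\ref{theorem:representation} is natural with respect to natural transformations $\eta:F\Rightarrow G$ of cubical diagrams, and that under it $\operatorname{tcof}(\Phi)$ corresponds to $\operatorname{Tot}(\eta)=\bigoplus_v\eta_v$. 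I would check this link by link.
\begin{enumerate}[label=$(\alph*)$]
\item Evaluation at $M$ commutes with $\hocolim$ and $\hocof$ naturally in the diagram, by Lemma~\ref{lemma:ev_hocolim}.
\item Dold--Kan transport is natural, by Lemma~\ref{lemma:DK_transport}.
\item The comparison $\operatorname{tcof}_{\cube^n}(X)\simeq\operatorname{Tot}(X)$ of Lemma~\ref{lemma:total_complex} is natural in $X$.
\item The identification $\operatorname{Tot}(X)\cong\operatorname{Hom}(M,\mathcal{C}(F))$ is natural by additivity.
\end{enumerate}
Once this naturality holds, both morphisms have the same image in $\operatorname{Hom}_{\operatorname{Ho}}$. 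Full faithfulness of $\mathcal{J}$ (Theorem~\ref{theorem:derived_enriched_embedding}) is not even needed, because we obtain equality of maps directly, not merely existence.

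The main obstacle is step $(c)$, where signs enter. Lemma~\ref{lemma:total_complex} is proved by induction using iterated cones followed by a re-signing automorphism $\rho_v=(-1)^{\operatorname{inv}(v)}$. I need the map induced by $\eta$ on iterated cones to be vertexwise $\eta_v$ with no sign. This holds because the cone of a map of squares is formed coordinatewise and $\eta$ commutes with all edge maps. I also need $\rho$ to commute with $\bigoplus\eta_v$, which is clear because $\rho$ is a vertexwise scalar that depends only on $v$, and $p=\mathrm{id}$ preserves $v$. For the induction step I would invoke Proposition~\ref{proposition:iterated_cofiber} along the last coordinate, noting that $\eta$ restricts to maps $F^{(n)}_\epsilon\Rightarrow G^{(n)}_\epsilon$ compatible with the crossing maps. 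The square-cofiber identification is natural in such morphisms of squares, so the inductive hypothesis propagates. I expect this sign bookkeeping to be the only genuinely delicate point. If $\rho$ interacted with $\eta$ nontrivially, I would absorb it, since $\lambda^{\mathrm{id}}\equiv1$ matches the trivial re-signing discrepancy.
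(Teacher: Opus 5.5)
Your proposal is correct and follows the paper's proof. Part (i) uses the same argument: admissible implies puncture-preserving, so Proposition~\ref{proposition:span_morphism} gives a strict span morphism, which induces a map of homotopy pushouts. Part (ii) uses the same naturality of the identification chain with respect to fixed-shape transformations, with $\lambda^{\mathrm{id}}\equiv 1$ giving components $\eta_v$. The differences are minor. You spell out the sign bookkeeping in the $\operatorname{tcof}\simeq\operatorname{Tot}$ step, which the paper leaves implicit. The paper closes by citing full faithfulness of $\mathcal{J}$, whereas you correctly observe that naturality of the comparison isomorphisms $\mathbf{Kh}(F)\cong\mathcal{J}(\mathcal{C}(F))$ already yields the equality.
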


\begin{proof}
$(i)$ For a pro-tangle $F$ of rank $n$, the span
\[
S_F = \Big( 0 \longleftarrow \hocolim_{\partial\cube^n} \mathcal{E}_F \xrightarrow{b(F)} \mathcal{E}_F(\mathbf{1}_n) \Big),
\]
is a diagram of shape $\bullet\leftarrow\bullet\rightarrow\bullet$. Since admissible morphisms are puncture-preserving, Proposition~\ref{proposition:span_morphism} yields a strict span morphism $S_\Phi : S_F \to S_G$ with components
\[
\lambda_\Phi = \kappa_p \circ \hocolim\big(\mathcal{E}(\eta)\big) : \hocolim_{\partial\cube^n} \mathcal{E}_F \to \hocolim_{\partial\cube^m} \mathcal{E}_G
\]
on the left, and
\[
a_\Phi = \mathcal{E}\Big( G\big(p(\mathbf{1}_n)\to\mathbf{1}_m\big) \circ \eta_{\mathbf{1}_n} \Big) : \mathcal{E}_F(\mathbf{1}_n) \to \mathcal{E}_G(\mathbf{1}_m)
\]
on the right, satisfying the commutation relation $b(G) \circ \lambda_\Phi = a_\Phi \circ b(F)$.

The total homotopy cofiber $\operatorname{tcof}(F) = \hocof\big(b(F)\big)$ is the homotopy pushout of the span $S_F$. A strict morphism of spans canonically induces a morphism of homotopy pushouts, so $\operatorname{tcof}(\Phi)$ is well-defined.

$(ii)$ For $p=\mathrm{id}$, $\lambda^p\equiv1$, so $\mathcal{C}(\Phi)$ has components $\eta_v$. The identification $\mathbf{N}(\operatorname{tcof}(\mathcal{E}_F)(M))\cong\operatorname{Hom}(M,\overline{\mathcal{C}}(F))$ is natural with respect to fixed-shape natural transformations, as all intervening constructions (Bousfield–Kan models, mapping cones, re-signing) are natural. Under this identification, $\operatorname{tcof}(\Phi)$ corresponds to $\mathcal{C}(\Phi)$, so it agrees with $\mathbf{Kh}(\Phi)$ in the homotopy category by full faithfulness of $\mathcal{J}$.
\end{proof}

\begin{remark}
Crossing maps $\Phi_i=(\mathrm{id},\eta^{(i)})$ and upper smoothing inclusions $\mathcal{I}_{i,1}=(\delta_{i,1},\mathrm{id})$ are admissible. Under the cone decomposition $\mathcal{C}(F)\cong\operatorname{Cone}\big(\mathcal{C}(F^{(i)}_0)\to\mathcal{C}(F^{(i)}_1)\big)$, $\mathcal{C}(\Phi_i)$ is the cone arrow, while $\mathcal{C}(\mathcal{I}_{i,1})$ is the canonical inclusion of the target into the mapping cone, up to the sign $\lambda^{\delta_{i,1}}$. The smoothing compatibility theorem restricts to admissible morphisms for $\epsilon=1$, and $\mathbf{Kh}$ sends it to a commutative square. Lower face inclusions are puncture-preserving but not admissible; their role is encoded in the cofiber sequences of Proposition~\ref{proposition:iterated_cofiber}.
\end{remark}

\begin{remark}
The functors $\overline{\mathbf{Kh}}$ and $\mathbf{Kh}$ are not faithful. For $n\ge 2$, let $\tau$ be a coordinate transposition and $\eta=0$ the zero natural transformation. Then $(\mathrm{id},0)$ and $(\tau,0)$ are distinct admissible morphisms, but both induce the zero map on Khovanov objects.
\end{remark}

\begin{remark}
Since $\fun_B$ combines cubes of all ranks, it is neither complete nor cocomplete, which obstructs a direct model category structure on $\fun_B$ itself. The functor $\mathbf{Kh}$ realizes Khovanov homotopy types as simplicial presheaves, but an intrinsic homotopical structure on $\fun_B$ (e.g. a weak model structure) remains an open question, as does extending functoriality beyond the admissible subcategory.
\end{remark}

\subsection{The homologies of pro-tangles}

The category $\mathcal{A}_B$ is pre-additive but not abelian, so direct homological invariants are not available internally. We extract homological invariants by post-composing with a $\mathbf{k}$-linear additive functor into an abelian category, and we show that the resulting homology admits three equivalent characterizations: as the total complex of the resolution cube, as the homology of the Khovanov simplicial object, and as the homotopy groups of an Eilenberg--MacLane total homotopy cofiber; for representable coefficients, these are further identified with the homotopy groups of the global Khovanov simplicial presheaf.

\subsubsection{Khovanov homology with coefficients}

Let $\mathcal{A}$ be a $\mathbf{k}$-linear abelian category and $\Theta:\mathcal{A}_B\to\mathcal{A}$ a $\mathbf{k}$-linear additive functor. By objectwise post-composition, $\Theta$ induces a functor between categories of simplicial objects:
\[
s\Theta: s\mathcal{A}_B \longrightarrow s\mathcal{A}, \qquad X_\bullet \longmapsto \Theta\circ X_\bullet.
\]
Explicitly, $(s\Theta(X_\bullet))_k = \Theta(X_k)$ in each simplicial degree, with face and degeneracy maps induced by functoriality of $\Theta$. Since $\Theta$ is $\mathbf{k}$-linear and additive, it preserves the finite direct sum decompositions of simplicial objects and commutes with the simplicial structure maps.

Let $\mathbf{N}: s\mathcal{A} \to \mathbf{Ch}_{\ge 0}(\mathcal{A})$ denote the normalized Moore complex functor for simplicial objects over the abelian category $\mathcal{A}$; this is the right adjoint of the Dold--Kan Quillen equivalence, which identifies simplicial homotopy groups with homology groups of the normalized complex.

\begin{definition}
The \textit{Khovanov chain complex with coefficients in $\Theta$} is
\[
\mathcal{C}(F;\Theta) := \mathbf{N}\big(s\Theta(\mathcal{S}(F))\big) \in \mathbf{Ch}_{\ge 0}(\mathcal{A}),
\]
where $\mathcal{S}(F)\in s\mathcal{A}_B$ is the Khovanov simplicial object of Definition~\ref{definition:Khovanov_simplicial_object}. The \textit{Khovanov homology} of $F$ with coefficients in $\Theta$ is the homology of this chain complex
\[
\kh_k(F;\Theta) := H_k\big(\mathcal{C}(F;\Theta)\big) \in \mathcal{A}.
\]
\end{definition}

By additivity of $\Theta$, the canonical decomposition of each $\mathcal{S}_k(F)$ into non-degenerate and degenerate summands is preserved under $s\Theta$; hence normalization commutes with $\Theta$, and applying $\mathbf{N}\circ s\Theta$ to $\mathcal{S}(F)$ recovers the total complex of the resolution cube
\[
\mathcal{C}(F;\Theta) \cong \operatorname{Tot}(\Theta\circ\mat\circ F),
\]
where $\operatorname{Tot}$ denotes the total complex of a cubical diagram with coweight grading $|v|^*$ and sign rule $(-1)^{\xi_i(v)}$, exactly as in Lemma~\ref{lemma:total_complex}. Indeed, since $\Theta$ is additive, it preserves the splitting of each $\mathcal{S}_k(F)$ into non-degenerate and degenerate summands, and since $\Theta$ is $\mathbf{k}$-linear, it commutes with the signs $(-1)^{\xi_i(v)}$ occurring in the differential; the identifications of Lemma~\ref{lemma:dold_kan_compatibility} therefore descend to $\Theta$-coefficients.

In particular, for the representable functor $\Theta_M = \operatorname{Hom}_{\mathcal{A}_B}(M,-)$ associated with a testing object $M\in\mathcal{A}_B$, we recover the chain complex
\[
\mathcal{C}(F;\Theta_M) \cong \operatorname{Hom}_{\mathcal{A}_B}\big(M,\mathcal{C}(F)\big).
\]
Combined with Theorem~\ref{theorem:representation} and the Dold--Kan identification $\pi_k \cong H_k\circ\mathbf{N}$, this yields:
\begin{corollary}
For every $M\in\mathcal{A}_B$ and every $k\ge 0$, there is a canonical isomorphism
\[
\pi_k\big(\mathbf{Kh}(F)(M)\big) \cong \kh_k(F;\Theta_M).
\]
\end{corollary}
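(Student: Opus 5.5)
The plan is to chain three identifications already established in this section, working objectwise in $M$. First, by Theorem~\ref{theorem:representation} there is a natural weak equivalence of simplicial $\mathbf{k}$-modules $\mathbf{Kh}(F)(M)\simeq\operatorname{Hom}_{\mathcal{A}_B}(M,\mathcal{S}(F))$. A weak equivalence of simplicial modules induces isomorphisms on all simplicial homotopy groups. Since the equivalence may be a zig-zag, I will pass through each intermediate object and use that every map in the zig-zag is a $\pi_*$-isomorphism. This gives
\[
\pi_k\big(\mathbf{Kh}(F)(M)\big)\cong\pi_k\big(\operatorname{Hom}_{\mathcal{A}_B}(M,\mathcal{S}(F))\big).
\]

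Second, I will apply Lemma~\ref{lemma:Dold_Kan} to the simplicial module $\operatorname{Hom}_{\mathcal{A}_B}(M,\mathcal{S}(F))$, which yields
\[
\pi_k\big(\operatorname{Hom}_{\mathcal{A}_B}(M,\mathcal{S}(F))\big)\cong H_k\Big(\mathbf{N}\big(\operatorname{Hom}_{\mathcal{A}_B}(M,\mathcal{S}(F))\big)\Big).
\]
Next I will observe that $\operatorname{Hom}_{\mathcal{A}_B}(M,\mathcal{S}(F))$ is literally $s\Theta_M(\mathcal{S}(F))$ with $\Theta_M=\operatorname{Hom}_{\mathcal{A}_B}(M,-)\colon\mathcal{A}_B\to\mathbf{Mod}_{\mathbf{k}}$. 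This functor is $\mathbf{k}$-linear and additive, and its target is abelian, so it is an admissible coefficient functor. Hence the normalized complex above is by definition $\mathcal{C}(F;\Theta_M)$, and its $k$-th homology is $\kh_k(F;\Theta_M)$. Equivalently, this step can be routed through Lemma~\ref{lemma:dold_kan_compatibility}$(ii)$ and the displayed identification $\mathcal{C}(F;\Theta_M)\cong\operatorname{Hom}_{\mathcal{A}_B}(M,\mathcal{C}(F))$. Corollary~\ref{corollary:homotopy_groups} already gives $\pi_k(\mathbf{Kh}(F)(M))\cong H_k(\operatorname{Hom}(M,\mathcal{C}(F)))$, so the statement is in fact a direct rephrasing of that corollary.

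The only point needing care is canonicity and naturality in $M$. The isomorphism from the zig-zag in Theorem~\ref{theorem:representation} depends only on the homotopy-category class, and each step is natural in $M$, so the composite is canonical. A second subtlety is that $\mathbf{N}$ on $s\mathbf{Mod}_{\mathbf{k}}$ uses intersection of kernels, whereas $\mathcal{C}(F;\Theta)$ is defined via $\mathbf{N}$ on $s\mathcal{A}$. For $\mathcal{A}=\mathbf{Mod}_{\mathbf{k}}$ these are the same functor, so no comparison between the two normalizations is needed here. I expect no step to be a genuine obstacle; the main thing to check is that the naturality of the weak equivalence carries over to the induced isomorphisms on $\pi_k$.
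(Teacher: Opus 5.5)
Your proposal is correct and follows essentially the paper's own argument: combine the objectwise weak equivalence of Theorem~\ref{theorem:representation} with the Dold--Kan identification $\pi_k\cong H_k\circ\mathbf{N}$. The paper then identifies the normalized complex via $\mathcal{C}(F;\Theta_M)\cong\operatorname{Hom}_{\mathcal{A}_B}(M,\mathcal{C}(F))$, whereas you observe more directly that $\operatorname{Hom}_{\mathcal{A}_B}(M,\mathcal{S}(F))=s\Theta_M(\mathcal{S}(F))$, so its normalization is $\mathcal{C}(F;\Theta_M)$ by definition. This is a slightly cleaner version of the same step.
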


Passing to an abelian target category $\mathcal{A}$ gives access to standard homological tools: long exact sequences, spectral sequences, and (if $\mathcal{A}$ has enough projectives) derived functor formalisms.

\subsubsection{Cellular cube homology}

Khovanov homology admits a direct combinatorial description as cellular homology of the Boolean cube lattice, independent of simplicial machinery.

Let $P$ be a finite poset, graded by a function $\operatorname{rk}:P\to\mathbb{Z}$ that decreases along the arrows of $P$, and let $\mathcal{A}$ be a $\mathbf{k}$-linear abelian category. For any functor $X:P\to\mathcal{A}$, the \textit{cellular chain complex} of $X$ is defined by
\[
C_k(P;X) = \bigoplus_{\operatorname{rk}(v)=k} X(v),
\]
with boundary differential $\partial_k: C_k(P;X)\to C_{k-1}(P;X)$ given by
\[
\partial_k\big|_{X(v)} = \sum_{v\to u} (-1)^{\operatorname{sgn}(v,u)} X(v\to u),
\]
where the sum runs over all covering arrows $v\to u$ out of $v$ (so that $\operatorname{rk}(u)=\operatorname{rk}(v)-1$), and the orientation signs are chosen so that all commuting squares of covering arrows anti-commute; this guarantees $\partial^2=0$. The homology of this complex is the \textit{cellular poset homology} of $X$, denoted $H_k^\mathrm{cell}(P;X)$.

We now specialize to the Boolean cube $\cube^n$, equipped with the coweight grading $|v|^* = n-|v|$ as rank function, which decreases along the edges $v\to v_i$. For an elementary edge $e_i:v\to v_i$ flipping the $i$-th zero of $v$, the standard orientation sign is $(-1)^{\xi_i(v)}$, where $\xi_i(v)$ counts the number of $1$s preceding the flipped position; the anti-commutation of squares holds by the classical Khovanov sign computation of Lemma~\ref{lemma:total_complex}.

Given a pro-tangle $F:\cube^n\to\cob(B)$, define the coefficient diagram
\[
F^\Theta = \Theta\circ\mat\circ F : \cube^n \longrightarrow \mathcal{A}.
\]

\begin{definition}
The \textit{cellular cube complex} of $F^\Theta$ is the cellular chain complex $C_\bullet(\cube^n;F^\Theta)$ of the poset $\cube^n$ graded by coweight. The \textit{cellular cube homology} is
\[
H_k^\mathrm{cell}(\cube^n;F^\Theta) = H_k\big(C_\bullet(\cube^n;F^\Theta), \partial\big).
\]
\end{definition}

\begin{remark}
The construction of the cellular cube complex requires only the additive structure of $\mathcal{A}$, that is, finite direct sums. Taking homology, however, uses kernels and cokernels and thus the abelian structure. The assumption that $\mathcal{A}$ has enough projectives is not needed for the construction, only for derived functor interpretations.
\end{remark}

\begin{proposition}\label{proposition:cellular_equals_khovanov}
There is a canonical isomorphism of chain complexes
\[
C_\bullet(\cube^n;F^\Theta) \cong \mathcal{C}(F;\Theta),
\]
inducing canonical isomorphisms on homology
\[
H_k^\mathrm{cell}(\cube^n;F^\Theta) \cong \kh_k(F;\Theta),\qquad k\ge 0.
\]
\end{proposition}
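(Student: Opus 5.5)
The plan is to compare both sides degreewise and then match the differentials. The key input is the identification $\mathcal{C}(F;\Theta)\cong\operatorname{Tot}(\Theta\circ\mat\circ F)$ established just before the cellular discussion. That identification rests on three facts: additivity of $\Theta$ preserves the splitting of $\mathcal{S}_k(F)$ into non-degenerate and degenerate summands, normalization commutes with $\Theta$, and Lemma~\ref{lemma:dold_kan_compatibility}$(i)$ descends to coefficients. It therefore suffices to produce a canonical isomorphism $C_\bullet(\cube^n;F^\Theta)\cong\operatorname{Tot}(F^\Theta)$ and compose the two.

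\textbf{Degreewise comparison.} In degree $k$ both complexes equal $\bigoplus_{|v|^*=k}\Theta(F(v))$. On the cellular side this is because the rank function is the coweight. On the Tot side it is the definition in Lemma~\ref{lemma:total_complex}, applied to the $\mathcal{A}$-valued diagram $F^\Theta$ in place of a degree-zero diagram of modules; that definition only uses finite direct sums and composition, so it makes sense verbatim. I would take the identity map on each summand as the candidate isomorphism.

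\textbf{Differentials.} The covering arrows out of $v$ in $(\cube^n,|\cdot|^*)$ are exactly the elementary edges $e_i:v\to v_i$ flipping the $i$-th zero, $1\le i\le |v|^*$. With the standard orientation sign $(-1)^{\xi_i(v)}$ fixed in the paragraph preceding the definition, the cellular differential on $\Theta F(v)$ is $\sum_i(-1)^{\xi_i(v)}F^\Theta(e_i)$. This is literally the Tot differential, so the identity is a chain isomorphism.

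\textbf{Sign choice.} This is the only real subtlety. The cellular complex was defined with ``signs chosen so that squares anticommute'', so a priori different choices could give different complexes. I would note the following.
\begin{itemize}
\item The signs $(-1)^{\xi_i(v)}$ anticommute on every square. On the square spanned by flipping positions $a<b$, flipping $a$ first raises $\xi$ at $b$ by one, while flipping $b$ first leaves $\xi$ at $a$ unchanged.
\item Any other admissible sign assignment differs from this one by a $1$-cocycle on the cube with $\mathbb{Z}/2$-coefficients.
\item Since $H^1(\cube^n;\mathbb{Z}/2)=0$, such a cocycle is a coboundary $v\mapsto(-1)^{\epsilon(v)}$, and rescaling the summand $\Theta F(v)$ by $(-1)^{\epsilon(v)}$ gives an isomorphism of complexes.
\end{itemize}
Canonicity therefore holds for the standard sign choice, and holds up to this re-signing automorphism in general, in the same way as the re-signing $\rho_v$ in the proof of Lemma~\ref{lemma:total_complex}.

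\textbf{Homology.} Passing to homology in the abelian category $\mathcal{A}$ gives $H_k^{\mathrm{cell}}(\cube^n;F^\Theta)\cong H_k(\mathcal{C}(F;\Theta))=\kh_k(F;\Theta)$ by definition. Naturality in $\Theta$ is clear, since every identification is the identity on summands.
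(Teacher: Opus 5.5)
Your proposal is correct and follows the paper's own proof. Both arguments identify $C_k(\cube^n;F^\Theta)$ and $\mathcal{C}_k(F;\Theta)\cong\operatorname{Tot}(F^\Theta)_k$ as the same sum $\bigoplus_{|v|^*=k}F^\Theta(v)$, note that both differentials use the sign rule $(-1)^{\xi_i(v)}$, and take the identity on summands. Your extra remark is also correct: any other square-anticommuting sign assignment differs from the standard one by a $\mathbb{Z}/2$ $1$-cocycle on the cube, hence by a vertexwise re-signing, a point the paper leaves implicit by fixing the standard sign.
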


\begin{proof}
The coweight grading identifies the summands degreewise: $C_k(\cube^n;F^\Theta) = \bigoplus_{|v|^*=k} F^\Theta(v) = \mathcal{C}_k(F;\Theta)$. The boundary differentials are both given by the alternating sum of edge morphisms with the same sign rule $(-1)^{\xi_i(v)}$. The identity on summands therefore extends to an isomorphism of chain complexes, which descends to an isomorphism on homology.
\end{proof}

\begin{example}
For a rank-$1$ pro-tangle $F:\cube^1\to\cob(B)$ with unique edge $e:0\to1$, the cellular complex reduces to the two-term complex
\[
F^\Theta(0) \xrightarrow{\Theta(F(e))} F^\Theta(1),
\]
concentrated in degrees $1$ and $0$. Its homology is $H_1^\mathrm{cell}=\ker\Theta(F(e))$ and $H_0^\mathrm{cell}=\operatorname{coker}\Theta(F(e))$, matching the standard Khovanov homology of a single crossing. This coincides with the fact that the $1$-cube cellular complex is exactly the mapping cone of the edge map.
\end{example}

\subsubsection{Eilenberg--MacLane simplicial realization}

We finally realize Khovanov homology as the simplicial homotopy groups of a cubical diagram of Eilenberg--MacLane objects, via the total homotopy cofiber construction.

For any object $X\in\mathcal{A}$ and integer $n\ge 0$, let $K_\mathcal{A}(X,n)\in s\mathcal{A}$ denote the $n$-th Eilenberg--MacLane simplicial object, characterized by
\[
\pi_k K_\mathcal{A}(X,n) \cong
\begin{cases}
X, & k=n, \\
0, & k\neq n.
\end{cases}
\]
Under the Dold--Kan correspondence, the normalized chain complex of $K_\mathcal{A}(X,n)$ is the shifted chain complex
\[
\mathbf{N}\, K_\mathcal{A}(X,n) \cong X[n],
\]
equivalently $K_\mathcal{A}(X,n)=\Gamma(X[n])$ for the denormalization functor $\Gamma$.

For a pro-tangle $F:\cube^n\to\cob(B)$, let $F^\Theta: \cube^n \longrightarrow \mathcal{A}$ be a coefficient diagram. We define the \textit{Eilenberg--MacLane resolution diagram}
\[
K_\mathcal{A}(F^\Theta,0):\cube^n\to s\mathcal{A},
\qquad
v\longmapsto K_\mathcal{A}\big(F^\Theta(v),0\big)=\Gamma\big(F^\Theta(v)\big),
\]
with edge maps $\Gamma\big(F^\Theta(e)\big)$ for each elementary edge $e$ of $\cube^n$. Since $\mathbf{N}\circ\Gamma \cong \mathrm{id}$, this is a lifting of the degree-zero diagram $F^\Theta$ along $\mathbf{N}$.

Let $\operatorname{tcof}_{\cube^n} : \operatorname{Fun}(\cube^n, \mathbf{Ch}_{\ge 0}(\mathcal{A})) \to \mathbf{Ch}_{\ge 0}(\mathcal{A})$ denote the total cofiber functor on cubical diagrams, given by iterated mapping cones along each coordinate direction. We define its simplicial counterpart
\[
\operatorname{tcof}_{\cube^n} : \operatorname{Fun}(\cube^n, s\mathcal{A}) \to s\mathcal{A}
\]
as the lift along the Dold–Kan adjunction:
\[
\operatorname{tcof}_{\cube^n}(Y) := \Gamma \circ \operatorname{tcof}_{\cube^n} \circ \mathbf{N}\, (Y),
\]
where $\Gamma: \mathbf{Ch}_{\ge 0}(\mathcal{A}) \to s\mathcal{A}$ is the denormalization functor, left adjoint to the normalization functor $\mathbf{N}$.

When $\mathcal{A} = \mathbf{Mod}_{\mathbf{k}}$, the simplicial object $s\mathcal{A}$ admits the standard projective model structure and $\Gamma$ is left Quillen. In this case $\operatorname{tcof}_{\cube^n}(Y)$ is canonically weakly equivalent to the homotopy cofiber of the punctured cube structure map described in Definition~\ref{definition:Khovanov_presheaf}.

Set
\[
\mathbb{X}_{\kh} = \operatorname{tcof}_{\cube^n} K_\mathcal{A}(F^\Theta,0) \in s\mathcal{A}.
\]

\begin{proposition}\label{proposition:khovanov_bk_realization}
There is a canonical natural isomorphism
\[
\pi_k\big(\mathbb{X}_{\kh}\big) \cong \kh_k(F;\Theta),\qquad k\ge 0.
\]
\end{proposition}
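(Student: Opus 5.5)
The plan is to unwind the definition of $\mathbb{X}_{\kh}$ through the Dold--Kan correspondence for the abelian category $\mathcal{A}$. By definition,
\[
\mathbb{X}_{\kh} = \Gamma\Big(\operatorname{tcof}_{\cube^n}\big(\mathbf{N}\circ K_\mathcal{A}(F^\Theta,0)\big)\Big).
\]
The simplicial homotopy groups of any simplicial object $Y\in s\mathcal{A}$ are identified with $H_k(\mathbf{N}Y)$. Hence
\[
\pi_k(\mathbb{X}_{\kh})\cong H_k\big(\mathbf{N}\Gamma\,\operatorname{tcof}_{\cube^n}(\mathbf{N}K_\mathcal{A}(F^\Theta,0))\big).
\]
Using the natural isomorphism $\mathbf{N}\circ\Gamma\cong\mathrm{id}$, this reduces to $H_k\big(\operatorname{tcof}_{\cube^n}(\mathbf{N}K_\mathcal{A}(F^\Theta,0))\big)$. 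All of these identifications are natural isomorphisms of functors, not merely weak equivalences, because Dold--Kan is an honest equivalence of categories for abelian $\mathcal{A}$.

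Next, I identify the cubical diagram $\mathbf{N}\circ K_\mathcal{A}(F^\Theta,0)$. Objectwise $\mathbf{N}\Gamma(F^\Theta(v))\cong F^\Theta(v)[0]$, and naturality of $\mathbf{N}\Gamma\cong\mathrm{id}$ in morphisms shows that the edge maps become $F^\Theta(e)$ in degree zero. So we obtain a natural isomorphism of diagrams $\mathbf{N}K_\mathcal{A}(F^\Theta,0)\cong F^\Theta$, where $F^\Theta$ is viewed as a degree-zero cubical diagram of chain complexes. Since $\operatorname{tcof}_{\cube^n}$ is a functor on diagrams, it carries this to an isomorphism $\operatorname{tcof}_{\cube^n}(\mathbf{N}K_\mathcal{A}(F^\Theta,0))\cong\operatorname{tcof}_{\cube^n}(F^\Theta)$.

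It remains to compute $\operatorname{tcof}_{\cube^n}(F^\Theta)$. I would mimic the proof of Lemma~\ref{lemma:total_complex}, now in $\mathbf{Ch}_{\ge0}(\mathcal{A})$ with $\operatorname{tcof}$ given strictly by iterated mapping cones, so no model structure on $\mathcal{A}$ is needed. The argument is an induction on $n$:
\[
\operatorname{tcof}_{\cube^n}(F^\Theta)=\operatorname{Cone}\Big(\operatorname{tcof}_{\cube^{n-1}}\big((F^\Theta)^{(n)}_0\big)\to\operatorname{tcof}_{\cube^{n-1}}\big((F^\Theta)^{(n)}_1\big)\Big).
\]
Each $F^\Theta(v)$ sits in degree $|v|^*$. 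The re-signing automorphism $\rho_v=(-1)^{\operatorname{inv}(v)}$ gives a chain isomorphism with $\operatorname{Tot}(F^\Theta)$, whose differential follows the $(-1)^{\xi_i(v)}$ rule. Then $\operatorname{Tot}(F^\Theta)=C_\bullet(\cube^n;F^\Theta)$, and Proposition~\ref{proposition:cellular_equals_khovanov} identifies this with $\mathcal{C}(F;\Theta)$. Taking $H_k$ gives $\pi_k(\mathbb{X}_{\kh})\cong\kh_k(F;\Theta)$.

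The main obstacle is the sign bookkeeping in the inductive step: the iterated-cone differential must match the $\xi_i$ rule after re-signing by $\rho$. Concretely, I would check the case of an edge flipping the last coordinate versus an interior one. Cone conventions contribute $(-1)^{\text{degree}}$ on the source part, and the parity of $\operatorname{inv}$ changes by $(Z-r_i)$ under a flip, as computed in Lemma~\ref{lemma:sign_system}; these two effects should combine to the required sign. Naturality in $F^\Theta$ then follows because every step is functorial for morphisms of diagrams of fixed shape.
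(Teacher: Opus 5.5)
Your proposal is correct and follows the paper's route. Use $\mathbf{N}\circ\Gamma\cong\mathrm{id}$ to identify $\mathbf{N}K_\mathcal{A}(F^\Theta,0)$ with the degree-zero diagram $F^\Theta$. Then identify $\operatorname{tcof}_{\cube^n}(F^\Theta)$ with $\operatorname{Tot}(F^\Theta)$ by the iterated-cone and re-signing argument of Lemma~\ref{lemma:total_complex}, match this with $\mathcal{C}(F;\Theta)$, and conclude with $\pi_k\cong H_k\circ\mathbf{N}$.

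Redoing that lemma strictly in $\mathbf{Ch}_{\ge0}(\mathcal{A})$, where $\operatorname{tcof}$ is defined by iterated cones, is slightly more careful than the paper, which cites the lemma even though it is stated for $\mathbf{Mod}_{\mathbf{k}}$ and only up to weak equivalence. Your sign check closes up: with cone differential $(a,b)\mapsto(-da,\,fa+db)$, an edge flipping coordinate $j$ acquires $(-1)^{\#\{l>j:\,v_l=0\}}$, and this times $(-1)^{\xi_i(v)}$ is exactly $(-1)^{\operatorname{inv}(v_i)-\operatorname{inv}(v)}$, as the re-signing by $\rho_v=(-1)^{\operatorname{inv}(v)}$ requires.
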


\begin{proof}
Since $\mathbf{N}\circ\Gamma \cong \mathrm{id}$, we have $\mathbf{N}\circ K_\mathcal{A}(F^\Theta,0)=F^\Theta$ as degree-zero cubical diagrams, whence
\[
\mathbf{N}\big(\mathbb{X}_{\kh}\big)
= \mathbf{N}\Big(\operatorname{tcof}_{\cube^n} K_\mathcal{A}(F^\Theta,0)\Big)
\cong \operatorname{tcof}_{\cube^n}\big(F^\Theta\big)
\cong \operatorname{Tot}(F^\Theta)
\cong \mathcal{C}(F;\Theta),
\]
where the first isomorphism holds by the definition of $\operatorname{tcof}$, the middle one is Lemma~\ref{lemma:total_complex}, and the last one is the identification established above. Taking homology and applying the Dold--Kan isomorphism $\pi_k \cong H_k\circ\mathbf{N}$ yields
\[
\pi_k\big(\mathbb{X}_{\kh}\big) \cong H_k\big(\mathbf{N}(\mathbb{X}_{\kh})\big) \cong H_k\big(\mathcal{C}(F;\Theta)\big) = \kh_k(F;\Theta),
\]
as required.
\end{proof}

\begin{proposition}\label{proposition:presheaf_EM_comparison}
Let $F$ be a pro-tangle.
\begin{enumerate}[label=$(\roman*)$]
\item If $\Theta=\Theta_M=\operatorname{Hom}_{\mathcal{A}_B}(M,-)$ for some $M\in\mathcal{A}_B$, then
$\mathbb{X}_{\kh} \simeq \mathbf{Kh}(F)(M)$.
\item For general additive $\Theta:\mathcal{A}_B\to\mathcal{A}$, $\mathbb{X}_{\kh}$ is the image of the Khovanov simplicial object $\mathcal{S}(F)$ under $\Gamma\circ s\Theta$. The weak equivalence class of $\mathbf{Kh}(F)$ determines all $\mathbb{X}_{\kh}[\Theta]$, but not vice versa.
\end{enumerate}
\end{proposition}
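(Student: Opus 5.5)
For part (i), I will compare normalized complexes, since $\mathbf{N}$ detects weak equivalences of simplicial modules by Lemma~\ref{lemma:DK_transport}. Take $\Theta=\Theta_M$, so $\mathcal{A}=\mathbf{Mod}_{\mathbf{k}}$. The proof of Proposition~\ref{proposition:khovanov_bk_realization} then gives
\[
\mathbf{N}(\mathbb{X}_{\kh})\cong\operatorname{Tot}(F^{\Theta_M})\cong\mathcal{C}(F;\Theta_M)\cong\operatorname{Hom}_{\mathcal{A}_B}\big(M,\mathcal{C}(F)\big).
\]
On the other side, Lemma~\ref{lemma:objectwise_tcof} and Lemma~\ref{lemma:total_complex} give
\[
\mathbf{N}\big(\mathbf{Kh}(F)(M)\big)\simeq\operatorname{tcof}_{\cube^n}(X^M)\simeq\operatorname{Tot}(X^M).
\]
Here $X^M_v=\operatorname{Hom}_{\mathcal{A}_B}(M,F(v))=F^{\Theta_M}(v)$, with the same edge maps. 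So both normalizations are quasi-isomorphic to the same total complex.

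To lift this to simplicial modules, I will use that $\Gamma$ and $\mathbf{N}$ are inverse equivalences on $\mathbf{Mod}_{\mathbf{k}}$. Apply $\Gamma$ to the zig-zag of quasi-isomorphisms. This gives a zig-zag of weak equivalences from $\Gamma\mathbf{N}(\mathbf{Kh}(F)(M))\cong\mathbf{Kh}(F)(M)$ to $\Gamma\operatorname{Tot}(F^{\Theta_M})$. The latter is $\mathbb{X}_{\kh}$ by its definition together with Lemma~\ref{lemma:total_complex}. Alternatively, I can cite Theorem~\ref{theorem:representation} directly: $\mathbf{Kh}(F)(M)\simeq\operatorname{Hom}(M,\mathcal{S}(F))$, and the normalization of the right side is $\operatorname{Hom}(M,\mathcal{C}(F))$ by Lemma~\ref{lemma:dold_kan_compatibility}(ii). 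I must check that the two uses of Lemma~\ref{lemma:total_complex} carry the same sign convention. Both use the $(-1)^{\xi_i(v)}$ rule, so they do. Naturality in $M$ follows because every identification used is natural.

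For the first claim of part (ii), I will build the identification degreewise. The discussion preceding the cellular subsection gives $\mathbf{N}(s\Theta\,\mathcal{S}(F))\cong\operatorname{Tot}(F^\Theta)\cong\mathbf{N}(\mathbb{X}_{\kh})$. Applying $\Gamma$ then gives $\mathbb{X}_{\kh}\cong\Gamma\mathbf{N}(s\Theta\,\mathcal{S}(F))\cong s\Theta\,\mathcal{S}(F)$, using $\Gamma\mathbf{N}\cong\mathrm{id}$ on the abelian category $\mathcal{A}$. I will interpret the phrase ``image under $\Gamma\circ s\Theta$'' in exactly this sense, namely as $\Gamma\mathbf{N}s\Theta(\mathcal{S}(F))$.

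For ``determines'', I will use Corollary~\ref{corollary:algebraic_representation}. The weak type of $\mathbf{Kh}(F)$ determines $\mathcal{C}(F)$ up to strict chain homotopy equivalence in $\mathbf{K}_{\ge0}(\mathcal{A}_B)$. An additive functor $\Theta$ preserves chain homotopies, so $\operatorname{Tot}(F^\Theta)\cong\Theta(\mathcal{C}(F))$ is determined up to homotopy equivalence, and hence so is $\mathbb{X}_{\kh}[\Theta]$.

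For ``not vice versa'', I will give a counterexample: $\Theta=0$, or a single non-faithful $\Theta$ such as one factoring through $\mathcal{A}_B\to0$. Then $\mathbb{X}_{\kh}[\Theta]\simeq0$ for every $F$, while $\mathbf{Kh}(F)$ is nonzero, for instance for a crossingless $F$ by Remark~\ref{remark:rank_zero}. I expect this converse to be the main obstacle, because the statement's intended reading is ambiguous. If it means that the whole family $\{\mathbb{X}_{\kh}[\Theta]\}_\Theta$ still fails to determine $\mathbf{Kh}(F)$, I would first try to argue that the objects alone lose the naturality in $\Theta$. If that argument fails, I would fall back to the reading with a fixed $\Theta$, where the counterexample above suffices.
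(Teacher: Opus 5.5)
Your proposal is correct. For (i) and for the first two assertions of (ii) it is the paper's argument. In (i), both sides normalize to $\operatorname{Hom}_{\mathcal{A}_B}(M,\mathcal{C}(F))$, and one lifts the comparison because $\mathbf{N}$ detects weak equivalences; your explicit $\Gamma$-lift of the zig-zag is just the precise form of this. In (ii), ``determines'' is Corollary~\ref{corollary:algebraic_representation} together with the fact that additive $\Theta$ preserves chain homotopies. You read ``image under $\Gamma\circ s\Theta$'' as $\Gamma\mathbf{N}s\Theta$, which gives $\mathbb{X}_{\kh}\cong s\Theta(\mathcal{S}(F))$. That is the right repair: the paper's composite does not type-check, since $\Gamma$ takes chain complexes as input.

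The genuine divergence is ``not vice versa''. The paper only remarks that quasi-isomorphisms need not have chain homotopy inverses over general rings. In other words, a single $\mathbb{X}_{\kh}[\Theta]$ sees $\Theta(\mathcal{C}(F))$ only up to quasi-isomorphism, but the paper gives no example. Your $\Theta=0$ is a rigorous witness for that same fixed-$\Theta$ reading. To close it, name two pro-tangles over the same $B$ whose $\mathbf{Kh}$ are not equivalent, for instance:
\begin{itemize}
\item a crossingless one with resolution $M$, so $\mathbf{Kh}=\mathbf{h}_M\not\simeq 0$;
\item a rank-one one whose edge is $\mathrm{id}_M$, whose cone is contractible.
\end{itemize}
The paper's remark points at a more interesting loss of information, which occurs even for representable $\Theta_M$, but it proves nothing. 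Yours proves the claim, though only in its degenerate form.

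Do drop your contingency plan. Under the family reading the assertion is false, so no ``objects lose naturality in $\Theta$'' argument can succeed. Take $\mathcal{A}=\operatorname{Fun}_{\mathbf{k}}(\mathcal{A}_B^{\mathrm{op}},\mathbf{Mod}_{\mathbf{k}})$ and $\Theta$ the Yoneda embedding. Then:
\begin{itemize}
\item $\operatorname{Tot}(F^{\Theta})\cong\mathbf{h}_{\mathcal{C}(F)}$;
\item hence $\mathbb{X}_{\kh}[\Theta]\cong\mathcal{N}^{-1}(\mathbf{h}_{\mathcal{C}(F)})=\mathcal{J}(\mathcal{C}(F))\cong\mathbf{Kh}(F)$ in $\operatorname{Ho}$, by Corollary~\ref{corollary:algebraic_representation};
\item quasi-isomorphisms in this $s\mathcal{A}$ are exactly the objectwise weak equivalences of the projective model structure.
\end{itemize}
So this one member of the family already recovers the Khovanov homotopy type. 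The only defensible reading of the claim is therefore the fixed-$\Theta$ one you fell back on.
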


\begin{proof}
$(i)$ By Theorem~\ref{theorem:representation} and Lemma~\ref{lemma:dold_kan_compatibility}$(ii)$, one has
\[
\mathbf{N}\big(\mathbf{Kh}(F)(M)\big) \simeq \operatorname{Hom}_{\mathcal{A}_B}(M,\mathcal{C}(F)) = \mathcal{C}(F;\Theta_M).
\]
By the proof of Proposition~\ref{proposition:khovanov_bk_realization}, we obtain $\mathbf{N}(\mathbb{X}_{\kh}) \cong \mathcal{C}(F;\Theta_M)$. Since $\mathbf{N}$ reflects weak equivalences, the quasi-isomorphisms lift to $\mathbb{X}_{\kh} \simeq \mathbf{Kh}(F)(M)$.

$(ii)$ By construction, we have
\[
\mathbb{X}_{\kh} = \operatorname{tcof}_{\cube^n} K_\mathcal{A}(F^\Theta,0) = \Gamma\big(\operatorname{tcof}_{\cube^n}(F^\Theta)\big) \cong \Gamma(\mathcal{C}(F;\Theta)),
\]
and $\mathcal{C}(F;\Theta) = \mathbf{N}(s\Theta(\mathcal{S}(F)))$. Hence $\mathbb{X}_{\kh}$ is the image of the Khovanov simplicial object $\mathcal{S}(F)$ under the composite functor $\Gamma\circ s\Theta: s\mathcal{A}_B \to s\mathcal{A}$. By Corollary~\ref{corollary:algebraic_representation}, the weak equivalence class of $\mathbf{Kh}(F)$ equals the strict chain homotopy class of $\mathcal{C}(F)$. Additive functors send strict chain homotopy equivalences to quasi-isomorphisms, so $\mathbf{Kh}(F)$ determines each $\mathbb{X}_{\kh}[\Theta]$.
The converse fails over general ground rings, as quasi-isomorphisms need not admit chain homotopy inverses.
\end{proof}

In summary, for any $\mathbf{k}$-linear additive functor $\Theta\colon \mathcal{A}_B \to \mathcal{A}$ taking values in an abelian category, the Khovanov homology $\kh_k(F;\Theta)$ obtained from the Khovanov simplicial object by Moore normalization, the cellular cube homology $H_k^\mathrm{cell}(\cube^n;F^\Theta)$ defined combinatorially on the resolution cube, and the simplicial homotopy groups $\pi_k(\mathbb{X}_{\kh})$ of the total homotopy cofiber of the Eilenberg--MacLane resolution diagram are all canonically isomorphic.

\section{Algebraic spectral sequence of pro-tangles}\label{section:spectral_sequence}

In this section, we present the decomposition of pro-tangle functors into local resolutions and constructs the associated algebraic spectral sequences. We determine the precise relation between the local Khovanov homology of these fiber diagrams and the global homological invariant. As a practical application, this decomposition provides the categorical basis to verify invariance under Reidemeister equivalence using pure diagrammatic coherence.

\subsection{Local homotopy reduction}

Consider a functor $F: \cube^{n} \to \mathcal{A}_B$ and the standard decomposition of the resolution cube into a base and a fiber,
\[
\cube^n \cong \cube^{k}\times\cube^{n-k},
\]
where a vertex $v=(s,t)\in\cube^n$ is written as $v=(s,t)$ with $s\in\cube^{k}$ the base part and $t\in\cube^{n-k}$ the fiber part. For each base vertex $s\in\cube^{k}$, the local fiber diagram is the restriction functor
\[
F_s = F\circ j_s = F(s,\cdot):\cube^{n-k}\longrightarrow\mathcal{A}_B,
\]
where $j_s:\cube^{n-k}\hookrightarrow\cube^{n}$, $t\mapsto(s,t)$, is the canonical inclusion.

The local Khovanov complex of the fiber is $\mathcal{C}(F_s)\in\mathbf{Ch}_{\ge0}(\mathcal{A}_B)$, the total complex of the $(n-k)$-cube $\mat\circ F_s$. The base structure is encoded in the diagram
\[
Y_F:\cube^{k}\longrightarrow\mathbf{Ch}_{\ge0}(\mathcal{A}_B),
\qquad s\longmapsto\mathcal{C}(F_s),
\]
whose structure map along a base edge $e:s\to s'$ is the chain map $Y_F(e):\mathcal{C}(F_s)\to\mathcal{C}(F_{s'})$ induced componentwise by $F(e\times\mathrm{id})$. This is the $p=\mathrm{id}$ instance of Theorem~\ref{theorem:functoriality}, which also guarantees that $Y_F$ is a functor.

\begin{theorem}
\label{theorem:iterated_decomposition}
There is a natural isomorphism of chain complexes
\[
\mathcal{C}(F) \cong \operatorname{tcof}_{\cube^{k}}\big(Y_F\big)
 = \operatorname{tcof}_{\cube^{k}}\big(s\mapsto\mathcal{C}(F_s)\big).
\]
\end{theorem}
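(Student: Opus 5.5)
The plan is to interpret $\operatorname{tcof}_{\cube^k}$ on a diagram of chain complexes concretely, as the iterated mapping cone (equivalently, the signed total complex). This is the model used in Lemma~\ref{lemma:total_complex} and in the definition of the chain-level total cofiber in the Eilenberg--MacLane subsection. With that model, the claim becomes an explicit identification of graded objects together with a check that the differentials agree up to a re-signing automorphism. We then argue by induction on $k$. The case $k=0$ is trivial, since $\cube^0$ is a point and $\operatorname{tcof}_{\cube^0}(Y_F)=\mathcal{C}(F)$.

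For the graded identification, use the product decomposition $v=(s,t)$. Coweight is additive, $|v|^\ast=|s|^\ast+|t|^\ast$. The iterated cone $\operatorname{tcof}_{\cube^k}(Y_F)$ places $\mathcal{C}(F_s)$ in degree shifted by $|s|^\ast$, so in degree $m$ it equals
\[
\bigoplus_{s}\mathcal{C}_{m-|s|^\ast}(F_s)=\bigoplus_{|s|^\ast+|t|^\ast=m}F(s,t)=\mathcal{C}_m(F).
\]
The differential on the left has two kinds of components. The internal fibre differentials carry the signs $(-1)^{\xi_i(t)}$, twisted by the cone sign $(-1)^{|s|^\ast}$ or a similar factor. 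The base components are $Y_F(e)=F(e\times\mathrm{id})$, carrying the sign $(-1)^{\xi_i(s)}$. On the right, an edge flipping a zero of $t$ has sign $(-1)^{\xi(s,t)}=(-1)^{|s|+\xi_i(t)}$, because the ones of $s$ precede every position in $t$. An edge flipping a zero of $s$ has sign $(-1)^{\xi_i(s)}$, since only coordinates of $s$ precede it.

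So the two differentials agree exactly if the cone convention places the fibre-internal sign $(-1)^{|s|}$ on $\mathcal{C}(F_s)$. Note that $|s|\equiv k-|s|^\ast \pmod 2$. If the iterated-cone convention instead produces $(-1)^{|s|^\ast}$, or some other parity, the mismatch is a function $\epsilon(s,t)$ that depends only on $s$. The plan is to absorb it with a diagonal automorphism $\rho_{(s,t)}=\pm1$, exactly as in the re-signing step of Lemma~\ref{lemma:total_complex}. Concretely, choose $\rho$ to be constant along fibre edges and to fix the base-edge signs. A convenient choice is $\rho_{(s,t)}=(-1)^{|t|^\ast\cdot c(s)}$ with $c$ chosen to make the comparison on each square consistent.

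To carry out the induction rigorously and avoid a global sign bookkeeping, apply Proposition~\ref{proposition:iterated_cofiber}, or rather its chain-level counterpart from the proof of Lemma~\ref{lemma:total_complex}, in the first base coordinate. This gives $\mathcal{C}(F)\cong\operatorname{Cone}(\mathcal{C}(F^{(1)}_0)\to\mathcal{C}(F^{(1)}_1))$, with the cone arrow being the crossing map $\mathcal{C}(\Phi_1)$. The remark after Proposition~\ref{proposition:geometric_compatibility} supplies this cone decomposition. Applying the inductive hypothesis to $F^{(1)}_\epsilon$, decomposed over $\cube^{k-1}\times\cube^{n-k}$, and using naturality of the inductive isomorphism with respect to the crossing transformation (a $p=\mathrm{id}$ morphism, hence covered by Theorem~\ref{theorem:functoriality}) gives the result. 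The iterated-cofiber formula for $\operatorname{tcof}_{\cube^k}$ then closes the induction.

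The main obstacle is naturality: the inductive isomorphism must be a strict chain isomorphism that commutes with the maps induced by base edges, signs included. I would handle this by making the re-signing explicit, so that it depends only on $s$, and verifying the anticommutation on each square once.
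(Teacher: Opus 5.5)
Your proposal is correct and takes essentially the paper's route. Both arguments identify $\bigoplus_s\mathcal{C}_{m-|s|^\ast}(F_s)$ with $\mathcal{C}_m(F)$ via $v=(s,t)$ and absorb the sign discrepancy with a diagonal re-signing; the paper's $(-1)^{|s|(l-|t|)}=(-1)^{|s|\,|t|^\ast}$ is your ansatz with $c(s)=|s|$, and the paper simply passes through an explicit Koszul-signed bicomplex and Lemma~\ref{lemma:total_complex} instead of inducting on $k$.

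One bookkeeping correction. With the standard cone $D(a,b)=(-da,\,fa+db)$, the fibre mismatch is the constant $(-1)^k$ and the base-edge mismatch does not depend on $t$. So $\rho$ cannot be constant along fibre edges when $k$ is odd, and your ansatz needs an extra $s$-dependent factor, e.g.\ $\rho_{(s,t)}=(-1)^{k|t|^\ast+\operatorname{inv}(s)}$. A suitable $\rho$ exists in any convention, because the ratio of two sign rules under which every square anticommutes is a $\mathbb{Z}/2$-valued $1$-cocycle on the contractible cube.
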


\begin{proof}
By Lemma~\ref{lemma:total_complex}, $\mathcal{C}(F)$ is the total complex of the $n$-dimensional cubical diagram $\mat\circ F$, with edge signs given by counting ones preceding each flipped zero coordinate. We establish the result via a chain of canonical isomorphisms.

Set $l = n-k$. Along the decomposition $\cube^n = \cube^k \times \cube^l$, define the bigraded module $K^{\bullet,\bullet}$ with components
\[
K^{p,q} = \bigoplus_{\substack{s\in\cube^k,\, t\in\cube^l \\ |s|=p,\, |t|=q}} F(s,t).
\]
Endow $K^{\bullet,\bullet}$ with differentials: the vertical differential $\delta^0: K^{p,q} \to K^{p,q+1}$, the internal cubical differential of the fiber direction, with signs counted locally within each fiber vertex $t$; the horizontal differential $\delta^1: K^{p,q} \to K^{p+1,q}$, the base cubical differential with signs counted locally within each base vertex $s$, twisted by the Koszul sign $(-1)^q$ in fiber degree $q$.

By the Koszul sign convention, $\delta^0\delta^1 + \delta^1\delta^0 = 0$, so $(K^{\bullet,\bullet}, \delta^0, \delta^1)$ is a bicomplex. Write $\operatorname{Tot}(K)$ for its total complex with total differential $D = \delta^0 + \delta^1$.

The underlying graded modules of $\mathcal{C}(F)$ and $\operatorname{Tot}(K)$ are identical, placing $F(s,t)$ in total degree $p+q$. The two differentials differ by the sign discrepancy between global and local zero-indexing, which is absorbed by the graded automorphism
\[
\Phi\big|_{F(s,t)} = (-1)^{|s|(l - |t|)}.
\]
A direct computation shows $\Phi \circ d_{\mathcal{C}(F)} = D \circ \Phi$, hence $\mathcal{C}(F) \cong \operatorname{Tot}(K)$.

Now let $Y_F: \cube^k \to \mathbf{Ch}_{\ge0}(\mathcal{A}_B)$ be the base cubical diagram sending each vertex $s$ to the fiber Khovanov complex $\mathcal{C}(F_s)$, with structure maps induced by $F(e\times\mathrm{id})$ along base edges $e$. The degree-wise sign automorphisms $(-1)^{q|s|}$ assemble to an isomorphism of cubical diagrams between $Y_F$ and the twisted base diagram whose totalization recovers $\operatorname{Tot}(K)$. Applying the totalization functor yields $\operatorname{Tot}(K) \cong \operatorname{Tot}(Y_F)$.

By Lemma~\ref{lemma:total_complex}, $\operatorname{Tot}(Y_F) \cong \operatorname{tcof}_{\cube^k}(Y_F)$. Composing these canonical isomorphisms gives the result. Naturality follows from functoriality of each construction.
\end{proof}

\subsection{Spectral sequence for pro-tangle decompositions}
Recall that for a pro-tangle functor $F: \cube^n \to \cob(B)$, composing with the TQFT-type construction $\overline{\Theta} = \Theta \circ \mat: \cob(B) \to \mathcal{A}$ yields the diagram $F^\Theta = \overline{\Theta} \circ F: \cube^n \to \mathcal{A}$ within the abelian category $\mathcal{A}$.

For $k$ selected crossings of $F$, we consider a decomposition of the resolution cube $\cube^n \cong \cube^k \times \cube^{n-k}$, where $\cube^k$ acts as the base and $\cube^{n-k}$ as the fiber. Define the bigraded module $K^{\bullet,\bullet}$ by
\[
    K^{p,q} = \bigoplus_{\substack{s\in\cube^k,\, t\in\cube^{n-k} \\ |s|=p,\, |t|=q}} F^{\Theta}(s, t),
\]
where $p$ is the base Hamming weight and $q$ is the fiber Hamming weight.

\begin{definition}
The \textit{total complex} $(\operatorname{Tot}(K), D)$ is the totalization of the bicomplex $(K^{\bullet,\bullet}, \delta^0, \delta^1)$, with total differential $D = \delta^0 + \delta^1$. Here:
\begin{itemize}
    \item the vertical (fiber) differential $\delta^0: K^{p,q} \to K^{p,q+1}$ is the standard cubical differential along the fiber direction, with edge signs given by counting ones preceding the flipped zero coordinate within each fiber vertex $t$;
    \item the horizontal (base) differential $\delta^1: K^{p,q} \to K^{p+1,q}$ is given by
      \[
      \delta^1\big|_{F^\Theta(s,t)} = \sum_{e: s\to s'} (-1)^{\xi_e(s) + q} F^\Theta(e\times\operatorname{id}),
      \]
      where the sum runs over elementary edges $e$ of the base cube $\cube^k$, $\xi_e(s)$ counts ones preceding the zero flipped by $e$ within $s$, and $(-1)^q$ is the Koszul degree twist.
\end{itemize}
\end{definition}

By construction, $(\delta^0)^2 = 0$ and $(\delta^1)^2 = 0$ as each is a cubical differential. The Koszul sign ensures the anti-commutation relation
\[
\delta^0\delta^1 + \delta^1\delta^0 = 0,
\]
so that $D^2 = 0$ and $\operatorname{Tot}(K)$ is a well-defined chain complex.

This complex admits a bounded descending filtration $I^{\bullet}$ defined by the base degree:
\[
I^p = \bigoplus_{i \ge p} \bigoplus_{q} K^{i,q}.
\]
It follows that $D(I^p) \subseteq I^p$, rendering each $I^p$ a subcomplex of $\operatorname{Tot}(K)$.

\begin{theorem}\label{theorem:spectral_sequence}
The filtration $I^{\bullet}$ induces a first-quadrant spectral sequence $\{E_r, d_r\}$ with
\[
E_1^{p,q} \Rightarrow \operatorname{Kh}^{p+q}(F),
\]
which collapses at the $(k+1)$-th page, i.e., $E_{k+1} \cong E_{\infty}$. The initial pages are characterized as follows:
\begin{itemize}
    \item $E_0^{p,q} = I^p / I^{p+1} \cong K^{p,q}$, with $d_0 = \delta^0$.
    \item $E_1^{p,q} = H^q(K^{p,\bullet}, \delta^0) \cong \bigoplus_{|s|=p} \operatorname{Kh}^q(F \circ j_s)$, where $j_s: \cube^{n-k}\hookrightarrow\cube^n$ is the canonical inclusion $t\mapsto (s,t)$, and $d_1$ is induced by the horizontal differential $\delta^1$.
\end{itemize}
\end{theorem}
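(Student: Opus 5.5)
This is the standard spectral sequence of a bounded filtered complex, applied to $(\operatorname{Tot}(K),D)$ with the filtration $I^\bullet$. The proof has three parts: the identification of the abutment, the computation of the first two pages, and the collapse bound.

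\textbf{Abutment.} First we identify $H^\ast(\operatorname{Tot}(K),D)$ with $\operatorname{Kh}^\ast(F)$. The argument of Theorem~\ref{theorem:iterated_decomposition} carries over verbatim after applying the additive functor $\overline{\Theta}$, since $\overline{\Theta}$ preserves the finite direct sums and the signs. There the automorphism $\Phi|_{F(s,t)}=(-1)^{|s|(l-|t|)}$ intertwines the global cube differential with $D$. Hence $\operatorname{Tot}(K)\cong \operatorname{Tot}(F^\Theta)\cong \mathcal{C}(F;\Theta)$, using Proposition~\ref{proposition:cellular_equals_khovanov}. The cohomological indexing by Hamming weight $p+q$ is just the regrading $|v|\mapsto n-|v|^\ast$ of the chain complex.

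\textbf{The filtration and the first pages.} The filtration $I^p$ satisfies $I^0=\operatorname{Tot}(K)$ and $I^{k+1}=0$, since $0\le |s|\le k$. It is preserved by $D$: $\delta^0$ fixes $p$ and $\delta^1$ raises it by one. The standard construction of the spectral sequence of a filtered complex (e.g.\ Weibel, Ch.~5) then gives pages $E_r$ with $E_0^{p,q}=\operatorname{gr}^p=I^p/I^{p+1}\cong K^{p,q}$. The differential $d_0$ is the part of $D$ preserving $p$, namely $\delta^0$. Thus $E_1^{p,q}=H^q(K^{p,\bullet},\delta^0)$.

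Next we compute $E_1$. The complex $(K^{p,\bullet},\delta^0)$ splits as a direct sum over $|s|=p$. This works because $\delta^0$ only moves the fiber coordinate $t$, and its signs are counted locally in $t$. Each summand is exactly the cellular cube complex of $F^\Theta\circ j_s$ on $\cube^{n-k}$. By Proposition~\ref{proposition:cellular_equals_khovanov}, each summand computes $\operatorname{Kh}^q(F\circ j_s)$. Since homology commutes with finite direct sums in the abelian category $\mathcal{A}$, we get the stated formula for $E_1$. The description of $d_1$ as the map induced by $\delta^1$ is the usual description of $d_1$ as the connecting map of $0\to\operatorname{gr}^{p+1}\to I^p/I^{p+2}\to\operatorname{gr}^p\to0$. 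On a class represented by a $\delta^0$-cycle $x$, this connecting map is the class of $\delta^1x$.

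\textbf{Convergence and collapse.} The differential $d_r$ has bidegree $(r,1-r)$. Because $E_r^{p,q}$ vanishes unless $0\le p\le k$, every $d_r$ with $r\ge k+1$ has either source or target zero. Therefore $E_{k+1}=E_{k+2}=\cdots=E_\infty$. The bounded filtration is finite, so the sequence converges: $E_\infty^{p,q}\cong \operatorname{gr}^pH^{p+q}(\operatorname{Tot}(K))\cong\operatorname{gr}^p\operatorname{Kh}^{p+q}(F)$. It is first-quadrant since $p,q\ge0$.

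\textbf{Main obstacle.} The only delicate step is the sign bookkeeping. We must confirm that $D^2=0$, i.e.\ that the Koszul twist $(-1)^q$ makes $\delta^0$ and $\delta^1$ anticommute. We must also confirm that the local sign conventions agree with the global $(-1)^{\xi_i(v)}$ rule up to $\Phi$. I would check this on a single commuting square with one base edge $e:s\to s'$ and one fiber edge $f:t\to t'$. Along the path that applies $f$ first, the Koszul twist contributes $(-1)^{q+1}$ instead of $(-1)^q$, while the local cube signs of $e$ and $f$ are unchanged because they are computed in separate coordinates. So the two composites differ by $-1$, and they cancel by functoriality of $F$ on the square. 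The comparison with the global signs then reduces to the parity computation already carried out in Theorem~\ref{theorem:iterated_decomposition}.
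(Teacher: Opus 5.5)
Your proposal is correct and follows essentially the same route as the paper: you identify the abutment via Theorem~\ref{theorem:iterated_decomposition}, read off $E_0$ and $E_1$ from the associated graded of $I^\bullet$, and get $E_{k+1}\cong E_\infty$ because $E_r^{p,q}=0$ outside $0\le p\le k$. One minor correction: with the Koszul twist $(-1)^{q}$, $q=|t|$, the automorphism you quote, $(-1)^{|s|(l-|t|)}$, does not intertwine the differentials when $l$ is odd (it already fails for $k=l=1$), whereas $(-1)^{|s||t|}$ does, so you should invoke only the existence of the isomorphism $\operatorname{Tot}(K)\cong\mathcal{C}(F;\Theta)$ rather than that particular formula.
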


\begin{proof}
The $E_0$ page is given by the associated graded pieces of the filtration. The zeroth differential $d_0$ is induced by $D$ on the quotient $I^p / I^{p+1}$, which preserves the filtration degree $p$, so $d_0 = \delta^0$. Taking homology with respect to $\delta^0$ yields the $E_1$ page, which is the direct sum of the fiber Khovanov homologies for each fixed base resolution $s$. The first differential $d_1: E_1^{p,q} \to E_1^{p+1,q}$ is induced by the horizontal differential $\delta^1$, and $E_2$ is the homology of the $E_1$ page.

By Theorem~\ref{theorem:iterated_decomposition}, there is a canonical isomorphism of chain complexes $\operatorname{Tot}(K) \cong \mathcal{C}(F)$. Hence the homology of $\operatorname{Tot}(K)$ coincides with the Khovanov homology of $F$. Since the filtration is finite and bounded, the spectral sequence converges to the homology of the total complex, with
\[
E_\infty^{p,q} \cong \operatorname{Gr}_p \operatorname{Kh}^{p+q}(F),
\]
where $\operatorname{Gr}_p$ denotes the $p$-th associated graded component with respect to the base-degree filtration.

The $r$-th differential $d_r$ has bidegree $(r, 1-r)$, acting as $d_r: E_r^{p,q} \to E_r^{p+r, q-r+1}$. Since the horizontal grading $p$ is bounded above by $k$ (the dimension of the base cube), we have $K^{a,\bullet} = 0$ for all $a > k$. Consequently, for all $r \ge k+1$, the target horizontal degree $p+r$ exceeds $k$, forcing $d_r$ to vanish identically. This boundary constraint implies the collapse $E_{k+1} \cong E_{\infty}$.
\end{proof}

\begin{example}
When the base resolution space is one-dimensional with cube $\cube^1_{\mathrm{base}}$, the spectral sequence recovers the classical mapping-cone decomposition of the Khovanov complex. Specifically, the total complex is quasi-isomorphic to the mapping cone of the saddle morphism:
\[
    \mathcal{C}(F) \simeq \operatorname{Cone}\left( \delta^1: \mathcal{C}(F_0) \to \mathcal{C}(F_1)[1] \right).
\]
The $E_1$ page consists of the Khovanov homologies of the two resolutions: $E_1^{0,q} \cong \operatorname{Kh}^q(F_0)$ and $E_1^{1,q} \cong \operatorname{Kh}^q(F_1)$. The $d_1$ differential is precisely the map induced by the saddle cobordism on homology. The $E_2$ page, being the last non-trivial page, computes the kernel and cokernel of this induced map, whose extensions reconstruct the total homology $\operatorname{Kh}(F)$.
\end{example}

\subsection{Induced maps on pro-tangle spectral sequences}

In this section, we establish the functoriality of the spectral sequence associated with the decomposition of pro-tangles. We show that morphisms in $\fun_B$ that respect the base-fiber structure induce well-defined morphisms between the corresponding spectral sequences.

\begin{definition}
Consider a decomposition of the resolution spaces as $\mathbb{B}^n \cong \mathbb{B}^k_{\mathrm{base}} \times \mathbb{B}^{n-k}_{\mathrm{fiber}}$. 
A morphism $p: \mathbb{B}^n \to \mathbb{B}^m$ is said to be \textit{fibered over $\mathbb{B}^k_{\mathrm{base}}$} if the map $p$ admits a product decomposition
\begin{equation*}
    p = \mathrm{id}_{\mathbb{B}^k} \times p_{\mathrm{fiber}} : \mathbb{B}^k_{\mathrm{base}} \times \mathbb{B}^{n-k}_{\mathrm{fiber}} \to \mathbb{B}^k \times \mathbb{B}^{m-k}_{\mathrm{fiber}},
\end{equation*}
where $p_{\mathrm{fiber}}: \mathbb{B}^{n-k}_{\mathrm{fiber}} \to \mathbb{B}^{m-k}_{\mathrm{fiber}}$ is a functor between the fiber components.
\end{definition}

Briefly, the morphism $p: \mathbb{B}^n \to \mathbb{B}^m$ admits a product decomposition $p = \mathrm{id}_{\mathbb{B}^k} \times p_{\mathrm{fiber}}$ up to permutation.

\begin{definition}
Let $F: \cube^n \to \cob(B)$ and $G: \cube^m \to \cob(B)$ be pro-tangles. Consider a decomposition of their resolution spaces as $\cube^n \cong \cube^k_{\mathrm{base}} \times \cube^{n-k}_{\mathrm{fiber}}$. A morphism $\Phi = (p, \eta): F \to G$ in $\fun_B$ is said to be \textit{fibered} over $\cube^k_{\mathrm{base}}$ if the map $p$ is fibered over $\cube^k_{\mathrm{base}}$.
\end{definition}

Suppose $F: \cube^n \to \cob(B)$ and $G: \cube^m \to \cob(B)$ are pro-tangles with $k<\min\{m,n\}$. Set $\cube^n \cong \cube^k_{\mathrm{base}} \times \cube^{n-k}_{\mathrm{fiber}}$.

A fibered morphism $\Phi = (p, \eta): F \to G$ over the base cube $\mathbb{B}^k_{\mathrm{base}}$ induces a map $f: \mathrm{Tot}(K_F) \to \mathrm{Tot}(K_G)$ between the associated total complexes. Specifically, for any element $x \in \mathrm{Tot}(K_F)$, its image is determined vertex-wise by
\begin{equation*}
    f(x) = \sum_{|s|=p, |t|=q} \eta_{(s,t)}(x_{s,t}),
\end{equation*}
where $x_{s,t} \in F^{\Theta}(s,t)$ denotes the component of $x$ at the resolution vertex $(s,t) \in \mathbb{B}^k_{\mathrm{base}} \times \mathbb{B}^{n-k}_{\mathrm{fiber}}$. 

\begin{lemma}\label{lemma:filtration}
Let $\Phi = (p, \eta): F \to G$ be a fibered morphism over $\cube^k_{\mathrm{base}}$. The induced map $f: \mathrm{Tot}(K_F) \to \mathrm{Tot}(K_G)$ is a filtered map with respect to the base-degree filtration $I^\bullet$, satisfying
\begin{equation*}
    f(I^p \mathrm{Tot}(K_F)) \subseteq I^p \mathrm{Tot}(K_G) \quad \text{for all } p \ge 0.
\end{equation*}
\end{lemma}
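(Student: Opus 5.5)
The plan is to check the inclusion summand by summand. Since $f$ is additive and $I^p\mathrm{Tot}(K_F)=\bigoplus_{i\ge p}\bigoplus_q K_F^{i,q}$, it suffices to show that for each vertex $(s,t)\in\cube^k_{\mathrm{base}}\times\cube^{n-k}_{\mathrm{fiber}}$ with $|s|=i\ge p$, the component $\eta_{(s,t)}$ lands in a summand of $\mathrm{Tot}(K_G)$ whose base degree is also $i$. The filtration on the target is taken with respect to the base factor $\cube^k$ in the decomposition $\cube^m\cong\cube^k\times\cube^{m-k}_{\mathrm{fiber}}$ that is implicit in the fibered hypothesis.

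First I unwind what $f$ does on a summand. By the definition of a morphism in $\fun_B$, $\eta_{(s,t)}\colon F(s,t)\to G(p(s,t))$. Applying $\overline{\Theta}$ gives a map $F^\Theta(s,t)\to G^\Theta(p(s,t))$, and this is exactly the component $x_{s,t}\mapsto \eta_{(s,t)}(x_{s,t})$ in the formula for $f$. Next I use the fibered hypothesis $p=\mathrm{id}_{\cube^k}\times p_{\mathrm{fiber}}$. It gives $p(s,t)=(s,p_{\mathrm{fiber}}(t))$, so the image lies in $G^\Theta(s,t')$ with $t'=p_{\mathrm{fiber}}(t)$. That summand sits in $K_G^{|s|,|t'|}$, which has base degree $|s|=i\ge p$, and hence lies in $I^p\mathrm{Tot}(K_G)$. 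Summing over all summands gives $f(I^p)\subseteq I^p$. In fact $f$ preserves the base grading exactly, which is stronger than what is needed.

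The only delicate point is the remark that $p$ is a product decomposition only \emph{up to permutation}. If the identification $\cube^m\cong\cube^k\times\cube^{m-k}$ involves a permutation of coordinates, I will choose the base factor of $\cube^m$ to be the image of $\cube^k_{\mathrm{base}}$, transported along that permutation. This is what defines $I^\bullet$ on $\mathrm{Tot}(K_G)$, and the argument above then applies verbatim, because a permutation preserves Hamming weight. I would also note that the claim here is only about filtration. It makes no assertion that $f$ is a chain map: compatibility with $D$ would need the signs $\lambda^p$ of Lemma~\ref{lemma:sign_system} and is not part of this lemma. So no sign bookkeeping is required, and I expect no serious obstacle beyond fixing the base-factor convention on the target.
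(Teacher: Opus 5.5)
Your argument is correct and matches the paper's proof. Both reduce to a homogeneous summand $F^\Theta(s,t)$ with $|s|=i\ge p$ and use $p(s,t)=(s,p_{\mathrm{fiber}}(t))$ to conclude that $\eta_{(s,t)}$ lands in $K_G^{i,\bullet}\subseteq I^p\,\mathrm{Tot}(K_G)$. Your extra care over the base-factor convention on the target, and your remark that only the filtration (not compatibility with $D$) is established here, are sensible refinements; the paper's proof ends by calling $f$ a ``filtered chain map'' without verifying the chain-map property.
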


\begin{proof}
Recall that $\mathrm{Tot}(K_F) = \bigoplus_{p,q} K_F^{p,q}$, where $K_F^{p,q} = \bigoplus_{|s|=p, |t|=q} F^{\Theta}(s,t)$. The map $f$ is defined vertex-wise by the natural transformation $\eta_{(s,t)}: F(s,t) \to G(p(s,t))$. 

Let $x \in I^p \mathrm{Tot}(K_F)$. Decomposing $x = \sum_{i \ge p} x_i$ with $x_i \in K_F^{i, \bullet}$, it suffices to consider a homogeneous element $x_{s,t} \in F^{\Theta}(s,t)$ with $|s|=i$. Its image under $f$ is $\eta_{(s,t)}(x_{s,t}) \in G(s, p_{\mathrm{fiber}}(t))$. Since the base coordinate $s$ is preserved by the product structure $p = \mathrm{id} \times p_{\mathrm{fiber}}$, the base degree of the image is precisely $|s|=i$. Consequently, $f(x_{s,t}) \in K_G^{i, \bullet} \subseteq I^i \mathrm{Tot}(K_G)$. As $i \ge p$, we have $f(x) \in I^p \mathrm{Tot}(K_G)$, proving that $f$ is a filtered chain map of degree zero.
\end{proof}

Crucially, while the natural transformation $\eta_{(s,t)}$ is required to preserve the base coordinate $s$, it may induce a non-uniform shift in the fiber degree depending on the specific vertex $(s,t)$. Consequently, $f$ maps the $(p,q)$-th bigraded component $K_F^{p,q}$ into a direct sum of components in $K_G$ that share the same base degree $p$ but may span multiple fiber degrees $q'$. Formally, this yields the inclusion
\begin{equation*}
    f(K_F^{p,q}) \subseteq \bigoplus_{q'} K_G^{p,q'}.
\end{equation*}
Despite this potential non-homogeneity in the second grading, $f$ behaves like a chain map. The naturality of $\eta$ ensures commutativity with both the vertical differential $\delta^0$ and the horizontal differential $\delta^1$.

\begin{theorem}\label{theorem:alegbraic_ss}
Any fibered morphism $\Phi = (p, \eta) \in \fun_B$ induces a filtered map between spectral sequences $\{f_r\}_{r \ge 0}$, acting as $f_r^{p}: E_r^{p,\bullet}(F) \to E_r^{p,\bullet}(G)$ for each base filtration degree $p$. This morphism satisfies:
\begin{enumerate}[label=$(\roman*)$]
    \item For each $r \ge 0$, the induced maps commute with the spectral sequence differentials, i.e., $d_r^G \circ f_r = f_r \circ d_r^F$.
    
    \item The morphism $f_1$ on the $E_1$ page is the direct sum of maps induced by the natural transformation $\eta$ on the local fiber homologies:
    \begin{equation*}
        f_1^{p}: \bigoplus_{|s|=p} Kh(F|_{s\times \mathbb{B}^{n-k}}) \longrightarrow \bigoplus_{|s|=p} Kh(G|_{s\times \mathbb{B}^{m-k}}).
    \end{equation*}
    Here, $f_1^p$ is not necessarily homogeneous with respect to the fiber grading $q$, but decomposes into components $f_1^{(p, q, \delta)}: E_1^{p,q}(F) \to E_1^{p,q+\delta}(G)$ determined by the degree shifts of $\eta_{(s,t)}$.

    \item As the spectral sequences collapse at the $E_{k+1}$ page (i.e., $d_r = 0$ for $r \ge k+1$), the map $f_{k+1} = f_\infty$ coincides with the associated graded morphism $\mathrm{Gr}_p(\Phi_*)$ induced by the map $\Phi_*: Kh(F) \to Kh(G)$ on the total Khovanov homology with respect to the base filtration $I^\bullet$.
\end{enumerate}
\end{theorem}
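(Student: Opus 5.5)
The plan is to reduce everything to the standard fact that a filtered chain map between filtered complexes induces a morphism of the associated spectral sequences. First I will check that the vertexwise map $f$ is a chain map $\mathrm{Tot}(K_F)\to\mathrm{Tot}(K_G)$. Then Lemma~\ref{lemma:filtration} makes it a filtered chain map for the base filtrations $I^\bullet$, and the remaining assertions are read off from the explicit description of the pages in Theorem~\ref{theorem:spectral_sequence}.

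\textbf{Chain map.} Because $p=\mathrm{id}_{\cube^k}\times p_{\mathrm{fiber}}$, a base edge $e\times\mathrm{id}$ at $(s,t)$ is sent to the base edge $e\times\mathrm{id}$ at $(s,p_{\mathrm{fiber}}(t))$. The base sign $(-1)^{\xi_e(s)}$ depends only on $s$, so it is preserved. Naturality of $\eta$ then gives $f\delta^1=\delta^1 f$, apart from the Koszul twist $(-1)^q$, which I come back to below.

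For the fiber differential, I will require $p_{\mathrm{fiber}}$ to be admissible, in line with the admissible functoriality used throughout. Then $f$ must be corrected by the sign $\lambda^{p_{\mathrm{fiber}}}_t$ of Lemma~\ref{lemma:sign_system}, exactly as in Theorem~\ref{theorem:functoriality}. Applied fiberwise, that theorem gives $\delta^0 f=f\delta^0$.

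Admissibility preserves fiber coweight, so it also preserves $|t|$ and the parity of $q$; hence the Koszul twist is respected. I expect this sign bookkeeping to be the main obstacle. The statement as written permits degree shifts in $q$, and for a non-admissible $p_{\mathrm{fiber}}$ the twist $(-1)^q$ need not match. If full generality is wanted, my fallback is to pass through the isomorphism $\mathrm{Tot}(K)\cong\mathcal{C}(F)$ of Theorem~\ref{theorem:iterated_decomposition}. On $\mathcal{C}(F)$ there is no Koszul twist, and one can check commutation there with the global sign rule. The sign automorphism $\Phi|_{F(s,t)}=(-1)^{|s|(l-|t|)}$ is diagonal, so it preserves the filtration.

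\textbf{Induced morphism and (i).} A filtered chain map induces maps $f_r$ on $Z_r^p=\{x\in I^p: Dx\in I^{p+r}\}$ and on $B_r^p$, hence on $E_r^p=Z_r^p/(Z_{r-1}^{p+1}+B_{r-1}^p)$. Each $d_r$ is induced by $D$, so $d_r f_r=f_r d_r$, which is (i). This is standard, and I will cite it rather than reprove it.

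\textbf{(ii).} On $E_0=\operatorname{Gr}_p$ the map $f_0$ is the vertexwise $\eta$ restricted to each base vertex $s$. Hence $f_1=H(f_0,\delta^0)$ splits as a direct sum over $|s|=p$ of the maps induced on fiber Khovanov homology by the restrictions $\eta|_{s\times\cube^{n-k}}$. Since $f_0$ can shift $q$, I decompose $f_1$ into its components $f_1^{(p,q,\delta)}$.

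\textbf{(iii).} Theorem~\ref{theorem:spectral_sequence} gives $E_{k+1}=E_\infty\cong\operatorname{Gr}_p\kh(F)$, the associated graded of the filtration induced on homology by $I^p$. The map $f_\infty$ is induced by $f$ on $\operatorname{im}(H(I^p)\to H)$ modulo $\operatorname{im}(H(I^{p+1})\to H)$. This is by definition $\operatorname{Gr}_p(\Phi_*)$, where $\Phi_*=H(f)$ is transported to $\kh$ via Theorem~\ref{theorem:iterated_decomposition}. Boundedness of the filtration ensures the comparison is compatible with convergence.
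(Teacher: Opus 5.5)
Your overall route matches the paper's. Once the vertexwise map $f$ is a filtered chain map, (i)--(iii) are the standard facts about the spectral sequence of a filtered complex, and your treatment of them is fine.

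The real content is whether $f$ is a chain map. The paper simply asserts this ``by naturality of $\eta$''. You are right that the work lies here, but your fallback for non-admissible $p_{\mathrm{fiber}}$ cannot succeed, because the obstruction is not a sign. Take fiber dimensions $n-k=2$, $m-k=1$, and $p_{\mathrm{fiber}}(a,b)=a$; this is graphical, with the $b$-edges sent to identities. Set $F=G\circ p$ and $\eta=\mathrm{id}$. This is a legitimate fibered morphism with $k<\min\{m,n\}$.

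For $x\in F^\Theta(s,(0,0))=G^\Theta(s,0)$, the $b$-edge $(s,(0,0))\to(s,(0,1))$ is sent by $F$ to an identity. It therefore contributes $x$ to $f(Dx)$. Nothing in $Df(x)$ has a component in $G^\Theta(s,0)$, so
\[
f(Dx)-Df(x)=x\in G^\Theta(s,0).
\]
Vertexwise sign changes only turn this into $\pm x$. The isomorphism $\operatorname{Tot}(K)\cong\mathcal{C}(F)$ of Theorem~\ref{theorem:iterated_decomposition} is itself a vertexwise sign change, so passing to $\mathcal{C}(F)$ does not help either. Hence the statement cannot hold for arbitrary fibered morphisms, and the paper's appeal to naturality of $\eta$ fails on this same example. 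What your argument can establish is a restricted version.

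Within that restricted version, two repairs are needed.

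\emph{Parity of $q$.} Coweight preservation gives $|p_{\mathrm{fiber}}(t)|=|t|+(m-n)$, not $|t|$. So when $m-n$ is odd, the Koszul twist $(-1)^q$ makes $f$ anticommute with $\delta^1$ while commuting with $\delta^0$. Fix this by multiplying $f$ by $(-1)^{(m-n)|s|}$. That sign depends only on the base degree, so it is harmless for the filtration.

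\emph{Out-edge bijection.} Applying Theorem~\ref{theorem:functoriality} fiberwise needs $p_{\mathrm{fiber}}$ to induce bijections $\operatorname{Out}(t)\to\operatorname{Out}(p_{\mathrm{fiber}}(t))$, and coweight preservation alone does not give this. For example, the functor $\mathbb{B}^2\to\mathbb{B}^3$ with $00\mapsto 001$, $10,01\mapsto 101$, $11\mapsto 111$ is graphical and coweight-preserving. Yet $\partial\circ\mathcal{C}(\Phi)$ on $F(00)$ contains the term $G(001\to 011)\circ\eta_{00}$, and $\mathcal{C}(\Phi)\circ\partial$ has no matching term. So assume $p_{\mathrm{fiber}}$ is a coordinate embedding, $t\mapsto\sigma(t,1,\dots,1)$ for a coordinate permutation $\sigma$.

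With both repairs,
\[
f\big|_{F^\Theta(s,t)}=(-1)^{(m-n)|s|}\,\lambda^{p_{\mathrm{fiber}}}_t\,\eta_{(s,t)}
\]
is a filtered chain map of bidegree $(0,m-n)$, and the rest of your argument goes through unchanged. In this setting the ``non-homogeneous components'' of (ii) reduce to the single uniform shift $m-n$.
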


\begin{proof}
The existence of the filtered map $\{f_r\}_{r \ge 0}$ follows from the general mapping theorem for filtered complexes. Since $f$ is a filtered map satisfying $f(I^p \mathrm{Tot}(K_F)) \subseteq I^p \mathrm{Tot}(K_G)$, it induces a collection of maps on the associated graded objects $E_0^{p,\bullet} \cong I^p/I^{p+1}$. The naturality of the transformation $\eta$ ensures that $f$ commutes with the differentials, $d_r^G \circ f_r = f_r \circ d_r^F$.

On the $E_1$ page, the differential is purely vertical ($d_1 = \delta^0$). Since $f$ is defined vertex-wise by $\eta_{(s,t)}$, the map $f_1^p$ on each column is the direct sum of maps induced by $\eta$ on the fiber homologies $Kh(F|_{s \times \mathbb{B}})$. Although the fiber grading $q$ may shift, the stability of the base coordinate $s$ ensures that $f_1$ maps the $p$-th column of $E_1(F)$ into the $p$-th column of $E_1(G)$.

The collapse at $E_{k+1}$ is a structural property of the pro-tangle double complex. By the comparison theorem for spectral sequences, since $f$ is a filtered map, the morphism $f_\infty$ on the limit page must coincide with the associated graded morphism $\mathrm{Gr}_p(\Phi_*)$ induced by the total filtered map $f$ on the homology $Kh(F) \to Kh(G)$. 
\end{proof}

\begin{definition}
Let $F,G\in \fun^{(n)}_B$. A morphism $\Phi = (p, \eta): F \to G$ in $\fun_B$ is said to be a \textit{fibered matching} over $\cube^k_{\mathrm{base}}$ if the map $p$ admits a product decomposition
\begin{equation*}
    p = \mathrm{id}_{\cube^k_{\mathrm{base}}} \times \mathrm{id}_{\cube^{n-k}_{\mathrm{fiber}}} : \cube^k_{\mathrm{base}} \times \cube^{n-k}_{\mathrm{fiber}} \to \cube^k_{\mathrm{base}} \times \cube^{n-k}_{\mathrm{fiber}}.
\end{equation*}
\end{definition}

\begin{theorem}\label{theorem:fibered_matching}
Let $\Phi = (p, \eta): F \to G$ be a fibered matching over $\mathbb{B}^k_{\mathrm{base}}$. Then the induced map $f: \mathrm{Tot}(K_F) \to \mathrm{Tot}(K_G)$ induces a morphism of spectral sequences $\{f_r\}_{r \ge 0}$ with $f_r^{p,q}: E_r^{p,q}(F) \longrightarrow E_r^{p,q}(G)$ satisfying
\begin{enumerate}[label=$(\roman*)$]
    \item This morphism intertwines the spectral sequence differentials, $d_r^G \circ f_r = f_r \circ d_r^F$.

    \item The morphism $f_1$ on the $E_1$ page is the direct sum of maps induced by $\eta$ on the local fiber homologies:
    \begin{equation*}
        f_1^{p, q}: \bigoplus_{|s|=p} Kh^q(F|_{s\times \mathbb{B}^{n-k}}) \longrightarrow \bigoplus_{|s|=p} Kh^q(G|_{s\times \mathbb{B}^{n-k}}).
    \end{equation*}
    \item As the spectral sequences collapse at the $E_{k+1}$ page, the map $f_{k+1} = f_\infty$ coincides with the associated graded morphism $\mathrm{Gr}_{p,q}(\Phi_*)$ on the total Khovanov homology
    \begin{equation*}
        f_\infty^{p,q}: \mathrm{Gr}_{p,q} Kh(F) \longrightarrow \mathrm{Gr}_{p,q} Kh(G).
    \end{equation*}
\end{enumerate}
\end{theorem}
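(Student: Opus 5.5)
The plan is to treat Theorem~\ref{theorem:fibered_matching} as the special case of Theorem~\ref{theorem:alegbraic_ss} in which $p_{\mathrm{fiber}}=\mathrm{id}$. The only new content is bihomogeneity: the induced map now preserves the fiber grading $q$ as well as the base filtration. First I define $f:\mathrm{Tot}(K_F)\to\mathrm{Tot}(K_G)$ vertexwise by $f|_{F^\Theta(s,t)}=\overline{\Theta}(\eta_{(s,t)})$. Since $p=\mathrm{id}$, each summand $F^\Theta(s,t)$ lands in $G^\Theta(s,t)$, which has the same base weight $|s|$ and fiber weight $|t|$. Hence $f(K_F^{p,q})\subseteq K_G^{p,q}$, so $f$ is a morphism of bigraded modules, not merely filtered. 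In particular Lemma~\ref{lemma:filtration} applies, and $f(I^p)\subseteq I^p$.

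Next I check that $f$ is a morphism of bicomplexes, i.e.\ $\delta^0_G f=f\delta^0_F$ and $\delta^1_G f=f\delta^1_F$. For a fiber edge $e':t\to t'$, naturality of $\eta$ along $(s,t)\to(s,t')$ gives $G(\mathrm{id}\times e')\circ\eta_{(s,t)}=\eta_{(s,t')}\circ F(\mathrm{id}\times e')$. The sign $(-1)^{\xi_{e'}(t)}$ is the same on both sides because it depends only on the indexing vertex, which $p$ fixes. The same argument along base edges $e\times\mathrm{id}$ handles $\delta^1$: the sign $(-1)^{\xi_e(s)+q}$ is identical on both sides, since both $s$ and $q$ are unchanged. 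Additivity of $\overline{\Theta}$ then transports these identities to $\mathcal{A}$. Thus $f$ is a chain map for $D$ that respects the bigrading. This is precisely the $p=\mathrm{id}$ case of the chain-map computation in Theorem~\ref{theorem:functoriality}, where $\lambda^{\mathrm{id}}\equiv 1$.

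A bigraded bicomplex morphism induces a morphism of the spectral sequences of the column filtration that respects both indices. On $E_0=K$ the map $f_0$ is $f$ itself. On $E_1^{p,q}=H^q(K^{p,\bullet},\delta^0)$ the map $f_1^{p,q}$ is the map induced on $\delta^0$-homology. By the identification in Theorem~\ref{theorem:spectral_sequence}, this is the direct sum over $|s|=p$ of the maps $\kh^q(F\circ j_s)\to\kh^q(G\circ j_s)$ induced by the restricted transformations $\eta j_s$, which proves (ii). Item (i) follows inductively: $E_{r+1}=H(E_r,d_r)$, and $d_r$ is induced by $D$ through the standard zig-zag $d_r[x]=[Dx]$ with $x\in Z_r^{p,q}$. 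Because $f$ commutes with $D$ and preserves $I^\bullet$ and the bigrading, it maps $Z_r^{p,q}(F)\to Z_r^{p,q}(G)$ and $B_r^{p,q}(F)\to B_r^{p,q}(G)$. So $f_r$ is well defined and commutes with $d_r$.

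For (iii), Theorem~\ref{theorem:spectral_sequence} gives $d_r=0$ for $r\ge k+1$, so $f_{k+1}=f_\infty$. Convergence gives $E_\infty^{p,q}\cong I^p\kh^{p+q}/I^{p+1}\kh^{p+q}$, where $I^p\kh$ denotes the image of $H(I^p)$. The map $\Phi_*=H(f)$ preserves this filtration because $f$ is filtered. The standard comparison then identifies $f_\infty^{p,q}$ with $\mathrm{Gr}_{p,q}(\Phi_*)$, with total degree $p+q$ recorded as $(p,q)$. The main point needing care is the sign bookkeeping in the second step: the sign conventions on the two sides must agree exactly, with no $\lambda$-twist. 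This holds only because $p$ is literally the identity; for a general fibered $p$ one would need the re-signing of Lemma~\ref{lemma:sign_system}. Apart from this, the argument is standard spectral-sequence formalism.
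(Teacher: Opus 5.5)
Your proposal is correct and follows the paper's route. Because $p=\mathrm{id}$, the vertexwise map $f$ sends $K_F^{p,q}$ into $K_G^{p,q}$, and the spectral-sequence morphism then comes from the filtered-map statement of Theorem~\ref{theorem:alegbraic_ss}, exactly as in the paper; your check that $f$ commutes with $\delta^0$ and $\delta^1$ with identical signs (the indexing vertex is fixed) supplies the detail the paper leaves implicit.
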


\begin{proof}
By the definition of a fibered matching, there exists a decomposition $p = \mathrm{id}_{\mathbb{B}^k_{\mathrm{base}}} \times \mathrm{id}_{\mathbb{B}^{n-k}_{\mathrm{fiber}}}$. Since any coordinate permutation preserves the Hamming weight of resolution vertices, it follows that $|p(s, t)| = |s| + |t|$. Consequently, the induced chain map $f$ maps each $(p,q)$-bigraded component $K_F^{p,q}$ directly into $K_G^{p,q}$. By Theorem \ref{theorem:alegbraic_ss}, this yields a spectral sequence morphism $\{f_r\}_{r \ge 0}$ 
\end{proof}

\begin{definition}
A fibered matching $\Phi = (p, \eta): F \to G$ in $\fun_B$ is said to be a \textit{fibered equivalence} if $\eta: F \Rightarrow G \circ p$ is a natural isomorphism. 
\end{definition}

\begin{corollary}
If $\Phi = (p, \eta)$ is a fibered equivalence in $\fun_B$, then the induced map 
\begin{equation*}
    f_r^{p}: E_r^{p,\bullet}(F) \longrightarrow E_r^{p,\bullet}(G)
\end{equation*}
is an isomorphism for all $r \ge 1$. Consequently, $\Phi$ induces an isomorphism on the total Khovanov homology
\begin{equation*}
    \Phi_*: \kh(F) \xrightarrow{\cong} \kh(G).
\end{equation*}
\end{corollary}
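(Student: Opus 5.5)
The plan is to show that the chain map $f$ induced by a fibered equivalence is an isomorphism of filtered complexes. Once that is established, every page map $f_r$ is automatically an isomorphism, and so is the map on total homology.

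\emph{Step 1: $f$ is an isomorphism of bigraded objects.} For a fibered matching, $p=\mathrm{id}_{\cube^k}\times\mathrm{id}_{\cube^{n-k}}$. By the proof of Theorem~\ref{theorem:fibered_matching}, $f$ sends $K_F^{p,q}$ into $K_G^{p,q}$, acting vertexwise by $\overline{\Theta}(\eta_{(s,t)})\colon F^\Theta(s,t)\to G^\Theta(s,t)$. Since $\eta$ is a natural isomorphism, each $\eta_{(s,t)}$ is invertible in $\cob(B)$. Applying the additive functor $\overline{\Theta}=\Theta\circ\mat$, each component becomes an isomorphism in $\mathcal{A}$, and hence so does each $f^{p,q}$.

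\emph{Step 2: $f$ is a filtered chain isomorphism.} By Lemma~\ref{lemma:filtration} and the naturality of $\eta$, $f$ commutes with $\delta^0$ and $\delta^1$. The sign factors $(-1)^{\xi}$ and the Koszul twist $(-1)^q$ depend only on the vertex $(s,t)$, and $p$ is the identity, so both sides carry the same signs. The vertexwise inverse $\eta^{-1}$ is again natural, so it defines a chain map $g$ with $g\circ f=\mathrm{id}$ and $f\circ g=\mathrm{id}$. Moreover, $g$ preserves the base-degree filtration $I^\bullet$. Therefore $f$ restricts to isomorphisms $I^p\mathrm{Tot}(K_F)\cong I^p\mathrm{Tot}(K_G)$ and induces isomorphisms on the quotients $I^p/I^{p+1}$.

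\emph{Step 3: the pages.} By functoriality of the spectral sequence of a filtered complex, $\{g_r\}$ is inverse to $\{f_r\}$ on every page. So $f_r^{p}$ is an isomorphism for all $r\ge0$, and in particular for $r\ge1$. One could instead argue by induction: $f_0$ is an isomorphism, so $f_1=H(f_0)$ is too, and each $f_{r+1}=H(f_r)$ is an isomorphism because $f_r$ commutes with $d_r$ by Theorem~\ref{theorem:fibered_matching}$(i)$. For the total homology, $\Phi_*=H(f)$ is an isomorphism with inverse $H(g)$, and Theorem~\ref{theorem:iterated_decomposition} identifies $H(\mathrm{Tot}(K))$ with $\kh$. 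Alternatively, $f_\infty$ is an isomorphism on $\mathrm{Gr}_p$, and the filtration is finite, so the five lemma applied to $0\to I^{p+1}\to I^p\to\mathrm{Gr}_p\to0$ on homology gives the same conclusion.

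The main obstacle is Step 2: one must check that the sign conventions match, so that $f$ is an honest chain map and not merely a map up to vertexwise signs. Since $p=\mathrm{id}$, this reduces to observing that all signs in $D$ are functions of the vertex alone. This is also the case $\lambda^{\mathrm{id}}\equiv1$ of Lemma~\ref{lemma:sign_system}.
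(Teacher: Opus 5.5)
Your proof is correct, but it takes a different route from the paper's.

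\textbf{The paper's route.} The paper argues upward from the $E_1$ page. Because $\eta$ is a natural isomorphism, each fiber map induces an isomorphism on local fiber homology, so $f_1$ is an isomorphism. The comparison (mapping) theorem then propagates this to every $E_r$ with $r\ge 1$, and convergence of the finitely filtered complex carries it to $\kh(F)\to\kh(G)$.

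\textbf{Your route.} You work at the chain level and use the full strength of the hypothesis. Since $p=\mathrm{id}$, the signs in $D$ depend only on the vertex and edge, which are the same for $F$ and $G$. So $f$ is an honest chain map; this is a sign check the paper never makes explicit. Moreover, $\eta^{-1}$ assembles into a filtered chain inverse $g$. Hence $f$ is an isomorphism of filtered complexes. Every page, including $E_0$, and the abutment are then isomorphisms by functoriality alone, with no comparison theorem needed.

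\textbf{What each buys.} Your argument gives a strictly stronger conclusion, a filtered chain isomorphism. The paper's argument uses less: it would still work for a fibered matching whose $\eta$ is only a fiberwise quasi-isomorphism rather than a natural isomorphism.

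\textbf{A small slip in your last alternative.} The long exact sequence coming from $0\to I^{p+1}\to I^p\to\mathrm{Gr}_p\to 0$ involves $H(\mathrm{Gr}_p)=E_1^{p,\bullet}$. So the five lemma there, applied by downward induction on $p$ starting from $I^{k}=\mathrm{Gr}_k$, uses $f_1$, not $f_\infty$. If you want to use $f_\infty$, apply the five lemma instead to $0\to F^{p+1}H\to F^{p}H\to E_\infty^{p,\bullet}\to 0$ for the induced filtration on homology. Either version works.
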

\begin{proof}
Since $\Phi$ is a fibered equivalence, $\eta_s$ induces an isomorphism on local fiber homologies for each $s$, making $f_1$ an isomorphism on the $E_1$ page. By the convergence of spectral sequences, this isomorphism leads to all higher pages $r \ge 1$ and eventually to the total Khovanov homology.
\end{proof}

\subsection{Reidemeister moves}

This section is devoted to proving the Reidemeister invariance of the spectral sequence. Our approach leverages the homotopy equivalence of Bar-Natan complexes and the algebraic reduction of contractible subcomplexes.

\subsubsection{Reidemeister I invariance}

\begin{definition}[Reidemeister I Move]
A pro-tangle $F \in \fun_B^{(n)}$ is said to admit a \textit{Reidemeister I (R1) move} at the $i$-th crossing if there exists a natural isomorphism
\begin{equation*}
    \Psi: F^{(i)}_0 \xrightarrow{\cong} F^{(i)}_1 \sqcup S^1 \quad (\text{or } F^{(i)}_1 \xrightarrow{\cong} F^{(i)}_0 \sqcup S^1),
\end{equation*}
where $S^1\in \fun_{\emptyset}^{(0)}$ denotes a disjoint circle component. In this configuration, the crossing map $\Phi_i:F^{(i)}_0 \to F^{(i)}_1$ locally corresponds to the birth or death cobordism of the circle $S^1$.
\end{definition}

In what follows, we consider the case $F^{(i)}_0 \xrightarrow{\cong} F^{(i)}_1 \sqcup S^1$.

\begin{proposition}[R1 Invariance]
If $F \in \fun_B^{(n)}$ admits an R1 move at the $i$-th crossing, then the associated $k=1$ spectral sequence collapses at the $E_2$ page, yielding
\begin{equation*}
    E_2^{p,q}(F) \cong 
    \begin{cases} 
    \kh^q(G), & \text{if } (p,q) \text{ corresponds to the reduced index}; \\
    0, & \text{otherwise,}
    \end{cases}
\end{equation*}
where $G=F^{(i)}_1 \in \fun_B^{(n-1)}$ is the simplified pro-tangle. Consequently, $\kh(F) \cong \kh(G)$ in $\mathcal{A}$.
\end{proposition}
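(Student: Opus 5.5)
We use the spectral sequence of Theorem~\ref{theorem:spectral_sequence} with $k=1$, taking the $i$-th crossing as the base cube $\cube^1_{\mathrm{base}}$ and the remaining $n-1$ crossings as the fiber. By Theorem~\ref{theorem:iterated_decomposition}, $\mathcal{C}(F)$ is isomorphic to the mapping cone of the chain map $\mathcal{C}(\Phi_i)\colon \mathcal{C}(F^{(i)}_0)\to\mathcal{C}(F^{(i)}_1)$. The $E_1$ page therefore has only two columns,
\[
E_1^{0,q}\cong \kh^q(F^{(i)}_0),\qquad E_1^{1,q}\cong \kh^q(F^{(i)}_1)=\kh^q(G).
\]
The differential $d_1$ is the map induced on homology by the crossing map $\Phi_i$. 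Since $k=1$, the sequence collapses at $E_2$, so everything reduces to computing $\ker d_1$ and $\operatorname{coker} d_1$.

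\textbf{Step 1: rewrite the source column.} The natural isomorphism $\Psi$ is a fibered equivalence with $p=\mathrm{id}$. By the corollary on fibered equivalences, it identifies $\kh(F^{(i)}_0)$ with $\kh(G\sqcup S^1)$. With $\Theta$ the TQFT of Section~\ref{section:pro-tangle_tqft}, the extra circle contributes a factor $V=\mathbb{F}[x]/(x^2)$. Concretely, the summand-wise isomorphism $\Theta(F(v)\sqcup S^1)\cong \Theta(F(v))\otimes V$ commutes with every edge map, because edges act as the identity on the disjoint circle. Hence
\[
E_1^{0,\bullet}\cong \kh(G)\otimes V\cong \kh(G)\cdot 1\oplus \kh(G)\cdot x .
\]

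\textbf{Step 2: identify $d_1$.} Under $\Psi$, the crossing map $\Phi_i$ is a death (cap) cobordism on $S^1$, tensored with the identity on $G$. Its chain-level model is $\mathrm{id}_{\mathcal{C}(G)}\otimes\varepsilon$, where $\varepsilon\colon V\to\mathbb{F}$ is the counit with $\varepsilon(1)=0$ and $\varepsilon(x)=1$. Therefore $d_1$ restricts to zero on the summand $\kh(G)\cdot 1$ and to the identity $\kh(G)\cdot x\to\kh(G)$ on the other.

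\textbf{Step 3: read off $E_2$.} From Step 2,
\[
E_2^{0,\bullet}\cong\kh(G)\cdot 1\cong\kh(G),\qquad E_2^{1,\bullet}=0 .
\]
The surviving column is the ``reduced index'' in the statement, up to the grading shift recorded by the placement of the circle. Since only one column survives, there are no extension problems, and $E_2=E_\infty$ gives $\kh(F)\cong\kh(G)$.

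For the other orientation, $F^{(i)}_1\cong F^{(i)}_0\sqcup S^1$, the argument is dual. There $d_1=\mathrm{id}\otimes\iota$, where $\iota$ is the unit (birth) map, and the surviving column is the cokernel $\kh(G)\cdot x$ in $p=1$.

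\textbf{Main obstacle.} The delicate point is Step 2. The definition of an R1 move only states that $\Phi_i$ ``locally corresponds'' to a birth or death. One must check that, after transporting along $\Psi$, the natural transformation $\eta^{(i)}$ really equals $\mathrm{id}_G\sqcup(\text{cap})$ at every vertex, up to signs that the sign system of Lemma~\ref{lemma:sign_system} absorbs. Otherwise $d_1$ could acquire a nonzero component on $\kh(G)\cdot 1$. I would impose this as part of the reading of the hypothesis: the natural transformation $\eta^{(i)}\circ\Psi^{-1}$ equals $\mathrm{id}\sqcup\varepsilon$ as maps $G\sqcup S^1\Rightarrow G$, componentwise in $\cob(B)$.

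Alternatively, one can avoid the spectral sequence bookkeeping and use Bar-Natan's delooping isomorphism $S^1\cong\emptyset\{1\}\oplus\emptyset\{-1\}$ in $\mat(\cob(B))$. Gaussian elimination of the resulting isomorphism summand in the cone then gives a strict chain homotopy equivalence $\mathcal{C}(F)\simeq\mathcal{C}(G)$, which specializes to the $E_2$ statement.
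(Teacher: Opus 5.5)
Your argument is correct and follows the paper's proof. Both use the $k=1$ spectral sequence and split the column carrying the extra circle into two copies of $\kh(G)$; you do this via the basis $\{1,x\}$ of the TQFT factor $V$, the paper via Bar-Natan's delooping $\iota=(\iota_1\ \iota_x)$. Both then observe that $d_1$ is an isomorphism on one copy and zero on the other, and get collapse at $E_2$ because the base cube is one-dimensional.

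The only cosmetic difference is which copy survives. You read $\Phi_i$ as a death cap, so $d_1=(0\ \ \mathrm{id})$ and $\kh(G)\cdot 1$ survives. The paper treats $\Phi_i$ as a saddle absorbing the circle and writes $d_1\circ\overline{\Theta}(\iota)=(\mathrm{id}\ \ 0)$, so the $x$-labelled copy $\overline{\Theta}(G)\{-1\}$ survives. Either way the conclusion $\kh(F)\cong\kh(G)$ holds up to grading shift.
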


\begin{proof}
Bar-Natan's delooping lemma \cite{bar2007fast} guarantees that any object containing a closed circle $S^1$ is splittably isomorphic to a direct sum of two components without that circle up to grading shifts. Formally, there exists a canonical cobordism splitting isomorphism:
\begin{equation*}
    \iota = \begin{pmatrix} \iota_1 & \iota_x \end{pmatrix}: G\{1\} \oplus G\{-1\} \xrightarrow{\cong} G \sqcup S^1,
\end{equation*}
where $\iota_1$ and $\iota_x$ are induced by the birth (cap) cobordisms without and with an $x$-label respectively. 

Given the pro-tangle natural isomorphism $F^{(i)}_0 \cong G \sqcup S^1$ where $G = F^{(i)}_1$, the additivity of the TQFT functor $\overline{\Theta}$ translates this geometric split into the $E_1$ page. The differential $d_1: E_1^{0, \bullet} \to E_1^{1, \bullet}$ is specified by the crossing map $\overline{\Theta}(\Phi_i)$, where $\Phi_i: G \sqcup S^1 \to G$ is the elementary saddle morphic clearance of the circle. 

To determine the precise matrix representation of the differential, we pre-compose $d_1$ with the splitting isomorphism $\overline{\Theta}(\iota)$. Under our TQFT construction, evaluating the saddle composite on the respective direct summands yields
\begin{equation*}
    \overline{\Theta}(\Phi_i \circ \iota_1) = \mathrm{id}_{\overline{\Theta}(G)}, \quad \overline{\Theta}(\Phi_i \circ \iota_x) = 0.
\end{equation*}
Consequently, the differential $d_1$ is explicitly expressed by the row matrix:
\begin{equation*}
    d_1 \circ \overline{\Theta}(\iota) = \begin{pmatrix} \mathrm{id}_{\overline{\Theta}(G)} & 0 \end{pmatrix} : \overline{\Theta}(G)\{1\} \oplus \overline{\Theta}(G)\{-1\} \to \overline{\Theta}(G).
\end{equation*}

In the Abelian category $\mathcal{A}$, the sub-complex $[\overline{\Theta}(G)\{1\} \xrightarrow{\mathrm{id}} \overline{\Theta}(G)]$ forms a contractible pair whose homology vanishes identically. The remaining summand $\overline{\Theta}(G)\{-1\}$ corresponds precisely to the kernel component $\ker(d_1) / \mathrm{im}(0)$, which survives to the $E_2$ page. Since the base resolution space $\cube^1_{\mathrm{base}}$ is one-dimensional, the spectral sequence collapses at $E_2$, yielding $E_2^{\bullet,\bullet}(F) \cong \kh(G)$ up to grading shift. This confirms the R1 invariance.
\end{proof}

\subsubsection{Reidemeister II invariance}

From now on, for a pro-tangle $F \in \fun_B^{(n)}$ and choice of resolutions $\epsilon_i, \epsilon_j \in \{0, 1\}$ at distinct indices $i < j$, we denote the \textit{double smoothing} as the pro-tangle functor
\begin{equation*}
    F^{(i,j)}_{\epsilon_i,\epsilon_j} :  \cube^{n-2} \longrightarrow \cob(B),
\end{equation*}
which is defined by the composition $F \circ \delta_{i,j; \,\epsilon_i,\epsilon_j}$. Here, $\delta_{i,j; \,\epsilon_i,\epsilon_j} :  \cube^{n-2} \to \cube^n$ is the joint face embedding functor that assigns the fixed resolution values $\epsilon_i$ and $\epsilon_j$ to the $i$-th and $j$-th coordinates respectively:
\begin{equation*}
    \delta_{i,j; \,\epsilon_i,\epsilon_j}(x_1, \dots, x_{n-2}) = (x_1, \dots, \epsilon_i, \dots, \epsilon_j, \dots, x_{n-2}).
\end{equation*}

\begin{definition}[Reidemeister II Move]
A pro-tangle $F \in \fun_B^{(n)}$ is said to admit a \textit{Reidemeister II (R2) move} at crossings $i, j$ if there exists a pro-tangle $G \in \fun_B^{(n-2)}$ such that the local resolutions at these crossings satisfy the following natural isomorphisms:
\begin{enumerate}[label=$(\roman*)$]
    \item $F^{(i,j)}_{0,0} \cong G$ and $F^{(i,j)}_{1,1} \cong G$.
    \item $F^{(i,j)}_{0,1} \cong G \sqcup S^1$, where $S^1\in \fun_{\emptyset}^{(0)}$ is a disjoint circle component.
\end{enumerate}
In this configuration, the pro-tangle $F$ is said to be R2-equivalent to the resolution $F^{(i,j)}_{1,0}$.
\end{definition}

The Reidemeister II move involves the simultaneous resolution of two adjacent crossings, denoted as $i$ and $j$. In our setting, this corresponds to a spectral sequence fibered over the square base $\cube^2_{\mathrm{base}}$.

\begin{proposition}
Suppose the pro-tangle $F$ admits an R2 move at crossings $i, j$. The spectral sequence associated with the base $\cube^2_{\mathrm{base}}$ collapses at the $E_2$ page, yielding a natural isomorphism of total homology
\begin{equation*}
    \kh(F) \cong \kh(F^{(i,j)}_{1,0}).
\end{equation*}
\end{proposition}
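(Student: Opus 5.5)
Take the base cube $\cube^2_{\mathrm{base}}$ to be the coordinates $i,j$ and apply Theorem~\ref{theorem:spectral_sequence} with $k=2$. Then
\[
E_1^{0,\bullet}\cong\kh(F^{(i,j)}_{0,0})\cong\kh(G),\qquad
E_1^{1,\bullet}\cong\kh(F^{(i,j)}_{1,0})\oplus\kh(F^{(i,j)}_{0,1}),\qquad
E_1^{2,\bullet}\cong\kh(F^{(i,j)}_{1,1})\cong\kh(G).
\]
The first step is to deloop the circle in the $(0,1)$-vertex, exactly as in the R1 argument. The natural isomorphism $F^{(i,j)}_{0,1}\cong G\sqcup S^1$ and Bar-Natan's delooping isomorphism $\iota=(\iota_1\ \iota_x)$ give
\[
\overline{\Theta}\big(F^{(i,j)}_{0,1}\big)\cong\overline{\Theta}(G)\{1\}\oplus\overline{\Theta}(G)\{-1\}.
\]
This isomorphism is natural in the fiber variable, so it holds at the level of fiber complexes. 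Hence $E_1^{1,\bullet}\cong\kh(F^{(i,j)}_{1,0})\oplus\kh(G)\oplus\kh(G)$.

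The second step is to compute $d_1$ in this splitting. The two edges touching the $(0,1)$-vertex are the split (birth-type saddle) from $(0,0)$ and the merge (death-type saddle) into $(1,1)$. Using the TQFT evaluations already used for R1, I expect the following:
\begin{itemize}
\item the component $E_1^{0}\to\kh(G)\{-1\}$ is $\pm\mathrm{id}$, since the $x$-labelled cap paired with the split returns the identity;
\item the component $\kh(G)\{1\}\to E_1^{2}$ is $\pm\mathrm{id}$, since merge composed with $\iota_1$ is the identity;
\item the remaining components through the circle summands vanish or are triangular.
\end{itemize}
So $d_1$ contains two identity blocks. The $E_1$ complex then splits as
\[
\big[\kh(G)\xrightarrow{\cong}\kh(G)\big]\ \oplus\ \big[\kh(G)\xrightarrow{\cong}\kh(G)\big]\ \oplus\ \kh(F^{(i,j)}_{1,0})[\text{col }1],
\]
after a Gaussian elimination. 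In that elimination, the maps $E_1^0\to\kh(F^{(i,j)}_{1,0})$ and $\kh(F^{(i,j)}_{1,0})\to E_1^2$ get absorbed through the inverses of the identity blocks. This gives $E_2^{p,\bullet}=0$ for $p\neq1$ and $E_2^{1,\bullet}\cong\kh(F^{(i,j)}_{1,0})$.

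The third step is collapse. Since $E_2$ is concentrated in the single column $p=1$, every $d_r$ with $r\ge2$ changes $p$ and so vanishes, giving $E_2=E_\infty$. The filtration on $\kh(F)$ then has a single nonzero graded piece, so
\[
\kh(F)\cong E_\infty^{1,\bullet}\cong\kh(F^{(i,j)}_{1,0}),
\]
up to the overall degree shift absorbed by the convention.

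The main obstacle is the second step: confirming that the off-diagonal terms do not obstruct the cancellation and that the identity blocks really are isomorphisms on fiber homology rather than only on chains. To handle this, I would run the Gaussian elimination at chain level first, before passing to homology. On $\operatorname{Tot}(K)$, cancel the two identity-block pairs directly; Bar-Natan's elimination lemma gives a chain homotopy equivalence $\mathcal{C}(F)\simeq\mathcal{C}(F^{(i,j)}_{1,0})$ whose corrected differential is that of $F^{(i,j)}_{1,0}$. The zigzag correction terms vanish because the composite split$\circ$merge through the remaining vertex factors through $\iota_x$, which the merge kills. Only after this chain-level equivalence would I read off the $E_2$ statement.
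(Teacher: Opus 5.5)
Your proposal is correct and is essentially the paper's argument: deloop the circle at the $(0,1)$ vertex, use the identity blocks $\kh(F^{(i,j)}_{0,0})\to\kh(G)\{-1\}$ and $\kh(G)\{1\}\to\kh(F^{(i,j)}_{1,1})$ in $d_1$ to cancel everything on the $E_1$ page except $\kh(F^{(i,j)}_{1,0})$ in column $p=1$, and conclude $E_2=E_\infty$. The chain-level Gaussian elimination you add is an optional strengthening (Bar-Natan's original R2 argument) that the $E_2$ statement does not need; there, the correction terms vanish because of the shape of the base square (no edge between $(0,1)$ and $(1,0)$, and the delooping splits $\mathcal{C}(F^{(i,j)}_{0,1})$ as a direct sum of complexes), not because the merge kills $\iota_x$, which need not hold when the strand lies on a closed component.
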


\begin{proof}
The $E_1$ page is a square complex in the Abelian category $\mathcal{A}$ formed by the objectwise homologies at the four vertices of $\cube^2_{\mathrm{base}}$. By Bar-Natan's delooping lemma \cite{bar2007fast}, the object at vertex $(0,1)$ admits a canonical splitting isomorphism $\iota = \begin{pmatrix} \iota_1 & \iota_x \end{pmatrix} : \overline{\Theta}(G)\{1\} \oplus \overline{\Theta}(G)\{-1\} \xrightarrow{\cong} \overline{\Theta}(F^{(i,j)}_{0,1})$. Substituting this decomposition yields the following grid on the $E_1$ page:
\begin{equation*}
\begin{tikzcd}[column sep=large, row sep=large]
    \overline{\Theta}(G) \arrow[d, "d_1^{v,0}"'] \arrow[r, "d_1^{h,0}"] & \overline{\Theta}(F^{(i,j)}_{1,0}) \arrow[d, "d_1^{v,1}"] \\
    \overline{\Theta}(G)\{1\} \oplus \overline{\Theta}(G)\{-1\} \arrow[r, "d_1^{h,1}"'] & \overline{\Theta}(G),
\end{tikzcd}
\end{equation*}
where the internal components of the differentials are governed by elementary birth and death saddles. 

To determine the precise algebraic behavior without ambiguity, we analyze the differentials by composing them with the splitting isomorphism $\iota$. For the vertical differential $d_1^{v,0}$ (the birth of $S^1$), post-composing with the projection $\iota^{-1}$ or evaluating the standard cobordism matrix shows it acts as a split monomorphism:
\begin{equation*}
    \overline{\Theta}(\iota)^{-1} \circ d_1^{v,0} = \begin{pmatrix} 0 \\ \mathrm{id}_{\overline{\Theta}(G)} \end{pmatrix} : \overline{\Theta}(G) \to \overline{\Theta}(G)\{1\} \oplus \overline{\Theta}(G)\{-1\}.
\end{equation*}
Conversely, for the horizontal differential $d_1^{h,1}$ (the death of $S^1$), pre-composing with the splitting map $\iota$ yields a row matrix representing a split epimorphism:
\begin{equation*}
    d_1^{h,1} \circ \overline{\Theta}(\iota) = \begin{pmatrix} \mathrm{id}_{\overline{\Theta}(G)} & 0 \end{pmatrix} : \overline{\Theta}(G)\{1\} \oplus \overline{\Theta}(G)\{-1\} \to \overline{\Theta}(G),
\end{equation*}
where the vanishing component is forced by the boundary-relative ideal annihilation.

In the Abelian category $\mathcal{A}$, these explicit matrix representations induce a pairwise chain contraction across the grid via the following matching mechanisms:
\begin{itemize}
    \item \textit{Horizontal cancellation:} The horizontal differential $d_1^{h,1} \circ \overline{\Theta}(\iota) = \begin{pmatrix} \mathrm{id} & 0 \end{pmatrix}$ restricts to an isomorphism on the first direct summand $\overline{\Theta}(G)\{1\} \subset \overline{\Theta}(F^{(i,j)}_{0,1})$. This maps it bijectively onto the target vertex $(1,1)$, which is $\overline{\Theta}(G)$. Consequently, the sub-complex $[\overline{\Theta}(G)\{1\} \xrightarrow{\cong} \overline{\Theta}(G)]$ forms a contractible pair, yielding zero homology at both vertex $(1,1)$ and the upper summand of vertex $(0,1)$.
    \item \textit{Vertical cancellation:} The vertical differential $\overline{\Theta}(\iota)^{-1} \circ d_1^{v,0} = \begin{pmatrix} 0 \\ \mathrm{id} \end{pmatrix}$ embeds the source vertex $(0,0)$, which is $\overline{\Theta}(G)$, injectively into the second direct summand $\overline{\Theta}(G)\{-1\} \subset \overline{\Theta}(F^{(i,j)}_{0,1})$. Thus, the sub-complex $[\overline{\Theta}(G) \xrightarrow{\cong} \overline{\Theta}(G)\{-1\}]$ forms another independent contractible pair, forcing the mutual annihilation of vertex $(0,0)$ and the lower summand of vertex $(0,1)$.
\end{itemize}
Through these algebraic cancellations, the homological contributions of the three vertices $(0,0)$, $(0,1)$, and $(1,1)$ mutually annihilate on the $E_2$ page. The only surviving un-paired component is $\overline{\Theta}(F^{(i,j)}_{1,0})$ situated at vertex $(1,0)$. Thus, the $E_2$ page is concentrated entirely on the single column $p=1$:
\begin{equation*}
    E_2^{p,q} \cong \begin{cases} \kh^q(F^{(i,j)}_{1,0}), & \text{if } p=1; \\ 0, & \text{otherwise.} \end{cases}
\end{equation*}
Consequently, all higher page differentials $d_r$ ($r \ge 2$) vanish identically due to grading and support restrictions, and the spectral sequence collapses at $E_2$. We conclude that $\kh(F) \cong \kh(F^{(i,j)}_{1,0})$ up to appropriate topological grading shifts, completing the proof of R2 invariance.
\end{proof}

\subsubsection{Reidemeister III invariance}

\begin{definition}
A pro-tangle $F \in \fun_B^{(3)}$ is said to be of \textit{R3 configuration} if it is naturally isomorphic to the pro-tangle associated with a tangle of three pairwise intersecting strands, such that one strand is an over-strand relative to the other two.
\end{definition}

\begin{definition}
A pro-tangle $F \in \fun_B$ \textit{admits an R3 move} if there exists a contractible disk $\mathbb{D}$ such that the restriction of the functor
\begin{equation*}
    F|_{\mathbb{D}}: \cube^3 \to \cob(B),\quad F|_{\mathbb{D}}(v) = F(v)\cap \mathbb{D}
\end{equation*}
is of R3 configuration.
\end{definition}

\begin{definition}[Reidemeister III Move]
Suppose pro-tangle $F \in \fun_B^{(n)}$ admits an R3 move at crossings $\{i, j, k\}$ in $\mathbb{D}$. We say that $F$ is \textit{R3-equivalent} to a pro-tangle $G \in \fun_B^{(n)}$ if the local connectivity of $G$ within $\mathbb{D}$ differs from that of $F$ only by an antipodal permutation on the six boundary labels of $\partial \mathbb{D}$.
\end{definition}

\begin{figure}[H]
  \centering
  \includegraphics[width=0.4\textwidth]{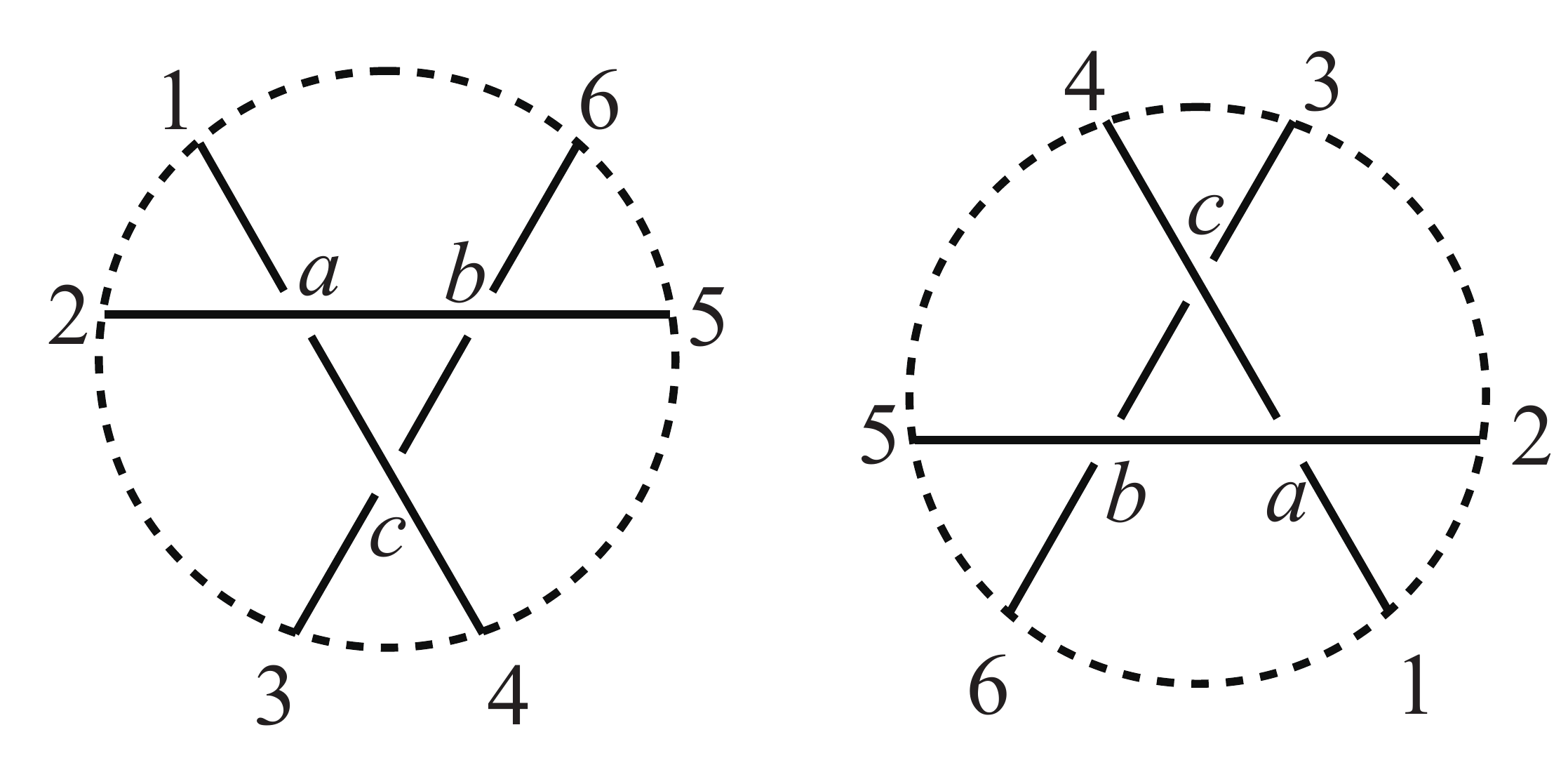}
  \caption{The antipodal permutation on the six boundary labels and the corresponding crossings.}\label{figure:antipodal}
\end{figure}

The Reidemeister III move is a local transformation involving three strands and three crossings, corresponding to a $3$-dimensional cube of resolutions $\cube^3_{\mathrm{base}}$. By the construction of the Reidemeister III move, there is a fibered matching $\Phi = (p, \eta): F \to G$, with the base map $f_{\mathrm{base}}: \cube^3_{\mathrm{base}} \to \cube^3_{\mathrm{base}}$ defined by $a \mapsto a$, $b \mapsto b$, and $c \mapsto c$, as illustrated in Figure \ref{figure:antipodal}. In this case, the map $p$ can be taken to be the identity, by choosing an appropriate matching of the crossings.

\begin{proposition}
Suppose the pro-tangles $F$ and $G$ in $\fun_B^{(n)}$ are R3-equivalent. Then there exists a morphism of spectral sequences $f_r: E_r(F) \to E_r(G)$ which becomes an isomorphism for $r \geq 2$. In particular, we have $\kh(F)\cong \kh(G)$.
\end{proposition}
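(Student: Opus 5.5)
The plan is to work with the spectral sequence of Theorem~\ref{theorem:spectral_sequence} for the base cube $\cube^3_{\mathrm{base}}$ spanned by the three R3 crossings $\{i,j,k\}$. The fiber is the complementary cube $\cube^{n-3}$, whose vertices are resolutions outside $\mathbb{D}$. By the discussion preceding the statement, there is a fibered matching $\Phi=(p,\eta)\colon F\to G$ with $p=\mathrm{id}$ after a suitable matching of crossings. Theorem~\ref{theorem:fibered_matching} then gives a morphism of spectral sequences $\{f_r\}$ that commutes with all $d_r$, and identifies $f_\infty$ with $\mathrm{Gr}(\Phi_*)$. The task is therefore to show that $f_2$ is an isomorphism. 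Once that holds, induction on $r$ propagates the isomorphism, because a morphism of spectral sequences that is an isomorphism on $E_r$ is one on $E_{r+1}$. Since the filtration is finite, the comparison theorem upgrades the isomorphism on $E_\infty=E_4$ to $\kh(F)\cong\kh(G)$.

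To analyse $E_1$ and $d_1$, I would follow the R2 argument and use Bar-Natan's delooping lemma on the local pieces. Inside $\mathbb{D}$, the eight resolutions of the R3 configuration fall into a small list of planar matchings of the six boundary points, one of which contains a closed circle. The same holds for $G$, whose matchings are the antipodal images of those of $F$. Delooping that circle at the relevant vertex, say $(0,1,1)$ or its counterpart, splits $E_1^{p,\bullet}$ into summands of the form $\kh(H)$, where $H$ runs over the pro-tangles obtained by replacing $\mathbb{D}$ by a crossingless matching. As in the R2 proof, the birth and death components of $d_1$ become $\begin{pmatrix}0\\ \mathrm{id}\end{pmatrix}$ and $\begin{pmatrix}\mathrm{id}&0\end{pmatrix}$. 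Gaussian elimination of these identity components cancels contractible pairs on both sides. What remains for $F$ is a reduced complex on the five vertices of the form $(\ast,\ast,1)$ or $(0,0,0)$ together with one surviving summand, and the analogous reduced complex remains for $G$. These two reduced complexes are the standard Bar-Natan R3 cones, and they are isomorphic via the antipodal symmetry.

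The key step is to check that the map induced by $\eta$ on $E_1$ is compatible with these cancellations, so that $f_1$ descends to an isomorphism between the reduced complexes. That in turn makes $f_2=H(f_1)$ an isomorphism. I would define $\eta$ vertexwise on the five non-cancelled vertices as the identity cobordism, which is possible because the antipodal permutation matches the crossingless matchings. On the cancelled vertices I would define it by the composite of the delooping isomorphisms, mirroring Bar-Natan's explicit R3 chain map.

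The main obstacle is that a strict natural transformation $\eta$ with $p=\mathrm{id}$ need not exist on the nose. Bar-Natan's R3 map is a chain homotopy equivalence that only becomes a strict identification after Gaussian elimination. If this fails, I would replace $\Phi$ by the zigzag $F\to F'\leftarrow G$ through the common reduced complex $F'$. Each leg of the zigzag is filtered, so it induces a morphism of spectral sequences that is an isomorphism from $E_2$ on, by the R2-type cancellation argument above. The sign compatibility with the factors $(-1)^{\xi_e(s)+q}$ would need a separate check using the re-signing automorphism of Lemma~\ref{lemma:sign_system}.
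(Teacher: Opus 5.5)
Your main route is the paper's own argument: the fibered matching $\Phi=(\mathrm{id},\eta)$ asserted before the statement, then Theorem~\ref{theorem:fibered_matching}, then Gaussian elimination to make $f_2$ an isomorphism. The obstacle you flag is fatal, not technical: no such $\eta$ exists.

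Resolving the three crossings in $\mathbb{D}$ gives the following cube.
\begin{itemize}
\item The all-triangle smoothing is $M\sqcup\bigcirc$, with $M$ an all-adjacent matching of the six boundary points.
\item Its three neighbours all carry $M$.
\item The three vertices at distance two carry the three long-chord matchings, one each.
\item The opposite vertex carries the other all-adjacent matching $M'$.
\end{itemize}
The antipodal relabelling fixes the long-chord matchings and swaps $M\leftrightarrow M'$. So on the face complementary to the R2 square, $F$ and $G$ carry $M$ and $M'$ at swapped vertices. Your identity cobordisms on the surviving vertices therefore exist only after interchanging two crossings, and that interchange destroys the match on the R2 face.

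More decisively, one of the two extreme vertices of $\cube^3$ carries $M$ for $F$ and $M'$ for $G$, and every reindexing $p$ fixes that vertex. Since $\overline{M}M'$ closes up to a single circle, every cobordism $M\to M'$ has degree at most $-2$. Chasing naturality along the edges out of that vertex then forces the degree-zero part of $\eta$ to vanish at every vertex. So for $n=3$, where $E_2=\kh$, every grading-preserving fibered matching induces $f_2=0$. Bar-Natan's R3 equivalence preserves base degree but is not vertexwise: it moves summands between different base vertices of the same height and carries the correction terms of Gaussian elimination. It is therefore not a morphism of $\fun_B$ at all.

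Your zigzag fallback is the right repair, but it must be carried out on filtered complexes. The reduced complex is not a pro-tangle, so Theorems~\ref{theorem:alegbraic_ss} and \ref{theorem:fibered_matching} do not apply. The missing ingredient is the filtration behaviour of the maps and homotopies, not $E_1$-cancellation by itself:
\begin{itemize}
\item Bar-Natan's maps $f,g$ and homotopies $H$ are supported in $\mathbb{D}$, where local homological degree equals base degree.
\item After tensoring with the identity outside $\mathbb{D}$, $f$ and $g$ therefore have base degree $0$ and $H$ has base degree exactly $-1$.
\item Write $D=\delta^0+\delta^1$ and compare base-degree components in $gf-1=DH+HD$. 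This gives $\delta^0H+H\delta^0=0$ and $gf-1=\delta^1H+H\delta^1$, and similarly for $fg-1$.
\item Hence $H$ descends to $E_1$, and $f_1,g_1$ are mutually inverse up to $d_1$-homotopy.
\end{itemize}
It follows that $f_r$ is an isomorphism for $r\ge 2$, though not in general for $r=1$, which is exactly the shape of the statement.

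Tensoring local maps with the identity on the exterior also needs an assumption. The pro-tangle must be a planar composite of the R3 disk with an exterior piece, so that cobordisms supported in $\mathbb{D}$ commute with the fiber edge maps. The functorial definition of R3-equivalence does not supply this, so it has to be assumed.
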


\begin{proof}
The proof proceeds by analyzing the induced morphism across the eight resolution states of the base cube $\cube^3_{\mathrm{base}}$. For each $s \in \cube^3_{\mathrm{base}}$, let $(\mathcal{C}(F|_s), d_{\mathrm{fiber}})$ and $(\mathcal{C}(G|_{s}), d_{\mathrm{fiber}})$ be the fiber complexes. The fibered matching $\Phi$ induces a chain map between these local complexes
\begin{equation*}
    \Phi|_s: (\mathcal{C}(F|_s), d_{\mathrm{fiber}}) \longrightarrow (\mathcal{C}(G|_{s}), d_{\mathrm{fiber}}).
\end{equation*}
By passing to the fiber homology, we obtain a bi-homogeneous morphism on the $E_1$ page
\begin{equation*}
    f_1^{p,q}: \bigoplus_{|s|=p} Kh^q(F|_s) \longrightarrow \bigoplus_{|s|=p} Kh^q(G|_{s}).
\end{equation*}
At this stage, although $f_1$ may not be a vertex-wise isomorphism for all $s$, the total complex of the $3$-cube allows for a Gaussian elimination procedure analogous to the standard simplification of the Bar-Natan complex for R3 (see \cite[§5]{bar2005khovanov}; \cite[§4]{clark2009fixing}). Specifically, the contractible subcomplexes arise from pairs of resolution states $(s, s') \in \cube^3_{\mathrm{base}} \times \cube^3_{\mathrm{base}}$ that differ by a single crossing resolution, forming canceling pairs of acyclic subcomplexes which are systematically eliminated, leaving the remaining homological terms isomorphic. This reduction ensures that the induced map on the second page
\begin{equation*}
    f_2: E_2(F) \xrightarrow{\cong} E_2(G)
\end{equation*}
is a strict isomorphism. By the comparison theorem for spectral sequences, the isomorphism $f_2$ propagates to all higher pages $r \ge 2$ and eventually to the total Khovanov homology.
\end{proof}

\section{TQFT and Khovanov homology of pro-tangles}\label{section:pro-tangle_tqft}

This section is devoted to the TQFT formulation of pro-tangles. Furthermore, we explore several concrete applications that leverage this construction to facilitate the computation of Khovanov homology for some tangles.
Unless otherwise specified, all pro-tangles discussed herein are assumed to be oriented with a prescribed orientation.

\subsection{TQFT of pro-tangles}

Let $\mathbb{F}$ be the base field. From now on, unless otherwise specified, all tensor products $\otimes$ are assumed to be taken over $\mathbb{F}$.

\subsubsection{TQFT construction}

Let $V = \mathbb{F}[x]/(x^2)$ be the Frobenius algebra over $\mathbb{F}$. We define a representation of tangles via a functorial construction that generalizes the traditional $(1+1)$-dimensional TQFT to a relative setting involving both closed and open components. The categorification of tangles is formally realized through a symmetric monoidal functor $\mathcal{Z}: \cob(B) \to \mathbf{Mod}_V$, which maps the topological category of relative cobordisms into the category of finitely generated graded $V$-modules. 

On the level of objects, let $M$ denote a general $1$-manifold embedded in a surface with fixed boundary $B = \partial M$. Such an $M$ represents a resolution consisting of disjoint unions of closed circles and open arcs anchored on $B$. The functor assigns to each constituent component a $V$-module: a closed circle $S^1$ is mapped to the regular module $\mathcal{Z}(S^1) = V = \mathbb{F}[x]/(x^2)$, while an open arc $A$ is mapped to the augmentation ideal $W = xV$. For a general object $M$, the construction $\mathcal{Z}(M)$ is defined as the tensor product over $\mathbb{F}$ of the modules corresponding to its components, resulting in a module of the form $V^{\otimes n} \otimes W^{\otimes m}$, which inherits a natural $V$-module structure.

The morphism assignment of $\mathcal{Z}$ maps topological cobordisms to $V$-linear homomorphisms. The local saddle moves are encoded by the Frobenius structure of $V$ and its action on $W$. A merge saddle $S^1 \sqcup S^1 \to S^1$ corresponds to the multiplication
\begin{equation*}
  m: V \otimes V \to V,\quad a\otimes b \mapsto ab,
\end{equation*}
while the interaction between a circle and an arc $S^1 \sqcup A \to A$ is sent to the module action
\begin{equation*}
  \mu: V \otimes W \to W,\quad a\otimes b \mapsto ab.
\end{equation*}
Conversely, the split saddles are described by the comultiplication
\begin{equation*}
  \Delta: V \to V \otimes V,\quad \left\{\begin{array}{ll}
  1 \mapsto 1 \otimes x + x \otimes 1, \\
  x \mapsto x\otimes x
\end{array}\right.
\end{equation*}
and the mixed co-action
\begin{equation*}
  \Delta_W: W \to V \otimes W,\quad x\mapsto x\otimes x.
\end{equation*}
The unit $\eta: \mathbb{F} \to V$ maps $1$ to $1$, and the counit $\epsilon: V \to \mathbb{F}$ is given by
\begin{equation*}
  \epsilon(1) = 0,\quad \epsilon(x) =1.
\end{equation*}
Besides, a crossing between two arcs corresponds to a saddle cobordism inducing a morphism
\begin{equation*}
  \sigma: W \otimes W \to W \otimes W,\quad \sigma(x\otimes x)=0
\end{equation*}
The vanishing of the arc-crossing morphism, $\sigma(x \otimes x) = 0$, captures a kind of algebraic shadowing of the boundary constraints.

\begin{table}[ht]
\centering
\caption{The TQFT construction $\mathcal{Z}: \cob(B) \to \mathbf{Mod}_V$ for pro-tangles}
\label{tab:tqft_construction}
\begin{tabular}{lll}
\toprule
\textbf{Topological object} & \textbf{Algebraic assignment} & \textbf{Module category description} \\ \midrule
Closed Circle $S^1$ & $\mathcal{Z}(S^1) = V = \mathbb{F}[x]/(x^2)$ & Regular $V$-module (Algebra) \\
Open Arc $A$ & $\mathcal{Z}(A) = W = xV$ & Augmentation ideal (Submodule) \\
General $1$-manifold $M$ & $\mathcal{Z}(M) \cong V^{\otimes n} \otimes W^{\otimes m}$ & Tensor product of constituents \\ \midrule
Merge (Circle-Circle) & $m: V \otimes V \to V$ & Frobenius multiplication \\
Merge (Circle-Arc) & $\mu: V \otimes W \to W$ & $V$-module action on $W$ \\
Split (Circle) & $\Delta: V \to V \otimes V$ & Frobenius comultiplication \\
Split (Arc) & $\Delta_W: W \to V \otimes W$ & Mixed module co-action \\ 
Arc-crossing & $\sigma: W \otimes W \to W \otimes W$ & Zero morphism \\ \midrule
Counit (Cup) & $\epsilon: V \to \mathbb{F}$ & Frobenius counit \\
Unit (Cap) & $\eta: \mathbb{F} \to V$ & Frobenius unit \\
Identity (Cylinder) & $\mathrm{id}_{\mathcal{Z}(M)}$ & Identity map \\ \bottomrule
\end{tabular}
\end{table}

\begin{remark}
The assignment of $V=\mathbb{F}[x]/(x^2)$ to circles is standard \cite{khovanov2000categorification}. 
Our assignment of $W=xV$ to open arcs is a natural adaptation: it is the minimal $V$-submodule of $V$ corresponding to the augmentation ideal. 
While \cite{khovanov2002functor} captures boundary constraints globally by viewing tangle complexes as functors taking values in bimodules over a non-commutative boundary algebra, this
construction localizes this topological enforcement by directly assigning the simple $V$-submodule $W$ to each individual open arc. Crucially, the Frobenius module structure $(W,\mu,\Delta_W)$ on open arcs departs fundamentally from the standard open-closed TQFT construction of \cite{lauda2008open}. 
While \cite{lauda2008open} models open and closed state spaces as distinct Frobenius algebras linked by transition morphisms, our open arc space $W=xV$ is a strict ideal of the Frobenius algebra $V$.
\end{remark}

\begin{lemma}\label{lemma:compatible}
The triple $(W, \mu, \Delta_W)$, where $\mu: V \otimes W \to W$ is the module action and $\Delta_W: W \to V \otimes W$ is the mixed co-action, forms a boundary-compatible Frobenius module over $V$. Specifically, the following identities hold:
\begin{enumerate}[label=$(\roman*)$]
    \item Unitality: $\mu \circ (\eta \otimes \mathrm{id}_W) = \mathrm{id}_W$.
    \item Frobenius compatibility: $\Delta_W \circ \mu = (m \otimes \mathrm{id}_W) \circ (\mathrm{id}_V \otimes \Delta_W)$.
    \item Boundary annihilation: $(\epsilon \otimes \mathrm{id}_W) \circ \Delta_W = \mathrm{id}_W$.
\end{enumerate}
\end{lemma}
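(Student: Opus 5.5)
Since $W=xV$ is one-dimensional over $\mathbb{F}$ with basis $\{x\}$, and $V$ has basis $\{1,x\}$, I would prove all three identities by direct evaluation on basis vectors. The plan is to record the relevant spaces first: $V\otimes W$ has basis $\{1\otimes x,\ x\otimes x\}$ and $V\otimes V\otimes W$ has basis $\{a\otimes b\otimes x : a,b\in\{1,x\}\}$. I would also note at the outset that $\mu$ is well defined as a map into $W$, since $ax\in xV$ for every $a\in V$. Likewise $\Delta_W$ lands in $V\otimes W$, because $x\otimes x\in V\otimes W$. Every map in the statement is $\mathbb{F}$-linear, so checking the identities on bases suffices.

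For $(i)$, we have $\mu(\eta(1)\otimes x)=\mu(1\otimes x)=1\cdot x=x$. Hence $\mu\circ(\eta\otimes\mathrm{id}_W)$ fixes the basis vector $x$ of $W$. For $(iii)$, we have $(\epsilon\otimes\mathrm{id}_W)(\Delta_W(x))=(\epsilon\otimes\mathrm{id}_W)(x\otimes x)=\epsilon(x)\,x=x$. This uses $\epsilon(x)=1$, and it gives the boundary annihilation identity.

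For $(ii)$, both sides are maps $V\otimes W\to V\otimes W$, and I would evaluate them on the two basis vectors.
\begin{itemize}
\item On $1\otimes x$, the left side gives $\Delta_W(\mu(1\otimes x))=\Delta_W(x)=x\otimes x$. The right side gives $(m\otimes\mathrm{id}_W)(1\otimes x\otimes x)=x\otimes x$.
\item On $x\otimes x$, the left side gives $\Delta_W(x^2)=\Delta_W(0)=0$. The right side gives $(m\otimes\mathrm{id}_W)(x\otimes x\otimes x)=x^2\otimes x=0$.
\end{itemize}
The two sides agree on a basis, so they are equal.

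There is no real obstacle here. The only point needing care is bookkeeping: the codomains must be the right spaces, and in $(ii)$ the relation $x^2=0$ must be applied consistently on both sides, since that is the one place where the vanishing relation matters. If one wants a more conceptual proof, $(ii)$ can also be seen by observing that $\Delta_W$ is the restriction to the ideal $W$ of the $V$-bimodule map $\Delta$, since $\Delta(x)=x\otimes x$. The identity then follows from the Frobenius relation $\Delta\circ m=(m\otimes\mathrm{id})\circ(\mathrm{id}\otimes\Delta)$ for $V$. Even so, the direct basis check is shorter, and I would use it.
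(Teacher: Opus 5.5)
Your proposal is correct and follows the paper's proof: it checks $(i)$ and $(iii)$ on the single basis vector $x$ of $W$ using $\mu(1\otimes x)=x$ and $\epsilon(x)=1$. It then verifies $(ii)$ on $1\otimes x$ and $x\otimes x$, using $x^2=0$ for the second. Your aside, that $(ii)$ also follows by restricting the Frobenius relation $\Delta\circ m=(m\otimes\mathrm{id})\circ(\mathrm{id}\otimes\Delta)$ of $V$ to $V\otimes W$ (since $\Delta|_W=\Delta_W$), is valid but not needed.
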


\begin{proof}
Since $W = xV$ is a one-dimensional vector space over $\mathbb{F}$ spanned uniquely by the single generator $x$, it suffices to verify the proposed identities on this basis element. First, the unitality of the module action follows directly from $\mu(1 \otimes x) = 1 \cdot x = x$, which recovers $\mathrm{id}_W$. 

To establish the Frobenius compatibility, we evaluate both sides of the identity on the domain generators $a \otimes x \in V \otimes W$, where $a \in \{1, x\}$ forms the standard basis of $V$. 
For the identity element $a=1$, the left-hand side yields 
\begin{equation*}
    \Delta_W(\mu(1 \otimes x)) = \Delta_W(x) = x \otimes x \in V \otimes W.
\end{equation*}
Meanwhile, evaluating the right-hand side via the mixed co-action and the regular multiplication $m$ gives:
\begin{equation*}
    \big((m \otimes \mathrm{id}_W) \circ (\mathrm{id}_V \otimes \Delta_W)\big)(1 \otimes x) = (m \otimes \mathrm{id}_W)(1 \otimes x \otimes x) = m(1 \otimes x) \otimes x = x \otimes x.
\end{equation*}
For the generator $a=x$, both sides vanish identically: the left-hand side triggers $\mu(x \otimes x) = 0$ due to the ideal nilpotency $x^2 = 0$ in $V$, while the right-hand side yields $(m \otimes \mathrm{id}_W)(x \otimes x \otimes x) = m(x \otimes x) \otimes x = 0 \otimes x = 0$.

Finally, the boundary annihilation property (iii) is verified by applying the Frobenius counit $\epsilon$ to the left-hand factor of the co-action image. Evaluating on the generator $x \in W$, we obtain
\begin{equation*}
    (\epsilon \otimes \mathrm{id}_W)\Delta_W(x) = \epsilon(x) \cdot x.
\end{equation*}
Given the standard Khovanov evaluation $\epsilon(x)=1$, this expression simplifies directly to $1 \cdot x = x$, thereby recovering the identity map on $W$ and completing the proof.
\end{proof}

\begin{proposition}
The assignment $\mathcal{Z}$ extends to a symmetric monoidal functor
\[
\mathcal{Z}: \cob(B) \to \mathbf{Vec}_{\mathbb{F}}
\]
with values in the category of vector spaces.
\end{proposition}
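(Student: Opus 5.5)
The natural approach is to build $\mathcal{Z}$ from a presentation of $\cob(B)$ by generators and relations, and then check that every relation is sent to an identity of linear maps. On objects, set
\[
\mathcal{Z}(M)=V^{\otimes n}\otimes W^{\otimes m}
\]
for a smoothing $M$ with $n$ circles and $m$ arcs. Components are ordered by a fixed convention, and reordering is absorbed by the symmetry isomorphisms of $\mathbf{Vec}_{\mathbb{F}}$. On disjoint unions set $\mathcal{Z}(M\sqcup M')=\mathcal{Z}(M)\otimes\mathcal{Z}(M')$, with the unit object $\emptyset\mapsto\mathbb{F}$.

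Morphisms of $\cobo^3(B)$ are generated, via Morse decomposition relative to $B$, by the following:
\begin{enumerate}[label=$(\roman*)$]
\item identity cylinders and permutations;
\item cups and caps on circles;
\item the four saddle types listed in Table~\ref{tab:tqft_construction}: circle--circle merge and split, circle--arc merge and split, and the arc--arc saddle.
\end{enumerate}
Each generator is sent to the corresponding structure map $\mathrm{id}$, $\eta$, $\epsilon$, $m$, $\Delta$, $\mu$, $\Delta_W$ or $\sigma$, tensored with identities on the untouched components. Symmetric monoidality is then built in, since $\mathcal{Z}$ is defined componentwise and is compatible with the symmetry of $\otimes$.

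Well-definedness on $\cobo^3(B)$ reduces to invariance under the relative Cerf moves: births and deaths of critical points, and the exchange of critical levels. For closed components these moves are exactly the commutative Frobenius algebra axioms for $(V,m,\Delta,\eta,\epsilon)$, which hold classically \cite{khovanov2000categorification}. For moves involving an arc, the needed identities are those of Lemma~\ref{lemma:compatible}:
\begin{enumerate}[label=$(\roman*)$]
\item unitality covers cap-then-merge;
\item Frobenius compatibility covers the exchange of a circle-merge level with an arc-split level;
\item boundary annihilation covers split-then-cup.
\end{enumerate}
Coassociativity of $\Delta_W$ and the module-associativity of $\mu$ remain to be checked. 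Since $W$ is one-dimensional, these are checks on the basis $\{1,x\}$ and the generator $x$. Moves involving two arcs are handled by the vanishing of $\sigma$: both sides of any such relation factor through $\sigma$ or through $\mu(x\otimes x)=0$, so they agree as zero.

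Next, $\mathcal{Z}$ must descend to the quotient by $S$, $T$ and $4Tu$.
\begin{enumerate}[label=$(\roman*)$]
\item $S$: a sphere evaluates to $\epsilon\eta(1)=\epsilon(1)=0$.
\item $T$: a torus evaluates to $\epsilon m\Delta\eta(1)=\epsilon(2x)=2$.
\item $4Tu$: the relation is local, supported in a ball meeting four sheets. I will check it on the Frobenius algebra $V$, where it is the standard verification \cite{bar2005khovanov}. When some sheets belong to arc components, the same local computation applies after restricting the $V$-action to $W\subset V$. This uses that $W$ is an ideal, so all operations are restrictions of those on $V$.
\end{enumerate}
Linearity over $\mathbf{k}=\mathbb{F}$ then lets $\mathcal{Z}$ extend additively through $\mat$ if needed.

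I expect the main obstacle to be the relative Cerf calculus for arcs, because the operations on $W$ are not literally restrictions of $V$-operations. In particular $\Delta_W(x)=x\otimes x$, whereas $\Delta(x)=x\otimes x$ lands in $V\otimes V$, not in $V\otimes W$ in general. Also $\sigma$ is declared zero rather than derived from the Frobenius structure. My first attempt will be to verify directly on the one-dimensional space $W$ every relation in the finite list of relative Cerf moves that involve at least one arc, namely:
\begin{enumerate}[label=$(\roman*)$]
\item the commuting of two saddles on disjoint components;
\item the saddle/saddle exchanges along a single arc;
\item the cap/saddle cancellations.
\end{enumerate}
Where a relation would force a nonzero value through an arc--arc saddle, I would fall back on restricting the statement to cobordisms whose arc--arc saddles occur in pairs that already compose to zero.
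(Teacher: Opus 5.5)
\textbf{Verdict: the architecture is sound, but there is a genuine gap in the two-arc case.}

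Your plan is to define $\mathcal{Z}$ on Morse generators, check the relative Cerf moves, and then check $S$, $T$ and $4Tu$. That is more careful than the paper's proof, which checks only the identity, $S$, $T$ and a case-split $4Tu$, and never addresses independence of the Morse decomposition. The gap sits exactly where $\sigma$ enters.

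\begin{itemize}
\item You assert that both sides of every move involving two arcs ``factor through $\sigma$ or through $\mu(x\otimes x)=0$'', but give no argument. Your fallback, restricting the statement to special cobordisms, would not prove the proposition.
\item Your $4Tu$ step has the same hole. The claim ``all operations are restrictions of those on $V$'' is false for $\sigma$, so a tube joining sheets on two \emph{different} arc strips is not covered.
\item The paper's ``two or more arcs: trivial since $\sigma=0$'' has a similar hole. Handles on a strip, and tubes between strips, can be built from $\Delta_W$ and $\mu$ with no $\sigma$ at all.
\item Your stated obstacle is a misreading. Since $\Delta(xv)=(1\otimes x)\Delta(v)$, we have $\Delta(x)=x\otimes x\in V\otimes W$. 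So $\Delta_W=\Delta|_W$ and $\mu=m|_{V\otimes W}$ are honest restrictions. The only operation that is not a restriction is $\sigma$.
\end{itemize}

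\textbf{The missing idea} is a global criterion that replaces the move-by-move check.
\begin{itemize}
\item An arc--arc saddle is a band joining two distinct strips, and strips once joined stay in one component. Hence a component containing at most one strip admits no $\sigma$ in any decomposition.
\item A component containing two strips gets $0$ from every decomposition. Either the decomposition uses some $\sigma$, or it is $\sigma$-free. In the $\sigma$-free case all arcs keep their endpoints, and $\mathcal{Z}$ is literally the closed Khovanov TQFT applied to the surface obtained by gluing, for each arc with endpoints $b_1,b_2$, a band along $b_1\times I$ and $b_2\times I$. Each strip then enters as a circle labelled $x$. A connected closed cobordism factors through the product of its inputs, so two such inputs give $x^2=0$.
\end{itemize}
Consequently $\mathcal{Z}(\Sigma)$ is $0$ if $\Sigma$ has a component with two or more strips, and otherwise equals the closed-TQFT value of the band-completed surface with $x$ on the arcs. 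Both values are independent of the decomposition. This disposes of all Cerf moves at once, and your separate coassociativity and module-associativity checks become automatic.

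$4Tu$ then follows from neck-cutting, $v=\epsilon(xv)\,1+\epsilon(v)\,x$ in $V$. A dot on any component containing a strip multiplies an input that is already divisible by $x$, so it contributes zero. Both sides of $4Tu$ therefore equal the sum of the four singly-dotted surfaces. This also covers tubes joining two strips, where both evaluations vanish. With this criterion the fallback is never needed.
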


\begin{proof}
The functoriality requires $\mathcal{Z}$ to respect composition and identities. 
For any object $M$, the identity morphism (the product cylinder $M \times [0,1]$) is mapped to $\mathrm{id}_{\mathcal{Z}(M)}$ due to the unitality of $V$ and $W$ established in Lemma~\ref{lemma:compatible}.

The well-definedness of cobordism composition relies on showing that $\mathcal{Z}$ satisfies the defining local relations of Bar-Natan's cobordism category $\cob(B)$ as follows:
\begin{itemize}
    \item \textit{$S$ and T relations:} On internal closed components, the evaluation of disjoint spheres and tori uniquely involves the closed Frobenius algebra $V$. The invariance under these relations reduces entirely to the standard closed TQFT construction. Specifically, $\epsilon \circ \eta = 0$ and $\epsilon \circ m \circ \Delta \circ \eta = 2 \cdot \mathrm{id}_{\mathbb{F}}$ hold automatically.

    \item \textit{$4Tu$ relation:} For all local configurations, handle-slide invariance holds case-by-case. For purely closed components, the relation reduces to the classical Frobenius identity for $V$. For configurations with one open arc, the relation is guaranteed by the three-term Frobenius module identity:
    \[
    (\mathrm{id}_V \otimes \mu) \circ (\Delta \otimes \mathrm{id}_W) = \Delta_W \circ \mu = (m \otimes \mathrm{id}_W) \circ (\mathrm{id}_V \otimes \Delta_W).
    \]
    For configurations with two or more open arcs, the relation holds trivially because the arc-crossing map $\sigma$ vanishes by definition. This symmetry ensures that reordering elementary saddles across distinct components yields identical linear endomorphisms, satisfying the $4Tu$ relation.
\end{itemize}

Furthermore, the arc-crossing morphism is defined as the zero map $\sigma = 0$, which is consistent with the $V$-module structure and vanishes on the generator $x \otimes x$ of $W \otimes W$.

Since all topological isotopies and local surgery relations are satisfied, the functor $\mathcal{Z}$ is well-defined. The symmetric monoidal property is directly inherited from the standard tensor product structure on $\mathbf{Vec}_{\mathbb{F}}$.
\end{proof}

\subsubsection{Quantum grading}

Let $C = \bigoplus_{i \in \mathbb{Z}} C^i$ be a graded vector space of finite type. The \textit{Poincaré series} is defined as the formal Laurent polynomial
\begin{equation*}
    P(C; q) = \sum_{i \in \mathbb{Z}} \left( \rank C^{i} \right) q^i.
\end{equation*}
In the context of the TQFT functor $\mathcal{Z}$, when the grading corresponds to the intrinsic internal grading of the $V$-modules, the Poincaré series represents the \textit{quantum dimension} or graded dimension of the object. The grading on the base algebra $V = \mathbb{F}[x]/(x^2)$ is defined such that $\deg(1) = 1$ and $\deg(x) = -1$. Consequently, the arc-module $W = xV$ is concentrated in degree $-1$. We have $P(V; q) = q + q^{-1}$ and $P(W; q) = q^{-1}$.

For a bigraded vector space $C = \bigoplus_{i,j \in \mathbb{Z}} C^{i,j}$, where $i$ denotes the height (homological grading) and $j$ denotes the quantum grading. The Poincaré series generalizes to the bivariate form
\begin{equation*}
    P(C; t, q) = \sum_{i \in \mathbb{Z}} \sum_{j \in \mathbb{Z}} \left( \rank C^{i,j} \right) t^i q^j.
\end{equation*}
By setting $t = -1$, the Poincaré series recovers the (unnormalized) Jones polynomial of the associated link, $J(q) = P(\kh; -1, q)$, where $\kh$ is the Khovanov homology and $P(\kh; -1, q)$ is the corresponding Euler characteristic.

\begin{definition}
Let $C = \bigoplus_{i,j \in \mathbb{Z}} C^{i,j}$ be a bigraded vector space. For any $h, k \in \mathbb{Z}$, the \textit{height shift} $[h]$ and the \textit{quantum shift} $\{k\}$ is defined as the endofunctors acting on the bigraded components such that
\begin{equation*}
    (C[h]\{k\})^{i,j} = C^{i-h, \, j-k}.
\end{equation*}
In terms of the Poincaré series, this shift corresponds to the monomial multiplication
\begin{equation*}
    P(C[h]\{k\}; t, q) = t^h q^k P(C; t, q).
\end{equation*}
\end{definition}

For a pro-tangle $F \in \fun^{(n)}_{B}$ and a vertex $v \in \cube^n$, suppose the resolution $F(v)$ consists of $n$ closed circles and $m$ open arcs. The graded dimension of the associated $V$-module $\mathcal{Z}(F(v))$ is given by
\begin{equation*}
    \dim_q \left( \mathcal{Z}(F(v)) \right) = (q + q^{-1})^n q^{-m}.
\end{equation*}
For an oriented pro-tangle $F$ with a fixed orientation, let $n_+$ denote the number of positive crossings and $n_-$ the number of negative crossings. The normalized Khovanov complex is given by
\begin{equation*}
    \mathcal{K}(F) = \mathcal{C}(F)[-n_-]\{n_+ - 2n_-\},
\end{equation*}
where $\mathcal{C}(F)$ is the Khovanov complex. The height shift tracks the number of $1$-resolutions in the hypercube of resolutions. Each differential $d$ in the Khovanov complex is a homogeneous map of degree $(1, 0)$ with respect to the $(h, q)$ bi-grading. 

\begin{remark}
For a generator $z$ in the Khovanov complex $\mathcal{K}^{i}(F)$, its height is $i$, and its quantum grading is given by
\begin{equation*}
Q(z) = i + n_{+} - n_{-} + \theta(z).
\end{equation*}
Here, $\theta: TV \otimes TW \to \mathbb{Z}$ is defined by $\theta(1) = 1$ and $\theta(x) = -1$, and satisfies the property that $\theta(ab) = \theta(a) + \theta(b)$ for all $a, b\in TV \otimes TW$. Equipped with the quantum grading, the Khovanov homology is a bigraded vector space, with the first grading coming from the dimension and the second coming from the quantum grading.
\end{remark}

From a categorical perspective, the quantum grading is formally characterized by the Poincaré series on the associated graded category. The assignment defines a well-behaved additive and multiplicative homomorphism
\begin{equation*}
    \chi_q: K_0(\mathbf{Mod}_V) \longrightarrow \mathbb{F}[q^{\pm 1}],
\end{equation*}
which maps each isomorphism class $[M]$ in the Grothendieck group to its corresponding graded dimension. This homomorphism ensures that the topological operations of disjoint unions and cobordism gluing are faithfully represented by the ring operations in $\mathbb{F}[q^{\pm 1}]$.

\subsection{Hopf clasp}

\subsubsection{Algebraic spectral sequence for Hopf clasp}

The Hopf clasp is the fundamental local clasping unit in low‑dimensional topology, serving as a core building block for constructing knots and links and describing finite-type invariants and clasper calculus.

\begin{definition}[Hopf clasp]
A pro-tangle $F \in \fun_B^{(n)}$ is said to admit a \textit{Hopf clasp} at crossings $i, j$ if there exists a pro-tangle $G \in \fun_B^{(n-2)}$ such that the local resolutions at these crossings satisfy the following natural isomorphisms:
\begin{enumerate}[label=$(\roman*)$]
    \item $F^{(i,j)}_{0,1} \cong F^{(i,j)}_{1,0} \cong G$;
    \item $F^{(i,j)}_{0,0} \cong G \sqcup S^1$ or $F^{(i,j)}_{1,1} \cong G \sqcup S^1$, where $S^1$ denotes a disjoint circle component.
\end{enumerate}
\end{definition}

If $F^{(i,j)}_{0,0} \cong G \sqcup S^1$, we say $F$ admits a Hopf clasp of type \textup{(I)}, and if $F^{(i,j)}_{1,1} \cong G \sqcup S^1$, a Hopf clasp of type \textup{(II)}.

\begin{example}
Figure \ref{figure:local_joints} displays a local region of the tangle, where the connection is characterized by a Hopf clasp. From the perspective of the Hopf clasp, there are typically two types: one where both crossings are undercrossings, and the other where both crossings are overcrossings. We refer to these as type \textup{(I)} and type \textup{(II)}, respectively.
\begin{figure}[H]
  \centering
  \includegraphics[width=0.3\textwidth]{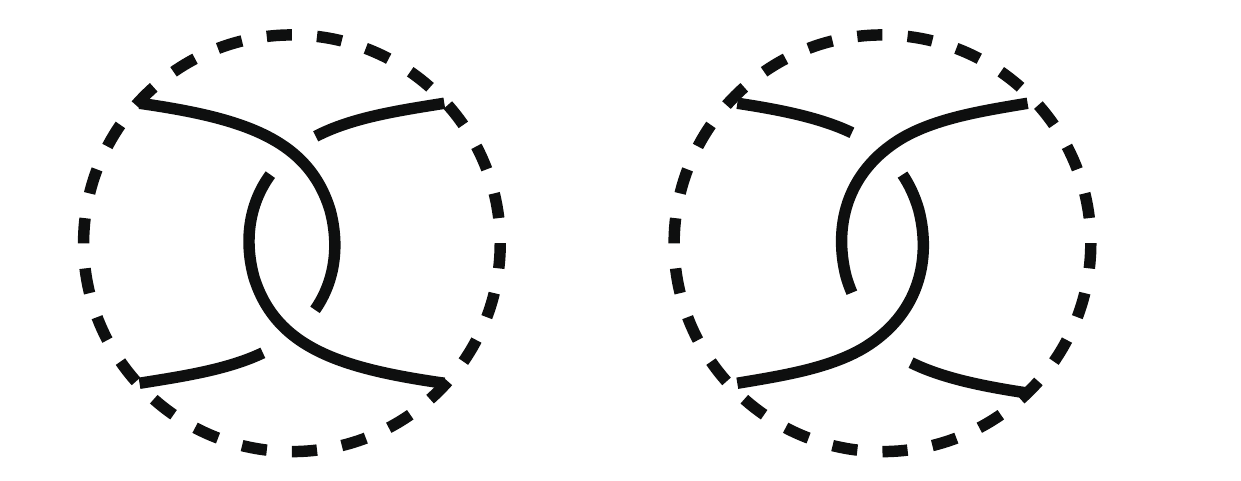}
  \caption{Hopf clasp: type \textup{(I)} on the left, type \textup{(II)} on the right.}\label{figure:local_joints}
\end{figure}
It is worth noting that the additional crossings in a Hopf clasp are always either all left-handed or all right-handed.
\end{example}

\begin{theorem}\label{theorem:hopf_clasp}
Let $F \in \fun_B^{(n)}$ be a pro-tangle admitting a Hopf clasp at crossings $i$ and $j$. Then, there exists a spectral sequence $\{E_r, d_r\}_{r \ge 1}$ that collapses at the $E_3$ page, i.e., $E_3 \cong E_\infty$, and converges to the total Khovanov homology $\kh(F)$. The $E_1$ page is concentrated in columns $p \in \{0, 1, 2\}$, and is given by
\begin{align*}
    E_1^{0, \bullet} &\cong \kh(G) \otimes V, \\
    E_1^{1, \bullet} &\cong \kh(G) \oplus \kh(G), \\
    E_1^{2, \bullet} &\cong \kh(F_{1,1})
\end{align*}
for the Hopf clasp of type \textup{(I)}, and 
\begin{align*}
    E_1^{0, \bullet} &\cong \kh(F_{0,0}), \\
    E_1^{1, \bullet} &\cong \kh(G) \oplus \kh(G), \\
    E_1^{2, \bullet} &\cong \kh(G) \otimes V
\end{align*}
for the Hopf clasp of type \textup{(II)}.
\end{theorem}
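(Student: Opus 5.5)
The plan is to specialize Theorem~\ref{theorem:spectral_sequence} to the base cube $\cube^2_{\mathrm{base}}$ spanned by the two clasp crossings $i,j$, with the remaining $n-2$ crossings forming the fiber. First I permute coordinates so that $i,j$ come first. A coordinate permutation is an admissible morphism with $\eta$ the identity, so by Theorem~\ref{theorem:functoriality} it does not change $\mathcal{C}(F)$ up to isomorphism. Theorem~\ref{theorem:iterated_decomposition} then gives $\mathcal{C}(F)\cong\operatorname{tcof}_{\cube^2}(s\mapsto\mathcal{C}(F_s))$. The filtration $I^\bullet$ by base weight yields a first-quadrant spectral sequence converging to $\kh(F)$ with
\[
E_1^{p,q}\cong\bigoplus_{|s|=p}\kh^q(F_s).
\]
Since $k=2$, the same theorem gives collapse at $E_{k+1}=E_3$, and $E_1$ is supported in the columns $p\in\{0,1,2\}$. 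This settles the convergence and collapse claims with no further work.

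Next I identify the columns from the defining isomorphisms of a Hopf clasp. Here $F_s=F^{(i,j)}_{s}$ for $s\in\cube^2$. Column $p=1$ collects $s=(0,1)$ and $s=(1,0)$, and both satisfy $F^{(i,j)}_{s}\cong G$. A natural isomorphism of pro-tangles induces an isomorphism of Khovanov complexes, by the corollary on fibered equivalences with trivial base (or directly by Theorem~\ref{theorem:functoriality} with $p=\mathrm{id}$). Hence $E_1^{1,\bullet}\cong\kh(G)\oplus\kh(G)$.

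For type (I), column $p=0$ is $\kh(F^{(i,j)}_{0,0})$, and $F^{(i,j)}_{0,0}\cong G\sqcup S^1$. Column $p=2$ is simply $\kh(F_{1,1})$. Type (II) is handled symmetrically: column $p=2$ carries $G\sqcup S^1$ and column $p=0$ is $\kh(F_{0,0})$.

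The remaining step, and the main obstacle, is the Künneth-type identification $\kh(G\sqcup S^1)\cong\kh(G)\otimes V$. I would argue at the chain level. The fiber diagram of $G\sqcup S^1$ is $v\mapsto G(v)\sqcup S^1$, with edge cobordisms $G(e)\sqcup\mathrm{id}_{S^1}$ (the circle is disjoint, so the saddles do not touch it). Since $\mathcal{Z}$ is symmetric monoidal with $\mathcal{Z}(S^1)=V$, the total complex is $\mathcal{C}(G;\Theta)\otimes V$, with differential $d_G\otimes\mathrm{id}_V$ and signs unchanged. Because we work over the field $\mathbb{F}$ and $V$ is free, homology commutes with $-\otimes V$. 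Alternatively, delooping (as in the R1 proof) gives $\kh(G)\{1\}\oplus\kh(G)\{-1\}$, which is the same thing as $\kh(G)\otimes V$ with quantum gradings.

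The point needing care is that the natural isomorphism $F^{(i,j)}_{0,0}\cong G\sqcup S^1$ must be compatible with the fiber edge maps. This holds because it is a natural isomorphism of functors on $\cube^{n-2}$, which is exactly the data required to induce the chain isomorphism. Assembling the three columns then gives the stated $E_1$ pages for types (I) and (II).
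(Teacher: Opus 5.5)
Your proposal is correct and takes the paper's route: the paper's proof just specializes Theorem~\ref{theorem:spectral_sequence} to the base cube $\cube^2$ at the two clasp crossings and reads off the $E_1$ columns from the defining isomorphisms of the Hopf clasp. Your extra steps spell out what the paper leaves implicit. These are reordering coordinates via an admissible permutation, transporting the natural isomorphisms to chain isomorphisms, and the chain-level identification $\mathcal{C}(G\sqcup S^1)\cong\mathcal{C}(G)\otimes V$, which gives $\kh(G)\otimes V$ over the field $\mathbb{F}$.
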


\begin{proof}
Consider the induced base-degree filtration $I^\bullet$ associated with the 2-dimensional base cube $\mathbb{B}^2_{\{i,j\}}$ for the crossings of the Hopf clasp. By Theorem \ref{theorem:spectral_sequence}, the spectral sequence collapses at the $E_3$-page, and its $E_1$-page is a direct sum of fiber Khovanov homologies over each fixed base resolution.
\end{proof}

\begin{example}
In this example, we consider the spectral sequence associated with a local Hopf clasp applied to the knot $8_{19}$. This example illustrates that such a spectral sequence does not necessarily collapse at the $E_2$-page. As shown in Figure \ref{figure:8_19_clasp}, we construct the Hopf clasp at the position marked by the dashed box.
\begin{figure}[H]
  \centering
  \includegraphics[width=0.6\textwidth]{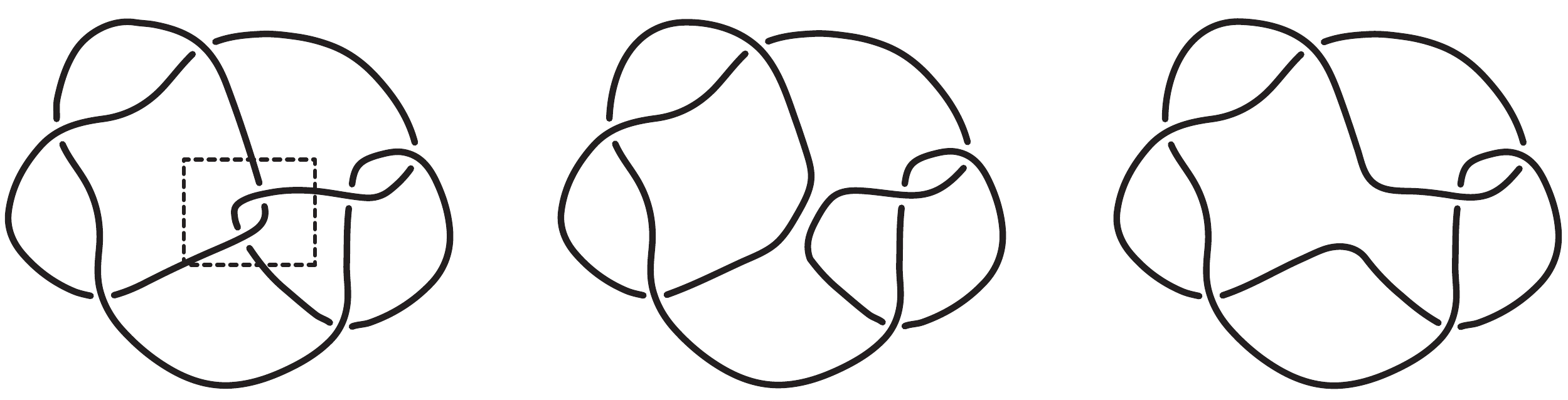}
  \caption{Illustration of the Hopf clasp. The left part shows the Hopf clasp of the $8_{19}$ knot within the dashed box, the center presents its $(0,0)$-smoothing, and the right part shows its $(0,1)$- and $(1,0)$-smoothings.}\label{figure:8_19_clasp}
\end{figure} 

We have computed the dimensions $\dim E_r^{p,q}$ of different pages of this spectral sequence via computer computations, where $\dim E_r^{p,q}$ denotes the dimension over $\mathbb{Q}$ at bidegree $(p,q)$ on the $r$-th page. The computation is summarized in Table \ref{tab:8_19_spectral_sequence}.
\begin{table}[htbp]
\centering
\caption{Dimensions of the spectral sequence pages for the Hopf clasp of $8_{19}$}
\label{tab:8_19_spectral_sequence}
\setlength{\tabcolsep}{3pt}
\renewcommand{\arraystretch}{1.05}
\newlength{\subboxwidth}
\setlength{\subboxwidth}{0.45\textwidth}

\begin{minipage}{\subboxwidth}
\centering
\textbf{$E_0$-page}\\[2pt]
\begin{tabular}{p{0.7cm} *{7}{p{0.7cm}}}
\toprule
$p \setminus q$ & 0 & 1 & 2 & 3 & 4 & 5 & 6 \\
\midrule
0 & 8 & 24 & 66 & 104 & 120 & 96 & 32 \\
1 & 8 & 60 & 192 & 296 & 264 & 144 & 32 \\
2 & 8 & 60 & 192 & 296 & 264 & 144 & 32 \\
\bottomrule
\end{tabular}
\end{minipage}
\qquad
\begin{minipage}{\subboxwidth}
\centering
\textbf{$E_1$-page}\\[2pt]
\begin{tabular}{p{0.7cm} *{7}{p{0.7cm}}}
\toprule
$p \setminus q$ & 0 & 1 & 2 & 3 & 4 & 5 & 6 \\
\midrule
0 & 2 & 0 & 2 & 2 & 1 & 2 & 1 \\
1 & 0 & 0 & 2 & 2 & 0 & 4 & 0 \\
2 & 0 & 0 & 2 & 2 & 0 & 4 & 0 \\
\bottomrule
\end{tabular}
\end{minipage}

\vspace{6pt}
\begin{minipage}{\subboxwidth}
\centering
\textbf{$E_2$-page}\\[2pt]
\begin{tabular}{p{0.7cm} *{7}{p{0.7cm}}}
\toprule
$p \setminus q$ & 0 & 1 & 2 & 3 & 4 & 5 & 6 \\
\midrule
0 & 2 & 0 & 1 & 1 & 1 & 1 & 1 \\
1 & 0 & 0 & 0 & 0 & 0 & 0 & 0 \\
2 & 0 & 0 & 1 & 1 & 0 & 1 & 0 \\
\bottomrule
\end{tabular}
\end{minipage}
\qquad
\begin{minipage}{\subboxwidth}
\centering
\textbf{$E_3$-page}\\[2pt]
\begin{tabular}{p{0.7cm} *{7}{p{0.7cm}}}
\toprule
$p \setminus q$ & 0 & 1 & 2 & 3 & 4 & 5 & 6 \\
\midrule
0 & 2 & 0 & 1 & 1 & 1 & 1 & 0 \\
1 & 0 & 0 & 0 & 0 & 0 & 0 & 0 \\
2 & 0 & 0 & 1 & 1 & 0 & 0 & 0 \\
\bottomrule
\end{tabular}
\end{minipage}
\end{table}

Indeed, the differential $d_2^{0,6}: E_{2}^{0,6} \to E_{2}^{2,5}$ is nonzero; it maps the generator at $(0,6)$ to that at $(2,5)$, resulting in the precise cancellation of these two generators on the $E_3$-page.
\end{example}

\subsubsection{Connected sum and Hopf sum}

\begin{definition}
Let $F \in \fun^{(n)}_B$ be a pro-tangle. A \textit{strand} of $F$ is defined as a functor 
\begin{equation*}
    \alpha: \cube^{n} \longrightarrow \cob(B_0)
\end{equation*}
where $B_0$ is a two-point set such that for each vertex $v \in \cube^n$, the resolution $\alpha(v)$ is a connected component (an open arc) of $F(v)$. Furthermore, for any edge $v \to w$ in $\cube^n$, the morphism $\alpha(v \to w)$ is the identity cylinder cobordism over the arc.
\end{definition}

\begin{definition}
Let $F \in \fun^{(n)}_B$ and $G \in \fun^{(m)}_{B'}$ be pro-tangles with disjoint boundaries. For a given pair of strands $R = (\alpha_F, \alpha_G)$, where $\alpha_F \in F$ and $\alpha_G \in G$, the \textit{connected sum} of $F$ and $G$ along $R$, denoted by $F \#_R G$, is the pro-tangle functor
\begin{equation*}
    F \#_R G: \cube^{n+m} \longrightarrow \cob(B \sqcup B')
\end{equation*}
defined such that it fits into a natural transformation $(\mathrm{id}, \eta): F \#_R G \Rightarrow F \sqcup G$. Here, the morphism $\eta$ corresponds to the saddle cobordism (crossing resolution) effectively joining the local arcs $\alpha_F$ and $\alpha_G$.
\end{definition}
The connected sum depends on the choice of the pair of strands in the corresponding Hopf clasp. When there is no ambiguity, we always use the notation $F_1 \# F_2$ to denote the connected sum associated with a fixed Hopf clasp. Figure \ref{figure:connected_sum} illustrates the connected sum of two tangles.
\begin{figure}[H]
  \centering
  \includegraphics[width=0.5\textwidth]{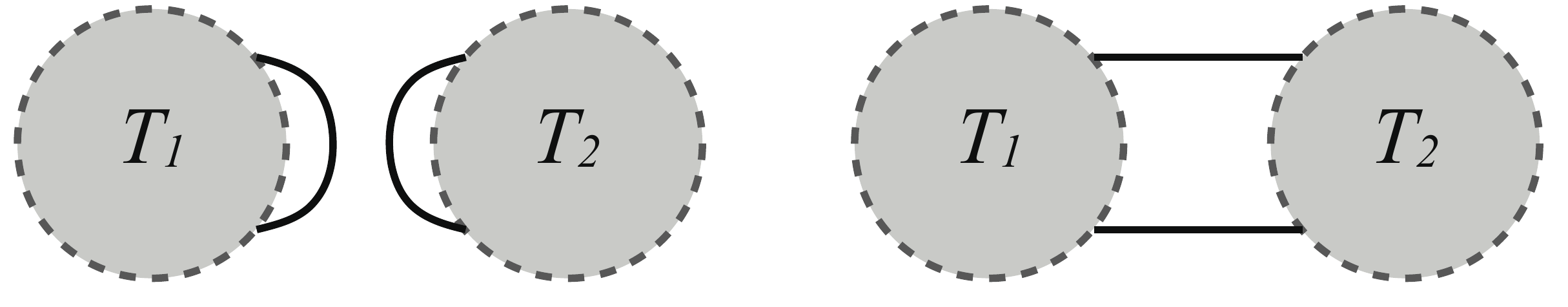}
  \caption{The illustration of the connected sum of tangles $T_1$ and $T_2$.}\label{figure:connected_sum}
\end{figure}

\begin{definition}
Let $F \in \fun^{(n)}_B$ and $G \in \fun^{(m)}_{B'}$ be pro-tangles with disjoint boundaries. Given a pair of strands $R = (\alpha_F, \alpha_G)$ from $F$ and $G$ respectively, the \textit{Hopf sum} along $R$, denoted by $F \boxtimes_R G$, is defined as the pro-tangle functor
\begin{equation*}
    F \boxtimes_{R} G: \cube^{2} \times \cube^{n+m} \longrightarrow \cob(B \sqcup B')
\end{equation*}
equipped with the following natural isomorphisms
\begin{enumerate}[label=$(\roman*)$]
    \item $(F\boxtimes_{R} G)^{(0,1)}_{0,1} \cong (F\boxtimes_{R} G)^{(0,1)}_{1,0} \cong F\#_{R} G$;
    \item In type \textup{(I)}: $(F\boxtimes_{R} G)^{(0,1)}_{0,0} \cong (F\#_{R} G) \sqcup S^1$ and $(F\boxtimes_{R} G)^{(0,1)}_{1,1} \cong F \sqcup G$. In type \textup{(II)}, $(F\boxtimes_{R} G)^{(0,1)}_{0,0} \cong F \sqcup G$ and $(F\boxtimes_{R} G)^{(0,1)}_{1,1} \cong (F\#_{R} G) \sqcup S^1$.
\end{enumerate}
\end{definition}

From the perspective of the Hopf clasp, there are typically two types: one where both crossings are undercrossings, and the other where both crossings are overcrossings. We refer to these as type \textup{(I)} and type \textup{(II)}, respectively (as illustrated in Figure \ref{figure:hopf_sum}).
\begin{figure}[H]
  \centering
  \includegraphics[width=0.5\textwidth]{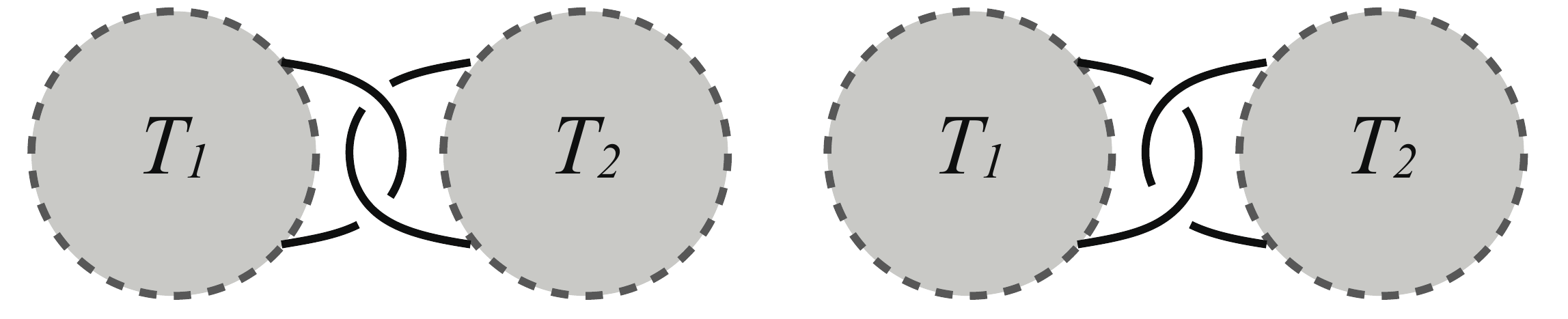}
  \caption{Hopf sum: type \textup{(I)} on the left, type \textup{(II)} on the right.}\label{figure:hopf_sum}
\end{figure}
In fact, the Hopf sum defined above yields a Hopf clasp, which functorially relates the connected sum $F \#_R G$ to the disjoint union $F \sqcup G$.

\begin{theorem}\label{theorem:hopf_sum}
Let $F \boxtimes_R G$ be the Hopf sum of pro-tangles $F \in \fun^{(n)}_B$ and $G \in \fun^{(m)}_{B'}$ along a pair of strands $R$. There exists a spectral sequence $E_r^{p,q}$ converging to the Khovanov homology of $F \boxtimes_R G$, i.e.,
\begin{equation*}
  E_1^{p,q} \Rightarrow \kh^{p+q+\delta}(F \boxtimes_R G),
\end{equation*}
where $\delta\in \{0,2\}$ is the number of left crossings among the two crossings at the joint. Moreover, the spectral sequence collapses at the $E_2$ page with the $E_1$ page given by
    \begin{align*}
        E_{1}^{0,q} &\cong \kh^{q}(F \#_R G) \otimes V, \\
        E_{1}^{1,q} &\cong \kh^{q}(F \#_R G) \oplus \kh^{q}(F \#_R G), \\
        E_{1}^{2,q} &\cong \kh^q(F \sqcup G) \cong \bigoplus_{i+j=q} \kh^{i}(F) \otimes \kh^{j}(G),\\
        E_{1}^{p,q} &= 0,\quad p \notin \{0,1,2\}
    \end{align*}
for type \textup{(I)} joint, and
    \begin{align*}
        E_{1}^{0,q} &\cong \kh^q(F \sqcup G) \cong \bigoplus_{i+j=q} \kh^{i}(F) \otimes \kh^{j}(G), \\
        E_{1}^{1,q} &\cong \kh^{q}(F \#_R G) \oplus \kh^{q}(F \#_R G), \\
        E_{1}^{2,q} &\cong \kh^{q}(F \#_R G) \otimes V,\\
        E_{1}^{p,q} &= 0,\quad p \notin \{0,1,2\}
    \end{align*}
for type \textup{(II)} joint.
\end{theorem}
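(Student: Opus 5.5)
The plan is to specialize Theorem~\ref{theorem:spectral_sequence} to the two-dimensional base cube $\cube^2_{\mathrm{base}}$ spanned by the two joint crossings. Since $F\boxtimes_R G$ admits a Hopf clasp at these crossings (with $G$ of the Hopf-clasp definition taken to be $F\#_R G$), Theorem~\ref{theorem:hopf_clasp} applies. It immediately gives a first-quadrant spectral sequence supported in columns $p\in\{0,1,2\}$ that collapses at $E_3$. The shift $\delta$ comes from the normalization $\mathcal{K}=\mathcal{C}[-n_-]\{n_+-2n_-\}$: the two clasp crossings are both left-handed or both right-handed, so they contribute $0$ or $2$ to the height shift.

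For the $E_1$ page, I would read off the fiber homologies at the four base vertices using the natural isomorphisms in the definition of the Hopf sum. Take type (I) as the model case. At $(0,0)$ the fiber is $(F\#_R G)\sqcup S^1$. Additivity of $\overline{\Theta}$, together with delooping (or simply $\mathcal{Z}(S^1)=V$ under the tensor structure of $\mathcal{Z}$), gives $\kh^q(F\#_RG)\otimes V$. The two vertices $(0,1)$ and $(1,0)$ give $\kh^q(F\#_RG)^{\oplus 2}$. At $(1,1)$ the fiber is $F\sqcup G$, and I would compute its homology by a Künneth argument. The disjoint-union complex is the tensor product $\mathcal{C}(F)\otimes_{\mathbb{F}}\mathcal{C}(G)$, because $\mathcal{Z}$ is symmetric monoidal and the cube $\cube^{n+m}=\cube^n\times\cube^m$ carries the product sign convention, with the Koszul sign matching $(-1)^{\xi}$. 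Over a field, Künneth then gives $\bigoplus_{i+j=q}\kh^i(F)\otimes\kh^j(G)$. Type (II) is the same computation with the columns reversed.

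The main obstacle is the $E_2$-collapse, i.e.\ showing $d_2:E_2^{0,\bullet}\to E_2^{2,\bullet-1}$ vanishes; Example~8\_19 shows this fails for general clasps. My plan is to compute $d_1$ explicitly. In type (I), I would split $E_1^{0}=\kh(F\#G)\otimes V$ as $\kh(F\#G)\otimes 1\oplus\kh(F\#G)\otimes x$. The two edge maps $(0,0)\to(0,1)$ and $(0,0)\to(1,0)$ are circle-merge saddles, so they act by the module action $\mu$ of $V$ on the joined arc. In the spirit of the R2 argument, I would then show that the summand $\kh(F\#G)\otimes 1$ cancels against the antidiagonal copy of $\kh(F\#G)$ in column $1$. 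After this cancellation, the kernel and cokernel of $d_1$ split: one summand of $E_2$ comes from column $0$ and the rest from the cokernel of the split-saddle map $\kh(F\#G)\to\kh(F\sqcup G)$.

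To kill $d_2$, I would write it as a zig-zag $\delta^1 h\,\delta^1$, where $h$ is a contraction of the cancelled pair. I would then use the fact that the Hopf sum joins two \emph{different} pro-tangles. This means the composite of the merge saddle with the split back to $F\sqcup G$ factors through the arc co-action $\Delta_W$, i.e.\ $x\mapsto x\otimes x$. The surviving $x$-component of $V$ then maps into a summand that is already a $d_1$-boundary in column $2$. If this chain-level vanishing proves too delicate, the fallback is a dimension count. The Euler characteristic of the spectral sequence is fixed, so I would compare $\dim E_2$ with the rank of $\kh(F\boxtimes_RG)$, computed independently by viewing the Hopf sum as two R2-type cancellations against the disjoint union. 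Equality of the two dimensions would force $d_2=0$.
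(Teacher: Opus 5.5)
Your $E_1$ page, the $E_3$-collapse via Theorem~\ref{theorem:hopf_clasp}, and the shift $\delta$ all agree with the paper. The gap is in the only nontrivial step: showing $d_2\colon E_2^{0,q}\to E_2^{2,q-1}$ vanishes.

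\textbf{The main route.} You assert that, since $F$ and $G$ are different pro-tangles, ``merge then split factors through $\Delta_W$'', and hence the surviving $x$-component lands in $d_1$-boundaries. Neither part is justified.
\begin{itemize}
\item The saddle $(0,1)\to(1,1)$ is $\Delta$, $\Delta_W$ or $\sigma$ depending on global connectivity. For closed $F,G$ it is the circle split $\Delta$, as in the paper's Hopf link example.
\item In the zig-zag $d_2[x_0]=[\delta^1x_1]$, the element $x_1$ with $\delta^0x_1=-\delta^1x_0$ is not a fiber cycle. So $[\delta^1x_1]\in\operatorname{im}d_1$ is exactly what has to be shown; it does not follow from where the $x$-component goes.
\end{itemize}

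\textbf{The missing idea} is a chain-level identity: under the identification of the $(0,1)$ and $(1,0)$ fibers (both are $F\#_RG$), the two merge maps out of $(0,0)$ coincide, and so do the two saddles into $(1,1)$. The merge identity holds for the following reason. The clasp circle is merged into the two arcs of $F\#_RG$ that run through the clasp. Because $F$ and $G$ are disjoint, in every resolution these two arcs either lie on one component, so $\mathcal{Z}$ lets the circle act on the same tensor factor, or lie on two boundary arcs on which $x$ acts on $W$ by $0$.

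The paper's proof uses exactly this. Write $x_1=(x_{01},x_{10})$. The identity gives $\delta^0x_{01}=\delta^0x_{10}$, so $y=(x_{01}-x_{10},0)$ is a $\delta^0$-cycle with $d_1[y]=[\delta^1x_1]$, and hence $d_2=0$.

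In your own language: cancel $\mathcal{C}(F\#_RG)\otimes 1$ against one middle copy at chain level rather than on $E_1$. The residual horizontal map on $\mathcal{C}(F\#_RG)\otimes x$ is then $x_{\mathrm{bot}}-x_{\mathrm{top}}$, which vanishes by the identity above. Column $0$ splits off and the sequence degenerates at $E_2$. This is what distinguishes a Hopf sum from a general Hopf clasp such as the one in $8_{19}$, where the two arcs may lie on different circles of a resolution.

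\textbf{The fallback} does not close the gap either.
\begin{itemize}
\item The Euler characteristic is the same on every page, so it cannot detect $d_2$.
\item A total-rank comparison needs an independent computation of $\kh(F\boxtimes_RG)$, and ``two R2-type cancellations'' does not supply one. The clasp is not an R2 configuration: the circle at $(0,0)$ is only merged out, never born, so only one cancellation exists.
\item Moreover, $F\boxtimes_RG$ is not R2-equivalent to $F\sqcup G$; the Hopf sum of two unknots is the Hopf link.
\end{itemize}

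A minor point: with the paper's sign rule, the $\otimes 1$ summand maps onto the diagonal of column $1$, not the antidiagonal.
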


\begin{proof}
Consider the induced base degree filtration associated with the $2$-dimensional base cube $\mathbb{B}^2$ corresponding to the two joint crossings of the Hopf sum $F \boxtimes_R G$. By construction, the $E_1$ page is concentrated in columns $p \in \{0, 1, 2\}$ and consists of the direct sum of the fiber Khovanov homologies computed at each fixed base resolution.

For a Hopf sum of type \textup{(I)}, evaluating the base resolutions yields:
\begin{itemize}
    \item At $s=(0,0)$, the fiber tangle is naturally isomorphic to $(F \#_R G) \sqcup S^1$. Applying the TQFT functor introduces a free tensor factor of the closed circle module $V$, yielding $E_1^{0, q} \cong \kh^q(F \#_R G) \otimes V$.
    \item At the intermediate vertices $s=(0,1)$ and $s=(1,0)$, the fiber tangles are both isomorphic to the connected sum $F \#_R G$, which provides the column splitting $E_1^{1, q} \cong \kh^q(F \#_R G) \oplus \kh^q(F \#_R G)$.
    \item At $s=(1,1)$, the fiber tangle de-clasps into the disjoint union $F \sqcup G$, yielding $E_1^{2, q} \cong \kh^q(F \sqcup G) \cong \bigoplus_{i+j=q} \kh^i(F) \otimes \kh^j(G)$ by the Künneth property of the Khovanov TQFT.
\end{itemize}
The assignment for type \textup{(II)} follows analogously by the symmetric distribution of the dual resolutions on the Boolean cube.

By Theorem \ref{theorem:hopf_clasp}, all higher differentials $d_r$ of the spectral sequence vanish for every $r \ge 3$. To prove strict collapse at the $E_2$-page, the only possibly non-trivial higher differential is the boundary map $d_2 \colon E_2^{0,q} \to E_2^{2,q-1}$. Let $D = \delta^0 + \delta^1$ be the total differential of the underlying bicomplex. A cohomology class in $E_2^{0,q}$ is represented by a combined chain $z = x_0 + x_1$, where $x_0 \in K^{0,q}$ and $x_1 \in K^{1,q-1}$, satisfying the algebraic cycle conditions $\delta^0(x_0) = 0$ and $\delta^1(x_0) + \delta^0(x_1) = 0$. The $E_2$ page differential is defined via the zigzag map $d_2([x_0]) = [\delta^1(x_1)] \in E_2^{2,q-1}$.

Applying $\delta^1$ to the cycle condition, the anti-commutativity of the bicomplex ($\delta^1\delta^0 = -\delta^0\delta^1$) implies that $\delta^0(\delta^1(x_1)) = 0$, confirming that $\delta^1(x_1)$ is a well-defined $\delta^0$-cycle representing a valid class in $E_1^{2,q-1}$. To prove that $[\delta^1(x_1)]$ vanishes in the second page quotient, we decompose the intermediate chain $x_1$ along the direct sum splitting of the middle layer, writing $x_1 = (x_{01}, x_{10}) \in K^{(0,1),q-1} \oplus K^{(1,0),q-1}$. Under this splitting, the differential evaluates to $\delta^1(x_1) = \xi(x_{01}) - \xi(x_{10})$, where $\xi$ is the chain map induced by the saddle cobordism.

Crucially, because the Hopf sum is defined along a designated pair of strands $R = (\alpha_F, \alpha_G)$, the disjoint circle $S^1$ at $s=(0,0)$ is positioned symmetrically with respect to both joint crossings. The edge cobordisms $(0,0) \to (0,1)$ and $(0,0) \to (1,0)$ merge this circle into the exact same local arcs of the connected sum $F \#_R G$. This geometric regularization guarantees that the components of the horizontal differential $\delta^1(x_0) = (\delta^1_i(x_0), \delta^1_j(x_0))$ are identical at the chain level, i.e., $\delta^1_i(x_0) = \delta^1_j(x_0)$. In view of the relation $\delta^1(x_0) + \delta^0(x_1) = 0$, we obtain the component-wise identifications:
\begin{equation*}
    \delta^0(x_{01}) = -\delta^1_i(x_0) = -\delta^1_j(x_0) = \delta^0(x_{10}).
\end{equation*}
By the crossing-reversal symmetry of the Hopf sum configuration, there exists a canonical chain isomorphism $\psi: K^{(0,1),\bullet} \xrightarrow{\cong} K^{(1,0),\bullet}$ that maps the element $x_{01}$ to $x_{10}$. We can therefore define an element $y \in K^{1,q-1}$ via the ordered splitting
\begin{equation*}
    y = (x_{01} - x_{10}, \, 0) \in K^{(0,1),q-1} \oplus K^{(1,0),q-1}.
\end{equation*}
Evaluating the vertical differential $\delta^0$ on $y$ yields
\begin{equation*}
    \delta^0(y) = (\delta^0(x_{01}) - \delta^0(x_{10}), \, 0) = (-\delta^1_i(x_0) - (-\delta^1_i(x_0)), \, 0) = (0,0).
\end{equation*}
Hence, $y \in \ker(\delta^0)$ represents a valid cohomology class $[y] \in E_1^{1,q-1}$.

Finally, evaluating the differential $d_1$ on $[y]$ gives
\begin{equation*}
    d_1([y]) = d_1([(x_{01} - x_{10}, 0)]) = [\xi(x_{01} - x_{10}) - \xi(0)] = [\xi(x_{01}) - \xi(x_{10})] = [\delta^1(x_1)].
\end{equation*}
This identity shows that the cycle $\delta^1(x_1)$ is a strict $d_1$-coboundary in $E_1^{2,q-1}$. Passing to the second-page cohomology quotient, we conclude that $d_2([x_0]) = [\delta^1(x_1)] = 0 \in E_2^{2,q-1}$. Since all higher differentials vanish, the spectral sequence collapses at the $E_2$ page, completing the proof.
\end{proof}

\begin{corollary}\label{corollary:hopf_sum}
Let $F \boxtimes_R G$ be the Hopf sum of pro-tangles $F \in \fun^{(n)}_B$ and $G \in \fun^{(m)}_{B'}$ along a pair of strands $R$. Suppose $\delta\in \{0,2\}$ is the number of left crossings among the two crossings at the joint. Then we have
\begin{equation*}
  \kh^{k}(F \boxtimes_R G) \cong E_2^{0,k+\delta} \oplus E_2^{1,k+\delta-1} \oplus E_2^{2,k+\delta-2}.
\end{equation*}
\end{corollary}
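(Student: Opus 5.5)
The plan is to read the corollary off Theorem~\ref{theorem:hopf_sum} together with the convergence statement of Theorem~\ref{theorem:spectral_sequence}, working over the field $\mathbb{F}$ so that filtration extensions split.

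First I will fix the $2$-dimensional base cube $\cube^2$ at the two joint crossings and consider the base-degree filtration $I^\bullet$ on $\operatorname{Tot}(K)\cong\mathcal{C}(F\boxtimes_R G)$, which is available by Theorem~\ref{theorem:iterated_decomposition}. The filtration has length three, $I^0\supseteq I^1\supseteq I^2\supseteq I^3=0$, so by Theorem~\ref{theorem:spectral_sequence} it converges with
\[
E_\infty^{p,q}\cong \operatorname{Gr}_p H^{p+q}(\operatorname{Tot}(K)),\qquad p\in\{0,1,2\}.
\]
Theorem~\ref{theorem:hopf_sum} shows that the sequence collapses at $E_2$, so $E_\infty=E_2$.

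Next I will handle the grading. The indexing $\kh^{p+q+\delta}(F\boxtimes_R G)$ in Theorem~\ref{theorem:hopf_sum} already records the normalization shift coming from the orientation. The two joint crossings are both right-handed or both left-handed, so $\delta\in\{0,2\}$ counts the contribution $n_-$ of the height shift $[-n_-]$ in $\mathcal{K}=\mathcal{C}[-n_-]\{\cdots\}$. This is a shift of total degree by $\delta$. Fixing the normalized degree $k$, the pieces contributing to the associated graded are $E_\infty^{p,q}$ with $p+q=k+\delta$, namely $(0,k+\delta)$, $(1,k+\delta-1)$ and $(2,k+\delta-2)$.

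Finally, over the field $\mathbb{F}$ every short exact sequence
\[
0\to I^{p+1}H\to I^pH\to \operatorname{Gr}_pH\to 0
\]
splits. By induction on the three filtration steps, $\kh^k$ is therefore isomorphic to the direct sum of its graded pieces, which gives the stated formula.

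The main obstacle is keeping the sign and indexing conventions consistent. The paper's cohomological indexing ($q$ being the fiber Hamming weight) has to be matched against the direction of the shift by $\delta$. I would fix this by checking the case $F,G$ crossingless, where everything is explicit.
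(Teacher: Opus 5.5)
Your proposal follows the paper's proof: $E_2=E_\infty$ from Theorem~\ref{theorem:hopf_sum}, convergence of the three-step base filtration to $\operatorname{Gr}_p\kh$, and splitting of the associated graded over the field $\mathbb{F}$ (the paper uses this tacitly when it writes $\kh^n\cong\bigoplus_p E_\infty^{p,n-p}$), then a shift of total degree by $\delta$. On the indexing point you flag: read literally, the abutment $\kh^{p+q+\delta}$ in Theorem~\ref{theorem:hopf_sum} would give $p+q=k-\delta$, whereas the normalization $\mathcal{K}=\mathcal{C}[-n_-]$ gives $p+q=k+\delta$ as in the statement, and your crossingless check (the Hopf link $S^1\boxtimes_R S^1$ with $\delta=2$, whose homology sits in degrees $-2$ and $0$) confirms the latter.
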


\begin{proof}
By the $E_2$-collapse of the spectral sequence established in Theorem \ref{theorem:hopf_sum}, we have $E_\infty^{p,q} \cong E_2^{p,q}$. The convergence to the filtered Khovanov homology implies
\begin{equation*}
  \kh^n(F \boxtimes_R G) \cong \bigoplus_{p} E_\infty^{p, n-p}.
\end{equation*}
Setting the total degree $n = k+\delta$ and noting that the support of $E_r^{p,q}$ is restricted to $p \in \{0, 1, 2\}$, the homology decomposes into the direct sum
\begin{equation*}
    \kh^k(F \boxtimes_R G) \cong \bigoplus_{p=0}^2 E_2^{p, k+\delta-p} = E_2^{0,k+\delta} \oplus E_2^{1,k+\delta-1} \oplus E_2^{2,k+\delta-2}.
\end{equation*}
This completes the proof.
\end{proof}

\begin{example}
Consider the pro-tangle $F \cong S^1$ representing a single unknotted circle. Let $T' = F \boxtimes_R F$ be the Hopf sum of type \textup{(I)}, which topologically yields a Hopf link. In this case, the connected sum $F \#_R F$ remains an unknotted circle. According to Theorem \ref{theorem:hopf_sum}, the $E_1$ page of the associated spectral sequence is concentrated in $q=0$ and is given by
\begin{equation*}
    E_1^{0,0} \cong \kh(F \#_R G) \otimes V \cong V \otimes V, \quad
    E_1^{1,0} \cong V \oplus V, \quad
    E_1^{2,0} \cong \kh(F \sqcup G) \cong V \otimes V.
\end{equation*}
The $d_1$ differentials are induced by the saddle cobordisms within the $\cube^2$ joint cube
\begin{equation*}
    \xymatrix{
    V \otimes V \ar[rr]^-{d_1^{0,0}=(m, m)} && V \oplus V \ar[rr]^-{d_1^{1,0}=\binom{\Delta}{-\Delta}} && V \otimes V,
    }
\end{equation*}
where $m: V \otimes V \to V$ and $\Delta: V \to V \otimes V$ are the multiplication and comultiplication of the Frobenius algebra $V = \mathbb{F}[x]/(x^2)$. Under the basis $\{1, x\}$, the calculations for the kernels and images are as follows:
\begin{align*}
    \ker(m, m) &= \mathbb{F}\{1 \otimes x - x \otimes 1, x \otimes x\}, \\
    \im(m, m) &= \mathbb{F}\{(1, 1), (x, x)\}, \\
    \ker\textstyle{\binom{\Delta}{-\Delta}} &= \mathbb{F}\{(1, 1), (x, x)\}, \\
    \im\textstyle{\binom{\Delta}{-\Delta}} &= \mathbb{F}\{1 \otimes x + x \otimes 1, x \otimes x\}.
\end{align*}
Consequently, we find $E_2^{1,q} = 0$ for all $q$. The non-zero $E_2$ terms are
\begin{equation*}
    E_2^{0,0} \cong \mathbb{F}\{1 \otimes x - x \otimes 1, x \otimes x\}, \quad
    E_2^{2,0} \cong (V \otimes V) / \im\textstyle{\binom{\Delta}{-\Delta}} \cong \mathbb{F}\{1 \otimes 1, 1 \otimes x\}.
\end{equation*}
Applying Corollary \ref{corollary:hopf_sum} with $\delta=2$, the Khovanov homology $\kh^*(T')$ is generated by four elements. Specifically, $E_2^{0,0}$ contributes to $\kh^{-2}(T')$ with quantum degrees $\{-4, -6\}$, and $E_2^{2,0}$ contributes to $\kh^{0}(T')$ with quantum degrees $\{0, -2\}$. This result is consistent with the standard Khovanov homology of the Hopf link.
\end{example}

\subsubsection{Hopf twist}

The Hopf twist is a dual construction to the Hopf sum, representing a different local configuration of the Hopf clasp where the intermediate states correspond to the disjoint union of pro-tangles.

\begin{definition}
Let $F \in \fun^{(n)}_B$ and $G \in \fun^{(m)}_{B'}$ be pro-tangles with disjoint boundaries. Given a pair of strands $R = (\alpha_F, \alpha_G)$ from $F$ and $G$ respectively, the \textit{Hopf twist} along $R$, denoted by $F \boxplus_R G$, is defined as the pro-tangle functor
\begin{equation*}
    F \boxplus_{R} G: \cube^{2} \times \cube^{n+m} \longrightarrow \cob(B \sqcup B')
\end{equation*}
equipped with the following natural isomorphisms:
\begin{enumerate}[label=$(\roman*)$]
    \item $(F\boxplus_{R} G)^{(0,1)}_{0,1} \cong (F\boxplus_{R} G)^{(0,1)}_{1,0} \cong F \sqcup G$;
    \item In type \textup{(I)}: $(F\boxplus_{R} G)^{(0,1)}_{0,0} \cong F \sqcup G \sqcup S^1$ and $(F\boxplus_{R} G)^{(0,1)}_{1,1} \cong F \#_{R} G$. In type \textup{(II)}: $(F\boxplus_{R} G)^{(0,1)}_{0,0} \cong F \#_{R} G$ and $(F\boxplus_{R} G)^{(0,1)}_{1,1} \cong F \sqcup G \sqcup S^1$.
\end{enumerate}
\end{definition}

Figure \ref{figure:hopf_twist} illustrates the Hopf twist of two tangles.
\begin{figure}[H]
  \centering
  \includegraphics[width=0.3\textwidth]{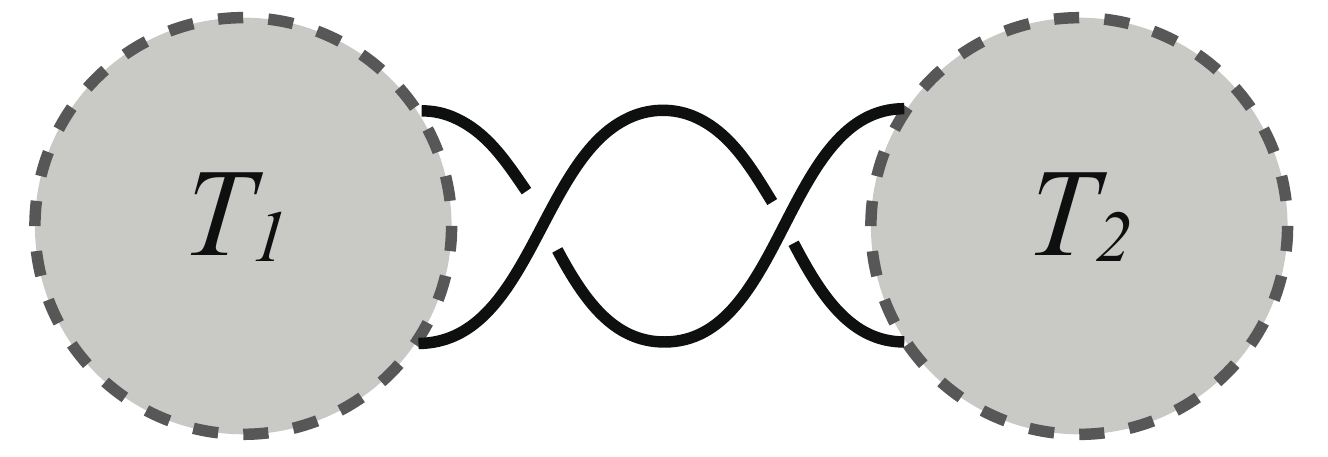}
  \caption{Illustration of the Hopf twist of type \textup{(I)} between two tangles $T_1$ and $T_2$.}\label{figure:hopf_twist}
\end{figure}

Similar to the Hopf sum, the Hopf twist admits a spectral sequence that relates the homology of the twist to the homology of its constituent parts and their connected sum.

\begin{theorem}\label{theorem:hopf_twist}
Let $F \boxplus_R G$ be the Hopf twist of pro-tangles $F \in \fun^{(n)}_B$ and $G \in \fun^{(m)}_{B'}$ along a pair of strands $R$. There exists a spectral sequence $E_r^{p,q}$ converging to the Khovanov homology of the Hopf twist
\begin{equation*}
  E_1^{p,q} \Rightarrow \kh^{p+q+\delta}(F \boxplus_R G),
\end{equation*}
where $\delta \in \{0,2\}$ is the number of left crossings at the joint. The spectral sequence collapses at the $E_3$ page, with the $E_1$ page given by
    \begin{align*}
        E_{1}^{0,q} &\cong \kh^q(F \sqcup G) \otimes V, \\
        E_{1}^{1,q} &\cong \kh^{q}(F \sqcup G) \oplus \kh^{q}(F \sqcup G), \\
        E_{1}^{2,q} &\cong \kh^q(F \#_R G),\\
        E_{1}^{p,q} &\cong 0, \quad p\notin \{0,1,2\}.
    \end{align*}
for type \textup{(I)} joint, and
    \begin{align*}
        E_{1}^{0,q} &\cong \kh^{q}(F \#_R G), \\
        E_{1}^{1,q} &\cong \kh^{q}(F \sqcup G) \oplus \kh^{q}(F \sqcup G), \\
        E_{1}^{2,q} &\cong \kh^q(F \sqcup G) \otimes V,\\
        E_{1}^{p,q} &\cong 0, \quad p\notin \{0,1,2\}.
    \end{align*}
for type \textup{(II)} joint. Here, $\kh^q(F \sqcup G)\cong\bigoplus_{i+j=q}\kh^i(F)\otimes Kh^j(G)$.
\end{theorem}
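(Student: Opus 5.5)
The plan is to realize the Hopf twist $F \boxplus_R G$ as a pro-tangle admitting a Hopf clasp at the two joint crossings. Then Theorem~\ref{theorem:spectral_sequence} and Theorem~\ref{theorem:hopf_clasp} apply to the $2$-dimensional base cube $\cube^2$ indexed by these crossings. The clasp pro-tangle is $G_0 := F\sqcup G$, since by (i) of the definition the two intermediate smoothings $(0,1)$ and $(1,0)$ are both naturally isomorphic to $F\sqcup G$. By (ii), in type \textup{(I)} the $(0,0)$-smoothing is $G_0\sqcup S^1$, and in type \textup{(II)} the $(1,1)$-smoothing is $G_0\sqcup S^1$. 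So $F\boxplus_R G$ admits a Hopf clasp of type \textup{(I)} or \textup{(II)} in the sense of the Hopf clasp definition. Theorem~\ref{theorem:spectral_sequence} with $k=2$ gives a first-quadrant spectral sequence supported in columns $p\in\{0,1,2\}$, converging to $\kh(F\boxplus_R G)$ and collapsing at $E_3$. The degree shift $\delta$ is handled as in Theorem~\ref{theorem:hopf_sum}. It comes from the normalization $\mathcal{K}=\mathcal{C}[-n_-]\{n_+-2n_-\}$: the two joint crossings are either both right-handed ($\delta=0$) or both left-handed ($\delta=2$), because the clasp crossings have equal handedness. This shifts the total degree by the number of joint crossings that are left-handed.

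Next, I identify the $E_1$ columns vertex by vertex, using $E_1^{p,q}\cong\bigoplus_{|s|=p}\kh^q(F\circ j_s)$. Take type \textup{(I)}. At $s=(0,0)$ the fiber is $F\sqcup G\sqcup S^1$. Applying the TQFT $\mathcal{Z}$, which is symmetric monoidal and sends $S^1$ to $V$, the fiber complex is $\mathcal{C}(F\sqcup G)\otimes V$ with differential $d\otimes\mathrm{id}_V$. Since $V$ is free over the field $\mathbb{F}$, its homology is $\kh^q(F\sqcup G)\otimes V$. At $s=(0,1)$ and $s=(1,0)$ the fibers are both $F\sqcup G$, giving the direct sum in column $1$. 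At $s=(1,1)$ the fiber is $F\#_R G$. Type \textup{(II)} is the same computation with the roles of $(0,0)$ and $(1,1)$ exchanged.

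It remains to justify $\kh^q(F\sqcup G)\cong\bigoplus_{i+j=q}\kh^i(F)\otimes\kh^j(G)$. The cube of $F\sqcup G$ is the product $\cube^{n}\times\cube^{m}$ with $(F\sqcup G)(v,w)=F(v)\sqcup G(w)$. Monoidality of $\mathcal{Z}$ gives $\mathcal{Z}(F(v)\sqcup G(w))\cong\mathcal{Z}(F(v))\otimes\mathcal{Z}(G(w))$, and edge maps of the form $F(e)\sqcup\mathrm{id}$ or $\mathrm{id}\sqcup G(e)$.

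I expect the main obstacle to be this Künneth step. The proof must check that the cube sign convention $(-1)^{\xi_i(v)}$ on the product cube agrees, up to the re-signing automorphism, with the Koszul sign on the tensor product complex. Theorem~\ref{theorem:iterated_decomposition} handles this, taking base $\cube^n$ and fiber $\cube^m$: it identifies $\mathcal{C}(F\sqcup G)$ with a bicomplex, which is exactly $\mathcal{C}(F)\otimes\mathcal{C}(G)$ after applying $\mathcal{Z}$. The algebraic Künneth theorem over the field $\mathbb{F}$ then gives the splitting.

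I do not attempt an $E_2$-collapse here. Unlike the Hopf sum argument, the intermediate states are disjoint unions, not a connected sum into which the circle merges symmetrically, so the zigzag cancellation used there does not apply. The statement claims only $E_3\cong E_\infty$, which follows from $k=2$.
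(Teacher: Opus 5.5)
Your proposal is correct and follows essentially the paper's argument: you observe that the Hopf twist is a Hopf clasp with clasp pro-tangle $F\sqcup G$, filter by the joint cube $\cube^2$, and read off the $E_1$ columns vertex by vertex (K\"{u}nneth for $F\sqcup G$, a free tensor factor $V$ for the extra circle). You then get $E_3\cong E_\infty$ from Theorem~\ref{theorem:hopf_clasp}, i.e.\ from Theorem~\ref{theorem:spectral_sequence} with $k=2$. Your type \textup{(I)} assignment ($F\sqcup G\sqcup S^1$ at $(0,0)$, $F\#_R G$ at $(1,1)$) matches the definition and the stated $E_1$ page, whereas the paper's written proof interchanges these two vertices (harmless by the \textup{(I)}/\textup{(II)} symmetry); and your reduction of the K\"{u}nneth step to Theorem~\ref{theorem:iterated_decomposition} spells out what the paper merely cites.
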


\begin{proof}
Consider the filtration on the complex $\mathcal{C}(F \boxplus_R G)$ associated with the joint subcube $\cube^2$. The $E_1$ terms are determined by the homology of the pro-tangle resolutions at each vertex $(i,j) \in \cube^2$. It suffices to prove the theorem for the case of a Hopf twist of type \textup{(I)}, as the argument for type \textup{(II)} is parallel.

For type \textup{(I)}, the vertex $(0,0)$ resolves to the connected sum $F \#_R G$, yielding the term $E_1^{0,q} \cong \kh^q(F\#_R G)$. The intermediate vertices $(0,1)$ and $(1,0)$ both resolve to the disjoint union $F \sqcup G$. By the K\"{u}nneth formula, $\kh^q(F\sqcup G) \cong \bigoplus_{i+j=q} \kh^i(F) \otimes \kh^j(G)$. Since both $(0,1)$ and $(1,0)$ contribute to this column, we obtain $E_1^{1,q} \cong \kh^q(F\sqcup G) \oplus \kh^q(F\sqcup G)$. The vertex $(1,1)$ yields the disjoint union $F \sqcup G \sqcup S^1$, which leads to the isomorphism $E_1^{2,q} \cong \kh^q(F\sqcup G) \otimes V$.

The collapse at the $E_3$ page and the convergence to $\kh^*(F\boxplus_R G)$ are guaranteed by Theorem \ref{theorem:hopf_clasp}. The grading shift by $\delta \in \{0,2\}$ accounts for the left-handed crossings at the joint.
\end{proof}

\begin{example}
In this example, we consider the Hopf twist of two trefoils. As shown in Figure~\ref{figure:trefoil_twist}, the type~\textup{II} Hopf twist is constructed by linking two trefoil knots.
\begin{figure}[H]
  \centering
  \includegraphics[width=0.4\textwidth]{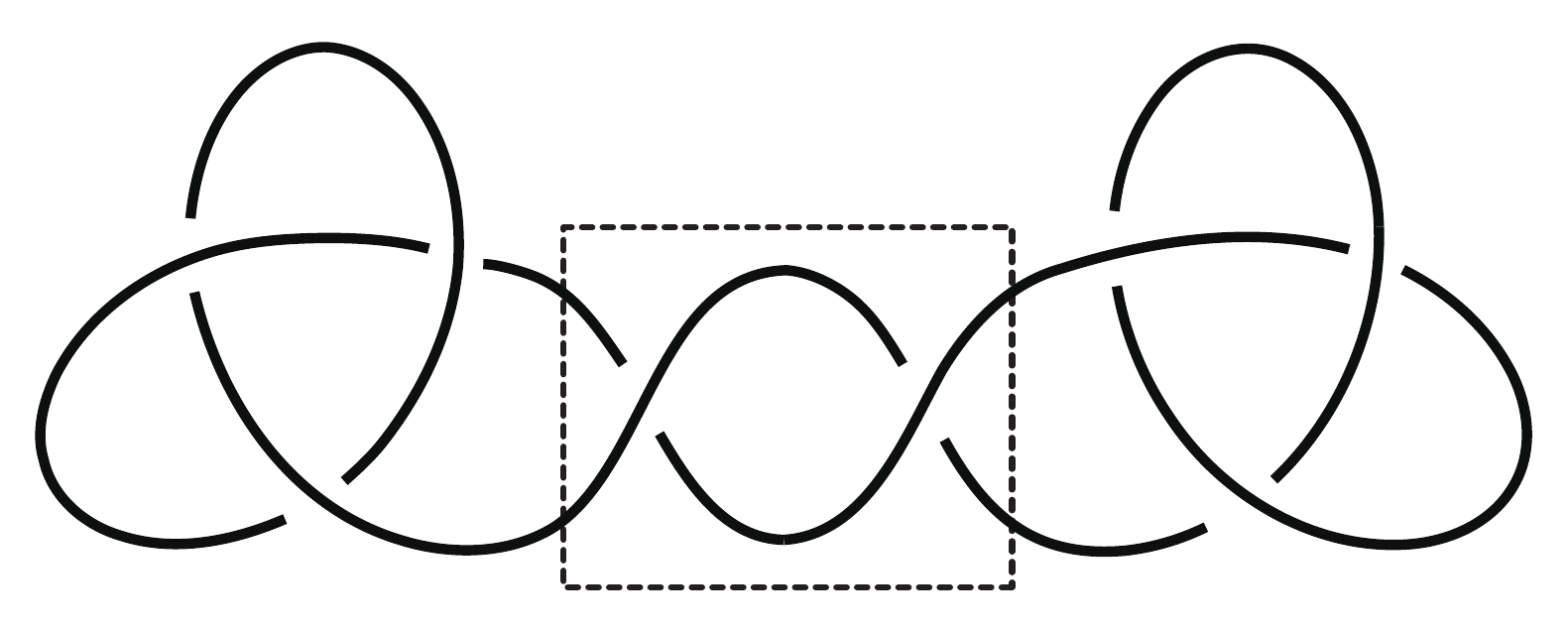}
  \caption{Illustration of the Hopf twist of two trefoils within the dashed box.}\label{figure:trefoil_twist}
\end{figure}

The dimensions $\dim E_r^{p,q}$ at each bidegree $(p,q)$ for the successive pages of this spectral sequence, obtained via computer computation, are summarized in Table~\ref{tab:spectral_sequence_new}. 
The $E_2$-page differentials $d_2^{0,5}: E_{2}^{0,5} \to E_{2}^{2,4}$ and $d_2^{0,6}: E_{2}^{0,6} \to E_{2}^{2,5}$ are non-trivial, which prevents four generators from surviving to the $E_3$-page.

\begin{table}[htbp]
\centering
\caption{Dimensions of the spectral sequence pages for Hopf twist of two trefoils.}
\label{tab:spectral_sequence_new}
\setlength{\tabcolsep}{3pt}
\renewcommand{\arraystretch}{1.05}

\begin{minipage}{0.45\textwidth}
\centering
\textbf{$E_0$-page}\\[2pt]
\begin{tabular}{p{0.7cm} *{7}{p{0.7cm}}}
\toprule
$p \setminus q$ & 0 & 1 & 2 & 3 & 4 & 5 & 6 \\
\midrule
0 & 8 & 24 & 66 & 104 & 120 & 96 & 32 \\
1 & 32 & 96 & 264 & 416 & 480 & 384 & 128 \\
2 & 32 & 96 & 264 & 416 & 480 & 384 & 128 \\
\bottomrule
\end{tabular}
\end{minipage}
\qquad
\begin{minipage}{0.45\textwidth}
\centering
\textbf{$E_1$-page}\\[2pt]
\begin{tabular}{p{0.7cm} *{7}{p{0.7cm}}}
\toprule
$p \setminus q$ & 0 & 1 & 2 & 3 & 4 & 5 & 6 \\
\midrule
0 & 2 & 0 & 2 & 2 & 1 & 2 & 1 \\
1 & 8 & 0 & 8 & 8 & 2 & 4 & 2 \\
2 & 8 & 0 & 8 & 8 & 2 & 4 & 2 \\
\bottomrule
\end{tabular}
\end{minipage}

\vspace{6pt}
\begin{minipage}{0.45\textwidth}
\centering
\textbf{$E_2$-page}\\[2pt]
\begin{tabular}{p{0.7cm} *{7}{p{0.7cm}}}
\toprule
$p \setminus q$ & 0 & 1 & 2 & 3 & 4 & 5 & 6 \\
\midrule
0 & 0 & 0 & 0 & 0 & 0 & 1 & 1 \\
1 & 0 & 0 & 0 & 0 & 0 & 1 & 1 \\
2 & 2 & 0 & 2 & 2 & 1 & 2 & 1 \\
\bottomrule
\end{tabular}
\end{minipage}
\qquad
\begin{minipage}{0.45\textwidth}
\centering
\textbf{$E_3$-page}\\[2pt]
\begin{tabular}{p{0.7cm} *{7}{p{0.7cm}}}
\toprule
$p \setminus q$ & 0 & 1 & 2 & 3 & 4 & 5 & 6 \\
\midrule
0 & 0 & 0 & 0 & 0 & 0 & 0 & 0 \\
1 & 0 & 0 & 0 & 0 & 0 & 1 & 1 \\
2 & 2 & 0 & 2 & 2 & 0 & 1 & 1 \\
\bottomrule
\end{tabular}
\end{minipage}
\end{table}

\end{example}

To conclude this section, we provide an overview of the structural and dual relationships between the variant clasping modifications introduced thus far. The internal geometric configurations within the joint cube $\cube^2$, specifically the localized topological resolutions at each state vertex for the Hopf clasp, Hopf sum, and Hopf twist, are systematically contrasted in Table \ref{table:hopf_operations_summary}.

\begin{table}[ht]
\centering
\caption{Geometric configurations of Hopf clasp operations}\label{table:hopf_operations_summary}
\begin{tabular}{lcccc}
\toprule
\textbf{Operation} & \textbf{Type} & \textbf{State $(0,0)$} & \textbf{State $(0,1)/(1,0)$} & \textbf{State $(1,1)$} \\ 
\midrule
\multirow{2}{*}{Hopf clasp $F$} 
  & \textup{(I)}  & $G \sqcup S^1$ & $G$ & $F_{1,1}$ \\
  & \textup{(II)} & $F_{0,0}$      & $G$ & $G \sqcup S^1$ \\ 
\midrule
\multirow{2}{*}{Hopf sum $F \boxtimes_R G$} 
  & \textup{(I)}  & $(F\#_R G) \sqcup S^1$ & $F\#_R G$ & $F \sqcup G$ \\
  & \textup{(II)} & $F \sqcup G$ & $F\#_R G$ & $(F\#_R G) \sqcup S^1$ \\ 
\midrule
\multirow{2}{*}{Hopf twist $F \boxplus_R G$} 
  & \textup{(I)}  & $F\#_R G$ & $F \sqcup G$ & $(F \sqcup G) \sqcup S^1$ \\
  & \textup{(II)} & $(F \sqcup G) \sqcup S^1$ & $F \sqcup G$ & $F\#_R G$ \\ 
\bottomrule
\end{tabular}
\end{table}

\subsection{Applications}\label{section:applications}

In this section, we present the application of the Hopf sum between tangles and a single circle or arc. This is particularly useful in computing the Poincar\'{e} polynomial in certain cases.

\subsubsection{Hopf sums with a circle}

We now consider the behavior of the Khovanov homology under the Hopf sum of a pro-link $L$ and the standard circle $S^1$.

\begin{theorem}\label{theorem:links_hopf}
Let $L$ be a pro-link, and let $L' = L \boxtimes_R S^1$ be the Hopf sum of $L$ and $S^1$ along a component $R$. 
\begin{itemize}
    \item If the joint crossings are left-handed, the Poincar\'{e} polynomial of $L'$ is given by
    \begin{equation*}
        P_{L'}(t,q) = P_{L}(t,q) (q^{-1} + t^{-2} q^{-5}).
    \end{equation*}
    \item If the joint crossings are right-handed, the Poincar\'{e} polynomial of $L'$ is given by
    \begin{equation*}
        P_{L'}(t,q) = P_{L}(t,q) (q + t^2 q^5).
    \end{equation*}
\end{itemize}
\end{theorem}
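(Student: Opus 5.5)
The plan is to apply Theorem~\ref{theorem:hopf_sum} and Corollary~\ref{corollary:hopf_sum} with $F=L$ and $G=S^1$, and to compute the $E_2$ page explicitly using the TQFT $\mathcal{Z}$. Since $S^1$ is crossingless, $L\#_R S^1\cong L$ as pro-links, because connecting an unknotted circle to a strand of $L$ does not change the resolution cube. Also $\kh(L\sqcup S^1)\cong\kh(L)\otimes V$. I would treat the left-handed case as type \textup{(I)} with $\delta=2$ (both joint crossings negative) and the right-handed case as type \textup{(II)} with $\delta=0$. In type \textup{(I)} the $E_1$ page reads
\[
\kh(L)\otimes V\xrightarrow{\;d_1\;}\kh(L)\oplus\kh(L)\xrightarrow{\;d_1\;}\kh(L)\otimes V .
\]
By naturality of the crossing maps, the first arrow is $(\mu,\mu)$, where $\mu:V\otimes\kh(L)\to\kh(L)$ is the module action of the circle on the homology of $L$ at the marked strand. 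The second arrow is $\binom{\Delta_R}{-\Delta_R}$, induced by the corresponding split. This is exactly the structure of the Hopf link example with $V$ replaced by the $V$-module $\kh(L)$.

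Next I would compute $E_2$ column by column, as in the Hopf link example. We have $\ker(\mu,\mu)=\{a\otimes 1\cdot? \}$ in the form $\{1\otimes z - x\otimes w : xw=z\}$ together with $x\otimes\ker(x\cdot)$. The middle homology is $\ker\binom{\Delta}{-\Delta}/\im(\mu,\mu)$, which should vanish because the split map is injective on the diagonal and the image of $(\mu,\mu)$ is the whole diagonal (using $\mu(1\otimes z)=z$). The last column is $\kh(L)\otimes V/\im\Delta$. To keep this clean, I would use that over the field $\mathbb{F}$ each $V$-module splits into free and trivial summands. For the free part, the computation is literally the Hopf link computation tensored over $V$. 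For the trivial part, where $x$ acts by zero, the same rank count goes through. In each case $E_2^{0,\bullet}\cong\kh(L)$, shifted so that its generators sit in quantum degree $q^{-5}$ relative to $\kh(L)$; in the Hopf link example the degrees $\{-4,-6\}$ are $q^{-5}$ times $\{q,q^{-1}\}$. Similarly $E_2^{2,\bullet}\cong\kh(L)$ sits at $q^{-1}$ times $\kh(L)$; there the degrees $\{0,-2\}$ are $q^{-1}$ times $\{q,q^{-1}\}$. Both columns come with the homological shift $t^{-2}$ versus $t^0$ dictated by $\delta=2$, which Corollary~\ref{corollary:hopf_sum} accounts for.

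Since $E_2$ collapses by Theorem~\ref{theorem:hopf_sum} and we work over a field, Corollary~\ref{corollary:hopf_sum} gives
\[
\kh(L')\cong\kh(L)\{-1\}\oplus\kh(L)[-2]\{-5\}.
\]
This yields $P_{L'}=P_L(q^{-1}+t^{-2}q^{-5})$. The right-handed case follows by the mirror/duality argument: reversing the cube, type \textup{(II)} with $\delta=0$ has $\Delta$ and $m$ exchanged, and the positive normalization shift $\{n_+-2n_-\}$ contributes $+2$ instead of $-4$ in the quantum grading. This gives the factor $q+t^2q^5$.

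The main obstacle I expect is bookkeeping the quantum and homological shifts correctly: the local $V$-grading, the $\{n_+-2n_-\}$ normalization, and the index $\delta$ must combine so that the two surviving copies of $\kh(L)$ land at exactly $q^{\mp1}$ and $t^{\mp2}q^{\mp5}$. I would verify these shifts first on $L=S^1$, where the statement reduces to the Hopf link example above, before handling general $L$. A secondary point is justifying that the module action on $\kh(L)$ splits so the rank count is valid; if it does not split, I would fall back on the exact sequences of the $E_1$ rows, which compute the ranks without needing a splitting.
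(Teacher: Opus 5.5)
Your proposal is essentially the paper's proof, step for step: apply Theorem~\ref{theorem:hopf_sum} with $F=L$ and $G=S^1$, use $L\#_R S^1\cong L$ and $\kh(L\sqcup S^1)\cong\kh(L)\otimes V$, kill the middle column of the row $(\widetilde m,\widetilde m)$, $\binom{\widetilde\Delta}{-\widetilde\Delta}$ (surjectivity of the unit action, injectivity of the split), keep $\kh(L)\otimes x$ and $\kh(L)\otimes 1$ in columns $0$ and $2$, and finish with Corollary~\ref{corollary:hopf_sum} and the grading shift. The one thing to fix is the right-handed case: type and chirality are independent (reversing the orientation of the added circle switches $\delta$), and for $G=S^1$ the vertices $(0,0)$ and $(1,1)$ both resolve to $L\sqcup S^1$ in either type, so the row is merge-then-split in both types with nothing ``exchanged''; as the paper does, simply rerun the type \textup{(I)} computation with $\delta=0$ instead of $\delta=2$, which multiplies the left-handed factor by $t^{2}q^{6}$ and gives $q+t^2q^5$ with no mirror argument needed.
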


\begin{proof}
We provide the proof for a Hopf sum of type \textup{(I)}; the type \textup{(II)} case follows by symmetry.

We arrange the two crossings of the Hopf sum at the first two positions of the state cube for $L'$. The resolutions of $L'$ are indexed as $(i, j, s)$, where $(i, j) \in \{0,1\}^2$ corresponds to the joint crossings and $s \in \{0,1\}^n$ to the crossings of $L$. The resulting links for the joint resolutions are
\begin{align*}
    L'_{(0,0,s)} &= L \sqcup S^1, \\
    L'_{(0,1,s)} &= L, \\
    L'_{(1,0,s)} &= L, \\
    L'_{(1,1,s)} &= L \sqcup S^1.
\end{align*}

By Theorem \ref{theorem:hopf_sum}, there exists a spectral sequence $E_r^{p,q}$ converging to $\kh^*(L')$. The $E_1$ page is concentrated in the first three columns ($p=0,1,2$):
\begin{equation*}
    \xymatrix{
    \kh^{\ast}(L) \otimes V \ar[rr]^-{(\widetilde{m},\widetilde{m})} && \kh^{\ast}(L) \oplus \kh^{\ast}(L) \ar[rr]^-{\binom{\widetilde{\Delta}}{-\widetilde{\Delta}}} && \kh^{\ast}(L) \otimes V,
    }
\end{equation*}
where $\widetilde{m}$ and $\widetilde{\Delta}$ are induced by the multiplication and comultiplication of the Frobenius algebra $V = \mathbb{F}[x]/(x^2)$. 

Since $1 \in V$ is the unit, $\widetilde{m}(\alpha \otimes 1) = \alpha$ for any $\alpha \in \kh^*(L)$, which implies
\begin{equation*}
    \im (\widetilde{m}, \widetilde{m}) \cong \kh^*(L),
\end{equation*}
and we have $\ker(\widetilde{m}, \widetilde{m}) \cong \kh^*(L) \otimes x$. Furthermore, $\im\binom{\widetilde{\Delta}}{-\widetilde{\Delta}} \cong \widetilde{\Delta}(\kh^*(L)) \cong \kh^*(L) \otimes x$. It follows that $E_2^{1,q} = 0$, while the remaining non-zero terms are
\begin{equation*}
    E_2^{0,q} \cong \kh^{q}(L) \otimes x, \quad E_2^{2,q} \cong \kh^{q}(L) \otimes 1.
\end{equation*}

Applying Corollary \ref{corollary:hopf_sum}, we obtain the decomposition:
\begin{equation*}
    \kh^{k}(L') \cong (\kh^{k+\delta}(L) \otimes x) \oplus (\kh^{k+\delta-2}(L) \otimes 1).
\end{equation*}
In our Frobenius algebra $V$, $\deg(1)=1$ and $\deg(x)=-1$. Accounting for the internal shifts and the writhe compensation $\delta$, the bigraded homology satisfies
\begin{equation*}
    \kh^{k,l}(L') \cong \kh^{k+\delta,l+3\delta-1}(L) \oplus \kh^{k+\delta-2,l+3\delta-5}(L).
\end{equation*}
Summing over the gradings yields the polynomial relation
\begin{equation*}
    P_{L'}(t,q) = P_{L}(t,q) (t^{-\delta}q^{-3\delta+1} + t^{-\delta+2} q^{-3\delta+5}).
\end{equation*}
The theorem follows by substituting $\delta=2$ for left-handed joints and $\delta=0$ for right-handed joints.
\end{proof}

\begin{proposition}
Let $L_n$ be a link obtained by the iterative Hopf sum of $(n+1)$ circles $S^1$ in a linear chain. If $j$ of the joints are left-handed and $n-j$ are right-handed, the Poincar\'{e} polynomial is
\begin{equation*}
    P_{L_n}(t,q) = (q + q^{-1}) (q^{-1} + t^{-2} q^{-5})^{j}(q + t^2 q^5)^{n-j}.
\end{equation*}
\end{proposition}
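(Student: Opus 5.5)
We argue by induction on $n$, using Theorem~\ref{theorem:links_hopf} as the inductive step. The base case $n=0$ is a single unknotted circle $L_0=S^1$. Its complex is $V$ concentrated in homological degree $0$, with $P(V;q)=q+q^{-1}$. Hence $P_{L_0}(t,q)=q+q^{-1}$, which agrees with the formula since both products are empty when $n=0$ and $j=0$.

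For the inductive step, write the linear chain $L_n$ as $L_n = L_{n-1}\boxtimes_R S^1$. Here $L_{n-1}$ is the chain on the first $n$ circles, and the Hopf sum is taken along the strand of the last circle of $L_{n-1}$. The final joint is either left-handed or right-handed. We assume, as in the setup of Theorem~\ref{theorem:links_hopf}, that the two crossings at a single joint have the same handedness. This holds because the crossings of a Hopf clasp are all left-handed or all right-handed, as remarked after the definition of the Hopf clasp.

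\begin{itemize}
\item If the last joint is left-handed, then $L_{n-1}$ has $j-1$ left-handed and $n-j$ right-handed joints. Theorem~\ref{theorem:links_hopf} gives $P_{L_n}=P_{L_{n-1}}\,(q^{-1}+t^{-2}q^{-5})$.
\item If the last joint is right-handed, then $L_{n-1}$ has $j$ left-handed and $n-1-j$ right-handed joints. Theorem~\ref{theorem:links_hopf} gives $P_{L_n}=P_{L_{n-1}}\,(q+t^2q^5)$.
\end{itemize}

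In either case, substituting the inductive hypothesis for $P_{L_{n-1}}$ yields the claimed product. The factors commute, so the order in which left- and right-handed joints occur along the chain does not matter.

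The main point to check is that Theorem~\ref{theorem:links_hopf} really applies at each stage. Its hypothesis is that $L_{n-1}$ is a pro-link and that the Hopf sum is taken with a standard circle along one component, and $L_{n-1}$ is a pro-link by construction. One further point needs care. The orientation conventions, and hence the handedness and the value of $\delta$, must be computed for the two new crossings relative to the fixed orientation of $L_n$. They should not change when the earlier crossings are viewed inside $L_n$ rather than inside $L_{n-1}$. In particular, the normalization shifts $[-n_-]\{n_+-2n_-\}$ are additive over crossings, so the shift of $L_n$ factors as the shift of $L_{n-1}$ times the contribution of the new joint. This additivity is the step to verify explicitly before applying the inductive step.
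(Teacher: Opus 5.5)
Your argument is correct and is exactly the paper's proof. It runs by induction on $n$ from the base case $P_{S^1}(t,q)=q+q^{-1}$, applying Theorem~\ref{theorem:links_hopf} to $L_n=L_{n-1}\boxtimes_R S^1$ with the factor determined by the handedness of the last joint. The orientation and normalization bookkeeping you flag needs no separate verification: Theorem~\ref{theorem:links_hopf} is already stated for the Poincar\'{e} polynomials themselves, with $\delta$ accounting for the two new crossings, and the old crossings keep their signs under the restricted orientation.
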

\begin{proof}
The result follows by induction on $n$ applying Theorem \ref{theorem:links_hopf} at each step, with the base case $P_{S^1}(t,q) = q + q^{-1}$.
\end{proof}

\subsubsection{Hopf sums with an arc}

\begin{definition}
Let $T$ be a tangle. An \textit{arc} of $T$ is called \textit{pure} in $T$ if it does not form part of the boundary of any closed region in $T$.
\end{definition}

\begin{definition}
Let $T = T_1 \boxtimes_R T_2$ be the Hopf sum of tangles $T_1$ and $T_2$ along the joined arcs $\alpha_1 \subset T_1$ and $\alpha_2 \subset T_2$. If $\alpha_1$ and $\alpha_2$ are pure arcs in $T_1$ and $T_2$ respectively, the Hopf sum is said to be \textit{pure}.
\end{definition}

\begin{definition}
A \textit{cut tangle} $\widehat{T}$ of a tangle $T$ is obtained by cutting a segment of an arc or loop in $T$ and connecting the two resulting endpoints to the boundary without introducing additional crossings.
\begin{figure}[h]
  \centering
  \includegraphics[width=0.35\textwidth]{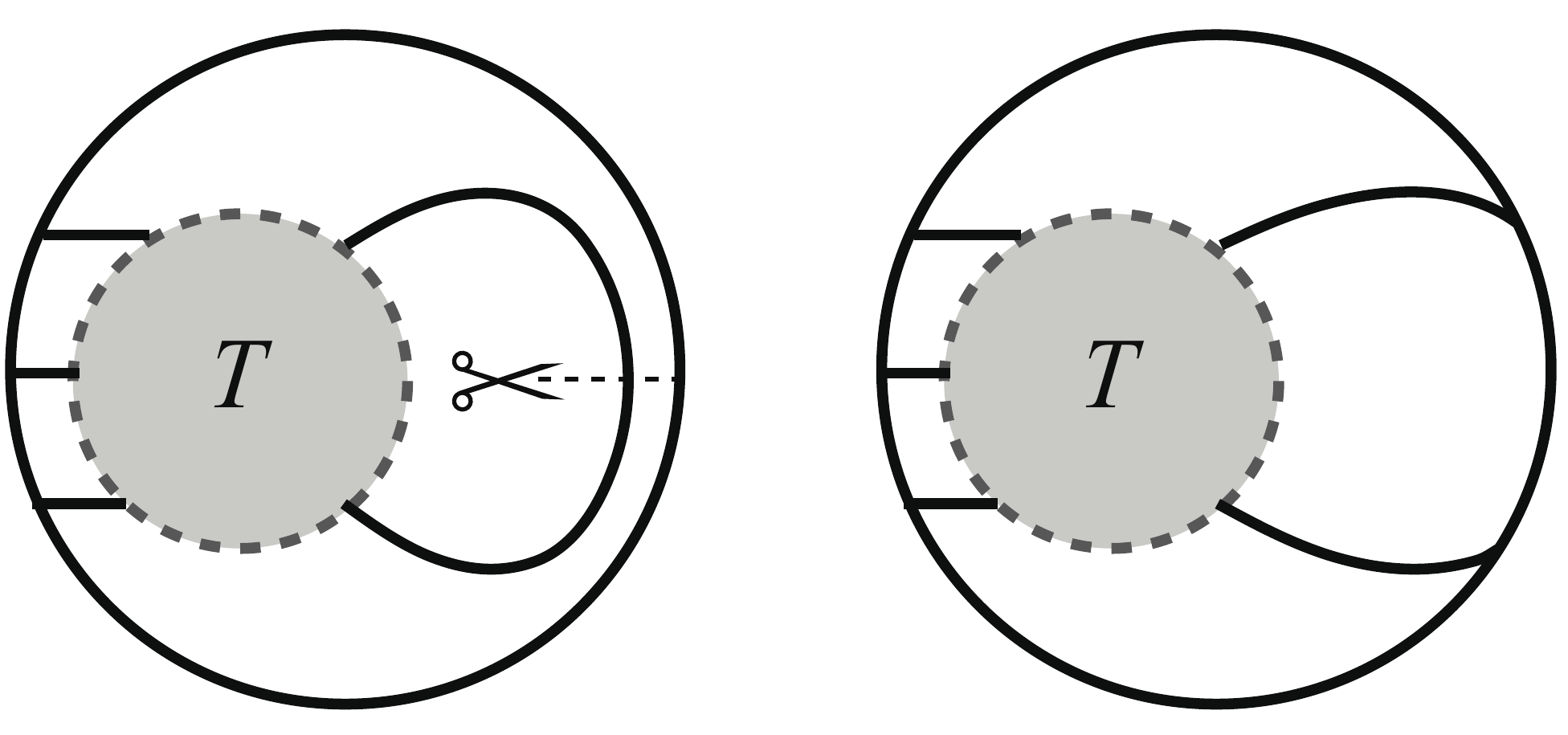}
  \caption{Illustration of the cut tangle.}
\end{figure}
\end{definition}

\begin{theorem}\label{theorem:hopf_joint_arc}
Let $T$ be a tangle, and let $T' = T \boxtimes_R A$ be the pure Hopf sum of $T$ and an arc $A$ along $\alpha \subseteq T$. Let $\widehat{T}$ denote the cut tangle of $T$ at the joint position. 
\begin{itemize}
\item For a Hopf sum of type \textup{(I)}:
\begin{itemize}
    \item If the joint crossings are right-handed ($\delta=0$):
        \begin{equation*}
           \kh^{k,l}(T') \cong \kh^{k,l-1}(\widehat{T}) \oplus \kh^{k-1,l-3}(\widehat{T}) \oplus \kh^{k-2,l-3}(T).
        \end{equation*}
    \item If the joint crossings are left-handed ($\delta=2$):
        \begin{equation*}
           \kh^{k,l}(T') \cong \kh^{k+2,l+5}(\widehat{T}) \oplus \kh^{k+1,l+3}(\widehat{T}) \oplus \kh^{k,l+3}(T).
        \end{equation*}
\end{itemize}
\item For a Hopf sum of type \textup{(II)}:
\begin{itemize}
    \item If the joint crossings are right-handed ($\delta=0$):
        \begin{equation*}
           \kh^{k,l}(T') \cong \kh^{k+2,l+5}(T) \oplus \kh^{k+1,l+3}(\widehat{T}) \oplus \kh^{k,l+1}(\widehat{T}).
        \end{equation*}
    \item If the joint crossings are left-handed ($\delta=2$):
        \begin{equation*}
           \kh^{k,l}(T') \cong \kh^{k,l-1}(T) \oplus \kh^{k-1,l-3}(\widehat{T}) \oplus \kh^{k-2,l-5}(\widehat{T}).
        \end{equation*}
\end{itemize}
\end{itemize}
\end{theorem}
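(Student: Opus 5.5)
We will follow the template of Theorem~\ref{theorem:links_hopf}: place the two joint crossings at the first two positions of the state cube of $T'$ and apply the Hopf sum spectral sequence of Theorem~\ref{theorem:hopf_sum} together with Corollary~\ref{corollary:hopf_sum}. We treat type \textup{(I)} in detail and obtain type \textup{(II)} by the same argument with the base square reversed.

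\textbf{Step 1: identify the vertices of the base square.} We first describe the four resolutions of the base square $\cube^2$. The connected sum $T\#_R A$ of $T$ with a single arc along $\alpha$ is, by the definition of the connected sum, the tangle in which $\alpha$ is cut and its two new ends are run to the boundary with no new crossings. This is precisely the cut tangle $\widehat{T}$. The corner where the clasp is undone is $T\sqcup A$. Since $\mathcal{Z}(A)=W$ is one-dimensional and concentrated in quantum degree $-1$, we get $\kh(T\sqcup A)\cong \kh(T)\{-1\}$. The $E_1$ page for type \textup{(I)} is therefore
\[
\kh(\widehat{T})\otimes V \longrightarrow \kh(\widehat{T})\oplus\kh(\widehat{T}) \longrightarrow \kh(T)\otimes W .
\]

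\textbf{Step 2: compute $d_1$ and the $E_2$ page.} This is where purity enters, and it is the main obstacle. Because $\alpha$ bounds no closed region, the saddles out of the $(0,0)$ vertex merge the extra circle into an open arc of $\widehat{T}$. Hence they act by the module action $\mu\colon V\otimes W\to W$ rather than by the multiplication $m$. Likewise the saddles into the $(1,1)$ vertex recombine two arcs, so they act by the arc-crossing map $\sigma=0$, or equivalently through $\Delta_W$ composed with the counit.

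We will try to show the following:
\begin{enumerate}[label=$(\roman*)$]
\item The map $(\widetilde\mu,\widetilde\mu)$ restricted to $\kh(\widehat{T})\otimes 1$ is the diagonal embedding, and it vanishes on $\kh(\widehat{T})\otimes x$.
\item The second map $d_1^{1,\bullet}$ vanishes.
\end{enumerate}
If both hold, the $E_2$ page consists of three pieces: $\kh(\widehat{T})\otimes x$ in column $0$, one copy $\kh(\widehat{T})$ (the antidiagonal) in column $1$, and all of $\kh(T)\otimes W$ in column $2$. This gives three summands, which matches the shape of the claimed formula.

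The delicate point is justifying $d_1^{1,\bullet}=0$ and the chain-level identification of the two edge maps from the purity hypothesis. The TQFT formulas of Lemma~\ref{lemma:compatible} only control the local generators, so the argument has to be made at that level.

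\textbf{Step 3: conclude by $E_2$-collapse and compute the gradings.} Theorem~\ref{theorem:hopf_sum} gives collapse at $E_2$, and Corollary~\ref{corollary:hopf_sum} gives a direct-sum decomposition of $\kh^k(T')$ into the three columns, with the homological shifts $p=0,1,2$ and $\delta\in\{0,2\}$.

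For the quantum shifts, we combine three contributions:
\begin{enumerate}[label=$(\roman*)$]
\item the internal degree of the tensor factor: $\deg x=-1$, $\deg 1=+1$, and $W$ sits in degree $-1$;
\item the height contribution $p$ to the quantum grading in $Q(z)=i+n_+-n_-+\theta(z)$;
\item the normalization $[-n_-]\{n_+-2n_-\}$ with $(n_+,n_-)=(2,0)$ or $(0,2)$.
\end{enumerate}
Carrying these through should produce the stated shifts: $(0,1)$, $(1,3)$, $(2,3)$ for $\delta=0$, and the $\delta=2$ analogues. We will check the final bookkeeping against a small example, namely the Hopf sum of an arc with an arc.
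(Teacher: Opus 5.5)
Your plan follows the paper's proof step for step: the same four resolutions (with $T\#_R A=\widehat T$), the same $E_1$ and $E_2$ pages, collapse via Theorem~\ref{theorem:hopf_sum} and Corollary~\ref{corollary:hopf_sum}, then grading bookkeeping. The real problem is the final bookkeeping, which you only assert (``should produce the stated shifts'').

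\textbf{The type (II) formulas do not come out as stated.} For type (I) the computation gives exactly the stated shifts. For type (II) it cannot. Column $0$ is $\kh(T)\otimes W$, with classes $\gamma\otimes x$. Column $2$ is $\kh(\widehat T)\otimes V/\kh(\widehat T)\otimes x$, with representatives $\beta\otimes 1$ of internal degree $\theta(\beta)+1$. Using $n_-(T')=n_-(T)+\delta$ and $n_+(T')=n_+(T)+2-\delta$, you get:
\begin{itemize}
\item $\delta=0$: $\kh^{k,l}(T')\cong\kh^{k,l-1}(T)\oplus\kh^{k-1,l-3}(\widehat T)\oplus\kh^{k-2,l-5}(\widehat T)$;
\item $\delta=2$: $\kh^{k,l}(T')\cong\kh^{k+2,l+5}(T)\oplus\kh^{k+1,l+3}(\widehat T)\oplus\kh^{k,l+1}(\widehat T)$.
\end{itemize}
These are the statement's two type (II) formulas with the labels $\delta=0$ and $\delta=2$ swapped.

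Your own arc-with-arc test exposes this. Take the type (II) clasp of two arcs with positive crossings. Its complex is $W^{\otimes 2}\xrightarrow{0}W^{\otimes 2}\oplus W^{\otimes 2}\to W^{\otimes 2}\otimes V$, and its homology has Poincaré polynomial $1+tq+t^2q^3$. The stated $\delta=0$ formula predicts $t^{-2}q^{-6}+t^{-1}q^{-5}+q^{-3}$, which is impossible: $n_-=0$ forces nonnegative homological degrees.

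The paper's own type (II) computations (a)--(c) give $\Phi(\alpha)=\Phi(z)+3\delta-1,\ 3\delta-3,\ 3\delta-5$, with $\alpha$ in degrees $k+\delta,\,k+\delta-1,\,k+\delta-2$. That is the corrected assignment, yet the paper then declares the stated result. So ``the same argument with the base square reversed'' proves the corrected type (II) formulas, not the ones written, and you should state and prove those.

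\textbf{Where purity is actually used.} The step you flag as delicate needs its justification moved.
\begin{itemize}
\item Purity is not what makes the edges at the circle vertex act as they do. In $\widehat T_s$ both local strands of the clasp run to $\partial$ along $A$, so the small circle is always merged into, or split off from, an arc. Both edges then act by $\beta\otimes 1\mapsto\beta$, $\beta\otimes x\mapsto 0$ in type (I), and by $\beta\mapsto\beta\otimes x$ in type (II). This holds even though the two edges involve different local arcs.
\item Purity controls the saddles between the mixed vertices and the unclasped vertex $T_s\sqcup A$. When the component of $T_s$ through $\alpha$ is an arc, the two local arcs at a mixed vertex are distinct components. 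These saddles are then arc--arc and act by $\sigma=0$.
\item If that component closed up, the saddle would instead split a circle off a single arc, or merge a circle into $A$. It would then act by $\Delta_W(x)=x\otimes x$ or $\mu(1\otimes x)=x$, both nonzero.
\end{itemize}
Your alternative description ``$\Delta_W$ composed with the counit'' is also not zero: by Lemma~\ref{lemma:compatible}(iii) it equals $\mathrm{id}_W$.

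Once this horizontal map vanishes at chain level (into column $2$ in type (I), out of column $0$ in type (II)), $d_2=0$ follows directly from the zigzag description. You do not need the symmetry argument of Theorem~\ref{theorem:hopf_sum}.
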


\begin{proof}
The tangle $T'$ consists of $n+2$ crossings, where $n$ is the number of crossings in $T$. Let $\delta \in \{0, 2\}$ be the number of left-handed crossings at the joint. We arrange the joint crossings in the first two positions of the state cube $\cube^{n+2}$.

\textbf{(i) Type \textup{(I)} Hopf sum.}
The resolutions of $T'$ at the joint vertices $(i,j)$ for a given state $s$ of $T$ are given by
\begin{equation*}
    T'_{(0,0,s)} = \widehat{T}_{s} \sqcup S^1, \quad T'_{(0,1,s)} = \widehat{T}_{s}, \quad T'_{(1,0,s)} = \widehat{T}_{s}, \quad T'_{(1,1,s)} = T_{s} \sqcup A.
\end{equation*}
By Theorem \ref{theorem:hopf_sum}, the $E_1$ page of the spectral sequence is
\begin{equation*}
    \xymatrix{
    \kh^{\ast}(\widehat{T}) \otimes V \ar[rr]^-{(\mu, \mu)} && \kh^{\ast}(\widehat{T}) \oplus \kh^{\ast}(\widehat{T}) \ar[rr]^-{\binom{\nu}{-\nu}} && \kh^{\ast}(T) \otimes W,
    }
\end{equation*}
where $V = \mathbb{F}[x]/(x^2)$ and $W \cong xV$ is the module associated with the arc $A$. For a pure Hopf sum, the induced saddle maps $\nu: \kh^*(\widehat{T}) \to \kh^*(T) \otimes W$ vanish due to the topological configuration of the pure arc. The map $\mu$ satisfies $\mu(\alpha \otimes 1) = \alpha$ and $\mu(\alpha \otimes x) = 0$. Thus, the $E_2$ terms are
\begin{equation*}
    E_2^{0,q} \cong \kh^q(\widehat{T}) \otimes x, \quad E_2^{1,q} \cong \kh^q(\widehat{T}), \quad E_2^{2,q} \cong \kh^q(T) \otimes x.
\end{equation*}
To determine the quantum grading $\Phi(z)$, we use the formula $\Phi(z) = \theta(z) + k + w(T')$, where $\theta(z)$ is the internal grading and $w(T') = w(\widehat{T}) + 2 - 2\delta$ is the writhe.
\begin{enumerate}
    \item[(a)] For $z \in E_2^{0, k+\delta}$, let $z = \alpha \otimes x$ with $\alpha \in \kh^{k+\delta}(\widehat{T})$. Then $\theta(z) = \theta(\alpha) - 1$.
    \begin{align*}
        \Phi(z) &= (\theta(\alpha) - 1) + k + (w(\widehat{T}) + 2 - 2\delta) \\
        &= (\theta(\alpha) + (k+\delta) + w(\widehat{T})) - \delta - 1 + 2 - 2\delta \\
        &= \Phi(\alpha) - 3\delta + 1.
    \end{align*}
    \item[(b)] For $z \in E_2^{1, k+\delta-1}$, $z$ corresponds to $\alpha \in \kh^{k+\delta-1}(\widehat{T})$ where $\theta(z) = \theta(\alpha)$.
    \begin{align*}
        \Phi(z) &= \theta(\alpha) + k + (w(\widehat{T}) + 2 - 2\delta) \\
        &= (\theta(\alpha) + (k+\delta-1) + w(\widehat{T})) - \delta + 1 + 2 - 2\delta \\
        &= \Phi(\alpha) - 3\delta + 3.
    \end{align*}
    \item[(c)] For $z \in E_2^{2, k+\delta-2}$, let $z = \alpha \otimes x$ with $\alpha \in \kh^{k+\delta-2}(T)$. Then $\theta(z) = \theta(\alpha) - 1$.
    \begin{align*}
        \Phi(z) &= (\theta(\alpha) - 1) + k + (w(T) + 2 - 2\delta) \\
        &= (\theta(\alpha) + (k+\delta-2) + w(T)) - \delta + 2 - 1 + 2 - 2\delta \\
        &= \Phi(\alpha) - 3\delta + 3.
    \end{align*}
\end{enumerate}
Accounting for the canonical normalization of the arc $A$, these shifts yield the bigraded isomorphisms for type \textup{(I)}.

\textbf{(ii) Type \textup{(II)} Hopf sum.}
The resolutions are $T'_{(0,0,s)} = T_s \sqcup A$, $T'_{(0,1,s)} = \widehat{T}_s$, $T'_{(1,0,s)} = \widehat{T}_s$, and $T'_{(1,1,s)} = \widehat{T}_s \sqcup S^1$. The $E_1$ page is
\begin{equation*}
    \xymatrix{
    \kh^{\ast}(T) \otimes W \ar[rr]^-{(\mu', \mu')} && \kh^{\ast}(\widehat{T}) \oplus \kh^{\ast}(\widehat{T}) \ar[rr]^-{\binom{\nu'}{-\nu'}} && \kh^{\ast}(\widehat{T}) \otimes V,
    }
\end{equation*}
where $\mu'=0$ as the mapping components $W \otimes W \to 0$. The map $\nu'$ is induced by the component change $W \to W \otimes V$ given by $x \mapsto x \otimes x$. Thus $\nu'(\alpha) = \alpha \otimes x$. It follows that
\begin{equation*}
    \ker \binom{\nu'}{-\nu'} = \{ (\alpha, \alpha) \mid \alpha \in \kh^*(\widehat{T}) \}, \quad \im \binom{\nu'}{-\nu'} = \kh^*(\widehat{T}) \otimes x.
\end{equation*}
The $E_2$ terms are
\begin{equation*}
    E_2^{0,q} \cong \kh^q(T) \otimes x, \quad E_2^{1,q} \cong \kh^q(\widehat{T}), \quad E_2^{2,q} \cong \kh^q(\widehat{T}) \otimes 1.
\end{equation*}
Here, note that $E_2^{2,q}$ is the quotient $(\kh^* \otimes V) / (\kh^* \otimes x)$, which is isomorphic to $\kh^* \otimes 1$.
The quantum grading analysis for type \textup{(II)} is as follows
\begin{enumerate}
    \item[(a)] If $z \in E_2^{0, k+\delta}$ lies in $\kh^{k+\delta}(T) \otimes x$, then $z = \alpha \otimes x$ and $\theta(\alpha) = \theta(z) + 1$.
    \begin{equation*}
        \Phi(\alpha) = k + \delta + w(T) + \theta(\alpha) = \Phi(z) + 3\delta - 1.
    \end{equation*}
    \item[(b)] If $z \in E_2^{1, k+\delta-1} \cong \kh^{k+\delta-1}(\widehat{T})$, then $\theta(\alpha) = \theta(z)$.
    \begin{equation*}
        \Phi(\alpha) = k + \delta - 1 + w(T) + \theta(z) = \Phi(z) + 3\delta - 3.
    \end{equation*}
    \item[(c)] If $z \in E_2^{2, k+\delta-2}$ lies in $\kh^{k+\delta-2}(\widehat{T}) \otimes 1$, we represent it by the class of $\alpha \otimes 1$. Then $\theta(\alpha) = \theta(z) - 1$.
    \begin{equation*}
        \Phi(\alpha) = k + \delta - 2 + w(T) + \theta(z) = \Phi(z) + 3\delta - 5.
    \end{equation*}
\end{enumerate}
Combining these computations, we obtain the desired result.
\end{proof}

\begin{proof}
The tangle $T'$ consists of $n+2$ crossings, where $n$ is the number of crossings in $T$. Let $\delta \in \{0, 2\}$ be the number of left-handed crossings at the joint. We arrange the joint crossings in the first two positions of the state cube $\cube^{n+2}$.

\medskip
\noindent \textbf{(i) Type \textup{(I)} Hopf sum.}
The resolutions of $T'$ at the joint vertices $(i,j)$ for a given state $s$ of $T$ are given by
\begin{equation*}
    T'_{(0,0,s)} = \widehat{T}_{s} \sqcup S^1, \quad T'_{(0,1,s)} = \widehat{T}_{s}, \quad T'_{(1,0,s)} = \widehat{T}_{s}, \quad T'_{(1,1,s)} = T_{s} \sqcup A.
\end{equation*}
By Theorem \ref{theorem:hopf_sum}, the $E_1$ page of the spectral sequence is
\begin{equation*}
    \xymatrix{
    \kh^{\ast}(\widehat{T}) \otimes V \ar[rr]^-{(\mu, \mu)} && \kh^{\ast}(\widehat{T}) \oplus \kh^{\ast}(\widehat{T}) \ar[rr]^-{\binom{\nu}{-\nu}} && \kh^{\ast}(T) \otimes W,
    }
\end{equation*}
where $V = \mathbb{F}[x]/(x^2)$ and $W \cong xV$ is the module associated with the arc $A$. For a pure Hopf sum, the induced saddle maps $\nu: \kh^*(\widehat{T}) \to \kh^*(T) \otimes W$ vanish due to the topological configuration of the pure arc. The map $\mu$ satisfies $\mu(\alpha \otimes 1) = \alpha$ and $\mu(\alpha \otimes x) = 0$. Thus, the $E_2$ terms are:
\begin{equation*}
    E_2^{0,q} \cong \kh^q(\widehat{T}) \otimes x, \quad E_2^{1,q} \cong \kh^q(\widehat{T}), \quad E_2^{2,q} \cong \kh^q(T) \otimes x.
\end{equation*}

To determine the quantum grading $\Phi(z)$, we use the formula $\Phi(z) = \theta(z) + k + w(T')$, where $\theta(z)$ is the internal grading and $w(T') = w(\widehat{T}) + 2 - 2\delta$ is the writhe.
\begin{enumerate}
    \item[(a)] For $z \in E_2^{0, k+\delta}$, let $z = y \otimes x$ with $x \in \kh^{k+\delta}(\widehat{T})$. Then $\theta(z) = \theta(x) - 1$.
    \begin{align*}
        \Phi(z) &= (\theta(x) - 1) + k + (w(\widehat{T}) + 2 - 2\delta) \\
        &= (\theta(x) + (k+\delta) + w(\widehat{T})) - \delta - 1 + 2 - 2\delta \\
        &= \Phi(x) - 3\delta + 1.
    \end{align*}
    \item[(b)] For $z \in E_2^{1, k+\delta-1}$, $z$ corresponds to $x \in \kh^{k+\delta-1}(\widehat{T})$ where $\theta(z) = \theta(x)$.
    \begin{align*}
        \Phi(z) &= \theta(x) + k + (w(\widehat{T}) + 2 - 2\delta) \\
        &= (\theta(x) + (k+\delta-1) + w(\widehat{T})) - \delta + 1 + 2 - 2\delta \\
        &= \Phi(x) - 3\delta + 3.
    \end{align*}
    \item[(c)] For $z \in E_2^{2, k+\delta-2}$, let $z = x \otimes x$ with $x \in \kh^{k+\delta-2}(T)$. Then $\theta(z) = \theta(x) - 1$.
    \begin{align*}
        \Phi(z) &= (\theta(x) - 1) + k + (w(T) + 2 - 2\delta) \\
        &= (\theta(x) + (k+\delta-2) + w(T)) - \delta + 2 - 1 + 2 - 2\delta \\
        &= \Phi(x) - 3\delta + 3.
    \end{align*}
\end{enumerate}
Accounting for the canonical normalization of the arc $A$, these shifts yield the bigraded isomorphisms for type \textup{(I)}.

\medskip
\noindent \textbf{(ii) Type \textup{(II)} Hopf sum.}
The resolutions are $T'_{(0,0,s)} = T_s \sqcup A$, $T'_{(0,1,s)} = \widehat{T}_s$, $T'_{(1,0,s)} = \widehat{T}_s$, and $T'_{(1,1,s)} = \widehat{T}_s \sqcup S^1$. The $E_1$ page is
\begin{equation*}
    \xymatrix{
    \kh^{\ast}(T) \otimes W \ar[rr]^-{(\mu', \mu')} && \kh^{\ast}(\widehat{T}) \oplus \kh^{\ast}(\widehat{T}) \ar[rr]^-{\binom{\nu'}{-\nu'}} && \kh^{\ast}(\widehat{T}) \otimes V,
    }
\end{equation*}
where $\mu'=0$ as the mapping components $W \otimes W \to 0$. The map $\nu'$ is induced by the component change $W \to W \otimes V$ given by $x \mapsto x \otimes x$. Thus $\nu'(\alpha) = \alpha \otimes x$. It follows that
\begin{equation*}
    \ker \binom{\nu'}{-\nu'} = \{ (\alpha, \alpha) \mid \alpha \in \kh^*(\widehat{T}) \}, \quad \im \binom{\nu'}{-\nu'} = \kh^*(\widehat{T}) \otimes x.
\end{equation*}
The $E_2$ terms are
\begin{equation*}
    E_2^{0,q} \cong \kh^q(T) \otimes x, \quad E_2^{1,q} \cong \kh^q(\widehat{T}), \quad E_2^{2,q} \cong \kh^q(\widehat{T}) \otimes 1.
\end{equation*}
Here, note that $E_2^{2,q}$ is the quotient $(\kh^* \otimes V) / (\kh^* \otimes x)$, which is isomorphic to $\kh^* \otimes 1$.

The quantum grading analysis for type \textup{(II)} is as follows
\begin{enumerate}
    \item[(a)] If $z \in E_2^{0, k+\delta}$ lies in $\kh^{k+\delta}(T) \otimes x$, then $z = \alpha \otimes x$ and $\theta(\alpha) = \theta(z) + 1$.
    \begin{equation*}
        \Phi(\alpha) = k + \delta + w(T) + \theta(\alpha) = \Phi(z) + 3\delta - 1.
    \end{equation*}
    \item[(b)] If $z \in E_2^{1, k+\delta-1} \cong \kh^{k+\delta-1}(\widehat{T})$, then $\theta(\alpha) = \theta(z)$.
    \begin{equation*}
        \Phi(\alpha) = k + \delta - 1 + w(T) + \theta(z) = \Phi(z) + 3\delta - 3.
    \end{equation*}
    \item[(c)] If $z \in E_2^{2, k+\delta-2}$ lies in $\kh^{k+\delta-2}(\widehat{T}) \otimes 1$, we represent it by the class of $\alpha \otimes 1$. Then $\theta(\alpha) = \theta(z) - 1$.
    \begin{equation*}
        \Phi(\alpha) = k + \delta - 2 + w(T) + \theta(z) = \Phi(z) + 3\delta - 5.
    \end{equation*}
\end{enumerate}
Combining these computations, we obtain the desired result.
\end{proof}

\begin{corollary}\label{cor:poincare_hopf_arc}
Let $T$ be a tangle and $T' = T \boxtimes_R A$ be its pure Hopf sum with an arc $A$. The Poincaré polynomial $P_{T'}(t, q)$ of the resulting tangle is determined by the type and chirality of the joint as follows:

\begin{enumerate}[label=$(\roman*)$]
    \item For a Hopf sum of type \textup{(I)}:
    \begin{itemize}
        \item If the joint consists of right-handed crossings ($\delta=0$),
        \begin{equation*} P_{T'}(t, q) = P_{\widehat{T}}(t, q)(q + tq^3) + P_T(t, q)t^2 q^3. \end{equation*}
        \item If the joint consists of left-handed crossings ($\delta=2$),
        \begin{equation*} P_{T'}(t, q) = P_{\widehat{T}}(t, q)(t^{-2} q^{-5} + t^{-1} q^{-3}) + P_T(t, q)t^{-1} q^{-3}. \end{equation*}
    \end{itemize}
    
    \item For a Hopf sum of type \textup{(II)}:
    \begin{itemize}
        \item If the joint consists of right-handed crossings ($\delta=0$),
        \begin{equation*} P_{T'}(t, q) = P_T(t, q)t^{-2} q^{-5} + P_{\widehat{T}}(t, q)(t^{-1} q^{-3} + q^{-1}). \end{equation*}
        \item If the joint consists of left-handed crossings ($\delta=2$),
        \begin{equation*} P_{T'}(t, q) = P_T(t, q)q + P_{\widehat{T}}(t, q)(tq^3 + t^2 q^5). \end{equation*}
    \end{itemize}
\end{enumerate}
Here, $P_{\widehat{T}}(t, q)$ denotes the Poincaré polynomial of the cut tangle associated with the joint position.
\end{corollary}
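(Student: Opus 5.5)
The corollary should follow directly from Theorem~\ref{theorem:hopf_joint_arc}. The idea is to translate each bigraded isomorphism into a Poincaré polynomial identity using the shift rule $P(C[h]\{k\};t,q)=t^hq^kP(C;t,q)$. Since the Poincaré series is additive under direct sums, each summand of $\kh^{k,l}(T')$ contributes a single monomial multiple of $P_T$ or $P_{\widehat T}$.

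\textbf{Conversion rule.} A summand $\kh^{k+a,\,l+b}(X)$ in the expression for $\kh^{k,l}(T')$ is the same as $\kh(X)[-a]\{-b\}$ in bidegree $(k,l)$. Summing $\rank\cdot t^kq^l$ over all $(k,l)$ therefore gives $t^{-a}q^{-b}P_X(t,q)$.

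I will go through the four cases in the order they are listed in Theorem~\ref{theorem:hopf_joint_arc}.

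\begin{itemize}
\item \textbf{Type (I), $\delta=0$.} The summands $\kh^{k,l-1}(\widehat T)$, $\kh^{k-1,l-3}(\widehat T)$ and $\kh^{k-2,l-3}(T)$ give
\[
P_{T'}=P_{\widehat T}\,(q+tq^3)+P_T\,t^2q^3.
\]
\item \textbf{Type (I), $\delta=2$.} The rule yields $P_{\widehat T}(t^{-2}q^{-5}+t^{-1}q^{-3})$ for the cut-tangle terms and $q^{-3}P_T$ for the $T$-term.
\item \textbf{Type (II), $\delta=0$.} The rule gives $t^{-2}q^{-5}P_T+P_{\widehat T}(t^{-1}q^{-3}+q^{-1})$.
\item \textbf{Type (II), $\delta=2$.} The rule gives $qP_T+P_{\widehat T}(tq^3+t^2q^5)$.
\end{itemize}

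\textbf{Main obstacle.} Matching the stated exponents is the only real difficulty. In the Type (I), $\delta=2$ case, the summand $\kh^{k,l+3}(T)$ read literally gives $q^{-3}P_T$, but the corollary states $t^{-1}q^{-3}P_T$.

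To settle this, I would recompute the homological shift for the $E_2^{2,\bullet}$ term directly from Corollary~\ref{corollary:hopf_sum}, that is, from $\kh^k(T')\supseteq E_2^{2,k+\delta-2}$. I would then fix a single normalization convention, either the one in Theorem~\ref{theorem:hopf_joint_arc} or the one in the corollary, and use it consistently. If the discrepancy turns out to be a typo in the theorem, I would state that the corollary follows by applying the conversion rule to the corrected shifts.

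No convergence or spectral-sequence input is needed beyond the decompositions already established. Because Poincaré series are additive and the direct sums are finite, every step is a bookkeeping identity.
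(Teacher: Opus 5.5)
Your route is the intended one. The paper gives no separate proof of this corollary; it is meant as a direct rewriting of Theorem~\ref{theorem:hopf_joint_arc} via $P(C[h]\{k\};t,q)=t^hq^kP(C;t,q)$. Your conversion rule and all four of your computations are correct, so three of the four bullets follow exactly.

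\textbf{The gap.} The type (I), $\delta=2$ bullet is left open in your proposal, and neither of your remedies can produce the stated $t^{-1}q^{-3}P_T$.

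\emph{Renormalization cannot help.} The two $\widehat T$-monomials $t^{-2}q^{-5}$ and $t^{-1}q^{-3}$ already agree with the statement. Since $T$ and $\widehat T$ have the same crossings, any change of normalization moves all three terms by a common monomial, so it cannot correct the $T$-term alone.

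\emph{The recomputation confirms the theorem, not the corollary.} By Corollary~\ref{corollary:hopf_sum} with $\delta=2$, the $T$-contribution to $\kh^k(T')$ is $E_2^{2,k}\cong\kh^k(T)\otimes x$, so there is no homological shift. Item (c) in part (i) of the proof of Theorem~\ref{theorem:hopf_joint_arc} gives $\Phi(z)=\Phi(\alpha)-3\delta+3=\Phi(\alpha)-3$, which is exactly $\kh^{k,l+3}(T)$.

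\emph{Internal consistency agrees.} The type (I) $\delta=2$ isomorphism is the $\delta=0$ one shifted uniformly by $[-2]\{-6\}$, the normalization change when both joint crossings become negative. The two $\widehat T$-terms of the statement obey this shift, and $t^{-2}q^{-6}\cdot t^2q^3=q^{-3}$, not $t^{-1}q^{-3}$.

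So the misprint is in the corollary, not the theorem. The stated and derived polynomials differ by $(t^{-1}-1)q^{-3}P_T$, so the statement as written contradicts Theorem~\ref{theorem:hopf_joint_arc} whenever $P_T\neq 0$. What you should prove, and say explicitly that you are proving, is $P_{T'}=P_{\widehat T}(t^{-2}q^{-5}+t^{-1}q^{-3})+q^{-3}P_T$.

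\textbf{A caution about type (II).} If you recompute the other cases from Corollary~\ref{corollary:hopf_sum} as well, type (II) comes out with the two displayed formulas interchanged. For $\delta=0$ the $T$-term is $E_2^{0,k}\cong\kh^k(T)\otimes x$, in homological degree $k$. The paper's own type (II) grading computations (a)--(c) then give $qP_T+P_{\widehat T}(tq^3+t^2q^5)$ for $\delta=0$ and $t^{-2}q^{-5}P_T+P_{\widehat T}(t^{-1}q^{-3}+q^{-1})$ for $\delta=2$. The corollary's type (II) bullets follow the statement of Theorem~\ref{theorem:hopf_joint_arc}, whose type (II) labels are swapped relative to its own proof. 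Your derivation ``from the theorem as stated'' therefore reproduces those bullets, but a from-scratch recomputation will not.
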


\subsubsection{A spectral sequence for arc joint}

Let $T$ be a tangle with two endpoints. Denote by $T'$ the tangle obtained from $T$ as follows: an embedded arc $\beta$ intersects each of the two boundary arcs of $T$ corresponding to its endpoints transversely at exactly one point. The new tangle $T'$ is obtained by replacing the portions of $T$ between these intersection points with the arc $\beta$, as illustrated in Figure~\ref{figure:arc_reduction}. Hence, $T'$ is derived from $T$ by a local modification along $\beta$, and remains a tangle.

\begin{figure}[h]
  \centering
  \includegraphics[width=0.6\textwidth]{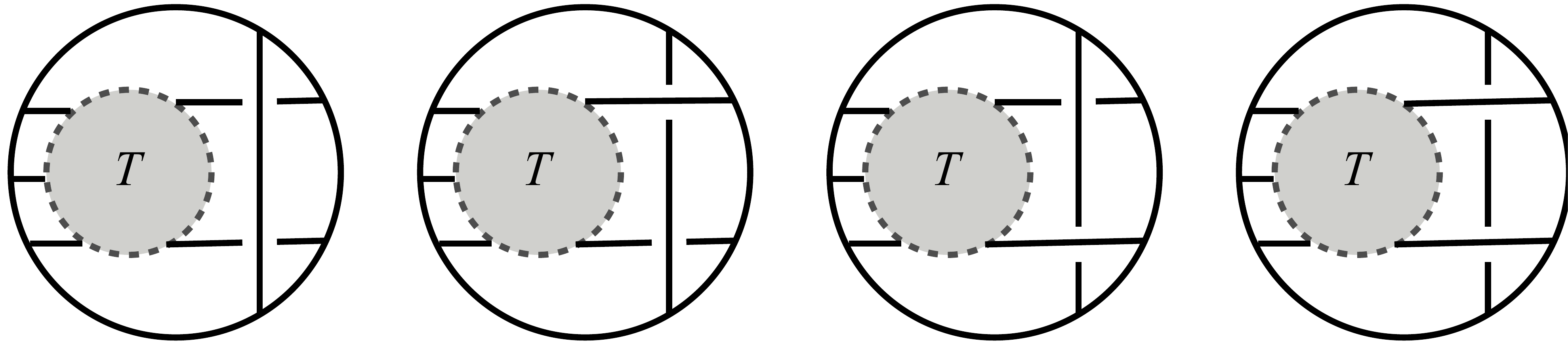} 
  \caption{Illustration of the four possible configurations for constructing the tangle $T'$.}\label{figure:arc_reduction}
\end{figure}

\begin{example}
As shown in Figure~\ref{figure:example_tangles}, we present the tangle $T$ together with the four possible configurations of $T'$ obtained by joining $T$ with an arc as described above. It is noteworthy that, unlike the Hopf clasp, the arc tangle in this section cannot be simplified by Reidemeister moves due to the interlacing of the tangle components.
\begin{figure}[h]
  \centering
  \includegraphics[width=0.7\textwidth]{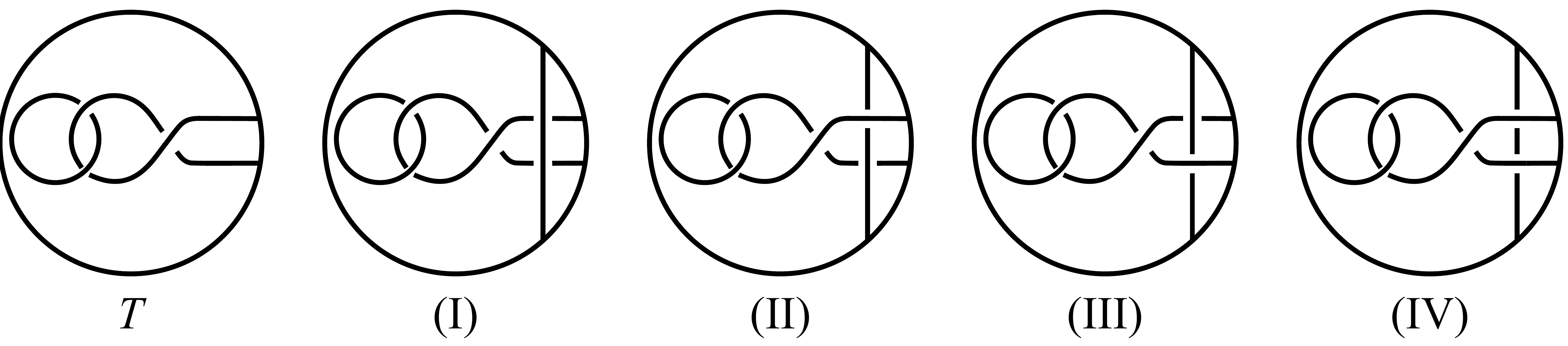} 
  \caption{Illustration of the tangle $T$ and its four possible configurations of $T'$ obtained by joining with an arc.}\label{figure:example_tangles}
\end{figure}
\end{example}

Although $T'$ is constructed by joining the tangle $T$ with an arc, in practice, by establishing a relationship between the Khovanov homology of $T$ and $T'$, we can characterize the invariants of the more complex $T'$ via those of the simpler $T$. The fundamental objective is to reduce a complex tangle to a more tractable one. For instance, the type \textup{(I)} tangle $T'$ can be reduced to $T$, which may itself be further decomposed. Such an approach transforms the computational challenge of complex tangles into a systematic study of simpler constituents.

\begin{definition}
Let $T$ be a tangle with two endpoints. By identifying the two endpoints of $T$, we obtain a closed curve, which forms a link denoted by $\widetilde{T}$. This link is called the \textit{glued link} of $T$.
\begin{figure}[h]
  \centering
  \includegraphics[width=0.35\textwidth]{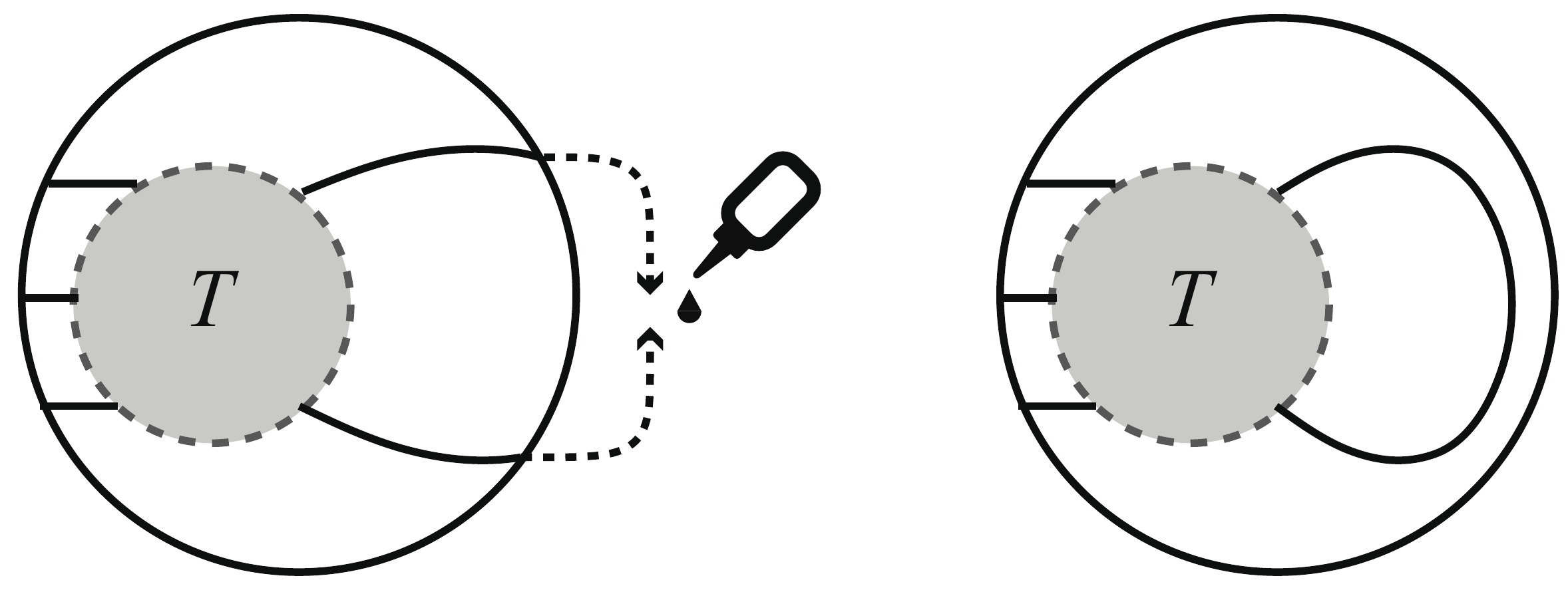} 
  \caption{Illustration of the glued link constructed from a tangle.}\label{figure:gluing_tangle}
\end{figure}
\end{definition}

\begin{theorem}\label{theorem:spectral_sequence_refined}
There exists a spectral sequence of the form
\begin{equation*}
  E_1^{p,q} \Rightarrow \kh^{p+q+\delta}(T'),
\end{equation*}
converging to the Khovanov homology of $T'$, where $\delta$ denotes the number of left-handed crossings at the joint. The $E_1$-terms for each configuration type are summarized as follows:
\begin{equation*}
\renewcommand{\arraystretch}{1.5}
\begin{tabular}{c|c|c|c|c}
  \hline
  $E_1$ page & Type \textup{(I)} & Type \textup{(II)} & Type \textup{(III)} & Type \textup{(IV)} \\
  \hline
  $E_1^{0,q}$ & $\kh^q(T)$ & $\kh^q(T)$ & $\kh^q(\widetilde{T})$ & $\kh^q(T)$ \\
  $E_1^{1,q}$ & $\kh^q(\widetilde{T}) \oplus \kh^q(T)$  & $\kh^q(T)\oplus \kh^q(T)$ & $\kh^q(T)\oplus \kh^q(T)$ & $\kh^q(\widetilde{T}) \oplus \kh^q(T)$ \\
  $E_1^{2,q}$ & $\kh^q(T)$ & $\kh^q(\widetilde{T})$ & $\kh^q(T)$ & $\kh^q(T)$ \\
  \hline
\end{tabular}
\end{equation*}
\noindent More precisely, the spectral sequence collapses at the $E_3$ page, exhibiting two distinct structural behaviors:
\begin{enumerate}[label=$(\roman*)$]
    \item \textit{Types \textup{(I)} and \textup{(IV)}:} At the $E_1$ page, the row complexes decouple into a reduced core and a trivial summand $\kh^*(T)$.
    \item \textit{Types \textup{(II)} and \textup{(III)}:} The spectral sequence collapses at the $E_2$ page, yielding the isomorphism of vector spaces
    \begin{equation*}
        \kh^k(T') \cong E_2^{0,k+\delta} \oplus E_2^{1,k+\delta-1} \oplus E_2^{2,k+\delta-2}.
    \end{equation*}
\end{enumerate}
\end{theorem}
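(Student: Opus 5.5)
The plan follows the template of Theorems~\ref{theorem:hopf_sum} and \ref{theorem:hopf_joint_arc}. The two crossings of the arc joint (the two transverse intersections of $\beta$ with the boundary arcs of $T$) are placed in the first two positions of the state cube, so $\cube^{n+2}\cong\cube^2_{\mathrm{base}}\times\cube^n$. Theorem~\ref{theorem:spectral_sequence} with $k=2$ then gives a first-quadrant spectral sequence converging to $\kh(T')$. It is concentrated in columns $p\in\{0,1,2\}$ and collapses at $E_3$. The shift by $\delta$ comes from the normalization $\mathcal{K}=\mathcal{C}[-n_-]\{n_+-2n_-\}$, restricted to the two joint crossings, exactly as in Corollary~\ref{corollary:hopf_sum}. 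This settles convergence and the general $E_3$ collapse for all four types.

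Next I identify the $E_1$ page. For each of the four configurations in Figure~\ref{figure:arc_reduction}, I will read off the four base smoothings $T'_{(i,j,\cdot)}$. Smoothing a crossing of $\beta$ with a boundary arc either reconnects $\beta$ to the endpoint strand, which returns a copy of $T$ up to planar isotopy, or joins the two ends of $T$ through $\beta$, which gives the glued link $\widetilde{T}$ together with a boundary arc. The arc contributes only the factor $W$, which is one-dimensional, so it affects gradings but not dimensions. The $E_1$ formula $E_1^{p,q}\cong\bigoplus_{|s|=p}\kh^q(F\circ j_s)$ of Theorem~\ref{theorem:spectral_sequence} then yields the table. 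Type (I) has $\widetilde T$ at one middle vertex, type (IV) likewise, type (II) has it at $(1,1)$, and type (III) at $(0,0)$. This step is a case-by-case diagrammatic check.

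For (i), types (I) and (IV), the middle column is $\kh(\widetilde T)\oplus\kh(T)$. I will show that the edge maps adjacent to the $T$-summand are, under the TQFT $\mathcal{Z}$, isomorphisms. They are identity cylinders on $T$, possibly composed with the module action $\mu(1\otimes-)=\mathrm{id}_W$ of Lemma~\ref{lemma:compatible}. As a result, one of $E_1^{0,\bullet}$ and $E_1^{2,\bullet}$ cancels against the $\kh(T)$ summand of the middle column by Gaussian elimination, which produces the stated decoupling into a reduced core and a trivial summand.

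For (ii), types (II) and (III), the two middle vertices are both $T$ and the corner vertices are $T$ and $\widetilde T$. I will reproduce the $d_2$-vanishing argument of Theorem~\ref{theorem:hopf_sum}. Take a class $[x_0]\in E_2^{0,q}$ with zigzag partner $x_1=(x_{01},x_{10})$. The two edges out of $(0,0)$ induce the same map on $T$, because the joint is symmetric along $\beta$. Hence $\delta^0(x_{01})=\delta^0(x_{10})$. The element $y=(x_{01}-x_{10},0)$ is then a $\delta^0$-cycle with $d_1[y]=[\delta^1(x_1)]$, so $d_2=0$. Collapse at $E_2$ follows. Since we work over a field, the convergent finite filtration splits, and $\kh^k(T')\cong\bigoplus_p E_2^{p,k+\delta-p}$.

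The main obstacle is this symmetry claim: one must verify that the two edge cobordisms $(0,0)\to(0,1)$ and $(0,0)\to(1,0)$ agree at the chain level, not merely up to homotopy, in types (II) and (III). The same issue arises in establishing the isomorphism property used for (i). My plan is to check this directly on the generators $1,x\in V$ and $x\in W$ using the saddle table for $\mathcal{Z}$. If the two maps instead differ by a sign, I will absorb the sign into $y$.
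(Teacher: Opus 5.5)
\textbf{The gap is in part (i).} You claim that the edge maps adjacent to the $\kh^*(T)$ summand of the middle column are isomorphisms, being identity cylinders or $\mu(1\otimes-)$. This cannot hold, because every edge of the resolution cube is a single elementary saddle, never a cylinder.

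In type (I) that summand comes from the vertex with resolution $T_s\sqcup A$. Both of its horizontal edges connect it to corners that are again of the form $T_s\sqcup A$. A saddle that preserves the number of components must join two distinct arcs, so it is the arc-reconnection $\sigma\colon W\otimes W\to W\otimes W$, which $\mathcal{Z}$ sends to $0$. The merge $\mu$ cannot occur here, since it changes the component count. It is also not invertible: only its restriction $\mu(1\otimes-)$ is the identity, and $\mu$ kills $x\otimes x$.

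So these edge maps vanish, already at the chain level. This is exactly the paper's mechanism: $d_1^{0,\bullet}=(\widetilde{\Delta},0)^{T}$ and $d_1^{1,\bullet}=(\nu,0)$. The middle $T$-summand is therefore a filtered direct summand of $\mathcal{C}(T')$ with zero horizontal differentials. The spectral sequence splits into the core $\kh^*(T)\to\kh^*(\widetilde{T})\to\kh^*(T)$ plus a copy of $\kh^*(T)$ concentrated in column $1$, and that copy survives to $E_\infty$.

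Your Gaussian elimination would instead cancel this surviving summand against a corner copy of $\kh^*(T)$. That gives a different $E_2$ page and a different answer for $\kh(T')$. So the ``isomorphism property'' you plan to verify is false, and step (i) must be replaced by the vanishing argument.

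The rest of your proposal matches the paper: the setup via Theorem~\ref{theorem:spectral_sequence}, the case-by-case reading of the $E_1$ table, and the adaptation of the Hopf-sum $d_2$-argument for types (II) and (III).

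For (ii), one simplification is available, assuming the resolutions have the same form as in type (I). By the same component count, one layer of horizontal edges in types (II) and (III) consists of arc--arc saddles, which vanish at the chain level. In type (II) this lets you take $x_1=0$; in type (III) it gives $\delta^1(x_1)=0$. Either way $d_2=0$ immediately, and the chain-level symmetry you flag as the main obstacle is not needed.
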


\begin{proof}
Consider the tangle $T'$ obtained by an arc-reduction modification on $T$ at two specific crossings. We assign the coordinates corresponding to these two joint crossings to the first two positions of the state cube $\mathcal{C}(T')$. Consequently, any state of $T'$ can be uniquely represented as $(i, j, s)$, where $i, j \in \{0, 1\}$ and $s \in \{0, 1\}^n$ denotes a state of the original tangle $T$. Let $\delta \in \{0, 1, 2\}$ be the number of left-handed crossings at the joint.

\begin{figure}[h]
  \centering
  \includegraphics[width=0.6\textwidth]{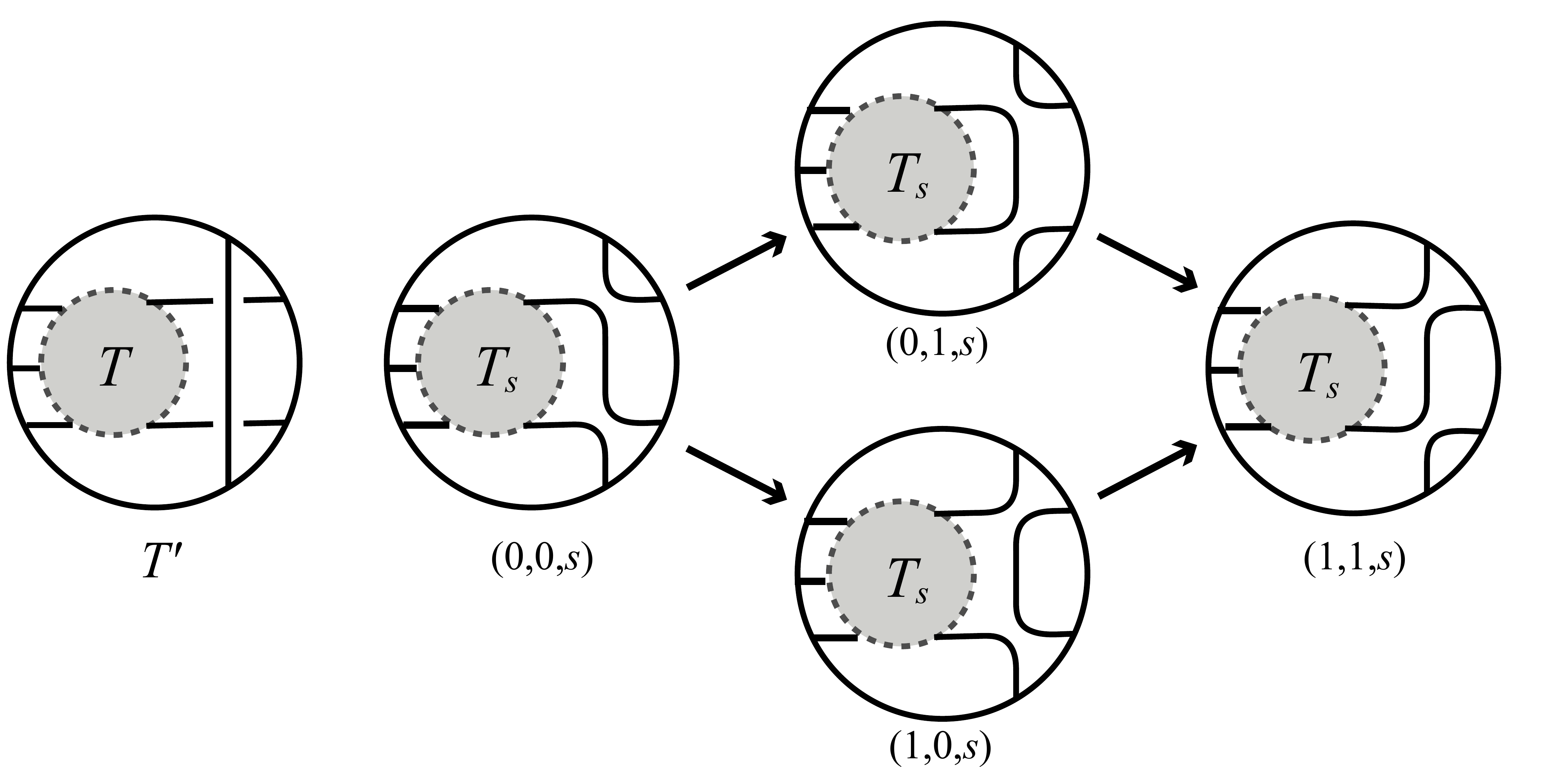}
  \caption{Illustration of the tangle $T'$ and its state cube for the first two coordinates.}\label{figure:arc_reduction1}
\end{figure}
Without loss of generality, we perform the detailed derivation for type \textup{(I)} (as illustrated in Figure~\ref{figure:arc_reduction1}). The resolutions of $T'$ at the joint vertices $(i,j)$ for a fixed state $s \in \mathcal{C}(T)$ are given by
\begin{align*}
    T'_{(0,0,s)} &= T_{s} \sqcup A, \\
    T'_{(0,1,s)} &= \widetilde{T}_{s} \sqcup A \sqcup A, \\
    T'_{(1,0,s)} &= T_{s} \sqcup A, \\
    T'_{(1,1,s)} &= T_{s} \sqcup A,
\end{align*}
where $A$ denotes the embedded arc and $\widetilde{T}$ is the glued link. Let $V = \mathbb{F}[x]/(x^2)$ be the Frobenius algebra associated with the Khovanov invariant, with basis $\{1, x\}$ and internal gradings $\deg(1)=1, \deg(x)=-1$.

\medskip
Let $C = \mathcal{C}(T')$ be the total Khovanov chain complex. We define a descending filtration
\begin{equation*} F^p C = \bigoplus_{i+j \ge p, s} \mathcal{G}(T'_{(i,j,s)}). \end{equation*}
This filtration induces a spectral sequence $\{E_r, d_r\}$. The $E_1$ page is computed as the homology with respect to the vertical differentials, which yields
\begin{equation*}
    \xymatrix{
    \kh^*(T) \ar[rr]^-{d_1^{0,q}} && \kh^*(\widetilde{T}) \oplus \kh^*(T) \ar[rr]^-{d_1^{1,q}} && \kh^*(T).
    }
\end{equation*}
The horizontal differentials $d_1^{p,q}$ are induced by the saddle maps between joint resolutions:
\begin{itemize}
    \item $d_1^{0,q} = (\widetilde{\Delta}, 0)^T$, where $\Delta^0: \kh^*(T) \to \kh^*(\widetilde{T})\cong \kh^*(\widetilde{T})\otimes W$ is the map induced by the co-multiplication $\Delta: W \to V \otimes W$, specifically sending $x \mapsto x \otimes x$ in the junction component.
    \item $d_1^{1,q} = (\nu, 0)$, where $\nu: \kh^*(\widetilde{T}) \cong \kh^*(\widetilde{T})\otimes W\to \kh^*(T)$ is the multiplication map $\mu: V \otimes W \to W$, defined by $1 \otimes x \mapsto x$ and $x \otimes x \mapsto 0$.
\end{itemize}
By Theorem~\ref{theorem:spectral_sequence}, the spectral sequence boundedly collapses at the $E_3$ page. Note that the horizontal differentials originating from and terminating at the direct summand $\kh^*(T)$ within the intermediate layer $E_1^{1,\bullet} = \kh^*(\widetilde{T}) \oplus \kh^*(T)$ vanish identically. Consequently, by the pre-additivity of the category of filtered complexes, this spectral sequence splits into a direct sum of two independent spectral sequences, one of which is purely concentrated on the trivial complex of $\kh^*(T)$.

For the cases of types \textup{(II)} and \textup{(III)}, Theorem~\ref{theorem:spectral_sequence} guarantees that the spectral sequence collapses at the $E_3$ page and converges to $\kh^*(T')$. To establish the sharper collapse at the $E_2$ page, one can adapt the argument from the proof of Theorem~\ref{theorem:hopf_clasp} with necessary modifications. Structurally, the early collapse at the $E_2$ page for these two types is driven by the fact that $E_1^{1,\bullet} \cong \kh^*(T) \oplus \kh^*(T)$ consists of identical isomorphic summands, whose associated row differentials agree up to a sign. Finally, combining this result with the grading isomorphism $\kh^k(T') \cong \bigoplus_p E_2^{p, k+\delta-p}$ completes the proof for types \textup{(II)} and \textup{(III)}.
\end{proof}

The construction of the Hopf sum and arc joint allows us to simplify the computation of Khovanov homology for tangles. By reducing these local interlacing structures, we can lower the computational complexity of Khovanov invariants.

\subsubsection{Khovanov homology of simple tangles}

In \cite{shen2025computing}, a tangle $T$ is defined as \textit{simple} if every arc is pure. It is shown therein that simple tangles possess a particularly rigid homological structure. In this section, we provide a more direct and conceptual proof of this result by employing the tangle spectral sequence.

\begin{definition}
  A pro-tangle $F: \cube^n \to \cob(B)$ is said to be \textit{simple} if the resolution $F(v)$ contains no closed loops for any vertex $v \in \cube^n$.
\end{definition}

It is evident that the definition of a simple tangle in \cite{shen2025computing} is consistent with the notion of a simple pro-tangle in the functorial sense.

\begin{theorem}
Let $F \in \fun^{(n)}_B$ be a simple pro-tangle. Then the Khovanov homology of $F$ is given by
\begin{equation*}
    \kh^{p+\delta}(F) \cong \mathbb{F}^{\binom{n}{p}},
\end{equation*}
where $\delta$ denotes the number of left-handed crossings in the pro-tangle.
\end{theorem}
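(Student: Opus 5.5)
Since $F$ is simple, every resolution $F(v)$ is a disjoint union of open arcs only; with a fixed boundary $B$ of $2m$ points there are exactly $m$ arcs. Applying the TQFT $\mathcal{Z}$ gives $\mathcal{Z}(F(v)) \cong W^{\otimes m}$, which is one-dimensional, spanned by $x\otimes\cdots\otimes x$, for every $v$. So the cellular cube complex $C_\bullet(\cube^n;F^\Theta)$ of Proposition~\ref{proposition:cellular_equals_khovanov} has a copy of $\mathbb{F}$ at each vertex, and its degree-$p$ term has dimension $\binom{n}{p}$. The plan is to show that every edge map vanishes. Then the complex has zero differential, and its homology equals the chain groups; the normalization shift $[-n_-]$ converts the height into the stated index $p+\delta$.

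To show the edge maps vanish, I would classify the saddle cobordism $F(e)$ along an elementary edge $e:v\to v'$. It is a single saddle between $1$-manifolds, and since neither $F(v)$ nor $F(v')$ contains a closed circle, the saddle cannot be a circle--circle merge or split, a circle--arc merge $\mu$, or an arc co-action $\Delta_W$. Each of those would create or consume a circle. The only remaining case is a saddle joining two arcs into two arcs, which is the arc-crossing morphism $\sigma$. From Table~\ref{tab:tqft_construction}, $\sigma=0$. Hence every edge map $\mathcal{Z}(F(e))$ is zero, and so is every differential $\partial|_{F(v)}=\sum\pm F(e_i)$. This step is the main point requiring care. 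In particular, I would have to confirm that a saddle with both ends and both endpoints on arcs is always modeled by $\sigma$, including the case where the two arc ends belong to the same arc. That case still yields arcs only, because a circle would violate simplicity, so it falls under the arc-crossing assignment.

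Alternatively, one could phrase the argument through Theorem~\ref{theorem:spectral_sequence} with the full cube as base ($k=n$). The $E_0$ page is then $K^{p,0}\cong\mathbb{F}^{\binom{n}{p}}$ with fiber $\cube^0$. The differentials $d_1$ are induced by $\sigma=0$, and all higher $d_r$ are built from zigzags of the same zero maps, so $E_1=E_\infty$. I would prefer the direct cellular argument and mention the spectral sequence only as a consistency check.

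Finally comes the grading bookkeeping. The homological degree of vertex $v$ in $\mathcal{C}(F)$ is its Hamming weight $|v|$ in the cohomological convention used in Section~\ref{section:spectral_sequence}. Passing to the normalized complex $\mathcal{K}(F)=\mathcal{C}(F)[-n_-]\{n_+-2n_-\}$ shifts heights by $n_-=\delta$, the number of left-handed crossings. The paper's convention, as in Corollary~\ref{corollary:hopf_sum}, writes $\kh^{p+\delta}$ for the contribution of weight-$p$ vertices. Combining this with the vanishing differential gives $\kh^{p+\delta}(F)\cong\bigoplus_{|v|=p}\mathbb{F}\cong\mathbb{F}^{\binom{n}{p}}$. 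I would also record that each generator has quantum grading determined by $\theta(x^{\otimes m})=-m$ plus the shifts, though the statement only requires the dimensions.
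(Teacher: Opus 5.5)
Your argument is the paper's argument. The paper applies Theorem~\ref{theorem:spectral_sequence} with the whole cube as base, so every fiber is a point, $E_1^{p,0}\cong\bigoplus_{|s|=p}\mathcal{Z}(F(s))\cong\mathbb{F}^{\binom{n}{p}}$, and $d_1$ is literally the cube differential. It then kills that differential using $\sigma=0$, exactly as you do. Your direct cellular version is the same computation without the spectral-sequence wrapper, and it makes the collapse immediate.

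A few points need care; the paper's proof shares all of them.

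First, both arguments assume that every edge map $F(e)$ is a single planar saddle. This holds for pro-tangles coming from tangle diagrams, but it is not part of the definition of a pro-tangle. For example, a rank-one simple pro-tangle with $F(0)=F(1)=M$ and $F(e)=\mathrm{id}_M$ has a contractible complex, so the formula fails. You should state the saddle condition as a hypothesis.

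Second, a planar saddle with both feet on the same arc always splits off a circle. Simplicity therefore rules that case out entirely; it does not turn into an arc-to-arc saddle as you suggest.

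Third, your final index step is not actually derived. Corollary~\ref{corollary:hopf_sum} reads $\kh^k\cong\bigoplus_p E_2^{p,k+\delta-p}$, so weight $p$ contributes to $\kh^{p-\delta}$; in the Hopf-link example, weight $0$ lands in $\kh^{-2}$. The paper's normalization $\mathcal{C}(F)[-n_-]$ with $(C[h]\{k\})^{i,j}=C^{i-h,j-k}$ likewise puts weight $p$ in height $p-\delta$. The $+\delta$ in the statement, and in the statement of Theorem~\ref{theorem:hopf_sum}, is a sign convention the paper asserts but does not apply consistently. Say explicitly which convention you adopt rather than attributing it to that corollary.
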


\begin{proof}
Consider the filtration on the complex $\mathcal{C}(F)$ induced by the weight of the vertices in the hypercube $\cube^n$. By Theorem \ref{theorem:spectral_sequence}, there exists a spectral sequence $E_r^{p,q}$ converging to the Khovanov homology of $F$, with the $E_1$ page given by
\begin{equation*}
    E_{1}^{p,q} \cong \bigoplus_{|s|=p} \kh^{q}(F_s),
\end{equation*}
where $s \in \cube^n$ are the vertices of the cube. 

Since $F$ is a simple pro-tangle, each resolution $F_s$ consists solely of disjoint open arcs. In the Frobenius algebra $V = \mathbb{F}[x]/(x^2)$, the homology of a single arc is associated with the subspace $W = xV$. For a resolution $F_s$ containing $k_s$ arcs, the homology is $\kh^*(F_s) \cong W^{\otimes k_s}$. Since $\dim(W) = 1$, we have $\kh^q(F_s) \cong \mathbb{F}$ for $q=0$ and vanishes otherwise.

The $E_1$ differentials are induced by cobordisms between these arc-only resolutions. Any such cobordism involves either the map $\sigma: W \otimes W \to W\otimes W$
\begin{equation*}
  \sigma: W\otimes W \to W\otimes W,\quad x\otimes x \mapsto 0.
\end{equation*}
Consequently, all $E_1$ differentials vanish, i.e., $d_1 = 0$. This implies that the spectral sequence collapses at the $E_1$ page, yielding $E_{\infty}^{p,q} \cong E_1^{p,q}$. The $E_1$ page is concentrated in row $q=0$, and we have
\begin{equation*}
    E_{1}^{p,0} \cong \bigoplus_{\binom{n}{p}} \mathbb{F} \cong \mathbb{F}^{\binom{n}{p}}.
\end{equation*}
Accounting for the total homological degree $n = p + q + \delta$, we conclude that $\kh^{p+\delta}(F) \cong \mathbb{F}^{\binom{n}{p}}$.
\end{proof}

\section{Planar connected sum}\label{section:pro-tangle_connected_sum}

The calculation of Khovanov homology for complex tangles becomes increasingly formidable as the crossing number grows. Given that connected sums are a natural way to decompose tangles, this section explores the algebraic structure of the resulting Khovanov complexes and establishes several formal descriptions of their properties.

\subsection{Khovanov complexes of connected sums}

\subsubsection{The $V$-module structure of the Khovanov complex}

Let $F: \cube^n \to \cob(B)$ be a pro-tangle. The Khovanov complex $\mathcal{C}(F)$ of $F$ can be regarded as a differential graded module over $V$.

Indeed, for each vertex $v \in \cube^n$, the object $F(v)$ is a 1-manifold consisting of disjoint arcs and circles. Fix a base strand $\alpha$ of the pro-tangle. Recall that the TQFT functor $\mathcal{Z}$ assigns
\begin{equation*}
    \mathcal{Z}(F(v)) = \bigotimes_{i} W_i \otimes \bigotimes_{j} V_j,
\end{equation*}
where $W_i$ and $V_j$ denote the vector spaces associated with arcs and circles, respectively (with $V_j \cong V$ as the defining Frobenius algebra). The choice of the base strand $\alpha$ uniquely singles out a tensor factor $Z \in \{W_i, V_j\}$ in $\mathcal{Z}(F(v))$. 

The $V$-action on the chain complex $\mathcal{C}(F)$ is defined level-wise. For any element $a \in V$, the endomorphism $\mu_a: \mathcal{Z}(F(v)) \to \mathcal{Z}(F(v))$ acts exclusively on the designated factor
\begin{equation*}
    \mu_a (\xi_1 \otimes \dots \otimes z \otimes \dots \otimes \xi_k) = \xi_1 \otimes \dots \otimes (z \cdot a) \otimes \dots \otimes \xi_k,
\end{equation*}
where $z \in Z$, and $z \cdot a$ is given by the natural $V$-module structure on $Z$. Geometrically, when $a$ corresponds to a generator of $V$, this algebraic action is induced via the TQFT functor by a saddle cobordism in $\cob(B)$ that merges an auxiliary unknot into the base strand $\alpha$.

\begin{proposition}
Let $F \in \fun^{(n)}_B$ be a pro-tangle with a designated base strand $\alpha$. Then the Khovanov chain complex $\mathcal{C}(F)$ inherits the structure of a differential graded $V$-module.
\end{proposition}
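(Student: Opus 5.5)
The plan is to check three things about the level-wise action $\mu_a$ on $\mathcal{C}(F)=\bigoplus_v \mathcal{Z}(F(v))$: (a) it makes each chain group a graded $V$-module, (b) the differential is $V$-linear, and (c) the action is compatible with the quantum grading. A differential graded $V$-module structure (with $V$ concentrated in homological degree $0$) is exactly a $V$-module structure on each chain group commuting with $\partial$.

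Step (a) is immediate. On the designated factor $Z$, the map $a\mapsto \mu_a$ is the module action, either $m$ on $V$ or $\mu$ on $W$. Associativity and unitality come from $V$ being a commutative algebra and from Lemma~\ref{lemma:compatible}(i). Tensoring with identities on the other factors preserves these identities, so every summand $\mathcal{Z}(F(v))$ is a $V$-module, and hence so is each $\mathcal{C}_k(F)$.

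Step (b), $\partial\circ\mu_a=\mu_a\circ\partial$, is the main work. Since the differential is a signed sum of the edge maps $\mathcal{Z}(F(e))$ and the signs $(-1)^{\xi_i(v)}$ are scalars, it suffices to show that each edge map intertwines the actions on source and target. An edge map is a single saddle tensored with identities. There are two cases.

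If the saddle does not touch the component carrying $\alpha$, then it acts on different tensor factors from $\mu_a$, and the two commute trivially.

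If the saddle involves the base component, I would split into subcases by its local type.
\begin{itemize}
\item A merge into the base component ($m$, or $\mu\colon V\otimes W\to W$) commutes with the action by associativity and commutativity of $V$. The base factor of the target is the merged component.
\item A split ($\Delta$ or $\Delta_W$) commutes with the action by the Frobenius compatibility of $\Delta$, i.e.\ $V$-linearity of $\Delta$, together with Lemma~\ref{lemma:compatible}(ii) for $\Delta_W$. Here the base factor of the target is the component containing $\alpha$ after the split. For a circle split, $\Delta(za)=\Delta(z)(a\otimes 1)=\Delta(z)(1\otimes a)$, so it does not matter which of the two output circles carries $\alpha$.
\item The arc--arc saddle $\sigma$ is the zero map and trivially commutes.
\end{itemize}

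The obstacle I anticipate is in these subcases: making precise that the designated factor is tracked consistently along edges, because the component containing $\alpha$ changes. I would handle this by noting that, by the strand definition, $\alpha(v\to w)$ is an identity cylinder, so the base component of $F(w)$ is by definition the image of that of $F(v)$ under the saddle. Then $V$-linearity of each elementary Frobenius (co)module map is exactly what is needed.

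Step (c) concerns gradings. Multiplication by $x$ has internal degree $-2$ under $\theta$, which makes $\mathcal{C}(F)$ a graded module over the graded algebra $V$. The homological degree is untouched, since $\mu_a$ preserves each vertex summand. Combining (a)--(c) gives the dg $V$-module structure.
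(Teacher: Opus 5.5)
Your proposal is correct and follows the same route as the paper: you reduce $\partial\circ\mu_a=\mu_a\circ\partial$ to a check on each edge map, the disjoint-support case is trivial, and the merge/split cases follow from the Frobenius (co)module structure on $V$ and $W$. Your version is somewhat more precise than the paper's: the paper attributes the split case to coassociativity, whereas the property actually needed is the $V$-linearity identity $\Delta(za)=\Delta(z)(a\otimes 1)=\Delta(z)(1\otimes a)$ that you isolate (and for an arc split with $\alpha$ staying on the arc, the analogous identity for $\Delta_W$ holds trivially because $x$ acts by zero on $W$).
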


\begin{proof}
The global Khovanov differential $d_F$ is defined as the sum of the linear maps induced by the local elementary cobordisms in $\cob(B)$. To establish that $\mathcal{C}(F)$ is a differential graded $V$-module, it suffices to show that the $V$-action $\mu_a$ commutes with each edge differential $d_{v \to v'}$ for adjacent vertices $v, v' \in \cube^n$. That is, for any $\xi \in \mathcal{Z}(F(v))$ and $a \in V$, we must verify
\begin{equation*}
    d_{v\to v'}(\mu_a(\xi)) = \mu_a(d_{v\to v'}(\xi)).
\end{equation*}
We analyze the geometric support of the local cobordism underlying $d_{v \to v'}$ via two distinct cases:
\begin{enumerate}[label=$(\roman*)$]
    \item \textit{Disjoint support:} If the geometric support of the local cobordism $d_{v \to v'}$ is disjoint from the base strand $\alpha$, the map $d_{v \to v'}$ and the $V$-action $\mu_a$ operate on distinct tensor factors of $\mathcal{Z}(F(v))$. Consequently, the commutativity $d_{v \to v'} \circ \mu_a = \mu_a \circ d_{v \to v'}$ follows directly from the functoriality of the tensor product.
    
    \item \textit{Intersecting support:} If $d_{v \to v'}$ involves the base strand $\alpha$, the local cobordism represents either a multiplication (merging a circle into $\alpha$) or a comultiplication (splitting a circle from $\alpha$). Since $V$ is a commutative Frobenius algebra and the assignment $Z$ on $\alpha$ is a well-defined compatible $V$-module, this compatibility translates precisely to the associativity and coassociativity of the underlying Frobenius structure.
\end{enumerate}
In both configurations, the local compatibility ensures the commutativity of the following diagram at the chain level:
\begin{equation*}
    \begin{tikzcd}
    \mathcal{Z}(F(v)) \arrow[r, "d_{v \to v'}"] \arrow[d, "\mu_a"'] & \mathcal{Z}(F(v')) \arrow[d, "\mu_a"] \\
    \mathcal{Z}(F(v)) \arrow[r, "d_{v \to v'}"'] & \mathcal{Z}(F(v'))
    \end{tikzcd}
\end{equation*}
Summing over all edges of the hypercube $\cube^n$, we obtain $d_F \circ \mu_a = \mu_a \circ d_F$, which establishes the $V$-linearity of the global differential and completes the proof.
\end{proof}

\subsubsection{Tensor product formula for connected sums}

In this section, we formalize the relationship between the connected sum of pro-objects and the tensor product over the Frobenius algebra $V$. This result allows for the decomposition of complex pro-tangles.

Let $F \in \fun^{(n)}_B$ be a pro-tangle and $L \in \fun^{(m)}_{\emptyset}$ be a pro-link. Suppose $R = (\alpha, \beta)$ is a pair consisting of a strand $\alpha \subset F$ and a component $\beta \subset L$ along which the connected sum $F \#_R L$ is formed.

\begin{theorem}\label{theorem:connected_sum_isomorphism}
There exists a natural isomorphism of chain complexes
\begin{equation*}\label{eq:tensor_iso}
    \mathcal{C}(F \#_R L) \cong \mathcal{C}(F) \otimes_V \mathcal{C}(L),
\end{equation*}
where $\otimes_V$ denotes the tensor product over the Frobenius algebra $V = \mathbb{F}[x]/(x^2)$.
\end{theorem}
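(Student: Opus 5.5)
The plan is to build the isomorphism vertex by vertex on the resolution cube, check that it intertwines the differentials, and then assemble it. First I will identify the cubes. By the definition of connected sum, $F\#_R L$ is indexed by $\cube^{n}\times\cube^{m}$, and the connected sum along $R=(\alpha,\beta)$ never creates or destroys crossings. So for each vertex $(v,w)$ the resolution $(F\#_R L)(v,w)$ is obtained from $F(v)\sqcup L(w)$ by joining the component $\alpha_v\subset F(v)$ carrying the base strand with the circle $\beta_w\subset L(w)$ carrying $\beta$. Because $L$ is a pro-link, $\beta_w$ is always a closed circle. The joined component is therefore an arc if $\alpha_v$ is an arc, and a circle if $\alpha_v$ is a circle.

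Second, I will establish the vertexwise isomorphism, using the TQFT $\mathcal{Z}$ of the previous section. Write
\[
\mathcal{Z}(F(v))=Z_{\alpha}\otimes X_v,\qquad \mathcal{Z}(L(w))=V_\beta\otimes Y_w,
\]
where $Z_\alpha\in\{V,W\}$ is the factor singled out by the base strand. The $V$-actions are the ones defined just before the preceding proposition, acting on the designated factors. Since $Z_\alpha\otimes_V V\cong Z_\alpha$ via $z\otimes a\mapsto z\cdot a$, this gives
\[
\mathcal{Z}(F(v))\otimes_V\mathcal{Z}(L(w))\cong Z_\alpha\otimes X_v\otimes Y_w\cong\mathcal{Z}\big((F\#_R L)(v,w)\big).
\]
This matches the object-level assignment: one joined component together with the remaining components. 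The homological degrees add, because coweight on $\cube^{n+m}$ is the sum of the coweights. Hence the graded pieces of $\mathcal{C}(F)\otimes_V\mathcal{C}(L)$ with its usual tensor differential $d_F\otimes1\pm1\otimes d_L$ agree with those of $\mathcal{C}(F\#_R L)$. Here I am using that both complexes are dg $V$-modules by the preceding proposition, so the tensor product over $V$ is a chain complex. I will fix the Koszul sign $(-1)^{|v|^*}$ on $d_L$ and absorb the difference from the standard $(-1)^{\xi_i}$ rule on $\cube^{n+m}$ with a re-signing automorphism. This is the same argument as in Theorem~\ref{theorem:iterated_decomposition}, where placing the $F$-coordinates first makes the signs coincide.

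Third, I will check that each edge map is compatible with the vertexwise identification. For an edge in the $F$-direction whose saddle avoids $\alpha_v$, both sides act only on $X_v$, so compatibility is immediate. For an edge whose saddle touches $\alpha_v$, the edge map is $m$, $\Delta$, $\mu$ or $\Delta_W$ on the factor $Z_\alpha$ tensored with another factor. Compatibility with $z\otimes a\mapsto za$ reduces to $V$-linearity of these maps. That linearity is exactly the Frobenius compatibility of Lemma~\ref{lemma:compatible}(ii), together with the commutative and cocommutative Frobenius identities for $V$. The $L$-direction edges are handled symmetrically; there the joined factor plays the role of $V_\beta$ and is merged into $Z_\alpha$ via the action.

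I expect the main obstacle to be this $V$-linearity of the split maps when the saddle touches the joined component. For example, $\Delta_W(z\cdot a)$ must equal $(1\otimes\mu_a)\Delta_W(z)$ with the action placed on the correct output factor, namely the one still containing $\alpha$. I will first try to verify these cases directly on the bases $\{1,x\}$ and $\{x\}$, using Lemma~\ref{lemma:compatible}. Naturality in $(F,L)$ then follows because every identification above is given by the canonical action map.
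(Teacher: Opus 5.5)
Your proposal is correct and follows essentially the same route as the paper: a vertexwise identification $\mathcal{Z}(F(v))\otimes_V\mathcal{Z}(L(w))\cong\mathcal{Z}((F\#_R L)(v,w))$ that collapses the two junction factors through the action map, followed by an edge-by-edge check that reduces to $V$-linearity of the Frobenius structure maps. Your version fills in details the paper leaves implicit. The sign discrepancy between the product-cube rule and the Koszul rule is a global factor $(-1)^n$ on the $L$-edges, and your re-signing automorphism absorbs it. Your ``symmetric'' treatment of $L$-edges when the joined component is an arc also goes through, because $\mu$ and $\Delta_W$ are exactly the restrictions of $m$ and $\Delta$ to the ideal $W=xV$ (for example, $(x\otimes 1)\Delta(1)=x\otimes x=\Delta_W(x)$).
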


\begin{proof}
The proof establishes a degree-by-degree identification of the state spaces and verifies the strict commutativity of the differentials.

First, consider the underlying graded vector spaces. For any multi-index $(v, w) \in \cube^n \times \cube^m$, the state space of the disjoint union $F \sqcup L$ is the $\mathbb{F}$-tensor product $\mathcal{Z}(F(v)) \otimes  \mathcal{Z}(L(w))$. In the Khovanov TQFT, the connected sum at $R$ corresponds to the application of the multiplication map $m: V \otimes V \to V$ (if $R$ joins two circles) or the module action $\mu: W \otimes V \to W$ (if $R$ joins a strand and a circle). At each vertex $(v, w)$, the surgery at $R$ collapses the two independent $V$-factors into a single shared factor. This precisely mimics the defining relation of the tensor product over $V$. Thus, we obtain a canonical isomorphism of graded vector spaces
\begin{equation*}
    \kappa_{v,w}: \mathcal{Z}((F \#_R L)(v, w)) \xrightarrow{\cong} \mathcal{Z}(F(v)) \otimes_V \mathcal{Z}(L(w)).
\end{equation*}
Summing over all vertices $(v, w) \in \cube^n \times \cube^m$, this induces a canonical grading-preserving isomorphism of chain groups
\begin{equation*}
    \kappa = \bigoplus_{(v,w)} \kappa_{v,w}: \mathcal{C}(F \#_R L) \xrightarrow{\cong} \mathcal{C}(F) \otimes_V \mathcal{C}(L).
\end{equation*}

Next, we demonstrate that $\kappa$ is a chain map with respect to the total differentials. Let $\mathcal{C}(F) \otimes_V \mathcal{C}(L)$ be equipped with the standard differential graded (DG) tensor product differential $d_{\otimes}$, which respects the Koszul sign rule
\begin{equation*}
    d_{\otimes}(x \otimes_{V} y) = d_F(x) \otimes_{V} y + (-1)^{|x|} x \otimes_{V} d_L(y)
\end{equation*}
for any homogeneous element $x \in \mathcal{C}(F)$ with homological degree $|x|$. 

The total differential $d_{F \# L}$ on the connected sum is composed of per-edge cobordisms. For an edge corresponding to an internal differential of $F$ or $L$ not involving the joint $R$, the strict commutativity with $\kappa$ follows from the functoriality of the tensor product, where the sign $(-1)^{|x|}$ precisely tracks the cubical ordering of the product hypercube $\cube^n \times \cube^m$. For an elementary edge that acts directly via a saddle cobordism at the joint $R$, the associativity and coassociativity of the commutative Frobenius algebra $V$ guarantee that the algebraic actions match. Consequently, we obtain the exact relation
\begin{equation*}
    d_{F \# L} \circ \kappa = \kappa \circ d_{\otimes}.
\end{equation*}
Since $\kappa$ is a bijection that commutes with the total differentials, it is an isomorphism of differential graded $V$-modules.
\end{proof}

\begin{remark}
The isomorphism in Theorem \ref{theorem:connected_sum_isomorphism} implies that the homological complexity of connected sums is entirely governed by the local $V$-module structures at the joint. In addition, this modularity is essential for the convergence of the spectral sequences in Theorems \ref{theorem:hopf_sum} and \ref{theorem:hopf_twist}.
\end{remark}

\subsubsection{K\"{u}nneth spectral sequence of connected sums}

Let $F \in \fun^{(n)}_B$ be a pro-tangle and $L \in \fun^{(m)}_{\emptyset}$ be a pro-link. Suppose $F \#_R L$ is the connected sum along a pair $R = (\alpha, \beta)$ of strands. The isomorphism $\mathcal{C}(F \#_R L) \cong \mathcal{C}(F) \otimes_V \mathcal{C}(L)$ leads to a spectral sequence.

\begin{theorem}\label{theorem:kunneth}
Let $F \in \fun^{(n)}_B$ be a pro-tangle and $L \in \fun^{(m)}_{\emptyset}$ be a pro-link. There exists a spectral sequence converging to the Khovanov homology of the connected sum
\begin{equation*}
    E_2^{p,q} = \bigoplus_{i+j=q} \mathrm{Tor}_p^{V} \left( \kh^i(F), \kh^j(L) \right) \implies \kh^{p+q}(F \#_R L).
\end{equation*}
\end{theorem}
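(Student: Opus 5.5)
The plan is to start from Theorem~\ref{theorem:connected_sum_isomorphism}, which identifies $\mathcal{C}(F\#_R L)$ with $\mathcal{C}(F)\otimes_V\mathcal{C}(L)$ as differential graded $V$-modules. The problem then becomes purely algebraic: compute the homology of a tensor product over $V$ of two bounded complexes of finite-dimensional $V$-modules. I would use the standard algebraic Künneth spectral sequence (Cartan--Eilenberg), built from a double complex that resolves one factor.

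First, I would take a Cartan--Eilenberg projective resolution $P_{\bullet,\bullet}\to\mathcal{C}(L)$ in the category of complexes of $V$-modules. Each $P_{p,\bullet}$ is a complex of free $V$-modules. Its boundaries, cycles and homology form projective resolutions of $B(\mathcal{C}(L))$, $Z(\mathcal{C}(L))$ and $\kh(L)$ respectively.

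Next, I would form the double complex $\mathcal{C}(F)\otimes_V P_{\bullet,\bullet}$. Filtering by the resolution degree $p$ gives one spectral sequence. The other filtration shows that the total complex is quasi-isomorphic to $\mathcal{C}(F)\otimes_V\mathcal{C}(L)$. This uses that each $P_{\bullet,j}\to\mathcal{C}(L)_j$ is a resolution, and it needs a flatness or boundedness argument: everything is bounded and finite-dimensional, so the column-wise comparison converges.

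Then I would compute $E_1$ and $E_2$. Since $\mathcal{C}(F)$ is not a complex of projectives over $V$, I cannot directly move homology inside the tensor product. I would therefore replace $\mathcal{C}(F)$ by a bounded-above free resolution $Q\to\mathcal{C}(F)$ (a semi-free DG $V$-module). Resolving both sides and using the Cartan--Eilenberg property, the vertical homology is $H(Q)\otimes_V H(P_{p,\bullet})$ up to splitting. Taking horizontal homology then yields $\bigoplus_{i+j=q}\mathrm{Tor}^V_p(\kh^i(F),\kh^j(L))$ on $E_2$. The Koszul signs from the tensor product differential must be checked against the cubical signs, but Theorem~\ref{theorem:connected_sum_isomorphism} already absorbs them.

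The main obstacle is convergence. $V=\mathbb{F}[x]/(x^2)$ has infinite global dimension: $\mathrm{Tor}^V_p(\mathbb{F},\mathbb{F})\ne0$ for all $p$. So the spectral sequence is not confined to a finite strip and lives in a half-plane. I would handle this by using that $\mathcal{C}(F)$ and $\mathcal{C}(L)$ are bounded and degreewise finite. The total complex in each fixed total degree then involves only finitely many $p$ once gradings are arranged correctly, with homological degree $p$ paired against the cohomological direction. I expect to need a conditional-convergence argument in the sense of Boardman, or a quantum-grading truncation, since each quantum degree is finite-dimensional and bounded. Convergence to $\kh^{p+q}(F\#_R L)$ would then follow from the isomorphism of Theorem~\ref{theorem:connected_sum_isomorphism}.
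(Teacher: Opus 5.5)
Your outline follows the paper's route: resolve $\mathcal{C}(L)$, filter by resolution degree, and read $\mathrm{Tor}^V$ off $E_2$. Using a Cartan--Eilenberg resolution is in fact what justifies the paper's $E_1$ identification. However, the step identifying the abutment with $H(\mathcal{C}(F)\otimes_V\mathcal{C}(L))$ is not justified by the reason you give. Boundedness and degreewise finiteness only make the comparison spectral sequence converge. Its $E^1$ term is $\bigoplus_{i,j}\mathrm{Tor}^V_\bullet(\mathcal{C}(F)_i,\mathcal{C}(L)_j)$, so without a flatness input the double complex computes the derived tensor product $\mathcal{C}(F)\otimes^{\mathbf{L}}_V\mathcal{C}(L)$, not the ordinary one. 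This genuinely matters here. $V$ has infinite global dimension, and $\mathcal{C}(F)_i$ is built from the non-flat module $W\cong V/xV$ whenever the base strand $\alpha$ is an arc. For example, take $C=D=W$ in degree $0$: both are bounded and finite-dimensional, and $H(C\otimes_V D)=\mathbb{F}$. Yet $\mathrm{Tor}^V_p(W,W)\cong\mathbb{F}$ for every $p$, all in the row $q=0$, where no $d_r$ with $r\ge 2$ can act. Replacing $\mathcal{C}(F)$ by a free resolution $Q$ does not help, because you would then need $Q\otimes_V\mathcal{C}(L)\simeq\mathcal{C}(F)\otimes_V\mathcal{C}(L)$, which is the same issue.

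The missing idea is that every $\mathcal{C}(L)_j$ is a free $V$-module. Since $L$ is a pro-link, each resolution $L(w)$ consists only of circles, and $\beta$ lies on one of them. So $\mathcal{Z}(L(w))\cong V\otimes V^{\otimes(c(w)-1)}$, with $V$ acting on the first factor. Hence $\mathrm{Tor}^V_{>0}(\mathcal{C}(F)_i,\mathcal{C}(L)_j)=0$, and the other filtration collapses onto $\mathcal{C}(F)\otimes_V\mathcal{C}(L)$. This is exactly where the pro-link hypothesis enters, and it is also what silently justifies the paper's replacement $\mathcal{P}_\bullet\xrightarrow{\sim}\mathcal{C}(L)$.

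Once this is in place, the rest simplifies. You need not resolve $\mathcal{C}(F)$ at all: the Cartan--Eilenberg columns $P_{p,\bullet}$ are split with projective homology, so $H(\mathcal{C}(F)\otimes_V P_{p,\bullet})\cong\kh(F)\otimes_V H(P_{p,\bullet})$ directly. Convergence is also not an obstacle. Since $P_{p,j}=0$ whenever $\mathcal{C}(L)_j=0$, each total degree meets only finitely many bidegrees, so the filtration is finite in each degree and convergence is strong without any conditional-convergence argument.

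Finally, as you noticed, $p$ runs against the cohomological height grading. With the grading used for $\kh^{*}$ in the later sections, $\mathrm{Tor}_p$ contributes to total degree $q-p$. The stated abutment $\kh^{p+q}$ is correct only in the homological coweight grading of Section~\ref{section:protangles_invariants}.
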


\begin{proof}
By Theorem~\ref{theorem:connected_sum_isomorphism}, the total chain complex $\mathcal{C}(F \#_R L)$ is isomorphic to the tensor product of DG $V$-modules $\mathcal{C}(F) \otimes_V \mathcal{C}(L)$. We construct the spectral sequence by replacing $\mathcal{C}(L)$ with a semi-projective resolution $\mathcal{P}_\bullet \xrightarrow{\sim} \mathcal{C}(L)$ in the category of DG $V$-modules.

Note that since $F$ and $L$ are objects in the functor categories over the finite Boolean cubes $\cube^n$ and $\cube^m$ respectively, their associated Khovanov complexes $\mathcal{C}(F)$ and $\mathcal{C}(L)$ are intrinsically bounded below and finitely generated as $V$-modules at each homological degree. This structural finiteness ensures that the standard filtration on the total complex, induced by the resolution degree $p$ of $\mathcal{P}_\bullet$, is bounded below and exhaustive.

Taking cohomology with respect to the internal differentials $d_F$ and $d_L$ identifies the $E_1$ page as $E_1^{p,q} = \bigoplus_{i+j=q} (\kh^i(F) \otimes_V \mathcal{P}_p)_j$. Subsequently, taking homology with respect to the induced differential from the resolution yields the $E_2$ page as the algebraic $\mathrm{Tor}$ groups over $V$:
\begin{equation*}
    E_2^{p,q} = \bigoplus_{i+j=q} \mathrm{Tor}_p^{V} \big( \kh^i(F), \kh^j(L) \big).
\end{equation*}
The desired result follows.
\end{proof}

If $\kh^*(L)$ is a projective (or free) $V$-module, the spectral sequence collapses at the $E_2$ page. In this case, the homology admits a direct decomposition
\begin{equation*}
    \kh^*(F \#_R L) \cong \kh^*(F) \otimes_{V} \kh^*(L).
\end{equation*}
When $L = S^1$, the connected sum $F \#_R L$ is isomorphic to $F$, and the above decomposition reduces to the identity isomorphism.

\begin{example}
Consider the case where the pro-tangle $F$ is a single arc and the pro-link $L$ is a right-handed Hopf link. The connected sum $F \#_R L$ at a strand $R$ is illustrated as in Figure \ref{figure:hopf_arc_sum}. 
\begin{figure}[h]
  \centering
  \includegraphics[width=0.5\textwidth]{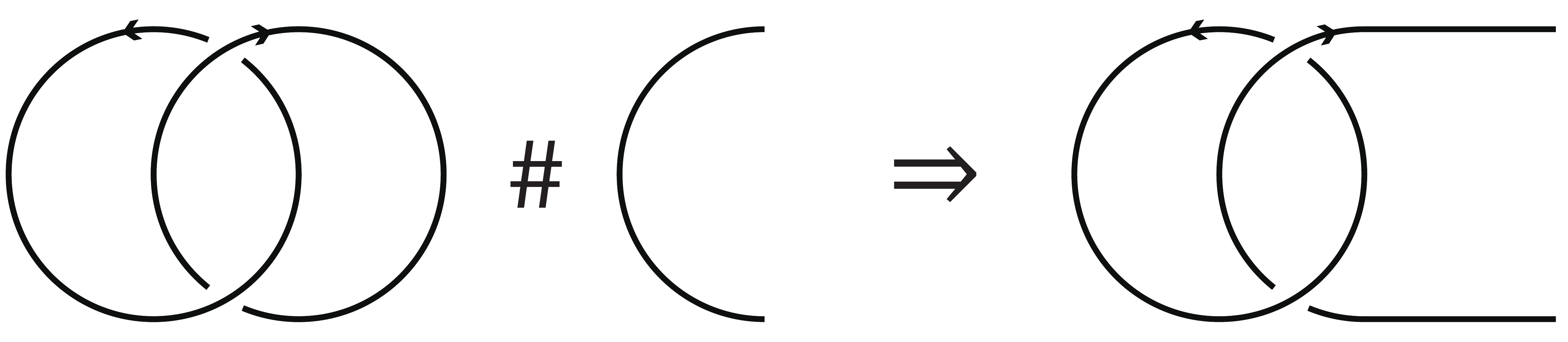} 
  \caption{Illustration of the connected sum of an arc and a right-handed Hopf link.}\label{figure:hopf_arc_sum}
\end{figure}
The Khovanov homology of the arc $F$ is
\begin{equation*}
  \kh^p(F) \cong 
  \begin{cases*}
    \spann\{ [x] \}, & if $p=0$, \\
    0, & otherwise.
  \end{cases*}
\end{equation*}
Note that $\kh^0(F)$ is isomorphic to the ideal $W = xV \subset V$, which is not a projective $V$-module. For the right-handed Hopf link $L$, the Khovanov homology is given by
\begin{equation*}
  \kh^p(L) \cong 
  \begin{cases*}
    \spann\{ [x \otimes x], [x \otimes 1 - 1 \otimes x] \}, & if $p=0$, \\
    \spann\{ [1 \otimes x], [1 \otimes 1] \}, & if $p=2$, \\
    0 & otherwise.
  \end{cases*}
\end{equation*}

We now compute the tensor product $\kh^*(F) \otimes_V \kh^*(L)$. Specifically, we have
\begin{align*}
  [x] \otimes_V [x \otimes x]  &= 0, \\
  [x] \otimes_V [x \otimes 1 - 1 \otimes x] &= - [x \otimes_V 1 \otimes x].
\end{align*}
Similar calculations for $p=2$ show that only $[x \otimes_V 1 \otimes 1]$ survives the contraction. We obtain
\begin{equation*}
    \kh^*(F) \otimes_{V} \kh^*(L) \cong 
    \begin{cases*}
      \spann\{ [x \otimes_V 1 \otimes x] \}, & if $p=0$, \\
      \spann\{ [x \otimes_V 1 \otimes 1] \}, & if $p=2$, \\
      0 & otherwise.
    \end{cases*}
\end{equation*}
Direct computation of the connected sum $F \#_R L$  confirms that its Khovanov homology is indeed generated by $[x \otimes x]$ in degree $0$ and $[1 \otimes x]$ in degree $2$. Remarkably, we have the isomorphism
\begin{equation*}
    \kh^*(F \#_R L) \cong \kh^*(F) \otimes_{V} \kh^*(L).
\end{equation*}
Taking the quantum grading $Q$ into account, we have
\begin{align*}
    Q([x \otimes x]) &= 0 + 2 - 0 - 2 = 0, \\
    Q([1 \otimes x]) &= 2 + 2 - 0 + 0 = 4.
\end{align*}
Consequently, the Poincaré polynomial of the connected sum is
\begin{equation*}
    P(F \#_R L; t, q) = 1 + t^2 q^4.
\end{equation*}
This result is consistent with the computation
\begin{equation*}
    P(F \#_R L; t, q) = q^{-1}(q + t^2 q^5) = 1 + t^2 q^4
\end{equation*}
derived in Section \ref{section:applications}.
\end{example}

Note that a cut tangle of a link $L$ along an arc can be identified with the connected sum $L\# A$ of $L$ and an arc $A$. By Theorem \ref{theorem:kunneth}, we have a spectral sequence convergent to the Khovanov homology of the cut tangle.

\begin{theorem}
Let $L$ be any link. Then there is a spectral sequence convergent to the Khovanov homology of the cut tangle $\kh^*(\widehat{L})$ with the $E_2$ page given by
\begin{equation*}
    E_2^{p,q} \cong 
    \begin{cases} 
       \kh^q(L) \big/ x\kh^q(L), & p = 0, \\
      \ker\big(x: \kh^q(L) \to \kh^q(L)\big) \big/ x\kh^q(L), & p > 0.
    \end{cases}
\end{equation*}
In particular, if $\kh^*(L)$ is a free $V$-module, the spectral sequence collapses at $E_2$ to the $p=0$ column.
\end{theorem}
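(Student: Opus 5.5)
The plan is to specialize Theorem~\ref{theorem:kunneth} to $F = A$, a single arc, and compute the resulting $\mathrm{Tor}$ groups explicitly. By the remark preceding the statement, the cut tangle is identified with $\widehat{L} \cong A \#_R L$, so Theorem~\ref{theorem:connected_sum_isomorphism} gives
\[
\mathcal{C}(\widehat{L}) \cong \mathcal{C}(A) \otimes_V \mathcal{C}(L).
\]
Here $\mathcal{C}(A)$ is the complex $W = xV$ concentrated in degree $0$ with zero differential, so $\kh^*(A) = W$ in degree $0$. The Künneth spectral sequence of Theorem~\ref{theorem:kunneth} then has
\[
E_2^{p,q} = \mathrm{Tor}^V_p\big(W, \kh^q(L)\big),
\]
and the problem reduces to computing $\mathrm{Tor}^V_\ast(W, M)$ for a $V$-module $M = \kh^q(L)$. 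The $V$-action on $M$ is the one induced on homology by the base-strand action from the $V$-module proposition.

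To compute $\mathrm{Tor}$, resolve $W$ rather than $M$; this is legitimate since $\mathrm{Tor}$ is balanced. Since $W = xV \cong V/(x)$ as a $V$-module, there is the standard $2$-periodic free resolution
\[
\cdots \xrightarrow{x} V \xrightarrow{x} V \xrightarrow{x} V \to W \to 0,
\]
whose augmentation is $1 \mapsto x$. Tensoring with $M$ gives the complex $\cdots \xrightarrow{x} M \xrightarrow{x} M \to 0$. Hence
\[
\mathrm{Tor}_0 = M/xM, \qquad \mathrm{Tor}_p = \ker(x)/xM \quad \text{for } p > 0,
\]
which is exactly the stated $E_2$ page. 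Convergence to $\kh^*(\widehat{L})$ is inherited from Theorem~\ref{theorem:kunneth}. The bounded-below and finiteness conditions hold because $\mathcal{C}(L)$ is finite dimensional in each degree.

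For the final clause, suppose $\kh^*(L)$ is free, so $M \cong V^r$. Then $\ker(x) = xV^r = xM$, and all columns with $p > 0$ vanish. The $E_2$ page is therefore concentrated in the column $p = 0$, and all differentials $d_r$ for $r \ge 2$ either leave or enter a zero column. Consequently $E_2 = E_\infty$, and $\kh^*(\widehat{L}) \cong \kh^*(L)/x\kh^*(L)$.

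The main obstacle is the bookkeeping in the $E_2$ identification of Theorem~\ref{theorem:kunneth}. There $\mathrm{Tor}$ is taken between homologies, which presumes that the $V$-module structure on $\kh^*(L)$ relevant to the spectral sequence is the homology-level action coming from the chosen component $\beta$. Because $\mathcal{C}(A)$ is already a module with zero differential, I would set up the filtration directly on $P_\bullet \otimes_V \mathcal{C}(L)$, where $P_\bullet$ is the periodic resolution above. This double complex has columns $\mathcal{C}(L)$ and horizontal maps $x$. Its $E_1$ page is $\kh^*(L)$ in each column with $d_1 = x$, which yields $E_2$ directly and sidesteps any semi-projective resolution of $\mathcal{C}(L)$.

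One caveat remains: this double complex is unbounded in $p$. Convergence therefore needs justification, either via the degreewise finiteness of $\mathcal{C}(L)$ or by noting that $\mathcal{C}(A) \otimes_V^{\mathbb{L}} \mathcal{C}(L)$ computes the same homology as $\mathcal{C}(A) \otimes_V \mathcal{C}(L)$. I expect this identification of the derived and underived tensor products to be the delicate point, and I would use Theorem~\ref{theorem:connected_sum_isomorphism} together with the freeness of the circle factor to argue it.
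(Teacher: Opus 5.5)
Your proposal is correct and follows the paper's argument. You identify $\widehat{L}$ with $A\#_R L$, specialize the K\"unneth spectral sequence of Theorem~\ref{theorem:kunneth} with $\kh^0(A)\cong W$, and compute $\mathrm{Tor}^V_p(W,\kh^q(L))$ from the periodic resolution $\cdots\xrightarrow{x}V\xrightarrow{x}V\to W\to 0$. Your additions (the free case, and building the spectral sequence directly from $P_\bullet\otimes_V\mathcal{C}(L)$) go beyond the paper's proof and are sound, with one clarification: convergence comes from $\mathcal{C}(L)$ being bounded, so each total degree has finitely many terms, while degreewise freeness of $\mathcal{C}(L)$ over $V$ is what identifies the abutment with $H(W\otimes_V\mathcal{C}(L))=\kh^*(\widehat{L})$; these are two separate ingredients, not alternatives.
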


\begin{proof}
Note that a cut tangle of a link $L$ along an arc can be identified with the connected sum $L\# A$ of $L$ and an arc $A$. By Theorem~\ref{theorem:kunneth}, we have a spectral sequence convergent to the Khovanov homology of the cut tangle.

Since $A$ is a single arc, its Khovanov homology is concentrated in degree $0$, with $\kh^0(A) \cong W$. To compute the $E_2$ page, we use the periodic free resolution of $W$ over $V$ as follows
\begin{equation*}
    \mathcal{P}_\bullet: \cdots \xrightarrow{\cdot x} V \xrightarrow{\cdot x} V \xrightarrow{\cdot x} V \xrightarrow{\iota} W \to 0,
\end{equation*}
where $\iota(1) = x$. This sequence is exact as $\ker(\cdot x) = \operatorname{im}(\cdot x) = xV$.

Applying the tensor product $(-) \otimes_V \kh^*(L)$ to the resolution $\mathcal{P}$, we obtain the following complex of graded vector spaces
\begin{equation*}
    \cdots \xrightarrow{\cdot x} \kh^q(L) \xrightarrow{\cdot x} \kh^q(L) \xrightarrow{\cdot x} \kh^q(L).
\end{equation*}
By the definition of the torsion product $\mathrm{Tor}_p^V(W, \kh^q(L))$, the cohomology of this complex at any stage $p > 0$ is the quotient $\ker(x)/\operatorname{im}(x)$. Thus, we have
\begin{equation*}
  E_2^{p,q} = \ker\big(x: \kh^q(L) \to \kh^q(L)\big) \big/ x\kh^q(L).
\end{equation*}
For $p=0$, we have
\begin{equation*}
  E_2^{0,q} = W \otimes_V \kh^{q}(L) \cong \kh^{q}(L) \big/ x\kh^{q}(L).
\end{equation*}
This completes the proof.
\end{proof}

This result establishes a fundamental bridge between the local data of a tangle, obtained by cutting a link, and the global Khovanov homology of the original link. This provides a structural view for understanding how the global topological information of a link is distributed across its local constituents and how these components are integrated via the derived tensor product.

\subsubsection{Algebraic fibration of connected sums}

Let $L$ be a pro-link, and let $\alpha$ be a base strand. The augmentation map $\epsilon_L: \mathcal{C}(L) \to V$ is constructed by applying the Frobenius counit $\iota: V \to \mathbb{F}$ to all circle components of the fiber resolution $L(w)$ except for the one involved in the surgery at $\alpha$. Formally, for a state $\xi = z_\alpha \otimes (\bigotimes_{i} z_i) \in \mathcal{C}(L)$, where $z_\alpha$ resides on the joint component, we have
\begin{equation*}
    \epsilon_L(\xi) = \left( \prod_{i} \iota(z_i) \right) z_\alpha \in V.
\end{equation*}

Let $F \in \fun^{(n)}_B$ be a pro-tangle, and let $L \in \fun^{(m)}_{\emptyset}$ be a pro-link. Consider the connected sum $F \#_R L$ along a pair of strands $R = (\alpha, \beta)$.
The Khovanov complex of connected sum $\mathcal{C}(F \#_R L)$ gives an algebraic fibration. More precisely, we consider the fibration of pro-tangles as a bundle $\mathfrak{E} = (\mathcal{C}(F \#_R L), \pi, \mathcal{C}(F), \mathcal{C}(L))$, where
\begin{equation*}
  \mathcal{E}=\mathcal{C}(F \#_R L) \cong\mathcal{C}(F) \otimes_V \mathcal{C}(L)
\end{equation*}
is the total space, and $\mathcal{B} =\mathcal{C}(F)$ is the base space. 
The projection $\pi: \mathcal{E} \to \mathcal{B}$ is defined as the chain map induced by the contraction of the fiber factor over the Frobenius algebra $V$. Specifically, for any tensor element $c_F \otimes_V c_L \in \mathcal{C}(F) \otimes_V \mathcal{C}(L)$, we define
\begin{equation*}
    \pi(c_F \otimes_V c_L) = \mu_F(c_F \otimes \epsilon_L(c_L)),
\end{equation*}
where $\mu_F: \mathcal{C}(F) \otimes V \to \mathcal{C}(F)$ is the canonical $V$-module action at the joint $R$, and $\epsilon_L: \mathcal{C}(L) \to V$ is the augmentation map at $\alpha$.

The algebraic fiber $K$ of the fibration $\mathfrak{E}$ is the sub-complex of $\mathcal{E}$ defined by the kernel of the projection map
\begin{equation*}
    K =\ker \left( \pi: \mathcal{C}(F) \otimes_V \mathcal{C}(L) \to \mathcal{C}(F) \right).
\end{equation*}

Let $\widetilde{\mathcal{C}}(L, \alpha) = \ker(\epsilon_L: \mathcal{C}(L) \to V)$ be the reduced Khovanov complex of $L$ relative to the surgery strand $\alpha$.
The following proposition characterizes $K$ in terms of the reduced Khovanov complex of the pro-link $L$.

\begin{proposition}
The algebraic fiber $K$ is isomorphic to the tensor product of the base tangle complex and the reduced graded module of the link over $V$. Specifically, there is a canonical isomorphism of graded $V$-modules
\[
    K \cong \mathcal{C}(F) \otimes_V \widetilde{\mathcal{C}}(L, \alpha).
\]
\end{proposition}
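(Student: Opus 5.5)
The plan is to prove the isomorphism degreewise and statewise, using the short exact sequence attached to the augmentation $\epsilon_L$, and then to argue that tensoring with $\mathcal{C}(F)$ over $V$ preserves exactness in the relevant sense.

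\textbf{Step 1: split $\mathcal{C}(L)$ at each state.} For each state $w$ of $L$, the map $\epsilon_L$ is $V$-linear, since it only contracts the non-joint circle factors by the counit. It is also surjective onto $V$: on the joint factor it is the identity, and one can take all other factors to be $x$, where $\iota(x)=1$. Hence there is a short exact sequence of graded $V$-modules
\[
0 \to \widetilde{\mathcal{C}}(L,\alpha) \to \mathcal{C}(L) \xrightarrow{\epsilon_L} V \to 0.
\]
Because $V$ is free, this sequence splits as $V$-modules. An explicit section is $s(a)=a\otimes x\otimes\cdots\otimes x$, placed on the joint factor.

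\textbf{Step 2: tensor with $\mathcal{C}(F)$.} Apply $\mathcal{C}(F)\otimes_V(-)$. A split exact sequence stays split exact, which gives
\[
0\to \mathcal{C}(F)\otimes_V\widetilde{\mathcal{C}}(L,\alpha)\to \mathcal{C}(F)\otimes_V\mathcal{C}(L)\xrightarrow{\mathrm{id}\otimes\epsilon_L}\mathcal{C}(F)\otimes_V V\to 0.
\]
Under the canonical identification $\mathcal{C}(F)\otimes_V V\cong\mathcal{C}(F)$, given by $c\otimes a\mapsto \mu_F(c\otimes a)$, the third map is exactly $\pi$. Therefore $K=\ker\pi$ is the image of the first map, which is injective. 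This yields the claimed isomorphism of graded $V$-modules $K\cong \mathcal{C}(F)\otimes_V\widetilde{\mathcal{C}}(L,\alpha)$. I will present this statewise over $(v,w)\in\cube^n\times\cube^m$, using the identification $\kappa_{v,w}$ from Theorem~\ref{theorem:connected_sum_isomorphism}, so that the isomorphism is visibly compatible with the cube grading.

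\textbf{Main obstacle.} The difficulty is pinning down the joint factor, where two cases arise.
\begin{itemize}
\item If the joint factor of $\mathcal{C}(L)$ at a state is $V$, Step 1 applies verbatim.
\item If the joint strand of $F$ carries $W=xV$, then $\mathcal{C}(F)$ is not flat over $V$. However, exactness is only needed for a split sequence, so non-flatness does no harm.
\end{itemize}
I also need to check that the identification $\mathcal{C}(F)\otimes_V V\cong\mathcal{C}(F)$ matches the definition of $\pi$ through $\mu_F$. This is the unitality part of Lemma~\ref{lemma:compatible}. Finally, the statement only asks for graded $V$-modules, not chain complexes, so I will not need to verify that $\epsilon_L$ commutes with differentials. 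That compatibility would fail in general, since $\iota$ does not commute with merges involving non-joint circles.
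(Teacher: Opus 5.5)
Your proposal is correct and is essentially the paper's proof. Both form the short exact sequence $0\to\widetilde{\mathcal{C}}(L,\alpha)\to\mathcal{C}(L)\xrightarrow{\epsilon_L}V\to 0$, apply $\mathcal{C}(F)\otimes_V(-)$, and identify $\mathrm{id}\otimes\epsilon_L$ with $\pi$ through $\mathcal{C}(F)\otimes_V V\cong\mathcal{C}(F)$. The only difference is cosmetic: you get exactness from the splitting where the paper cites $\mathrm{Tor}_1^V(\mathcal{C}(F),V)=0$, and you also check the $V$-linearity and surjectivity of $\epsilon_L$, which the paper leaves implicit.

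One caution for the write-up. $\epsilon_L$, and hence $\pi$, sums the contributions of all states $w$ of $L$ (across homological degrees) into a single copy of $V$. So your section must sit at one chosen state $w_0$, and in general neither $K$ nor $\widetilde{\mathcal{C}}(L,\alpha)$ is a direct sum of statewise or degreewise kernels. Keep Step~2 in its global form rather than decomposing it over the pairs $(v,w)$.
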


\begin{proof}
Define the reduced graded module $\widetilde{\mathcal{C}}(L, \alpha) = \ker(\epsilon_L: \mathcal{C}(L) \to V)$. By construction, there is a short exact sequence of graded $V$-modules
\[
    0 \longrightarrow \widetilde{\mathcal{C}}(L, \alpha) \xrightarrow{\iota} \mathcal{C}(L) \xrightarrow{\epsilon_L} V \longrightarrow 0.
\]
We note that $\epsilon_L$ is not in general a chain map, so $\widetilde{\mathcal{C}}(L, \alpha)$ does not inherit a natural differential graded structure.

Applying the right exact functor $\mathcal{C}(F) \otimes_V (-)$ to the above short exact sequence, and noting that $\mathrm{Tor}_1^V(\mathcal{C}(F), V) = 0$ since $V$ is a free $V$-module, we obtain a short exact sequence of graded $V$-modules
\[
    0 \longrightarrow \mathcal{C}(F) \otimes_V \widetilde{\mathcal{C}}(L, \alpha) \longrightarrow \mathcal{C}(F) \otimes_V \mathcal{C}(L) \xrightarrow{\mathrm{id} \otimes \epsilon_L} \mathcal{C}(F) \otimes_V V \longrightarrow 0.
\]
Under the canonical identification $\mathcal{C}(F) \otimes_V V \cong \mathcal{C}(F)$ given by the module action, the map $\mathrm{id} \otimes \epsilon_L$ corresponds exactly to the projection $\pi$. Therefore, $\ker \pi \cong \mathcal{C}(F) \otimes_V \widetilde{\mathcal{C}}(L, \alpha)$ as graded $V$-modules, as claimed.
\end{proof}

The choice of $\mathcal{C}(F)$ as the base space is motivated by the fact that the pro-link $L$, being a closed topological object, admits a canonical augmentation $\epsilon_L$ towards the Frobenius algebra $V$. This allows for a natural contraction of the fiber factor while preserving the skeletal boundary information of the pro-tangle $F$ in the base.

\subsection{Multiple connected sums}

The structural realization of connected sums can be extended to cases where multiple pairs of strands are joined simultaneously. To establish the algebraic approach for such simultaneous operations, we formalize how the associated chain complexes inherit a higher-rank module structure from multiple boundary strands.

\subsubsection{The algebraic isomorphism over $V^{\otimes k}$-module}
Let $F \in \fun^{(n)}_B$ be a pro-tangle, and let $\boldsymbol{\alpha} = \{\alpha_1, \dots, \alpha_k\}$ be an ordered collection of $k$ distinct designated base strands in $F$. For each vertex $v \in \cube^n$, the TQFT functor $\mathcal{Z}$ assigns a tensor product of vector spaces corresponding to arcs and circles. The choice of the multi-strand $\boldsymbol{\alpha}$ uniquely singles out $k$ distinct tensor factors $Z_1, \dots, Z_k$ in $\mathcal{Z}(F(v))$. 

The $V^{\otimes k}$-module action on the Khovanov complex $\mathcal{C}(F)$ is defined level-wise. For any pure tensor $a_1 \otimes \dots \otimes a_k \in V^{\otimes k}$, the endomorphism $\mu_{\mathbf{a}}: \mathcal{Z}(F(v)) \to \mathcal{Z}(F(v))$ acts simultaneously and independently on the designated factors:
\begin{equation*}
    \mu_{\mathbf{a}} (\xi_1 \otimes \dots \otimes z_1 \otimes \dots \otimes z_k \otimes \dots \otimes \xi_m) = \xi_1 \otimes \dots \otimes (z_1 \cdot a_1) \otimes \dots \otimes (z_k \cdot a_k) \otimes \dots \otimes \xi_m,
\end{equation*}
where $z_i \in Z_i$, and $z_i \cdot a_i$ is given by the natural $V$-module structure on each component. Geometrically, this algebraic action is induced by $k$ disjoint saddle cobordisms that merge $k$ auxiliary unknots into the respective base strands $\alpha_1, \dots, \alpha_k$. Since these geometric operations are localized and disjoint, their corresponding actions commute, equipping $\mathcal{C}(F)$ with the structure of a differential graded module over the tensor algebra $V^{\otimes k}$.

\begin{theorem}\label{theorem:connected_sum_isomorphism2}
Let $F \in \fun^{(n)}_B$ be a pro-tangle, and let $L = \bigsqcup_{i=1}^k L_i$ be a pro-link consisting of $k$ disjoint connected components $L_i \in \fun^{(m_i)}_\emptyset$ with $\sum m_i = m$. Let $\mathbf{R} = \{R_1, \dots, R_k\}$ be a collection of $k$ distinct junction pairs, where each $R_i = (\alpha_i, \beta_i)$ connects a distinct strand $\alpha_i \subset F$ with a strand $\beta_i \subset L_i$ belonging to the $i$-th component of $L$. Then there is a natural isomorphism of complexes
\begin{equation*}
    \mathcal{C}(F \#_{\mathbf{R}} L) \cong \mathcal{C}(F) \otimes_{V^{\otimes k}} \mathcal{C}(L),
\end{equation*}
where the $V^{\otimes k}$-action on $\mathcal{C}(L)$ is given by the component-wise action of each $V$-factor on the respective $\mathcal{C}(L_i)$ factor in $\mathcal{C}(L) \cong \bigotimes_{i=1}^k \mathcal{C}(L_i)$.
\end{theorem}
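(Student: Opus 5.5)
The natural approach is induction on $k$, reducing each step to the single-junction isomorphism of Theorem~\ref{theorem:connected_sum_isomorphism}. Because the components $L_i$ are pairwise disjoint and each $R_i$ joins $F$ to a different $L_i$, the multiple connected sum can be built one junction at a time:
\[
F \#_{\mathbf{R}} L \;\cong\; \big(\cdots\big((F \#_{R_1} L_1) \#_{R_2} L_2\big)\cdots\big) \#_{R_k} L_k .
\]
At stage $i$ the strand $\alpha_i$ still lives in the partial sum, since the earlier surgeries are supported away from it. The first step is to make this precise at the level of pro-tangles. Both sides are functors on $\cube^{n+m}$, and after the evident coordinate identification $\cube^{n+m}\cong \cube^n\times\prod_i \cube^{m_i}$ they agree vertexwise and edgewise. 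The reason is that the $k$ saddle transformations $\eta$ in the definition of connected sum have disjoint supports, so their order does not matter.

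For the inductive step I apply Theorem~\ref{theorem:connected_sum_isomorphism} to the pro-tangle $F_{i-1}:=F\#_{R_1}L_1\cdots\#_{R_{i-1}}L_{i-1}$, with base strand $\alpha_i$, and the pro-link $L_i$. This gives
\[
\mathcal{C}(F_{i})\cong \mathcal{C}(F_{i-1})\otimes_V \mathcal{C}(L_i).
\]
I also need to check that under the induction hypothesis the $V$-action on $\mathcal{C}(F_{i-1})$ at $\alpha_i$ is the action of the $i$-th tensor factor of $V^{\otimes k}$ on $\mathcal{C}(F)$. This holds because the action acts only on the factor $Z_i$ singled out by $\alpha_i$. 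The isomorphism $\kappa$ of Theorem~\ref{theorem:connected_sum_isomorphism} only merges the factors indexed by $\alpha_1,\dots,\alpha_{i-1}$, so it is $V$-linear for the $\alpha_i$-action.

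Unwinding the induction yields an iterated tensor product
\[
\big(\cdots(\mathcal{C}(F)\otimes_V\mathcal{C}(L_1))\otimes_V\cdots\big)\otimes_V \mathcal{C}(L_k),
\]
where the $j$-th tensor product uses the $j$-th $V$-action on $\mathcal{C}(F)$. The last step is the purely algebraic identification of this iterated product with $\mathcal{C}(F)\otimes_{V^{\otimes k}}\bigotimes_{i}\mathcal{C}(L_i)$. Here I use the Künneth-type isomorphism $\mathcal{C}(L)\cong\bigotimes_i\mathcal{C}(L_i)$ for disjoint unions, which holds since $\mathcal{Z}$ is monoidal and the cube of a disjoint union is the product cube. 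I then invoke the standard associativity for modules over a tensor product of commutative algebras,
\[
M\otimes_{V\otimes V'}(N\otimes N')\cong (M\otimes_V N)\otimes_{V'} N',
\]
which holds because the two actions on $M$ commute. The maps are defined on pure tensors and are compatible with the Koszul-signed differentials, since each is a composite of chain isomorphisms.

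The main obstacle is sign and ordering bookkeeping. The cubical sign convention on $\mathcal{C}(F\#_{\mathbf{R}}L)$ uses a single ordering of the $n+m$ crossings, while the iterated tensor product carries Koszul signs $(-1)^{|x|}$ at each stage. I would fix the crossing order as $F$ first and then $L_1,\dots,L_k$, and check that the accumulated Koszul signs coincide with $(-1)^{\xi_i(v)}$ for that order, using the argument of Theorem~\ref{theorem:iterated_decomposition}. Any residual discrepancy can be absorbed by a vertexwise sign automorphism $(-1)^{\sum_{a<b}|w_a||w_b|}$. Naturality then follows because every isomorphism in the chain is natural in the pro-tangle data.
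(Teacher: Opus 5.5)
Your proposal is correct and follows essentially the same route as the paper. Both argue by induction on $k$: split off one junction at a time via Theorem~\ref{theorem:connected_sum_isomorphism}, then re-associate the iterated $\otimes_V$ products into a single $\otimes_{V^{\otimes k}}$ using $\mathcal{C}(L)\cong\bigotimes_i\mathcal{C}(L_i)$. Your checks that the intermediate isomorphisms are linear for the remaining $\alpha_i$-actions, and that the signs match, make explicit steps the paper only asserts.

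One small correction on signs. With crossings ordered $F,L_1,\dots,L_k$, the cubical and Koszul signs differ at most by a constant sign $\epsilon_b$ on the edges of each block $L_b$. This sign is trivial if degree counts ones, and $\epsilon_b=(-1)^{n+\sum_{a<b}m_a}$ if degree counts zeros. So the correcting automorphism is $\prod_b\epsilon_b^{|w_b|}$. Your $(-1)^{\sum_{a<b}|w_a||w_b|}$ is not the right fix: it trades the ``ones before'' sign convention for ``ones after''.
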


\begin{proof}
The proof proceeds by induction on the number of junctions $k$, reducing the assertion to the single-junction case established in Theorem \ref{theorem:connected_sum_isomorphism}.

For the base case $k=1$, the statement holds by Theorem \ref{theorem:connected_sum_isomorphism}. Now, assume the natural isomorphism holds for any collection of $k-1$ junctions. Since $L$ consists of $k$ disjoint connected components, we can isolate the $k$-th component $L_k$ and decompose the simultaneous connected sum as a two-step iterative process
\begin{equation*}
    F \#_{\mathbf{R}} L \cong \left( F \#_{\mathbf{R} \setminus \{R_k\}} \bigsqcup_{i=1}^{k-1} L_i \right) \#_{R_k} L_k.
\end{equation*}
By applying the induction hypothesis to the first $k-1$ disjoint components, we obtain a natural isomorphism of chain complexes
\begin{equation*}
    \mathcal{C}\left( F \#_{\mathbf{R} \setminus \{R_k\}} \bigsqcup_{i=1}^{k-1} L_i \right) \cong \mathcal{C}(F) \otimes_{V^{\otimes (k-1)}} \left( \bigotimes_{i=1}^{k-1} \mathcal{C}(L_i) \right).
\end{equation*}
Next, treating the entire left-hand side as a single pro-tangle and applying Theorem \ref{theorem:connected_sum_isomorphism} for the remaining single junction $R_k$ between $F$ and $L_k$, the total chain complex decomposes further
\begin{align*}
    \mathcal{C}(F \#_{\mathbf{R}} L) &\cong \left[ \mathcal{C}(F) \otimes_{V^{\otimes (k-1)}} \left( \bigotimes_{i=1}^{k-1} \mathcal{C}(L_i) \right) \right] \otimes_V \mathcal{C}(L_k) \\
    &\cong \mathcal{C}(F) \otimes_{V^{\otimes k}} \left( \bigotimes_{i=1}^k \mathcal{C}(L_i) \right) \\
    &\cong \mathcal{C}(F) \otimes_{V^{\otimes k}} \mathcal{C}(L).
\end{align*}
Since each step preserves the total differential and respects the Koszul sign rule by the functoriality of the nested tensor products, the combined map $\kappa$ defines an isomorphism of chain complexes.
\end{proof}

Theorem \ref{theorem:connected_sum_isomorphism2} provides a systematic methodology to reduce the homological analysis of a complex tangle into simpler, more rigid constituents, where the global topological complexity is entirely captured by the tensor interaction over $V^{\otimes k}$.

\begin{theorem}
Let $F \in \fun^{(n)}_B$ be a pro-tangle, and let $L = \bigsqcup_{i=1}^k L_i$ be a pro-link consisting of $k$ disjoint connected components $L_i \in \fun^{(m_i)}_\emptyset$ with $\sum m_i = m$. Let $\mathbf{R} = \{R_1, \dots, R_k\}$ be a collection of $k$ distinct junction pairs, where each $R_i = (\alpha_i, \beta_i)$ connects a distinct strand $\alpha_i \subset F$ with a strand $\beta_i \subset L_i$ belonging to the $i$-th component of $L$.
There exists a spectral sequence convergent to the Khovanov homology of the multiple connected sum
\begin{equation*}
    E_2^{p,q} = \bigoplus_{i+j=q} \mathrm{Tor}_p^{V^{\otimes k}} \left( \kh^i(F), \kh^j(L) \right) \implies \kh^{p+q}(F \#_{\mathbf{R}} L).
\end{equation*}
\end{theorem}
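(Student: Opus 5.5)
The plan is to mirror the proof of Theorem~\ref{theorem:kunneth}, with the single-junction isomorphism replaced by Theorem~\ref{theorem:connected_sum_isomorphism2}. By that theorem, $\mathcal{C}(F \#_{\mathbf{R}} L) \cong \mathcal{C}(F) \otimes_{V^{\otimes k}} \mathcal{C}(L)$ as complexes. Here $\mathcal{C}(F)$ is a DG module over $A := V^{\otimes k}$ through the $k$ designated base strands. The complex $\mathcal{C}(L) \cong \bigotimes_i \mathcal{C}(L_i)$ carries the componentwise $A$-action, and this is a DG action because each $V$-factor commutes with the differential of its own $\mathcal{C}(L_i)$ (the single-strand proposition for pro-tangles) and acts trivially on the other factors. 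So it suffices to build an Eilenberg--Moore/Künneth spectral sequence for a tensor product of DG modules over the finite-dimensional commutative algebra $A$.

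\textbf{Step 1: resolution.} Take a semi-projective resolution $\mathcal{P}_\bullet \to \mathcal{C}(L)$ over $A$. For concreteness, use a Cartan--Eilenberg style resolution by free graded $A$-modules: for each homological degree it resolves cycles, boundaries and homology simultaneously, so that $H(\mathcal{P}_{p,\bullet}) \to \kh^\bullet(L)$ gives a free resolution of $\kh^\bullet(L)$. A second option is available because $A = V^{\otimes k}$ and $\mathcal{C}(L)$ is a tensor product: take the tensor product of resolutions of the $\mathcal{C}(L_i)$ over the separate $V$-factors. Tensor products of free $V$-modules over $\mathbb{F}$ are free over $V^{\otimes k}$, so this also works.

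\textbf{Step 2: double complex and filtration.} Form $\mathcal{C}(F) \otimes_A \mathcal{P}_{\bullet,\bullet}$ and filter by resolution degree $p$. All complexes are finite-dimensional in each degree and the cube degrees are bounded, so the filtration is bounded below and exhaustive and the spectral sequence converges. Moreover $\mathcal{C}(F) \otimes_A \mathrm{Tot}\,\mathcal{P} \simeq \mathcal{C}(F) \otimes_A \mathcal{C}(L)$. This quasi-isomorphism is exactly where semi-projectivity is used, since $\mathcal{C}(F)$ itself need not be $A$-flat (arcs contribute the non-free module $W$).

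\textbf{Step 3: identify $E_2$.} Compute $E_1$ by taking homology in the internal (cube) directions. Each $\mathcal{P}_p$ is free over $A$ and $\mathbb{F}$ is a field, so the Künneth formula over $\mathbb{F}$ for $\mathcal{C}(F) \otimes_A A^{r} \cong \mathcal{C}(F)^{r}$, together with the Cartan--Eilenberg splitting of $\mathcal{P}_p$, gives $E_1^{p,q} = \bigoplus_{i+j=q} \kh^i(F) \otimes_A H_j(\mathcal{P}_p)$. Taking homology in $p$ then yields $\bigoplus_{i+j=q}\mathrm{Tor}^A_p(\kh^i(F), \kh^j(L))$.

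\textbf{Main obstacle.} The delicate point is the $E_1$ identification. The internal homology of $\mathcal{C}(F) \otimes_A \mathcal{P}_p$ mixes the differentials of $\mathcal{C}(F)$ and of $\mathcal{P}_p$, so one does not get $\kh(F) \otimes_A H(\mathcal{P}_p)$ for free. My first attempt is to split the construction into two filtration stages: first filter so that $d_F$ acts alone, replacing $\mathcal{C}(F)$ by $\kh(F)$ (valid because each $\mathcal{P}_{p,j}$ is free), and only then take the differential of $\mathcal{P}$. If this becomes awkward, I would fall back on the factored resolution and Theorem~\ref{theorem:connected_sum_isomorphism2}, inducting on $k$ with the one-junction Theorem~\ref{theorem:kunneth}. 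That route would give an iterated spectral sequence, and the extra work would be to reassemble it using $\mathrm{Tor}^{V^{\otimes k}}$ of external tensor products.
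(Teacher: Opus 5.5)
Your plan is the paper's proof: Theorem~\ref{theorem:connected_sum_isomorphism2}, a semi-projective resolution $\mathcal{P}\to\mathcal{C}(L)$ over $A=V^{\otimes k}$, a filtration by resolution degree, and $E_2$ read off as $\mathrm{Tor}^{A}$. Your Cartan--Eilenberg splitting even supplies the $E_1$ identification that the paper only asserts. There is one genuine gap, in the justification of the abutment in Step~2. Semi-projectivity of $\mathcal{P}$ gives only $\mathcal{C}(F)\otimes_A\operatorname{Tot}\mathcal{P}\simeq\mathcal{C}(F)\otimes^{\mathbf{L}}_A\mathcal{C}(L)$. To pass to the underived product $\mathcal{C}(F)\otimes_A\mathcal{C}(L)$, which is what Theorem~\ref{theorem:connected_sum_isomorphism2} identifies with $\mathcal{C}(F\#_{\mathbf{R}}L)$, one factor must supply flatness. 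Since $\mathcal{C}(F)$ is not flat (arcs contribute $W$), that flatness has to come from $\mathcal{C}(L)$. So the non-flatness of $\mathcal{C}(F)$ is the obstruction, not the reason the step works. Already for $k=1$, with $W$ in place of both factors, the resolved side has homology $\mathrm{Tor}^V_\ast(W,W)\cong\mathbb{F}$ in every degree $\ast\ge 0$, while $W\otimes_V W\cong W$ is one-dimensional. The paper's proof is silent on this point as well.

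The missing observation uses a fact you already quote in Step~1. Every resolution of $L$ consists of circles, and the $\beta_i$ lie on circles of distinct components $L_i$. Hence each $\mathcal{Z}(L_i(w_i))$ is free over $V$ (the $\beta_i$-circle is a free factor $V$), and $\mathcal{Z}(L(w))=\bigotimes_i\mathcal{Z}(L_i(w_i))$ is free over $V^{\otimes k}$. So $\mathcal{C}(L)$ is a bounded complex of free $A$-modules and is therefore K-projective, as is $\operatorname{Tot}\mathcal{P}$. The cone of $\operatorname{Tot}\mathcal{P}\to\mathcal{C}(L)$ is then acyclic and K-projective, hence contractible. It follows that $\mathcal{C}(F)\otimes_A\operatorname{Tot}\mathcal{P}\to\mathcal{C}(F)\otimes_A\mathcal{C}(L)$ is a homotopy equivalence.

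With this inserted the argument is complete. Your Step~3 already disposes of the ``main obstacle'': each Cartan--Eilenberg row $\mathcal{P}_{p,\bullet}$ splits $A$-linearly as its homology $H_{p,\bullet}$ (projective, zero differential) plus a contractible complex. Exactness of $-\otimes_A H_{p,j}$ then gives $H_q(\mathcal{C}(F)\otimes_A\mathcal{P}_{p,\bullet})\cong\bigoplus_{i+j=q}\kh^i(F)\otimes_A H_{p,j}$. You need neither the two-stage filtration nor induction on $k$; in any case, ``replacing $\mathcal{C}(F)$ by $\kh(F)$'' is not a chain-level operation.

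Finally, the abutment degree $p+q$ is correct for the homological grading of Section~\ref{section:protangles_invariants}. If $\kh^\bullet$ is read cohomologically, the resolution degree must enter with a minus sign.
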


\begin{proof}
By the generalized tensor isomorphism, we have an isomorphism of complexes
\begin{equation*}
    \mathcal{C}(F \#_{\mathbf{R}} L) \cong \mathcal{C}(F) \otimes_{V^{\otimes k}} \mathcal{C}(L).
\end{equation*}
We resolve $\mathcal{C}(L)$ by a semi-projective resolution $\mathcal{P}_\bullet \to \mathcal{C}(L)$ in the category of differential graded $V^{\otimes k}$-modules. The total complex is then filtered by the resolution degree $p$, leading to the standard spectral sequence for the derived tensor product. 

The $E_1$ page consists of the homologies of the internal differentials $d_F$ and $d_L$ as follows:
\begin{equation*}
    E_1^{p,q} = \kh^q(\mathcal{C}(F) \otimes_{V^{\otimes k}} \mathcal{P}_p).
\end{equation*}
Identifying the homology of the internal complexes as $\kh^*(F)$ and $\kh^*(L)$ yields the $E_2$ page as the $p$-th derived functor of the product
\begin{equation*}
    E_2^{p,q} \cong \bigoplus_{i+j=q} \mathrm{Tor}_p^{V^{\otimes k}} \left( \kh^i(F), \kh^j(L) \right) \implies \kh^{p+q}(F \#_{\mathbf{R}} L).
\end{equation*}
The $E_2$ page is computed over the $k$-fold tensor product of the Frobenius algebra cohomology $V^{\otimes k} \cong \mathbb{F}[x_1, \dots, x_k] / (x_1^2, \dots, x_k^2)$.
The convergence follows from the boundedness of the complexes and the finite dimensionality of $V^{\otimes k}$.
\end{proof}

\begin{remark}
It is crucial to emphasize that the disjointness of the components $L_i$ in Theorem \ref{theorem:connected_sum_isomorphism2} cannot be relaxed. If multiple junctions $\mathbf{R}$ are formed between $F$ and the \textit{same} connected component of a pro-link $L$, the isomorphism fails. This forces multiple factors of the tensor product $V^{\otimes k}$ to act non-trivially on the same single copy of $V$ within $\mathcal{Z}(L)$, collapsing the expected modular freedom of $\otimes_{V^{\otimes k}}$ and preventing the total complex from factoring as a simple tensor product.
\end{remark}

\begin{example}\label{example:counterexample_loop}
To explicitly demonstrate the failure of Theorem \ref{theorem:connected_sum_isomorphism2} when multiple junctions are formed along the same connected component, consider the case where both the pro-tangle $F$ and the pro-link $L$ are single circles, i.e., $F = S^1$ and $L = S^1$. Let $\mathbf{R} = \{R_1, R_2\}$ be a collection of $k=2$ distinct junction pairs connecting $F$ and $L$ along two pairs of strands. We obtain the multiple connected sum at $\mathbf{R} = \{R_1, R_2\}$, as shown in Figure \ref{figure:multiple_sum}.
\begin{figure}[h]
  \centering
  \includegraphics[width=0.4\textwidth]{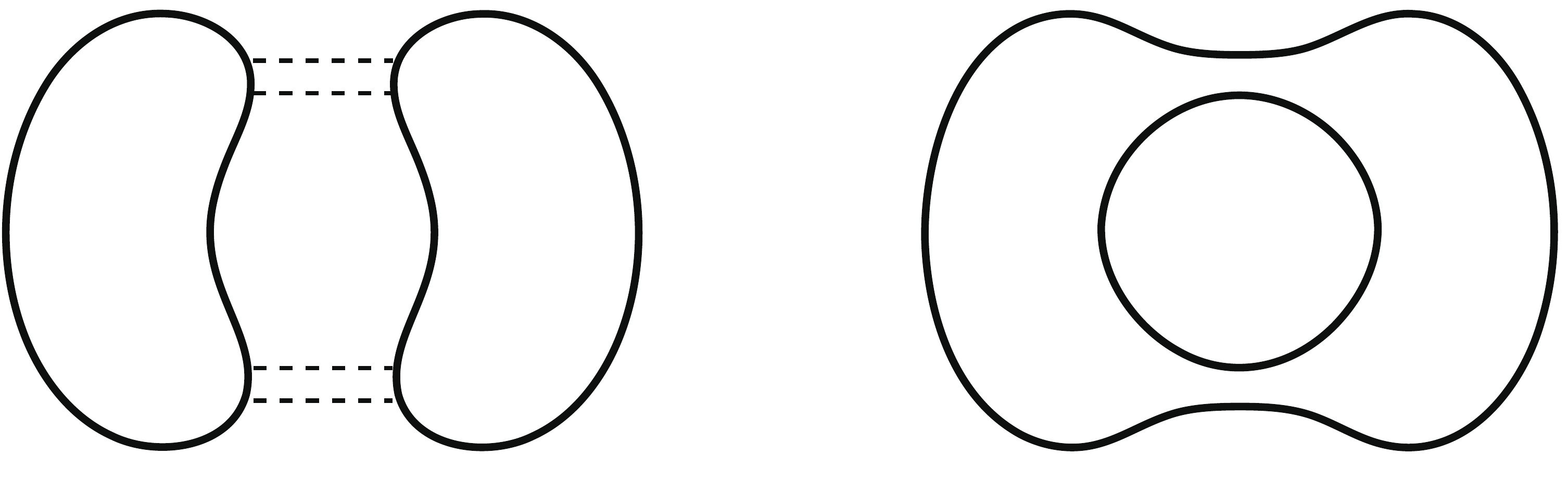} 
  \caption{The multiple connected sum of $S^1$ and $S^1$ at $\mathbf{R} = \{R_1, R_2\}$.}\label{figure:multiple_sum}
\end{figure}

On the left-hand side, performing two independent geometric surgeries between two disjoint circles yields a topological manifold consisting of two disjoint circles. In the Khovanov TQFT, the chain complex of this combined geometric object is simply the standard tensor product of two independent Frobenius algebras over the base field
\begin{equation*}
    \mathcal{C}(F \#_{\mathbf{R}} L) \cong V \otimes  V.
\end{equation*}
The dimension of $\mathcal{C}(F \#_{\mathbf{R}} L)$ over $\mathbb{F}$ is 4.

On the right-hand side, the chain complexes of the individual components are $\mathcal{C}(F) \cong V$ and $\mathcal{C}(L) \cong V$. Here, both complexes are viewed as $V \otimes  V$-modules. Explicitly, the module structure is defined via the canonical multiplication map $V \otimes  V \to V$, which identifies the module with the quotient space
\begin{equation*}
    V \cong (V \otimes  V) / \langle x \otimes 1 - 1 \otimes x \rangle.
\end{equation*}
The left and right actions of an element $(a_1 \otimes a_2) \in V \otimes  V$ on an element $y \in V$ are given by standard modular multiplication: $(a_1 \otimes a_2) \cdot y = a_1 a_2 y$. Now, we compute the balanced tensor product over $V \otimes  V$ as follows:
\begin{align*}
    \mathcal{C}(F) \otimes_{V \otimes V} \mathcal{C}(L) &\cong V \otimes_{V \otimes V} V \\
    &\cong V \otimes_{V \otimes V} \left( (V \otimes  V) / \langle x \otimes 1 - 1 \otimes x \rangle \right) \\
    &\cong V / \langle x \cdot 1 - 1 \cdot x \rangle \\
    &\cong V.
\end{align*}
Evaluating the dimension of this resulting algebraic tensor product, we have
\begin{equation*}
    \dim_{\mathbb{F}} \left( \mathcal{C}(F) \otimes_{V \otimes V} \mathcal{C}(L) \right) = \dim_{\mathbb{F}} V = 2.
\end{equation*}

Comparing the two sides, we observe a strict dimensional mismatch ($4 \neq 2$). Thus, the chain complexes cannot be isomorphic
\begin{equation*}
    \mathcal{C}(F \#_{\mathbf{R}} L) \not\cong \mathcal{C}(F) \otimes_{V \otimes V} \mathcal{C}(L).
\end{equation*}
This divergence arises because the algebraic tensor product overdetermines the relations, collapsing the independent geometric degrees of freedom generated by the two distinct junctions.
\end{example}

To understand and rectify this discrepancy, we observe the precise interplay between the geometric components and the algebraic modules. In the tensor product $V \otimes_{V \otimes V} V \cong V$, the two original copies of $V$ corresponding to $F$ and $L$ are amalgamated into a single $V$. Topologically, this single $V$ represents the underlying circle that actively carries the junction strands and supports the mutual module actions.

Conversely, on the geometric side, the simultaneous connection via two separate junctions creates a closed loop system. This parallel geometric surgery produces an additional, disjoint circle in the state space of $F \#_{\mathbf{R}} L$. This second circle does not participate in the core modular intertwining; rather, it is extraneously generated as a topological consequence of the parallel routing. 

To compensate for this algebraic deficit and capture the degree of freedom represented by this extra circle, we must enrich the right-hand side with a free $V$-factor representing the unlinked component. The corrected isomorphism for this specific configuration reads
\begin{equation*}
    \mathcal{C}(F \#_{\mathbf{R}} L) \cong \left(\mathcal{C}(F) \otimes_{V \otimes V} \mathcal{C}(L)\right) \otimes  V.
\end{equation*}
Evaluating the dimensions now yields $\dim_{\mathbb{F}}(V \otimes_{V \otimes V} V \otimes  V) = 2 \times 2 = 4$, establishing a perfect alignment between the topological state space and the corrected algebraic decomposition.

\subsubsection{Isomorphism corrections for double connected sums}

To systematically capture the topological fluctuations induced by multiple junctions, we formalize the conditional tensor factor associated with loop-generating operations. Let $F \in \fun^{(n)}_B$ and $L \in \fun^{(m)}_\emptyset$, and let $\mathbf{R} = \{R_1, R_2\}$ be a set of two distinct junction pairs, where $R_1=(\alpha_1,\beta_1)$ and $R_2=(\alpha_2,\beta_2)$ are pairs of strands with $\alpha_1,\alpha_2\subset F$ and $\beta_1,\beta_2\subset L$. 

For each cubical vertex $(v, w) \in \cube^n \times \cube^m$, the indicator exponent $I_{\mathbf{R}}(v,w) \in \{0, 1\}$ is defined to be $1$ if $\alpha_1$ and $\alpha_2$ lie on the same connected component of $F(v,w)$ and $\beta_1$ and $\beta_2$ lie on the same connected component of $L(v,w)$. Otherwise, $I_{\mathbf{R}}(v,w)=0$. For example, the indicator exponent of the connected sum from Example \ref{example:counterexample_loop} equals $1$ at its unique state.

\begin{definition}
Let $F \in \fun^{(n)}_B$ and $L \in \fun^{(m)}_\emptyset$, and let $\mathbf{R} = \{R_1, R_2\}$ be a set of two distinct junction pairs. For $(v, w) \in \cube^n \times \cube^m$, we define
\begin{equation*}
    \left(\mathcal{Z}(F(v)) \otimes_{V\otimes V} \mathcal{Z}(L(w))\right) \otimes^{\mathbf{R}} V = \left(\mathcal{Z}(F(v)) \otimes_{V\otimes V} \mathcal{Z}(L(w))\right) \otimes  V^{\otimes I_{\mathbf{R}}(v,w)}.
\end{equation*}
\end{definition}

This local definition extends across the hypercube to define the graded chain complex with
\begin{equation*}
    \left(\mathcal{C}(F) \otimes_{V \otimes V} \mathcal{C}(L)\right) \otimes^{\mathbf{R}} V = \bigoplus\limits_{(v,w)\in \cube^n \times \cube^m} \left(\mathcal{Z}(F(v)) \otimes_{V\otimes V} \mathcal{Z}(L(w))\right) \otimes^{\mathbf{R}} V.
\end{equation*}
The differential $d_{\otimes^{\mathbf{R}}}$ on a component corresponding to an edge $(v,w) \to (v',w')$ in $\cube^n \times \cube^m$ is defined componentwise as follows:
\begin{enumerate}[label=$(\roman*)$]
    \item $I_{\mathbf{R}}(v,w) = I_{\mathbf{R}}(v',w')$: If the indicator remains invariant, the differential acts as the standard tensor differential carrying the external tensor factor via the identity
    \begin{equation*}
        d_{\otimes^{\mathbf{R}}} = (d_F \otimes_{V\otimes V} \mathrm{id}_L \pm \mathrm{id}_F \otimes_{V\otimes V} d_L) \otimes \mathrm{id}_{V^{\otimes I_{\mathbf{R}}}}.
    \end{equation*}
    \item $I_{\mathbf{R}}(v,w) \neq I_{\mathbf{R}}(v',w')$: If the indicator changes, the differential incorporates the local algebraic structures governing the splitting and merging of the loop at the junction-carrying factors. Explicitly, by isolating the state space components as $\mathcal{Z}(F(v)) \otimes_{V\otimes V} \mathcal{Z}(L(w)) \cong \mathcal{Z}_{\mathrm{spec}} \otimes  M_{\mathbf{R}}$, the transition differential is given by:
    \begin{equation*}
        d_{\otimes^{\mathbf{R}}} = \begin{cases}
            \mathrm{id}_{\mathcal{Z}_{\mathrm{spec}}} \otimes \Delta_{\mathbf{R}} & \text{if } I_{\mathbf{R}}(v,w) = 0 \to I_{\mathbf{R}}(v',w') = 1, \\
            \mathrm{id}_{\mathcal{Z}_{\mathrm{spec}}} \otimes m_{\mathbf{R}} & \text{if } I_{\mathbf{R}}(v,w) = 1 \to I_{\mathbf{R}}(v',w') = 0,
        \end{cases}
    \end{equation*}
    where $\mathcal{Z}_{\mathrm{spec}} = \mathcal{Z}(F(v)) \otimes_{V\otimes V} \mathcal{Z}(L(w) \setminus \mathbf{R})$ comprises all spectator components, $M_{\mathbf{R}}$ is the internal structural module (isomorphic to either $V$ or $W$ depending on the local boundary conformation) linked to the junctions, while $\Delta_{\mathbf{R}}\colon M_{\mathbf{R}} \to M_{\mathbf{R}} \otimes  V$ and $m_{\mathbf{R}}\colon M_{\mathbf{R}} \otimes  V \to M_{\mathbf{R}}$ represent the induced local TQFT comultiplication and multiplication actions, respectively.
\end{enumerate}

\begin{remark}\label{remark:tensor_configurations}
We analyze the explicit algebraic form of the tensor product over $V \otimes V$ within the complex $\mathcal{C}(F) \otimes_{V \otimes V} \mathcal{C}(L)$. For a given cubical state $(v,w)$, the local diagrammatic patterns at the junctions yield four possible configurations, as illustrated in Figure \ref{figure:double_sum}.
\begin{figure}[h]
  \centering
  \includegraphics[width=0.5\textwidth]{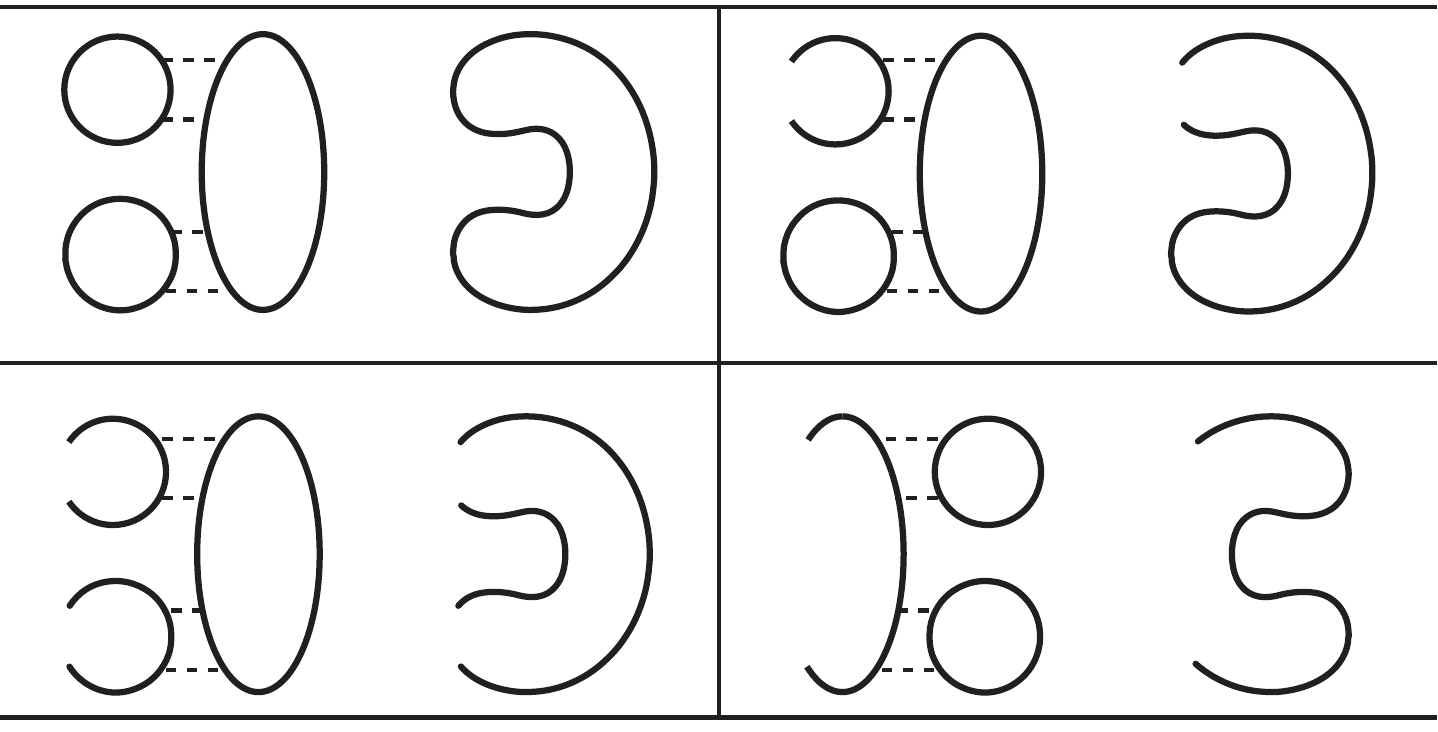} 
  \caption{Illustration of the different local configurations of double connected sum.}\label{figure:double_sum}
\end{figure}

\smallskip
For the first configuration (top-left of Figure \ref{figure:double_sum}), both interacting components are closed circles carrying the full Frobenius algebra, where the product evaluates to
\[
\begin{tikzpicture}[baseline=(X.base)]
    \node (X) [inner sep=2pt] {$(V\otimes V)\otimes_{V\otimes V}V \cong V$.};
    \draw [<->, >=stealth] (X.17) to [out=150, in=30] (X.170);
    \draw [<->, >=stealth] (X.343) to [out=210, in=330] (X.198);
\end{tikzpicture}
\]
Here, the first and the third tensor factors $V$ exchange via the $(V\otimes V)$-module action, and the second and the third factors $V$ exchange as well. The non-trivial generators of $(V\otimes V)\otimes_{V\otimes V}V \cong V$ are given by
\begin{equation*}
  (x\otimes 1)\otimes_{V\otimes V} 1 \quad \text{and} \quad (1\otimes 1)\otimes_{V\otimes V} 1.
\end{equation*}

\smallskip
For the second configuration (top-right of Figure \ref{figure:double_sum}), one crossing resolution reduces to an open arc, and the product yields
\[
\begin{tikzpicture}[baseline=(X.base)]
    \node (X) [inner sep=2pt] {$(W\otimes V)\otimes_{V\otimes V}V \cong W$.};
    \draw [<->, >=stealth] (X.17) to [out=150, in=30] (X.170);
    \draw [<->, >=stealth] (X.343) to [out=210, in=330] (X.199);
\end{tikzpicture}
\]
Here, the third factor $V$ is across the tensor to scale the first factor $W$ and the second factor $V$. The resulting space $(W\otimes V)\otimes_{V\otimes V}V \cong W$ is $1$-dimensional, spanned by the unique non-zero generator
\begin{equation*}
  (x\otimes 1)\otimes_{V\otimes V} 1.
\end{equation*}

\smallskip
For the third configuration (bottom-left of Figure \ref{figure:double_sum}), both resolutions yield open arcs on the same side, corresponding to the tensor form
\[
\begin{tikzpicture}[baseline=(X.base)]
    \node (X) [inner sep=2pt] {$(W\otimes W)\otimes_{V\otimes V}V \cong W\otimes W$};
    \draw [<->, >=stealth] (X.26) to [out=150, in=30] (X.172);
    \draw [<->, >=stealth] (X.334) to [out=210, in=330] (X.192);
\end{tikzpicture}
\]
The action of the external factor $V$ is absorbed by the internal components, and the space $(W\otimes W)\otimes_{V\otimes V}V$ possesses a unique non-trivial generator
\begin{equation*}
  (x\otimes x)\otimes_{V\otimes V} 1.
\end{equation*}

\smallskip
For the fourth configuration (bottom-right of Figure \ref{figure:double_sum}), symmetrically, the open arc conformations appear on the alternative side, yielding
\[
\begin{tikzpicture}[baseline=(X.base)]
    \node (X) [inner sep=2pt] {$W\otimes_{V\otimes V}(V\otimes V) \cong W$,};
    \draw [<->, >=stealth] (X.19) to [out=150, in=30] (X.171);
    \draw [<->, >=stealth] (X.280) to [out=210, in=330] (X.189);
\end{tikzpicture}
\]
where the second and third factors $V$ are shuffled across the tensor product to act on the ideal $W$. The non-zero generator of $W\otimes_{V\otimes V}(V\otimes V)$ is uniquely given by
\begin{equation*}
  x\otimes_{V\otimes V} (1\otimes 1).
\end{equation*}

All these algebraic evaluations are in perfect alignment with the geometric configurations of the local connected sums.
\end{remark}

With the above construction established, the single-junction isomorphism generalizes smoothly to dual-junction configurations.

\begin{theorem}\label{thm:dual_junction_isomorphism}
Let $F \in \fun^{(n)}_B$ be a pro-tangle and $L \in \fun^{(m)}_\emptyset$ be a pro-link. Suppose $\mathbf{R}$ is a collection of two distinct junction pairs between $F$ and $L$ for the connected sum. There exists an isomorphism of chain complexes
\begin{equation*}
    \mathcal{C}(F \#_{\mathbf{R}} L) \cong \left(\mathcal{C}(F) \otimes_{V \otimes V} \mathcal{C}(L)\right) \otimes^{\mathbf{R}} V.
\end{equation*}
\end{theorem}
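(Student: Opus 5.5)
The isomorphism will be built vertex by vertex on the cube $\cube^n\times\cube^m$, exactly as in the proof of Theorem~\ref{theorem:connected_sum_isomorphism}. We then check it against the differential edge by edge. The only new feature compared with the single-junction case is the indicator $I_{\mathbf{R}}$, which records whether the double surgery closes off an extra circle.

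\textbf{Step 1: the vertex-wise isomorphism.} Fix a state $(v,w)$. The resolution $(F\#_{\mathbf{R}}L)(v,w)$ is obtained from $F(v)\sqcup L(w)$ by two saddle surgeries at $R_1$ and $R_2$. Components not touching $\alpha_1,\alpha_2,\beta_1,\beta_2$ are spectators. Their factors pass unchanged through both sides and assemble into $\mathcal{Z}_{\mathrm{spec}}$. For the remaining components we split according to $I_{\mathbf{R}}(v,w)$.
\begin{itemize}
\item If $I_{\mathbf{R}}=0$, at least one of the pairs $\{\alpha_1,\alpha_2\}$ or $\{\beta_1,\beta_2\}$ lies on two different components. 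The two surgeries then merge the junction-carrying components into a single component, possibly together with arcs. In $\otimes_{V\otimes V}$ the two $V$-factors act on different tensor factors, so each relation identifies exactly one pair of components. This is the iterated use of Theorem~\ref{theorem:connected_sum_isomorphism}, in the spirit of Theorem~\ref{theorem:connected_sum_isomorphism2}.
\item If $I_{\mathbf{R}}=1$, both pairs lie on single components. The local picture is then one of the four configurations of Remark~\ref{remark:tensor_configurations}. There the balanced tensor product collapses to the single module $M_{\mathbf{R}}\in\{V,W\}$ carried by the merged component. Geometrically, the second surgery splits off one extra circle, and this circle is supplied by the factor $V^{\otimes 1}$.
\end{itemize}
In both cases we define $\kappa_{v,w}$ by sending the generators listed in Remark~\ref{remark:tensor_configurations} (tensored with $1$ or $x$ on the extra circle) to the corresponding TQFT generators. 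A dimension count, as in Example~\ref{example:counterexample_loop}, then shows that $\kappa_{v,w}$ is bijective.

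\textbf{Step 2: compatibility with the differentials.} Summing the $\kappa_{v,w}$ gives a graded isomorphism $\kappa$. We check $d_{F\#L}\circ\kappa=\kappa\circ d_{\otimes^{\mathbf{R}}}$ on each edge $(v,w)\to(v',w')$, with Koszul signs handled as in Theorem~\ref{theorem:connected_sum_isomorphism}.
\begin{itemize}
\item If $I_{\mathbf{R}}$ is constant along the edge, the saddle either avoids the junction components entirely, in which case we use functoriality, or acts on the merged component, in which case we use the $V$-linearity established for the single-junction case and the Frobenius-module identities of Lemma~\ref{lemma:compatible}. The extra $V$-factor is carried along by the identity.
\item If $I_{\mathbf{R}}$ changes from $0$ to $1$, the saddle splits the merged component into the junction-carrying piece and the new circle. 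The TQFT assigns $\Delta$ or $\Delta_W$ to this, which matches $\mathrm{id}\otimes\Delta_{\mathbf{R}}$.
\item If $I_{\mathbf{R}}$ changes from $1$ to $0$, the saddle merges the extra circle back. The TQFT assigns $m$ or $\mu$, which matches $\mathrm{id}\otimes m_{\mathbf{R}}$.
\end{itemize}

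\textbf{Expected obstacle.} The main difficulty is making the identification of the extra circle canonical. When $I_{\mathbf{R}}=1$ we must decide which of the two resulting circles is the one that ``carries'' the junctions, i.e.\ is modelled by $M_{\mathbf{R}}$, and which is the free $V$-factor. This choice has to be made so that on transition edges the induced map is exactly $\Delta_{\mathbf{R}}$ or $m_{\mathbf{R}}$, and not a twisted version.

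My first attempt will fix the convention that the extra circle is the component containing the $F$-side arc between $\alpha_1$ and $\alpha_2$. Commutativity and cocommutativity of $V$, together with the coassociativity identity $(\mathrm{id}_V\otimes\mu)(\Delta\otimes\mathrm{id}_W)=\Delta_W\mu$, should then absorb the ambiguity. Should that fail, I would instead verify the square directly on the generators $1,x$ in each of the four configurations of Remark~\ref{remark:tensor_configurations}.
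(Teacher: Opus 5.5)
Your outline is the same as the paper's proof: a vertexwise $\kappa_{v,w}$ defined by cases on $I_{\mathbf R}$, then an edge check split into $I_{\mathbf R}$-constant edges and transition edges. The step that fails is the one you flag as the expected obstacle, and it cannot be fixed by choosing a convention.

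By the definition of $d_{\otimes^{\mathbf R}}$, on an $I_{\mathbf R}$-constant edge inside the region $I_{\mathbf R}=1$, every saddle touching the junction-carrying component $A\ni\alpha_1,\alpha_2$ acts on $M_{\mathbf R}$. The external $V$ is carried along by the identity. Geometrically, however, $\alpha_1,\alpha_2$ cut $A$ into two segments $A',A''$. After the double surgery these lie on different circles $P\supset A'$ and $Q\supset A''$, so a saddle with a foot on $A''$ acts on $Q$. The maps $c\otimes a\otimes b\mapsto ca\otimes b$ and $c\otimes a\otimes b\mapsto a\otimes cb$ are different; they already disagree on $x\otimes 1\otimes 1$. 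So commutativity of $V$ absorbs nothing. As soon as one vertex has outgoing saddles on both segments, no identification of $\{P,Q\}$ with $\{M_{\mathbf R},V\}$ makes both squares commute.

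Two further problems with your fallback. Your convention is not well defined when $A$ is a circle, since both segments lie ``between'' $\alpha_1$ and $\alpha_2$. Also, the four configurations of Remark~\ref{remark:tensor_configurations} are all $I_{\mathbf R}=0$ pictures, so checking them never reaches this case. The paper's Case 1, which it calls an immediate consequence of the single-junction functoriality, passes over the same point.

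In fact the statement fails with $d_{\otimes^{\mathbf R}}$ as defined. Take the following data.
\begin{itemize}
\item $F$ is the two-crossing Hopf link diagram, with crossings chosen so that the $00$-resolution is the outer circle $A$ together with the lens circle $\Lambda$.
\item $L$ is a crossingless circle placed in the outer region.
\item $\alpha_1,\alpha_2$ lie on the two outer arcs.
\end{itemize}
Then $I_{\mathbf R}=1$ at $00,10,01$ and $I_{\mathbf R}=0$ at $11$.

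On the right-hand side, both edges out of $00$ are $\pm\, m\otimes\mathrm{id}_V$ on $V_M\otimes V_\Lambda\otimes V$. So the homology supported at $00$ is $\ker(m)\otimes V$, of dimension $4$. The maps into $11$ are surjective merges, so the middle homology is $6-4=2$, and the total homology has dimension $6$.

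On the left-hand side, the saddles at the two crossings lie on different segments of $A$. They therefore merge $\Lambda$ into $P$ and into $Q$ respectively. Their common kernel in $V_P\otimes V_Q\otimes V_\Lambda$ is $2$-dimensional, and all later maps are surjective. The total homology has dimension $2$, as it must for a two-crossing knot diagram.

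Hence the two complexes are not isomorphic. A correct statement would need to redefine $d_{\otimes^{\mathbf R}}$ on $I_{\mathbf R}=1$ edges, so that each saddle acts on $M_{\mathbf R}$ or on the external factor according to which segment it meets. At that point the right-hand side is essentially a relabeling of the left.
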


\begin{proof}
We construct the chain isomorphism by defining a family of localized linear embeddings at each hypercube vertex $(v, w) \in \cube^n \times \cube^m$, obtaining the map
\begin{equation*}
    \kappa_{v,w}: \mathcal{Z}\left((F \#_{\mathbf{R}} L)(v, w)\right) \longrightarrow \left(\mathcal{Z}(F(v)) \otimes_{V\otimes V} \mathcal{Z}(L(w))\right) \otimes^{\mathbf{R}} V.
\end{equation*}
The definition of $\kappa_{v,w}$ proceeds by a case analysis on the local patterns. If $I_{\mathbf{R}}(v,w) = 0$, the junctions connect to distinct components within $L(w)$; the operator $\otimes^{\mathbf{R}} V$ reduces to the identity factor $\otimes  \mathbb{F}$, and $\kappa_{v,w}$ is given by the standard single-junction bijection. If $I_{\mathbf{R}}(v,w) = 1$, the parallel surgery isolates an extra unlinked circle in the state space of $F \#_{\mathbf{R}} L$; here, $\kappa_{v,w}$ acts as the identity on the core spectator components mapping into $\mathcal{Z}_{\mathrm{spec}}$, while sending the newly generated circle to the external factor $V$ through the canonical vector space identification $M_{\mathbf{R}} \otimes  V$.

To establish that $\kappa$ is a chain map, we check its commutativity with the total differential $d_{\otimes^{\mathbf{R}}}$ across both internal and transition edges of the product hypercube $\cube^n \times \cube^m$:

\smallskip
\textit{Case 1: Internal edges ($I_{\mathbf{R}}(v,w) = I_{\mathbf{R}}(v',w')$).} 
On subcubes where the indicator exponent remains invariant, no parallel loops are created or destroyed by the edge maps. According to the definition, the total differential acts as the standard tensor product differential extended via the identity on the external factor:
\begin{equation*}
    d_{\otimes^{\mathbf{R}}} = (d_F \otimes_{V\otimes V} \mathrm{id}_L \pm \mathrm{id}_F \otimes_{V\otimes V} d_L) \otimes \mathrm{id}_{V^{\otimes I_{\mathbf{R}}}}.
\end{equation*}
Commutativity $d_{\otimes^{\mathbf{R}}} \circ \kappa = \kappa \circ d_{F \#_{\mathbf{R}} L}$ in this regime follows immediately from the functoriality of the standard single-junction tensor product established in Theorem \ref{theorem:connected_sum_isomorphism}, where the cubical signs align precisely with the Koszul sign rule.

\smallskip
\textit{Case 2: Transition edges ($I_{\mathbf{R}}(v,w) \neq I_{\mathbf{R}}(v',w')$).} 
On the edges triggering a transition between the two regimes, the map does not engage the internal differentials $d_F$ or $d_L$, but acts locally on the junction-carrying component. 
\begin{itemize}[label=$(\roman*)$]
    \item For a transition of type $0 \to 1$, a parallel loop splits off from the junction-carrying module. The differential evaluates to $d_{\otimes^{\mathbf{R}}} = \mathrm{id}_{\mathcal{Z}_{\mathrm{spec}}} \otimes \Delta_{\mathbf{R}}$. Since $\kappa_{v',w'}$ maps the newly separated loop into the external $V$ factor, commutativity reduces to the requirement that the geometric splitting cobordism aligns with the local TQFT comultiplication $\Delta_{\mathbf{R}} \colon M_{\mathbf{R}} \to M_{\mathbf{R}} \otimes  V$.
    \item For a transition of type $1 \to 0$, the external loop merges back into the junction-carrying module. The differential evaluates to $d_{\otimes^{\mathbf{R}}} = \mathrm{id}_{\mathcal{Z}_{\mathrm{spec}}} \otimes m_{\mathbf{R}}$. The geometric surgery corresponds precisely to the local TQFT multiplication $m_{\mathbf{R}} \colon M_{\mathbf{R}} \otimes  V \to M_{\mathbf{R}}$.
\end{itemize}
Because $V$ is an associative and coassociative Frobenius algebra and $M_{\mathbf{R}}$ inherits a well-defined local Frobenius-type module structure, these algebraic relations commute exactly with $\kappa_{v,w}$ on the nose.

\smallskip
As $\kappa$ preserves the gradings at every vertex and commutes with the differential on both internal and transition edges, it constitutes a strict isomorphism of differential graded complexes.
\end{proof}

\section*{Acknowledgments}

This work was supported in part by the National Natural Science Foundation of China (NSFC Grant No. 12401080), the Scientific Research Foundation of Chongqing University of Technology, the National Key Research and Development Program of China (NKPs Grant No. 2024YFA1013201), the Natural Science Foundation of Chongqing (NSFCQ Grant No. CSTB2024NSCQ-LZX0040), and the Special Project of Chongqing Municipal Science and Technology Bureau (Grant No. 2025CCZ015). Li Shen was supported by an NITMB fellowship supported by grants from the NSF (Grant No. DMS-2235451) and the Simons Foundation (Grant No. MP-TMPS-00005320).

\bibliographystyle{plain}  
\bibliography{Reference}

\end{document}